
\documentclass[reqno]{amsart}
\usepackage{amssymb}
\usepackage{amsmath}
\usepackage[mathscr]{euscript}

\makeatletter
\@addtoreset{equation}{section}
\makeatother

\renewcommand\thefigure{\thesection.\@arabic\c@figure}
\renewcommand\thetable{\thesection.\@arabic\c@table}

\newtheorem{theorem}{Theorem}[section]
\newtheorem{lemma}[theorem]{Lemma}
\newtheorem{proposition}[theorem]{Proposition}
\newtheorem{corollary}[theorem]{Corollary}

\newtheorem{remark}[theorem]{Remark}

\newcommand{\mc}[1]{{\mathcal #1}}
\newcommand{\mf}[1]{{\mathfrak #1}}
\newcommand{\mb}[1]{{\mathbf #1}}
\newcommand{\bb}[1]{{\mathbb #1}}
\newcommand{\bs}[1]{{\boldsymbol #1}}
\newcommand{\ms}[1]{{\mathscr #1}}

\renewcommand{\>}{\rangle}

\renewcommand{\Cap}{{\rm cap}}

\begin{document}

\title[Metastability of Markov chains]{Metastability of reversible
  finite state Markov processes}

\author{J. Beltr\'an, C. Landim}

\address{\noindent IMCA, Calle los Bi\'ologos 245, Urb. San C\'esar
  Primera Etapa, Lima 12, Per\'u and PUCP, Av. Universitaria cdra. 18,
  San Miguel, Ap. 1761, Lima 100, Per\'u. 
\newline e-mail: \rm
  \texttt{johel@impa.br} }

\address{\noindent IMPA, Estrada Dona Castorina 110, CEP 22460 Rio de
  Janeiro, Brasil and CNRS UMR 6085, Universit\'e de Rouen, Avenue de
  l'Universit\'e, BP.12, Technop\^ole du Madril\-let, F76801
  Saint-\'Etienne-du-Rouvray, France.  \newline e-mail: \rm
  \texttt{landim@impa.br} }

\keywords{Metastability, Finite state Markov processes,
  Reversibility}

\begin{abstract}
  We prove the metastable behavior of reversible Markov processes on
  finite state spaces under minimal conditions on the jump rates. To
  illustrate the result we deduce the metastable behavior of the Ising
  model with a small magnetic field at very low temperature.
\end{abstract}

\maketitle

\section{Introduction}
\label{sec0}

Metastability is a phenomenon observed in thermodynamic systems close
to a first--order phase transition. Describing the evolution among
competing metastable states or from a metastable state to a stable
state in stochastic lattice spin systems at low temperatures is still
a subject of considerable interest. We refer to \cite{h1, ov, b2, h2}
for recent monographs on the subject.

Inspired from the metastable behavior of condensed zero-range
processes \cite{bl3} and from the metastable behavior of random walks
among random traps \cite{jlt1, jlt2}, we proposed in \cite{bl2} a
definition of metastability and developed some techniques,
particularly effective in the reversible case, to prove the
metastability of sequences of Markov processes on countable state
spaces.

To present the approach introduced in \cite{bl2} in the simplest
possible context, we examine in this article the metastable behavior
of reversible Markov processes on finite state spaces. The main result
of the article, Theorem \ref{s19}, describes all metastable behaviors
of such processes in all time scales under the minimal conditions
\eqref{24}, \eqref{30} on the jump rates.

The minimal assumptions \eqref{24}, \eqref{30} are clearly satisfied
by all Markovian dynamics studied so far. This includes the Glauber
dynamics with a small external field at very low temperature in two
\cite{ns1, ns2} and three \cite{bc} dimensions, anisotropic Glauber
dynamics \cite{ko1, ko2}, conservative Kawasaki dynamics \cite{hnos1,
  hnos2, hnos, ghnos1}, birth and death processes \cite{s} and the
reversible dynamics considered in \cite{bm}.

Theorem \ref{s19} asserts the existence of time scales in which a
metastable behavior is observed. To apply this result to specific
models, as pointed out in Remark \ref{mr1}, one needs to compute the
capacity between metastable sets and the hitting probabilities of
metastable sets. In some cases, as in the Kawasaki dynamics, the exact
calculation of the hitting probabilities is impossible, but one can at
least determine if the asymptotic hitting probability is strictly
positive or not. In these cases, an exact description of the
metastable behavior of the process is not available. It is only known
that asymptotically the process spends an exponential time, of a
computable mean, in a metastable set at the end of which it jumps to
some other metastable set with an unknown probability, where the same
phenomenon is observed.

In contrast with the pathwise approach proposed in \cite{cgov}, the
one presented in this article does not highlight the saddle
configurations visited by the process when moving from one metastable
state to another. However, to compute the exact depths of the valleys,
a calculation which relies on a precise estimation of the capacities,
one needs to characterize the saddle configurations. This is clearly
illustrated in Section \ref{si2} where the saddle configurations of a
valley $\mc E_\sigma$, denoted by $\bb W(\sigma)$, appear when we
compute the capacities between the metastable sets of the Ising model.

The lack of precise results on the saddle configurations is
compensated by an exact description of the asymptotic dynamics among
wells. We are able, in particular, with similar methods to the ones
introduced in Bovier et al. \cite{begk1, begk2}, to show the existence
of sequences $\theta_N$ for which $T_N/\theta_N$ converges to a mean
one exponential distribution, if $T_N$ represents the time the process
leaves a metastable set. Furthermore, we also prove the asymptotic
independence of $T_N/\theta_N$ and $\eta_{T_N}$, where $\eta$
represents the Markov process, a question not considered before. The
proof of this asymptotic independence requires the convergence of the
average jump rates, defined in \eqref{35}, which is, in most cases,
the main technical difficulty in the deduction of metastability.

To illustrate the main result, we consider in Section \ref{si1} the
metastable behavior of the two dimensional Ising model with a small
external field at very low temperature, the model of Neves and
Schonmann \cite{ns1, ns2}, and a case in which all parameters can be
exactly computed.

\section{Notation and Results}
\label{sec3}

We say that a sequence of positive real numbers $(\alpha_N : N\ge 1)$
is of lower magnitude than a similar sequence $(\beta_N : N\ge 1)$,
$\alpha_N \prec \beta_N$ or $\beta_N \succ \alpha_N$, if
$\alpha_N/\beta_N$ vanishes as $N\uparrow\infty$. We say that two
positive sequences $(\alpha_N : N\ge 1)$, $(\beta_N : N\ge 1)$ are of
the same magnitude, $\alpha_N \approx \beta_N$, if there exists a
finite positive constant $C_0$ such that
\begin{equation*}
C_0^{-1} \;\le\, \liminf_{N\to\infty} \frac{\alpha_N}{\beta_N} \;\le\;
\limsup_{N\to\infty} \frac{\alpha_N}{\beta_N} \;\le\; C_0\;. 
\end{equation*}
Finally, $\alpha_N\preceq \beta_N$ or $\beta_N \succeq \alpha_N$ means
that $\alpha_N \prec \beta_N$ or $\alpha_N \approx \beta_N$.

We say that a set of sequences $(\alpha_N (1): N\ge 1), \dots,
(\alpha_N (\ell): N\ge 1)$ is \emph{comparable} if for each $i\not =
j$ one of the three possibilities holds: either $\alpha_N (i) \prec
\alpha_N (j)$ or $\alpha_N (j) \prec \alpha_N (i)$ or $\alpha_N
(i)/\alpha_N (j)$ converges to a constant $c_{i,j}\in (0,\infty)$.
Hence, for example, the possibility that the sequence $\alpha_N
(i)/\alpha_N (j)$ oscillates between two finite values and does not
converge is excluded.
\smallskip

Fix a finite set $E$ and sequences $\{\lambda_N(j):N\ge 1\}$, $0\le
j\le \mf n$, such that $ \lambda_{N}(\mf n) \prec \lambda_N(\mf n -1)
\prec \cdots \prec \lambda_N(0) \equiv 1$.  Consider a Markov process
$\{\eta^N_t : t\ge 0\}$ on $E$ with jump rates denoted by $R_N(x,y)$,
$x\not = y\in E$. We assume that the process is \emph{irreducible},
that the unique stationary state, denoted by $\mu_N$, is
\emph{reversible}, and that the jump rates satisfy the following
multi-scale hypothesis.  There exists $a: E\times E \to \{0, \dots,
\mf n\}$ such that
\begin{equation}
\label{24}
R_N(x,y) = r_N(x,y) \, \lambda_N(a(x,y))\;, \quad x\not = y\in E\;,
\end{equation}
where $\lim_{N\to\infty} r_N(x,y) = r(x,y) \in (0,\infty)$, $x\not =
y$. We assume, without loss of generality, that $a(x,y)=0$ for some
$x\not =y$. We assume, furthermore, that products of the rates
$\lambda_N(j)$ are comparable. More precisely, we suppose that for any
$(k_1, \dots, k_{\mf n}) \in \bb Z^{\mf n}$,
\begin{equation}
\label{30}
\lim_{N\to \infty} \prod_{i=1}^{\mf n} \lambda_N(i)^{k_i} \;=\; C_0 
\end{equation}
for some constant $C_0\in [0,\infty]$ which depends on $(k_0, \dots,
k_{\mf n})$.

Fix two states $x$, $y$ in $E$. By irreducibility, there exits a path
$x=x_0, x_1, \dots, x_n= y$ such that $n \le |E|$, $R_N(x_i,
x_{i+1})>0$, $0\le i<n$. By the detailed balance condition,
\begin{equation}
\label{33}
\frac{\mu_N(y)}{\mu_N(x)} \;=\; \prod_{i=0}^{n-1} 
\frac{R_N(x_i,x_{i+1})}{R_N(x_{i+1},x_i)} \;\cdot
\end{equation}
It follows from assumptions \eqref{24} and \eqref{30} that the
sequences $\{\mu_N(x) : N\ge 1\}$, $x\in E$, are comparable. In fact,
there exist $\mf m\ge 1$, sequences $M_N(\mf m) \prec \cdots \prec
M_N(1) \prec M_N(0) \equiv 1$, a function $b:E\to \{0, \dots, \mf m\}$
and a function $m: E \to (0,\infty)$ such that
\begin{equation}
\label{32}
\mu_N(x) \;=\; m_N(x) \, M_N(b(x))\;, \quad x\in E\;,
\end{equation}
where $\lim_{N\to\infty} m_N(x) = m(x) \in (0,\infty)$. We may choose
each sequence $M_N(j)$ to be equal to $\prod_{i=1}^{\mf n}
\lambda_i(N)^{k_i}$ for an appropriate choice of $(k_1, \dots, k_{\mf
  n})$ with $\sum_i |k_i|\le 4 |E|$.

Let $G_N: E\times E\to \bb R_+$ be given by $G_N(x,y) = \mu_N(x)
R_N(x,y)$ and note that $G_N$ is symmetric. As above, by \eqref{24}
and \eqref{30} the sequences $\{G_N(x,y) : N\ge 1\}$, $x\not = y \in
E$, are comparable. Moreover, there exist $\mf j \ge 1$, sequences
$G_N(\mf j) \prec \cdots \prec G_N(1) \prec G_N(0) \equiv 1$, a
function $c:E\times E \to \{0, \dots, \mf j\}$ and a function $g: E
\to (0,\infty)$ such that
\begin{equation}
\label{27}
G_N(x,y) \;=\; g_N(x,y) \, G_N(c(x,y))\;, \quad x\;, y \in E\;,
\end{equation}
where $\lim_{N\to\infty} g_N(x,y) = g(x,y) \in (0,\infty)$.  Here also
each sequence $G_N(j)$ may be chosen equal to $\prod_{i=1}^{\mf n}
\lambda_i(N)^{k_i}$ for an appropriate choice of $(k_1, \dots, k_{\mf
  n})$ with $\sum_i |k_i|\le 4 |E|+1$.
\smallskip

Denote by $\mb P_x^N$, $x\in E$, the probability measure on the path
space $D(\bb R_+, E)$ induced by the Markov process $\{\eta^N_t : t\ge
0\}$ starting from $x$. Expectation with respect to $\mb P_x^N$ is
denoted by $\mb E_x^N$.

For a subset $A$ of $E$, denote by $T_A$ the
hitting time of $A$:
\begin{equation*}
T_A \;=\; \inf\{t>0 : \eta^N_t \in A\}\; .
\end{equation*}
When $A$ is a singleton $\{x\}$, we denote $T_{\{x\}}$ by $T_x$.

For a proper subset $F$ of $E$, denote by $\{\eta^{F}_t : t\ge 0\}$
the trace of the Markov process $\{\eta^{N}_t : t\ge 0\}$ on $F$. We
refer to \cite[Section 2]{bl2} for a precise definition.  $\eta^{F}_t$
is a Markov process on $F$ and we denote by $R^{F}_N(x,y)$, $x$, $y\in
F$, its jump rates. Let $r^{F}_N(A,B)$, $A$, $B\subset F$, $A\cap
B=\varnothing$, be the average jump rates of $\eta^{F}_t$ from $A$
to $B$:
\begin{equation}
\label{35}
r^{F}_N(A,B) \;=\; \frac{1}{\mu_N(A)}  \sum_{x\in A} 
\mu_N(x) \sum_{y\in B} R^{F}_N(x,y)\;.
\end{equation}

The main theorem of this article describes all metastable behaviors of
the process $\{\eta^N_t : t\ge 0\}$.

\renewcommand{\theenumi}{\arabic{enumi}}
\renewcommand{\labelenumi}{({\bf \theenumi})}

\begin{theorem}
\label{s19} 
There exist $\mf M \ge 1$, sequences $\{\theta_N(k) : N\ge 1\}$, $1\le
k\le \mf M$, $1 \prec \theta_N(1) \prec \cdots \prec \theta_N(\mf M)$,
and, for each $1\le k\le \mf M$, a partition $\mc E^{(k)}_1, \dots,
\mc E^{(k)}_{\nu(k)}$, $\Delta_k$ of the state space $E$ such that for
all $1\le k\le \mf M$:

\renewcommand{\theenumi}{\arabic{enumi}}
\renewcommand{\labelenumi}{({\bf P\theenumi})}

\begin{enumerate}
\item $1< \nu(k)<\nu(k-1)$, $k\ge 2$.

\item For $k\ge 2$, $1\le i\le \nu(k)$, $\mc E^{(k)}_i = \cup_{a\in
    I_{k,i}} \mc E^{(k-1)}_a$, where $I_{k,1}, \dots, I_{k,\nu(k)}$
  are disjoint subsets of $\{1, \dots, \nu(k-1)\}$.

\item For $1\le i\le \nu(k)$, $\mu_N(x) \approx \mu_N(\mc E^{(k)}_i)$ for
  all states $x$ in $\mc E^{(k)}_i$.

\item Let $\mc E^{(k)} = \cup_{i=1}^{\nu(k)} \mc E^{(k)}_i$.  For all
  $1\le i \not = j \le \nu(k)$, the following limits exist
\begin{equation*}
\mf r_k (i,j) \;:=\;
\lim_{N\to \infty} \theta_N(k) \, r^{\mc E^{(k)}}_N(\mc E^{(k)}_i, \mc
E^{(k)}_j) \;.
\end{equation*}

\item Property {\rm ({\bf M1}')} of metastability holds: For every
  $1\le i\le \nu(k)$, every state $x$ in $\mc E^{(k)}_i$ and
  every $\delta>0$,
\begin{equation*}
\lim_{N\to \infty} \max_{y\in \mc E^{(k)}_i} \mb P^N_y \big[
T_{x} > \delta \theta_N(k) \big] \;=\; 0\;. 
\end{equation*}

\item Property {\rm ({\bf M2})} of metastability holds: Let $\Psi_k :
  \mc E^{(k)} \to \{1, \dots, \nu(k)\}$ be given by
\begin{equation*}
\Psi_k (x) \;=\; \sum_{i=1}^{\nu(k)} i\, \mb 1\{ x\in \mc E_i^{(k)} \}\; .
\end{equation*}
Denote by $\{\eta^{N, k}_t : t\ge 0\}$ the trace of the process
$\{\eta^{N}_t : t\ge 0\}$ on $\mc E^{(k)}$. For every $1\le i\le
\nu(k)$, $x\in \mc E_i^{(k)}$, under the measure $\mb P^N_x$, the
blind speeded up (non-Markovian) process $X^{N,k}_{t} =
\Psi_k(\eta^{N, k}_{t \theta_N(k)})$ converges to the Markov process
on $\{1\, \dots, \nu(k)\}$ starting from $i$ and characterized by the
rates $\mf r_k(i,j)$.

\item Property {\rm ({\bf M3}')} of metastability holds: For every
  $t>0$,
\begin{equation*}
\lim_{N\to \infty} \max_{x\in E} \, \mb E^N_x \Big[
\int_0^t \mb 1\{ \eta^N_{s\theta_N(k)} \in \Delta_k\} \, ds  \Big] \;=\; 0\;. 
\end{equation*}

\end{enumerate}
\end{theorem}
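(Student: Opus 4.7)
The plan is to construct the hierarchy by induction on $k$, applying at each step the metastability criterion of \cite{bl2} to the trace process on the current union of wells, and using hypotheses \eqref{24}--\eqref{30} to extract the new time scale together with the new limit rates. At every level, the limit chain with rates $\mf r_k$ on $\nu(k)$ states is in general reducible; its recurrent classes become the wells of level $k+1$, while its transient states are absorbed into $\Delta_{k+1}$. The iteration terminates, producing a finite $\mf M$, as soon as the limit chain is irreducible, which must occur in at most $|E|$ steps since $\nu(k)$ strictly decreases and remains at least $2$.

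For the base case I would take $\mc E^{(1)}_i$ to be the singletons of the states $x\in E$ with $b(x)=0$, and set $\Delta_1 := E\setminus\mc E^{(1)}$. The scale $\theta_N(1)$ is chosen as the slowest sequence for which $\theta_N(1)\, R^{\mc E^{(1)}}_N(\mc E^{(1)}_i, \mc E^{(1)}_j)$ has a finite positive limit for at least one pair $i\neq j$. By the representation of the trace rates through capacities and stationary weights combined with the Dirichlet and Thomson principles, $R^{\mc E^{(1)}}_N(x,y)$ is an explicit algebraic expression in the $R_N(z,w)$, and \eqref{24}--\eqref{30} guarantee that its leading order is a $\lambda_N$-monomial with a positive limiting prefactor. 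Property (P3) is trivial for singletons, (P7) follows from $\mu_N(\Delta_1)\prec 1$, and (P4)--(P6) at level $1$ are provided by the metastability theorem of \cite{bl2} applied to the trace on $\mc E^{(1)}$.

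For the inductive step, given level $k$ with limit rates $\mf r_k$, I would identify the transient set $T_k$ and the recurrent classes $C_1, \dots, C_{\nu(k+1)}$ of the chain on $\{1, \dots, \nu(k)\}$, thereby defining $\nu(k+1)$ as the number of such classes. Setting $\mc E^{(k+1)}_i := \bigcup_{a\in C_i} \mc E^{(k)}_a$, $I_{k+1,i} := C_i$ and $\Delta_{k+1} := \Delta_k \cup \bigcup_{a\in T_k} \mc E^{(k)}_a$ makes (P1), (P2) automatic. Property (P3) at level $k+1$ then follows from (P3) at level $k$ combined with the fact that the invariant measure of $\mf r_k$ restricted to each recurrent class is strictly positive on every point, which by reversibility forces $\mu_N(\mc E^{(k)}_a) \approx \mu_N(\mc E^{(k+1)}_i)$ for $a\in C_i$. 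The new scale $\theta_N(k+1)$ is selected exactly as in the base case, and by construction $\theta_N(k+1) \succ \theta_N(k)$; (P4)--(P7) at level $k+1$ then follow from the same metastability theorem of \cite{bl2}, the inductive (P6) at level $k$ providing the fast intra-class mixing required as input.

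The principal obstacle is (P4), the existence and identification of the limits $\mf r_{k+1}(i,j)$. The average rate $r^{\mc E^{(k+1)}}_N(\mc E^{(k+1)}_i, \mc E^{(k+1)}_j)$ is a ratio of sums of products of the $\lambda_N(i)$'s, and its product with $\theta_N(k+1)$ involves delicate cancellations that could a priori oscillate rather than converge. Hypothesis \eqref{30} is precisely what rules out oscillation, guaranteeing that every expression $\prod_i\lambda_N(i)^{k_i}$ with $\sum_i|k_i|$ controlled by a multiple of $|E|$ has a limit in $[0,\infty]$. Extracting the finite positive value of $\mf r_{k+1}(i,j)$ then requires a careful Dirichlet/Thomson-principle analysis of capacities between nested subsets of $E$, and it is this step that in applications absorbs most of the case-by-case work.
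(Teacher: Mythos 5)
Your inductive architecture is the right one — identify wells, pass to the trace process, extract a time scale, obtain a limit chain on the wells, take its recurrent classes as the next-level wells, repeat; the limits $\mf r_k$ exist because of hypothesis \eqref{30}, and the termination in at most $|E|$ steps because $\nu(k)$ strictly decreases. This matches the paper's multiscale scheme (Sections \ref{ssec5}--\ref{ssec3} and Theorem \ref{s37}). However, there is a genuine gap at the base case, and it would propagate through the whole recursion.

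The $1$-metastates cannot be taken to be the singletons $\{x\}$ with $b(x)=0$. The paper's $1$-metastates are the closed (``leaf'') equivalence classes of the relation ``there is an open path in both directions'', where a path is open when every edge has $R_N\approx 1$; equivalently, the recurrent classes of the graph obtained by \emph{keeping} only the $R_N(x,y)\approx 1$ transitions (Remark \ref{mr1}). Two things go wrong with your choice. First, two states of order-one mass can be joined by order-one rates: if $\mu_N(x)\approx\mu_N(y)\approx 1$ and $R_N(x,y)\approx 1$, then by detailed balance $R_N(y,x)\approx 1$ as well, so $\{x,y\}$ forms a single leaf. Taking them as two separate singleton wells forces the trace process on $\mc E^{(1)}$ to have an order-one rate between them, hence $\theta_N(1)\approx 1$, contradicting $\theta_N(1)\succ 1$ (and making (P5) impossible for a non-singleton leaf). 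The simplest failing instance is $E=\{0,1,2,3\}$ with uniform $\mu_N$, $R_N(0,1)=R_N(1,0)\approx 1$, $R_N(2,3)=R_N(3,2)\approx 1$, and $R_N(1,2)=R_N(2,1)\approx\epsilon_N\prec 1$: the correct wells are $\{0,1\}$ and $\{2,3\}$ on the scale $\epsilon_N^{-1}$, while your construction yields four singleton wells and no diverging scale. Second, there may be leaves of measure $\prec 1$ — e.g.\ a state whose only outgoing rate is $\prec 1$ but whose invariant mass is also $\prec 1$ — and these are genuine metastates at some scale; your construction buries them in $\Delta_1$, where they can then obstruct (P7) or, more subtly, change the value of the first scale $\theta_N(1)$ (it is $\min_i \mu_N(\mc E_i)/\Cap_N(\mc E_i,\breve{\mc E}_i)$ over \emph{all} leaves, not just the macroscopic ones).

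Two smaller points. The claim ``(P7) follows from $\mu_N(\Delta_1)\prec 1$'' is not a proof: one needs an argument as in Lemma \ref{s05} and the sojourn-time decomposition in Proposition \ref{s07} showing that excursions into $\Delta_1$ are $O(1)$ while returns are rare on scale $\theta_N(1)$. And (P3) at level $k+1$ requires more than ``reversibility forces it'': the paper's argument (inequality \eqref{10}/\eqref{37} plus walking around a cycle in a recurrent class, giving \eqref{13}) is what actually makes the measures comparable. Your identification of (P4) as the technical bottleneck and the role of \eqref{30} in ruling out oscillation is accurate, though the paper handles it algebraically via the explicit formula for trace rates in Lemma \ref{s29}/Corollary \ref{s32} rather than purely through a Dirichlet--Thomson analysis.
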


The sets $\mc E^{(k)}_i$, $1\le i\le \nu(k)$, are called the
metastable states at level $k$ or, simply, $k$-metastates. Property
(P2) asserts that as we pass from a description in the time scale
$\theta_N(k-1)$ to a description in the longer time scale
$\theta_N(k)$, the new metastates are larger and obtained as unions of
$(k-1)$-metastates. Moreover, by property (P3), all states in any
metastable set have measure of the same magnitude.

Condition (P5) asserts that, with a probability increasing to one, any
state in a metastable set is visited before the process leaves the
metastable set. The process therefore thermalizes in the metastable
state or, in other words, reaches a local equilibrium, before leaving
the metastable state.

Condition (P7) states that on the time scale $\theta_N(k)$, the time
spent outside the union of all metastates is negligible. To examine
the behavior of the process in this time scale we may therefore
restrict our attention to the trace process $\{\eta^{N,k}_t : t\ge
0\}$ speeded up by $\theta_N(k)$. 

It follows from properties (P5) and (P6) that the speeded up trace
process $\{\eta^{N,k}_{t\theta_N(k)} : t\ge 0\}$ thermalizes in each
metastable set $\mc E^{(k)}_i$ and then, at the end of an exponential
time, jumps to another metastable set. By property (P4) the rate at
which the process jumps from one metastable set to another is given by
the asymptotic mean rate at which the speeded up trace process
jumps. Theorem \ref{s19} gives, therefore, a complete description of
the evolution of the process on each time scale $\theta_N(k)$.

\begin{remark}
\label{mr1}
{\rm
In order to apply this result to concrete models, we proceed as
follows. Consider the Markov process on $E$ obtained by suppressing
all jumps $R_N(x,y)$ of magnitude smaller than $1$: $R_N(x,y) \prec
1$. Note that this Markov process may not be irreducible. Denote by
$\nu=\nu(1)$ the number of irreducible classes and by $\mc E_1, \dots,
\mc E_\nu$ the irreducible classes. These sets are the $1$-metastates.
Let 
\begin{equation}
\label{h1}
\theta_{N,i} \;=\; \frac{\mu_N(\mc E_i)}
{\Cap_N(\mc E_i , \breve{\mc E}_i)}\;,\quad 1\le i\le \nu\;,
\end{equation}
where $\Cap_N(A,B)$ represents the capacity between $A$ and $B$,
defined in Section \ref{ssec1}, and $\breve{\mc E}_i = \cup_{j\not =i}
\mc E_j$. By Proposition \ref{s24} the sequences $(\theta_{N,i} :N\ge 1)$,
$1\le i\le \nu$, are comparable. Let $\theta_{N} = \theta_N(1) =
\min\{\theta_{N,i}: 1\le i\le \nu\}$. Since the sequences are
comparable the following limits exist
\begin{equation*}
\lambda (i) \;=\;
\lim_{N\to\infty}\frac{\theta_{N}}{\theta_{N,i}}\;\in\;
[0,\infty)\;, \quad 1\le i\le \nu\;. 
\end{equation*}
By Lemma \ref{s29} and the first remark formulated at the end of
Section \ref{ssec2}, for every $1\le i\not = j\le \nu$, the limits
below also exist and do not depend on $x\in \mc E_i$:
\begin{equation}
\label{h2}
p(i,j) \;=\; \lim_{N\to\infty} \mb P^N_x \big[T_{\mc E_j} =
T_{\breve{\mc E}_i} \big]\;. 
\end{equation}
By \eqref{41a}, $\mf r_1(i,j) = \lambda(i) p(i,j)$.

Hence, the characterization of the $1$-metastates is very simple and
the computation of $\theta_{N,i}$ (the depth of the valley $\mc E_i$,
as we shall see) is feasible. This computation provides the slowest
time scale $\theta_N(1)$ in which a metastable behavior is
observed. To determine the exact asymptotic evolution in this time
scale, we need to compute \eqref{h2} which may be difficult or even
impossible. In several cases, however, one may at least discriminate
the pairs $(i,j)$ for which $\mf r_1(i,j)$ is strictly positive. This
permits to iterate the argument and gives an imprecise picture of the
metastable behavior. In the time scale $\theta_N(1)$ the process
remains in the $1$-metastate $\mc E_i$ for a rate $\lambda(i)$
exponential time at the end of which it jumps to one of the remaining
metastates such that $p(i,j)>0$.

Consider the Markov process on $\{\mc E_1, \dots, \mc E_\nu\}$
(instead of $\{1, \dots, \nu\}$) with rates $\mf r_1(i,j)$ and denote
by $\nu(2)$ the number of its irreducible classes, and by $\mc
E^{(2)}_1, \dots, \mc E^{(2)}_{\nu(2)}$ the irreducible classes. Note
that properties (P1) and (P2) are fulfilled and that we need only to
know if $p(i,j)$ is strictly positive or not to determine the
irreducible classes.  Compute \eqref{h1} and \eqref{h2} for this new
class of sets to obtain the second time scale $\theta_N(2)$ and the
rates $\mf r_2(i,j)$. Iterating this scheme we completely characterize
the metastable behavior of the Markov process.}
\end{remark}

We conclude this section with some comments.  In statistical mechanics
models, the rates $R_N(x,y)$ are usually exponential and given by
$e^{N h(x,y)}$ for some function $h:E\times E \to \bb R$. Assumptions
\eqref{24}, \eqref{30} are trivially satisfied in this context.

In some models examined in statistical mechanics the time scales
$\theta_N(k)$, $1\le k < \mf M$, correspond to the nucleation phase of
the system, which may be very intricate even for simple dynamics due
to the variety of valleys and the complexity of their geometries. In
most case, one only investigates the behavior in the largest time
scale, $\theta_N(\mf M)$, where one observes either an exponential
jump from a metastable to a stable state, or a Markovian evolution
among competing metastable states.

We prove in \eqref{10} and \eqref{37} that the process never jumps
from a metastable set to another metastable set which has probability
of smaller order: $\mf r_k(i,j)=0$ if $\mu_N(\mc E^{(k)}_j) \prec
\mu_N(\mc E^{(k)}_i)$.

\section{The Ising model at low temperature}
\label{si1}

To illustrate the methods presented in the first part of this article,
we examin in this section the metastable behavior of the Ising model
at low temperature following Neves and Schonmann \cite{ns1}.

We consider the two dimensional nearest neighbor ferromagnetic Ising
model on a finite torus $\Lambda_L = \bb T_L \times \bb T_L$, $L\ge
1$, where $\bb T_L = \{1, \dots, L\}$ is the discrete one-dimensional
torus with $L$ points. The Hamiltonian is written as
\begin{equation*}
\bb H (\sigma) \; =\; -\; \frac 12 \sum_{\<x,y\>} \sigma(x) \sigma(y)
\;-\; \frac h2 \sum_{x\in \Lambda_L} \sigma(x) \; ,
\end{equation*}
where $\sigma (x) \in \{ -1, 1\}$, the first sum runs over the pairs
of nearest neighbors sites of $\Lambda_L$, counting each pair only
once, and the second is taken over $\Lambda_L$. We will always
consider $h > 0$.

At inverse temperature $\beta>0$, the Gibbs measure $\mu_\beta$
associated to the Hamiltonian $\bb H$ is given by
\begin{equation*}
\mu_\beta (\sigma) \;=\; \frac 1{Z_\beta} e^{-\beta \bb H(\sigma)}\;,
\end{equation*}
where $Z_\beta$ is the normalizing partition function.

The Glauber dynamics on the state space $\Omega = \Omega_L = \{ -1,
1\}^{\Lambda_L}$, also known as the Ising model, is the
continuous-time Markov process whose generator $L_\beta$ acts on
functions $f:\Omega \to \bb R$ as
\begin{equation*}
(L_\beta f)(\sigma)\;=\;  \sum_{x\in \Lambda_L} 
c(x,\sigma) \, [f(\sigma^{x}) - f(\sigma)]\;,
\end{equation*}
where $\sigma^{x}$ is the configuration obtained from $\sigma$ by
flipping the spin at $x$:
\begin{equation*}
\sigma^{x}(y) \;=\; 
\begin{cases}
\sigma(y) & \text{if $y\not = x$}, \\
- \sigma(x) & \text{if $y = x$}, \\
\end{cases}
\end{equation*}
where the rates $c(x,\sigma)$ are given by
\begin{equation*}
c (x,\sigma) \;=\; \exp\big\{-\beta \,
[\bb H(\sigma^{x}) - \bb H(\sigma)]_+ \big\}\;,
\end{equation*}
and where $a_+$, $a\in \bb R$, stands for the positive part of $a$:
$a_+ = \max\{a,0\}$.  The Markov process $\{\sigma^\beta_t : t\ge 0\}$
with generator $L_\beta$ is reversible with respect to the Gibbs
measures $\mu_\beta$, $\beta>0$, and ergodic. Denote by
$R_\beta(\sigma, \sigma')$ the rate at which the process jumps from
$\sigma$ to $\sigma'$ so that $R_\beta(\sigma, \sigma')$ vanishes
unless $\sigma'= \sigma^x$ for some $x\in \Lambda_L$, in which case
$R_\beta(\sigma, \sigma^x) = c(x,\sigma)$.

In this model the process jumps from a state $\sigma$ to the state
$\sigma^x$ at rate $1$ if $\mu_\beta(\sigma) \le
\mu_\beta(\sigma^x)$. In particular, by the detailed balance
condition, $\mu_\beta(\sigma) R_\beta (\sigma, \sigma^x) =
\min\{\mu_\beta(\sigma) , \mu_\beta(\sigma^x)\}$ so that $G_\beta
(\sigma, \sigma^x) = \min\{\mu_\beta(\sigma) , \mu_\beta(\sigma^x)\}$.

We examine in this section the metastable behavior of the Markov
process $\{\sigma^\beta_t : t\ge 0\}$ on $\Omega$ as the temperature
vanishes. To avoid less interesting cases, following \cite{ns1} we
assume from now on that $0<h<2$, that $2/h \not\in \bb N$ and that
$L>(n_0+1)^2+1$, where $n_0 = [2/h]$ and $[r]$ stands for the integer
part of $r$.

Let $I$ be an interval of the one dimensional torus $\bb T_L$.  The
sets $I\times \bb T_L$, $\bb T_L \times I\subset \Lambda_L$ are called
rings, while rectangles are subsets of the form $I\times J$, where
$I$, $J$ are non-empty proper intervals of $\bb T_L$.

To describe all metastable behaviors of the Ising model, we need to
define the time scales at which they occur, the metastable sets
associated to each time scale, and the asymptotic dynamics which
specifies at which rate the process jumps from one metastable state to
another. We start defining the $n_0+1$ time scales. For $1\le k\le
n_0-1$ let
\begin{equation*}
\theta_\beta (k) = e^{k \beta h}\;, 
\quad \theta_\beta (n_0) = e^{\beta (2-h)}\; , \quad 
\theta_\beta (n_0+1) = e^{\beta \, c(h)}\;,
\end{equation*}
where $c(h) = 4 (n_0+1) - h [(n_0+1)n_0+1]$.  Note that
$\theta_\beta(1) \prec \cdots \prec \theta_{\beta} (n_0+1)$.

The presentation of the metastable sets requires some notation.
Denote by $\Omega_o \subset \Omega$ the set of configurations whose
total jump rate $\sum_{x\in\Lambda_L} R_\beta(\sigma, \sigma^x)$
vanishes as $\beta\uparrow\infty$. This is the set of configurations
in which a negative spin has at most one positive neighbor and in
which a positive spin has at most two negative neighbors. This set
contains the configurations $+ \bf 1$, $- \bf 1$, which are the
configurations with all spins positive, negative, respectively, and
configurations formed by positive rectangles and rings of length and
width larger than $2$ in a background of negative spins.  In these
latter configurations, to fulfill the prescribed conditions the
positive rectangles and rings may not be at graph distance $2$.

For a configuration $\sigma$ in $\Omega_o$, denote by $\ell (\sigma)$
the smallest length or width of the positive rectangles of $\sigma$.
By convention, $\ell ( - \mb 1) = 0$, $\ell ( + \mb 1) = L$ and
$\ell(\sigma)=L$ if $\sigma$ contains no positive rectangles, but only
positive rings. Let $N_r(\sigma)$ be the number of positive
$\ell(\sigma)\times m$ rectangles of $\sigma$ for some
$m>\ell(\sigma)$, and let $N_s(\sigma)$ be the number of positive
$\ell (\sigma) \times \ell (\sigma)$ squares of $\sigma$.

We may now introduce the metastable states $\Omega_{o,k}$ appearing in
the time scale $\theta_\beta (k)$, $1\le k\le n_0+1$. For $1\le k\le
n_0$, let
\begin{equation*}
\Omega_{o,k} \;=\; \{\sigma \in \Omega_o : \ell(\sigma)>k\} \cup 
\{- \mb 1\} \;, \quad \Omega_{o,n_0+1} \;=\; \{+ \mb 1 , - \mb 1\} \;.
\end{equation*}
Note that $\Omega_{o} = \Omega_{o,1} \supset \cdots \supset
\Omega_{o,n_0+1}$. The metastables states appearing in the time scale
$\theta_\beta (k)$, $1\le k\le n_0+1$, are all the elements of
$\Omega_{o,k}$.

To depict how the process jumps from one metastable state to another
in the different time scales, we need to introduce several sets. We
use the terminology of graph theory to name some of them.  Denote by
$\bb D(\sigma)$ the set of direct successors of the configuration
$\sigma$ in $\Omega_o$, $\sigma \not = + \mb 1, - \mb 1$. If $\ell
(\sigma) =2$, $\bb D(\sigma)$ is the set of configurations obtained
from $\sigma$ by flipping all positive spins from one of the two sides
of length $2$ of a positive $2 \times m$ rectangle, $m> 2$, and of
configurations obtained from $\sigma$ by flipping all spins of a
positive $2\times 2$ square of $\sigma$.  Clearly, $|\bb D(\sigma)| =
2N_r(\sigma) + N_s(\sigma)$.  When $3\le \ell (\sigma) \le n_0$, $\bb
D(\sigma)$ is the set of configurations obtained from $\sigma$ by
flipping all positive spins from one of the sides of length
$\ell(\sigma)$ of a positive $\ell(\sigma) \times m$ rectangle, $m\ge
\ell(\sigma)$. In this case, $|\bb D(\sigma)| = 2N_r(\sigma) + 4
N_s(\sigma)$. For $\ell (\sigma) > n_0$, $\bb D(\sigma)$ is the set of
configurations obtained by first flipping a negative spin from a site
which has a neighbor site with a positive spin, and then flipping in
any order all negative spins surrounded by two positive spins. Note
that in this latter case two or more positive rectangles may be
replaced by the smallest rectangle which contains them all. For this
reason an exact description of the direct successors of a
configuration in the case $\ell (\sigma) > n_0$ is more complicated.

For $\sigma \in \Omega_o$, $\sigma \not = \pm \mb 1$, denote by $\bb
W(\sigma)$ the set of saddle points of the configuration $\sigma$. For
$2\le \ell (\sigma) \le n_0$, $\bb W(\sigma)$ is the set of
configurations obtained from $\sigma$ by flipping $\ell(\sigma)-1$
positive spins from a side of length $\ell(\sigma)$ of a positive
$\ell(\sigma) \times m$ rectangle of $\sigma$, $m\ge \ell(\sigma)$.
Note that $|\bb W(\sigma)| = 2 \ell(\sigma) N_r(\sigma) + 4
\ell(\sigma) N_s(\sigma)$ for $3\le \ell (\sigma) \le n_0$ and $|\bb
W(\sigma)| = 4N_r(\sigma) + 4 N_s(\sigma)$ for $\ell(\sigma)=2$.  For
$\ell (\sigma) > n_0$, $\bb W(\sigma)$ consists of the set of
configurations obtained from $\sigma$ by flipping a negative spin from
a site which has one neighbor with a positive spin so that $|\bb
W(\sigma)|$ is equal to the sum of the perimeters of the positive
rectangles of $\sigma$ added to $2L$ times the number of positive
rings of $\sigma$.

For $\ell (\sigma) > n_0$, let $\bb W(\sigma, \sigma')$, $\sigma \in
\Omega_o$, $\sigma'\in \bb D(\sigma)$, be the subset of $\bb
W(\sigma)$ of all configurations which attain $\sigma'$ by flipping in
any order all negative spins surrounded by two positive spins, and let
$\bb W_j(\sigma)$, $1\le j\le 3$, be the configurations $\xi$ of $\bb
W(\sigma)$ with the following property.  The site where $\xi$ differs
from $\sigma$ has $3$ neighbors with negative spins.  Among these
three neighbors, $j$ sites have two neighbors with positive spins. The
case $j=3$ occurs when the configuration has two positive rectangles
or rings at distance $3$. Let $\bb W_j(\sigma, \sigma') = \bb
W_j(\sigma) \cap \bb W(\sigma, \sigma')$.

Fix a configuration $\sigma \in \Omega_o$ and let $\Omega_\sigma =
\Omega_o \setminus \{\sigma\}$. Recall that we denote by $T_A$ the
hitting time of a set $A\subset \Omega$. We prove in Lemma \ref{t03}
that $\mb P^\beta_\sigma [T_{\bb D(\sigma)} = T_{\Omega_\sigma}]$
converges, as $\beta\uparrow\infty$, to $1$ and that the process
reaches $\sigma'$ by first visiting a configuration of $\bb
W(\sigma)$.

Denote by $\bb S(\sigma)$ the set of successors of the configuration
$\sigma$ in $\Omega_o$, $\sigma \not = + \mb 1, - \mb 1$. The
difference between a successor and a direct successor is that the
critical length $\ell(\sigma')$ of a successor $\sigma'$ may not be
smaller than the one of the original configuration: $\ell(\sigma')\ge
\ell (\sigma)$. If $\ell (\sigma) =2$ or $\ell (\sigma) >n_0$, the set
of successors coincides with the set of direct successors: $\bb
S(\sigma) = \bb D(\sigma)$.  However, if $3\le \ell (\sigma) \le n_0$,
$\bb S(\sigma)$ is the set of configurations obtained from $\sigma$ by
flipping all positive spins from one of the two sides of length
$\ell(\sigma)$ of a positive $\ell(\sigma) \times m$ rectangle of
$\sigma$, $m> \ell(\sigma)$, and of configurations obtained from
$\sigma$ by flipping all spins of a positive $\ell(\sigma)\times
\ell(\sigma)$ square of $\sigma$.

The probability measure $p$ introduced below describes how the process
jumps from one metastable state to another in the appropriate time
scales.  For each configuration $\sigma \in \Omega_o$, define the
probability measure $p(\sigma, \,\cdot\,)$ on $\Omega_o$ as follows.
Let $p(\sigma, \sigma')=0$ for $\sigma'\not\in \bb S(\sigma)$. For
$\sigma'\in \bb S(\sigma)$ and $\ell (\sigma) =2 \le n_0$, let
\begin{equation}
\label{e04}
p(\sigma, \sigma') \;=\;
\begin{cases}
(8/3) [2 N_r + (8/3) N_s]^{-1} & \text{for $\sigma' \in \bb S_s(\sigma)$,} \\
[2 N_r + (8/3) N_s]^{-1} & \text{otherwise,}
\end{cases}
\end{equation}
where $\bb S_s(\sigma) \subset \bb S(\sigma)$ is the set of
configurations obtained from $\sigma$ by flipping all spins in a
positive $2\times 2$ square of $\sigma$. For $\sigma'\in \bb
S(\sigma)$ and $3\le \ell (\sigma) \le n_0$, let
\begin{equation}
\label{e05}
p(\sigma, \sigma') \;=\; 
\begin{cases}
4[2 N_r + 4 N_s]^{-1} & \text{for $\sigma' \in \bb S_s(\sigma)$,} \\
[2 N_r + 4 N_s]^{-1} & \text{otherwise,}
\end{cases} 
\end{equation}
where $\bb S_s(\sigma) \subset \bb S(\sigma)$ is the set of
configurations obtained from $\sigma$ by flipping all spins in a
positive $\ell(\sigma)\times \ell(\sigma)$ square of $\sigma$.
Finally, for $\sigma'\in \bb S(\sigma)$ and $\ell(\sigma)>n_0$, let
\begin{equation}
\label{e06}
p(\sigma, \sigma') \;=\; \frac{\sum_{j=1}^3 \frac j{j+1} \,
|\bb W_j (\sigma, \sigma')|} {\sum_{j=1}^3 \frac j{j+1} \,
|\bb W_j(\sigma)|}\;\cdot
\end{equation}

It remains to describe the rates at which the process leaves a
metastable state in the different time scales.  Let
$\theta:\Omega_o\setminus \{- \mb 1, +\mb 1\}\to\bb R_+$ be given by
\begin{equation}
\label{e07}
\theta(\sigma) \;=\;
\begin{cases}
(2/3) N_s(\sigma) + 2 N_r(\sigma) & \text{if $\ell =2\le n_0$,} \\
\frac{2\ell -1}{3\ell} \, |\bb W(\sigma)| & \text{if $3\le \ell \le
  n_0$,} \\
(1/2) \, |\bb W_1(\sigma)| \;+\; (2/3) \, |\bb W_2(\sigma)|
\; +\; (3/4) \, |\bb W_3(\sigma)| & \text{if $\ell > n_0$.} 
\end{cases}
\end{equation}

We are now in a position to state the first main result of this
section.  Fix $1\le k\le n_0+1$ and denote by $\sigma^{\beta,k}_{t}$
the trace of the process $\sigma^\beta_t$ on $\Omega_{o,k}$. Recall
that $\sigma^{\beta,k}_{t}$ is a Markov process on $\Omega_{o,k}$.

\begin{theorem}
\label{t04}
Fix $1\le k\le n_0$. As $\beta\uparrow\infty$, the Markov process
$\sigma^{\beta,k}_{t \theta_\beta (k)}$ converges to the Markov
process on $\Omega_{o,k}$ with jump rates $\mf r$ given by
\begin{equation*}
\mf r(\sigma, \sigma') \;=\;
\begin{cases}
\theta(\sigma) p(\sigma, \sigma') &\text{if $\sigma \in \Omega_{o,k}
\setminus \Omega_{o,k+1}$,} \\
0 &\text{if $\sigma \in \Omega_{o,k+1}$.}
\end{cases}
\end{equation*}
Moreover, the time spent outside $\Omega_{o,k}$ by the process
$\sigma^{\beta,k}_{t \theta_\beta (k)}$ is negligible: for all $t>0$
and $\sigma\in \Omega$,
\begin{equation*}
\lim_{\beta\to\infty} \mb E^\beta_\sigma \Big[ \int_0^t \mb 1\{ 
\sigma^{\beta,k}_{s \theta_\beta (k)} \not\in \Omega_{o,k} \}\, ds 
\Big]\;=\; 0\;. 
\end{equation*}
\end{theorem}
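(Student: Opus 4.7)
The strategy is to deduce Theorem \ref{t04} from Theorem \ref{s19} via the iterative scheme of Remark \ref{mr1}. Hypotheses \eqref{24}, \eqref{30} hold trivially because each rate $c(x,\sigma)$ equals $e^{-\beta a}$ for one of finitely many nonnegative reals $a$ determined by $h$ and the assumption $2/h\notin\mathbb{N}$. Suppressing all transitions of magnitude $\prec 1$ leaves only the spin flips that weakly decrease $\bb H$; by construction $\Omega_o$ is precisely the set of local minima, hence every $\sigma\in\Omega_o$ is an isolated sink of the restricted dynamics, and the $1$-metastates are the singletons $\{\sigma\}$, $\sigma\in\Omega_o = \Omega_{o,1}$.

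By Remark \ref{mr1} it then suffices to compute $\theta_{\beta,\sigma} = \mu_\beta(\sigma)/\Cap_\beta(\{\sigma\},\breve\sigma)$ and $p(\sigma,\sigma') = \lim_\beta \mb P^\beta_\sigma[T_{\sigma'} = T_{\breve\sigma}]$ for each $\sigma\in\Omega_o$ and each $\sigma'\in\Omega_o\setminus\{\sigma\}$, and then to iterate, at each step merging the shallow wells into the deeper ones and rescaling time. The assertion to verify by induction is that the surviving wells at stage $k$ are precisely indexed by $\Omega_{o,k}$ and that the slowest escape scale at stage $k$ is $\theta_\beta(k)$. The limiting rate $\mf r_1(\sigma,\sigma') = \lambda(\sigma)p(\sigma,\sigma')$ provided by Remark \ref{mr1} then specialises to $\theta(\sigma)p(\sigma,\sigma')$ with $\theta(\sigma)$ as in \eqref{e07}, while the vanishing of $\mf r$ on $\Omega_{o,k+1}$ reflects that configurations with $\ell(\sigma)>k$ have depths of strictly larger order than $\theta_\beta(k)$.

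The core estimates split according to $\ell = \ell(\sigma)$. For $2\le\ell\le n_0$ escape from $\sigma$ proceeds by shrinking a subcritical $\ell\times m$ rectangle along one of its short sides, through intermediate states of energy $\bb H(\sigma)+h,2h,\dots,(\ell-1)h$, the $(\ell-1)$th of which is a saddle in $\bb W(\sigma)$ and the $\ell$th a direct successor in $\bb D(\sigma)$. Dirichlet's principle with the $0/1$ test function interpolated along these shrinking paths yields an upper bound on $\Cap_\beta$, and Thompson's principle with the corresponding uniform unit flow gives the matching lower bound, so that $\Cap_\beta(\{\sigma\},\breve\sigma) \approx \mu_\beta(\sigma)\,\theta(\sigma)\,e^{-(\ell-1)\beta h}$ and $\theta_{\beta,\sigma}\approx\theta_\beta(\ell-1)/\theta(\sigma)$. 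The weights $2/3$, $4$ and $(2\ell-1)/(3\ell)$ in \eqref{e07} and the probabilities \eqref{e04}--\eqref{e05} follow from the number of admissible first flips on a square versus a rectangle side and from a gambler's-ruin computation on the one-dimensional interval of intermediate saddles. For $\ell>n_0$ the rectangle is supercritical: escape costs one uphill flip creating a protuberance in $\bb W(\sigma)$, followed by a cascade of zero-cost flips at sites surrounded by two positive spins. Analysing the trace of $\sigma^\beta_t$ on $\{\sigma\}\cup\bb W(\sigma)\cup\bb D(\sigma)$ as a random walk on the saddle layer produces both the weights $1/2,2/3,3/4$ in the third case of \eqref{e07} and the probabilities $j/(j+1)$ appearing in \eqref{e06}, the index $j$ recording how many neighbours of the newly flipped spin already see two positive spins.

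The main obstacle is the supercritical case $\ell>n_0$: the saddle configurations are not disjoint exit channels, since different cascades can terminate at the same direct successor or, when two rectangles or rings sit at distance $3$, at configurations in which they merge. Extracting the harmonic measure of the saddle-level trace therefore requires a careful enumeration of exit cascades weighted by the probabilities $j/(j+1)$, and this is what produces both \eqref{e06} and the third line of \eqref{e07}. Once these level-$1$ quantities are in hand, the convergence of $\sigma^{\beta,k}_{t\theta_\beta(k)}$ to the stated Markov chain and the negligibility of the time spent outside $\Omega_{o,k}$ follow from $k$ iterations of properties \textbf{(P4)}--\textbf{(P7)} of Theorem \ref{s19}, the number of surviving metastates dropping at each level until the last merger produces $\Omega_{o,n_0+1} = \{+\mb 1,-\mb 1\}$.
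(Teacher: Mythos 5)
Your proposal follows essentially the same route as the paper: identify the $1$-metastates as the singletons of $\Omega_o$, compute the depths via capacity estimates and the hitting probabilities via harmonicity, then iterate through the $n_0+1$ time scales using Theorem~\ref{s19}, Remark~\ref{mr1} and Theorem~\ref{s37}. The Thompson (flow) dual that you invoke for the capacity lower bound is a legitimate alternative to the paper's technique of restricting the Dirichlet form and evaluating it on the equilibrium potential (Lemmas~\ref{s21}, \ref{t02}); the two are dual and the electrical-network picture is perfectly adequate here.

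There is, however, a gap in the capacity step as you have written it. The quantity $\theta(\sigma)$ from \eqref{e07} is the \emph{exact} prefactor of the escape asymptotics: one needs $\Cap_\beta(\{\sigma\},\Omega_\sigma)/[\mu_\beta(\sigma)e^{-(\ell-1)\beta h}]\to\theta(\sigma)$, not merely the order of magnitude. A literal $0/1$ test function gives only $\approx$, which is off by a configuration-dependent factor: each corner saddle contributes twice and each non-corner saddle three times its optimal share of the Dirichlet form. The paper's Lemma~\ref{t02} fixes this by assigning the test function the harmonic values $1/2$ on $\bb W_1(\sigma)$ and $2/3$ on $\bb W_2(\sigma)$ (more generally $j/(j+1)$), which is exactly what produces the combination $(1/2)|\bb W_1|+(2/3)|\bb W_2|=\frac{2\ell-1}{3\ell}|\bb W(\sigma)|$. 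Symmetrically, a \emph{uniform} unit flow underestimates the capacity; the flow must be split across the saddles in proportion to these same escape probabilities. Since $\lambda(\sigma)=\lim\theta_\beta(k)/\theta_{\beta,\sigma}$ in Remark~\ref{mr1} needs the exact limit of the capacity, the sharp prefactor cannot be deferred to a separate combinatorial count as your sketch suggests; it must come out of the capacity computation itself. Two smaller points: in this model the surviving metastates stay singletons at every level, so the shallow wells are \emph{pruned} (become transient, absorbed into $\Delta_k$), not merged into deeper ones; and the iteration requires the level-$k$ capacities and hitting probabilities for the trace process on $\Omega_{o,k}$ (Corollaries~\ref{t05} and~\ref{t06}), which are proved by additional arguments, not just invoked from the level-$1$ case.
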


Fix a configuration $\sigma \in \Omega_{o,k}$, $1\le k\le n_0-1$, and
consider asymptotic behavior, as the temperature vanishes, of the
trace process $\sigma^{\beta,k}_{t}$ in the time scale $\theta_\beta
(k)$ starting from $\sigma$. Theorem \ref{t04} states that if
$\ell(\sigma)>k+1$, the configuration $\sigma$ is an absorbing point
for the asymptotic dynamics, while if $\ell(\sigma)=k+1$, the
asymptotic dynamics visits a sequence of configurations where each
element of the sequence differs from the previous one either by
flipping all positive spins of one of the two sides of length $k+1$ of
a $(k+1)\times m$ positive rectangle, $m>k+1$, or by flipping all
spins of a $(k+1)\times (k+1)$ positive square. After a finite number
of jumps, the process reaches a configuration whose positive
rectangles have all sides larger than $k+1$ and stays there forever.

For a configuration $\sigma \in \Omega_{o,n_0}$, Theorem \ref{t04}
states that in the time scale $\theta_\beta (n_0)$ the trace process
$\sigma^{\beta,n_0}_{t}$ sees its positive rectangles and rings to
increase gradually until the configurations $+ \mb 1$ is reached.

This result describes therefore the behavior of the Ising model in the
intermediate scales where first the small positive droplets are
removed and then the large positive droplets increase to eventually
occupy all space. To complete the picture of the metastable behavior
of the model it remains to specify how the process jumps from the
configuration $-\mb 1$ to the configuration $+ \mb 1$.

Denote by $\bb W(-\mb 1)$ the set of configurations which have a
positive $(n_0+1)\times n_0$ rectangle and an extra positive spin
which has a positive neighbor sitting on one of the sides of length
$n_0+1$ of the positive rectangle, all others spins being
negative. All configurations of $\bb W(-\mb 1)$ have the same measure.
Denote by $\bb W_1(-\mb 1)$ the configurations of $\bb W(-\mb 1)$
whose extra positive spin is next to the corner of the positive
rectangle and by $\bb W_2(-\mb 1)$ the other configurations of $\bb
W_1(-\mb 1)$. Let
\begin{equation*}
\theta(-\mb 1) \;=\; (1/2) \, |\bb W_1(-\mb 1)| \;+\; (2/3) \, |\bb
W_2(-\mb 1)| \;.
\end{equation*}

\begin{theorem}
\label{t08}
As $\beta\uparrow\infty$, the Markov process $\sigma^{\beta,n_0+1}_{t
  \theta_\beta (n_0+1)}$ converges to the Markov process on $\{-\mb 1,
+ \mb 1\}$ in which $+\mb 1$ is an absorbing state and which jumps
from $-\mb 1$ to $+ \mb 1$ at rate $\theta(-\mb 1)$.  Moreover, the
time spent outside $\{-\mb 1, + \mb 1\}$ by the process
$\sigma^{\beta,n_0+1}_{t \theta_\beta (n_0+1)}$ is negligible: for all
$t>0$ and $\sigma\in \Omega$,
\begin{equation*}
\lim_{\beta\to\infty} \mb E^\beta_\sigma \Big[ \int_0^t \mb 1\{ 
\sigma^{\beta,n_0+1}_{s \theta_\beta (n_0+1)} \not = \pm \mb 1\}\, ds 
\Big]\;=\; 0\;. 
\end{equation*}
\end{theorem}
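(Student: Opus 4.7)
The plan is to deduce Theorem \ref{t08} from the abstract machinery, namely Theorem \ref{s19} combined with the iteration scheme described in Remark \ref{mr1}, applied at the top level $\mf M = n_0+1$. Since Theorem \ref{t04} has already identified the $k$-metastates $\Omega_{o,k}$ for $1\le k\le n_0$, I would argue that after the time scale $\theta_\beta(n_0)$ all small droplets have been eliminated and the process effectively evolves among $-\mb 1$ and those configurations possessing a positive rectangle or ring of side length $>n_0$. By the dynamics described in Theorem \ref{t04} at level $n_0$, every such configuration with $\ell(\sigma)>n_0$ is transient on the next scale and eventually absorbed into $+\mb 1$. Iterating as in Remark \ref{mr1}, the only irreducible classes surviving at level $n_0+1$ are $\{-\mb 1\}$ and $\{+\mb 1\}$, with everything else relegated to $\Delta_{n_0+1}$.

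Next, I would compute the time scale via \eqref{h1}: $\theta_{\beta,1} = \mu_\beta(-\mb 1)/\Cap_\beta(-\mb 1,+\mb 1)$. Using the Dirichlet and Thompson variational principles, the capacity is controlled by the energy of the minimal saddle configurations separating $-\mb 1$ from $+\mb 1$, which by the Neves--Schonmann analysis \cite{ns1} are precisely the elements of $\bb W(-\mb 1)$: a positive $(n_0+1)\times n_0$ rectangle with a single protuberance on a long side. A direct computation of $\bb H(\xi)-\bb H(-\mb 1)$ for $\xi \in \bb W(-\mb 1)$ yields the exponent $c(h)=4(n_0+1)-h[(n_0+1)n_0+1]$, hence $\theta_{\beta,1} \approx e^{\beta c(h)}$. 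Since $\mu_\beta(+\mb 1)\succ \mu_\beta(-\mb 1)$ and capacities are symmetric, $\theta_{\beta,2}\succ \theta_{\beta,1}$, so $\theta_\beta(n_0+1)=\theta_{\beta,1}$. The fact that $+\mb 1$ is absorbing at this scale is then immediate from the general principle stated after Theorem \ref{s19}, namely $\mf r_k(i,j)=0$ whenever $\mu_N(\mc E^{(k)}_j)\prec \mu_N(\mc E^{(k)}_i)$.

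To pin down the prefactor $\theta(-\mb 1)$ and conclude that $\mf r_{n_0+1}(-\mb 1,+\mb 1)=\theta(-\mb 1)$, I would sharpen the capacity estimate to leading order. The strategy is to test the Dirichlet principle with an equilibrium-like harmonic function that is constant on each level set of the droplet energy, together with a matching Thompson flow supported on typical escape trajectories through $\bb W(-\mb 1)$. Each saddle configuration contributes according to the probability that, once the protuberance is placed, the added spin survives long enough for a second neighboring spin to flip and the droplet to grow; this local computation gives probability $j/(j+1)$ for a site whose $3$ negative neighbors include $j$ sites with two positive neighbors, whence the weights $1/2$ for $\bb W_1(-\mb 1)$ and $2/3$ for $\bb W_2(-\mb 1)$ in \eqref{e07}. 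By the characterization \eqref{h2} of the hitting probability and the formula $\mf r_1(i,j)=\lambda(i)p(i,j)$ from Remark \ref{mr1}, one obtains the stated rate.

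Finally, property (P7) of Theorem \ref{s19}, applied at $k=n_0+1$, delivers directly the negligibility statement
\[
\lim_{\beta\to\infty}\mb E^\beta_\sigma\Big[\int_0^t \mb 1\{\sigma^{\beta,n_0+1}_{s\theta_\beta(n_0+1)}\not=\pm\mb 1\}\,ds\Big]=0,
\]
since the trace process on $\Omega_{o,n_0+1}=\{-\mb 1,+\mb 1\}$ only records time spent in the metastable set. The main obstacle will be the sharp capacity estimate: producing matching upper and lower bounds with the correct combinatorial constant requires a careful identification of $\bb W(-\mb 1)$ as the unique gate and an analysis of the branching structure of escape paths through each $\bb W_j(-\mb 1)$. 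Everything else is a straightforward specialization of Theorem \ref{s19} once this quantitative input is secured.
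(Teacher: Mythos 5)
Your proposal takes essentially the same route as the paper's: identify $\{-\mb 1\}$ and $\{+\mb 1\}$ as the only $(n_0+1)$-metastates from the arrow structure at level $n_0$, establish the sharp capacity asymptotics through the gate $\bb W(-\mb 1)$, and feed this into the abstract multiscale machinery (the paper invokes Theorem \ref{s37} together with Lemma \ref{s39}) to read off the rate $\theta(-\mb 1)$ and the negligibility of $\Delta_{n_0+1}$. The capacity estimate you sketch is precisely the content of Lemma \ref{t07}, and your escape-probability heuristic yielding the weights $j/(j+1)$ matches the paper's harmonic-function computation $g_\beta\to 1/(j+1)$ on $\bb W_j(-\mb 1)$; the only cosmetic difference is that you propose a Thompson flow for the lower bound where the paper restricts the Dirichlet form of the equilibrium potential to bonds incident to $\bb W(-\mb 1)$.
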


\section{Capacities and hitting times}
\label{ssec1}

Denote by $D_N$ the Dirichlet form associated to the generator of the
Markov process introduced in Section \ref{sec3}:
\begin{equation*}
D_N(f)\;=\; \sum_{\{x,y\}\subset E} \mu_N(x) \, R_N(x,y) \, 
\{f(y) - f(x)\}^2 \;, \quad f:E\to \bb R\;, 
\end{equation*}
where in the sum on the right hand side each bond $\{x,y\}$ is counted
only once. Let $\Cap_N(A,B)$, $A$, $B\subset E$, $A\cap
B=\varnothing$, be the capacity between $A$ and $B$:
\begin{equation}
\label{25}
\Cap_N(A,B) \;=\; \inf_f D_N(f)\;,
\end{equation}
where the infimum is carried over all functions $f:E\to \bb R$ such
that $f(x) =1$ for all $x\in A$, and $f(x) =0$ for all $x\in B$.

We may compute the order of magnitude of the capacity between two
disjoint subsets of $E$.  A
self-avoiding path $\gamma$ from $A$ to $B$, $A$, $B\subset E$, $A\cap
B = \varnothing$, is a sequence of sites $(x_0, x_1, \dots, x_n)$ such
that $x_0\in A$, $x_n\in B$, $x_i \not = x_j$, $i\not =j$,
$R_N(x_i,x_{i+1})>0$, $0\le i <n$. Denote by $\Gamma_{A,B}$ the set of
self-avoiding paths from $A$ to $B$ and let
\begin{equation*}
G_N(A,B) \;:=\; \max_{\gamma\in \Gamma_{A,B}} G_N(\gamma)\;, \quad
G_N(\gamma) \;:=\; \min_{0\le i<n} G_N(x_i,x_{i+1})\;.
\end{equation*}
Note that there might be more than one optimal path and that
$G_N(\{x\},\{y\}) \ge G_N(x,y)$, with possibly a strict inequality.

We shall say that a bond $(x_p,x_{p+1})$ of a path $\gamma = (x_0,
x_1, \dots, x_n)$ is \emph{critical} if
\begin{equation*}
G_N(x_p,x_{p+1}) \;=\; \min_{0\le i<n} G_N(x_i,x_{i+1})
\;=\; G_N(\gamma) \;.
\end{equation*}

Note that for every disjoint sets $A$, $B$, $C$,
\begin{equation}
\label{11}
G_N(A,B\cup C) \;=\; \max\{G_N(A,B), G_N(A,C) \}\;.
\end{equation}
Indeed, the left hand side is greater or equal than the right hand
side because $G_N(A,D)\le G_N(A,D')$ if $D\subset D'$. On the other
hand, there exists a self-avoiding path $\gamma =(x_0, \dots, x_n)$
from $A$ to $B\cup C$ such that $G_N(A,B\cup C) = G_N(\gamma)$. We may
assume without loss of generality that $x_n$ belongs to $B$.  Hence,
$\gamma$ is a self-avoiding path from $A$ to $B$ and $G_N(\gamma) \le
G_N(A,B)$, which proves \eqref{11}.

\begin{lemma}
\label{s03}
Fix $A$, $B\subset E$ such that $A\cap B = \varnothing$. The capacity
$\Cap_N(A,B)$ is of the same magnitude of $G_N(A,B)$. There exists a
positive and finite constant $C_1$, depending only on $E$ and on the
limiting rates $r(x,y)$, such that
\begin{equation*}
  C_1^{-1} \;\le\; \frac{\Cap_N(A,B)}{G_N(A,B)} \;\le\; C_1
\end{equation*}
for all $N$ sufficiently large.
\end{lemma}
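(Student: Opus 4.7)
The plan is to sandwich $\Cap_N(A,B)$ between two positive constant multiples of $G_N(A,B)$ by exploiting the variational characterization \eqref{25} in both directions, with constants that depend only on $|E|$; the limiting rates $r(x,y)$ enter only implicitly through $G_N$ itself.

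For the upper bound, I would choose as test function the indicator of
\[
S \;=\; A \,\cup\, \bigl\{x\in E : \text{there is a self-avoiding path from $A$ to $x$ all of whose bonds have $G_N$-weight $>G_N(A,B)$}\bigr\}\;.
\]
Since, by the definition of $G_N(A,B)$ as a max--min over paths, no self-avoiding path from $A$ to a point of $B$ can have all its bonds strictly above $G_N(A,B)$, the set $S$ is disjoint from $B$, and $\mb 1_S$ is admissible in \eqref{25}. Any bond $\{x,y\}$ with $x\in S$ and $y\notin S$ must satisfy $G_N(x,y)\le G_N(A,B)$: otherwise a witnessing path to $x$ (truncated at an earlier visit of $y$, if any) could be extended across $(x,y)$ to a witnessing path to $y$, forcing $y\in S$. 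Consequently
\[
\Cap_N(A,B)\;\le\; D_N(\mb 1_S)\;=\; \sum_{x\in S,\, y\notin S} G_N(x,y)\;\le\; |E|^{2}\, G_N(A,B)\;.
\]

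For the lower bound, I would pick a self-avoiding path $\gamma^{*}=(x_0,\dots,x_n)$ realising $G_N(A,B)=G_N(\gamma^{*})$ and, by trimming a prefix or suffix if necessary, arrange that $x_0$ is the only vertex of $\gamma^{*}$ in $A$ and $x_n$ the only vertex in $B$; in particular $n\le |E|-1$. By Rayleigh's monotonicity principle, setting to zero the conductance of every bond outside $\gamma^{*}$ cannot increase the capacity, and on the one-dimensional subnetwork that remains the effective capacity between $A$ and $B$ equals the standard series conductance between $x_0$ and $x_n$, so
\[
\Cap_N(A,B)\;\ge\; \Bigl(\sum_{i=0}^{n-1}\frac{1}{G_N(x_i,x_{i+1})}\Bigr)^{-1}
\;\ge\; \frac{G_N(\gamma^{*})}{n}\;\ge\; \frac{G_N(A,B)}{|E|}\;.
\]

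Combining both estimates yields the lemma with, for instance, $C_1=|E|^{2}$, uniformly in $N$. No step is genuinely hard: the main points to verify carefully are the self-avoidance of the extension in the upper bound (handled by the truncation-at-$y$ remark) and the elementary series-conductance computation on a path-shaped network in the lower bound. In spirit the argument is a discrete max-flow/min-cut tailored to the multi-scale bond weights $G_N(x,y)$, which explains why the cut- and path-based quantities coincide up to the combinatorial factor $|E|^{2}$.
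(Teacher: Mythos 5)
Your proof is correct, and while the lower bound is essentially the one in the paper (pick an optimal path, drop all off-path bonds by Rayleigh monotonicity, and evaluate the resulting series conductance, bounding the harmonic sum by $\min/n$), your upper bound is organized quite differently and is noticeably cleaner. The paper partitions $E$ into maximal components for the high-weight bond graph $\mf B_N = \{\{x,y\}: G_N(x,y) > G_N(A,B)\}$, runs an iterative algorithm to locate a critical bond of an optimal path so that no component straddles both sides, and then builds a three-valued test function (taking values $1$, $0$, and an auxiliary $\omega$ on the unreachable components). You instead take as cut the set $S$ of all states reachable from $A$ by self-avoiding high-weight paths (which is exactly $A$ together with the components touching $A$), observe that $S\cap B=\varnothing$ by the max--min definition of $G_N(A,B)$, and verify directly that every bond on $\partial S$ has weight $\le G_N(A,B)$ via the truncate-or-extend argument. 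This short-circuits the paper's critical-bond search and the $\omega$-bookkeeping entirely, since the indicator of $S$ already gives $D_N(\mb 1_S)\le\binom{|E|}{2}G_N(A,B)$. As a by-product your constant $C_1=|E|^2$ is purely combinatorial (independent of the limiting rates $r(x,y)$) and the two-sided bound holds for every $N$ rather than only for $N$ large, which is slightly stronger than what the lemma claims.
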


\begin{proof}
Fix two subsets $A$, $B$ of $E$ such that $A\cap B = \varnothing$. We
shall obtain an upper and a lower bound for $\Cap_N(A,B)$. We start
with a lower bound. Fix a self-avoiding path $\gamma= (x_0, x_1,
\dots, x_n)$ in $\Gamma_{A,B}$ such that $G_N(A,B) = \min_{0\le i <n}
G_N(x_i,x_{i+1})$. This path always exists because the number of paths
is finite. For any function $f:E\to \bb R$,
\begin{equation*}
D_N(f) \;\ge\; \sum_{i=0}^{n-1} G_N(x_i,x_{i+1}) [f(x_{i+1})-f(x_i)]^2
\; .
\end{equation*}
Therefore, minimizing over all $f:E\to \bb R$ such that $f(x)=1$,
$x\in A$, $f(y)=0$, $y\in B$, we get that $\Cap_N(A,B)$ is bounded
below by
\begin{equation*}
\inf_f  \sum_{i=0}^{n-1} G_N(x_i,x_{i+1}) [f(x_{i+1})-f(x_i)]^2\;,
\end{equation*}
where the infimum is carried over all functions $f:\{x_0, \dots, x_n\}
\to \bb R$ such that $f(x_0)=1$, $f(x_n)=0$. A simple computation
shows that this expression is equal to
\begin{equation*}
\Big\{ \sum_{i=0}^{n-1} \frac 1{G_N(x_i,x_{i+1})} \Big\}^{-1} \;,
\end{equation*}
which is bounded below, for $N$ large, by $C_1 \min_{0\le i <n}
G_N(x_i,x_{i+1})$ for some positive constant $C_1$ depending only on
$E$ and the asymptotic rates $r(x,y)$. By the definition of the path
$\gamma= (x_0, x_1, \dots, x_n)$, $\min_{0\le i <n} G_N(x_i,x_{i+1}) =
G_N(A,B)$, which proves the lower bound for the capacity.

We now turn to the upper bound. Denote by $\mf B_N$ the set of bonds
$(x,y)$ such that $G_N(x,y) > G_N(A,B)$. The state space $E$ can be
written as the disjoint union of maximal connected components. More
precisely, there exist disjoint subsets $A_1, \dots, A_m$ of $E$,
possibly singletons, fulfilling the next three conditions:
\begin{itemize}
\item $E = \cup_{1\le j\le m} A_j$;
\item for any $x$, $y \in A_j$, there exists a path $\gamma = (x=x_0,
  x_1, \dots, x_p=y)$ such that $G_N(x_i,x_{i+1}) > G_N(A,B)$ for
  all $0\le i <p$;
\item for any $x\in A_j$, $y\in A_k$, $j\not = k$, there does not
  exist a path $\gamma = (x=x_0, x_1, \dots, x_p=y)$ such that
  $G_N(x_i,x_{i+1}) > G_N(A,B)$ for all $0\le i <p$.
\end{itemize}

Note that if $A_j\cap A\not = \varnothing$ then $A_j\cap B =
\varnothing$. Otherwise, there would be a self-avoiding path $(x_0,
\dots, x_n)$ from $A$ to $B$ such that $G_N(x_i,x_{i+1})>G_N(A,B)$ for
all $0\le i<n$, in contradiction with the definition of $G_N(A,B)$.

Consider a self-avoiding path $\gamma = (x_0, x_1, \dots, x_n)$ in
$\Gamma_{A,B}$ such that $G_N(A,B) = \min_{0\le i <n}
G_N(x_i,x_{i+1})$. The path $\gamma$ may have bonds $(x_i,x_{i+1})$ in
$\mf B_N$. We claim, however, that there exists a bond $(x_q,x_{q+1})$
in $\gamma$ such that $G_N(x_q,x_{q+1}) = G_N(A$, $B)$ and such that
there is no maximal connected component $A_j$ of $E$ such that
\begin{equation}
\label{02}
A_j \cap [A\cup \{x_0, \dots, x_q\}] \not = \varnothing
\text{ and } A_j \cap [B\cup \{x_{q+1}, \dots, x_n\}] \not = \varnothing \;.
\end{equation}

To prove this claim, let $L\ge 1$ be the number of critical bonds in
$\gamma$ and fix a critical bond $(x_p,x_{p+1})$ for which \eqref{02}
does not hold. There exists therefore a maximal connected component
$A_j$ of $E$ such that $A_j \cap [A\cup \{x_0, \dots, x_p\}] \not =
\varnothing$ and $A_j \cap [B\cup \{x_{p+1}, \dots, x_n\} ] \not =
\varnothing$. By overlapping the bond $(x_p,x_{p+1})$ by a path in
$A_j$, we construct a new self-avoiding path $\gamma' = (x'_0, \dots,
x'_{n'})$ from $A$ to $B$ with possibly different initial or final
point which avoids the bond $(x_p,x_{p+1})$.

Since all bonds which belong to $\gamma'$ and not to $\gamma$ are in
$\mf B_N$ and since $G_N(x_i, x_{i+1})$ $\ge G_N(x_p, x_{p+1}) =
G_N(A,B)$, $0\le i<n$, $G_N(x'_i, x'_{i+1}) \ge G_N(A,B)$ for all
$0\le i<n'$. On the other hand, since $\gamma'$ is a self-avoiding path
from $A$ to $B$, $\min_{0\le i<n'} G_N(x'_i, x'_{i+1}) \le
G_N(A,B)$. Hence, $\min_{0\le i<n'} G_N(x'_i, x'_{i+1}) = G_N(A,B)$.

On the other hand, since all bonds which belong to $\gamma'$ and not
to $\gamma$ are in $\mf B_N$ and since $(x_p, x_{p+1})$ does not
belong $\gamma'$, the number of critical bonds of $\gamma'$ is at most
$L-1$. It might be smaller than $L-1$ if the set $A_j$ overlaps more
than one critical bond of $\gamma$.

If the new path $\gamma'$ fulfills condition \eqref{02}, the claim is
proved. If it does not, we apply the algorithm again. Since the
algorithm reduces the number of critical bonds by at least one, after
a finite number of iterations we obtain a path satisfying \eqref{02}
as claimed.

We now define a function $f$ equal to $1$ on the set $A$, equal to $0$
on the set $B$ and we show that the Dirichlet form of $f$ is bounded
by $C_1 G_N(A,B)$ for some finite constant $C_1$ which depends only on
$E$. Let $(x_p,x_{p+1})$ be a critical bond of a path $\gamma =(x_0,
\dots, x_n)$ satisfying condition \eqref{02}. Define $f:E\to \bb R$ as
follows. Let $f(x)=1$ for $x\in A$, $f(y)=0$, $y\in B$. Define $f$ on
$\gamma$ as $f(x_i)=1$, $0\le i\le p$, $f(x_j)=0$, $p+1\le i\le n$. On
each maximal connected component $A_j$ which intersects $A\cup \{x_0,
\dots, x_p\}$, set $f=1$. Similarly, on each maximal connected
component $A_k$ which intersects $\{x_{p+1}, \dots, x_n\} \cup B$ set
$f=0$. Property \eqref{02} ensures that this can be done. On the
remaining sites we define $f$ to be a fixed arbitrary constant
$\omega$. Note that with this definition $f$ is constant on each
maximal connected component $A_k$.

It remains to examine the Dirichlet form of $f$. There are three types
of nonvanishing terms in this Dirichlet form. The first one is
$G_N(x_p,x_{p+1}) = G_N(A,B)$. The second and third types are
expressions of the form $G_N(x,y) (1-\omega)^2$, $G_N(x,y) \omega^2$,
where $(x,y)$ does not belong to $\mf B_N$. In particular, the
contribution to the Dirichlet form of $f$ of these expressions is
bounded by $C_1 G_N(A,B) \{\omega^2 + (1-\omega)^2\}$ for some finite
constant which depends only on $E$. This proves that $D_N(f) \le C_1
G_N(A,B)$. Since $f$ is equal to $1$ on the set $A$ and is equal to
$0$ on the set $B$, $\Cap_N(A,B) \le C_1 G_N(A,B)$, which proves the
lemma.
\end{proof}

This lemma presents a typical estimation of asymptotic capacities. We
first obtain a lower bound of the Dirichlet form, uniform over all
functions $f$, by disregarding some bonds. Then, we prove an upper
bound for a specific candidate, believed to be close to the optimal
function in view of the proof of the lower bound. This time, however,
no bond can be neglected in the Dirichlet form.

Of course, the function $f$ proposed in the proof of the previous
lemma gives only the correct magnitude of the capacity $\Cap_N(A,B)$ and
not its exact asymptotic value. The computation of the exact
asymptotic value requires a detailed information of the jump rates and
has to be done model by model.

We may prove, however, that under certain assumptions the capacity
between two sets conveniently rescaled converges. Fix two disjoint
subsets of $E$: $A$, $B\subset E$, $A\cap B=\varnothing$. By
definition, $G_N(A,B) = G_N(x,y)$ for some $x$, $y\in E$. By
\eqref{27}, $G_N(x,y) =g_N(x,y) \, G_N(j)$ for some $0\le j \le \mf
j\, $. Let $\mf g_N(A,B) = G_N(j) \preceq 1$ so that $G_N(A,B)/ \mf
g_N(A,B)$ converges, as $N\uparrow\infty$, to some number in
$(0,\infty)$.

\begin{lemma}
\label{s21}
Fix two disjoint subsets of $E$: $A$, $B\subset E$, $A\cap
B=\varnothing$. Let $f_N : E\to [0,1]$ be the function $f_N(x) = \mb
P^N_x[T_A < T_B]$. Assume that $f_N$ converges pointwisely to some
function $f$. Denote by $\mf B(A,B)$ the set of pairs $\{x,y\}$ such
that $G_N(x,y) \approx \mf g_N(A,B)$.  Then, $f(y)=f(x)$ if $G_N(x,y)
\succ \mf g_N(A,B)$ and 
\begin{equation*}
\lim_{N\to\infty} \frac {\Cap_N(A,B)}{\mf g_N(A,B)}  \;=\; 
\sum_{\{x,y\} \in \mf B(A,B)} g(x,y) [f (y) - f (x)]^2 
\in (0,\infty)\;,
\end{equation*}
where $g(x,y)$ has been introduced in \eqref{27}.
\end{lemma}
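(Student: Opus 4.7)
The plan is to exploit the standard Dirichlet principle, which identifies the equilibrium potential $f_N(x) = \mb P^N_x[T_A < T_B]$ as the minimizer of $D_N$ subject to the boundary conditions $f = 1$ on $A$ and $f = 0$ on $B$. Consequently $\Cap_N(A,B) = D_N(f_N)$, and using the symmetry $G_N(x,y) = \mu_N(x) R_N(x,y) = \mu_N(y) R_N(y,x)$ we may divide by $\mf g_N(A,B)$ to obtain
\begin{equation*}
\frac{\Cap_N(A,B)}{\mf g_N(A,B)} \;=\; \sum_{\{x,y\} \subset E} \frac{G_N(x,y)}{\mf g_N(A,B)}\, [f_N(y) - f_N(x)]^2\;.
\end{equation*}
By Lemma \ref{s03}, the left hand side stays bounded above and below by positive constants depending only on $E$ and the limiting rates $r(x,y)$.

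Next, I split the bonds $\{x,y\}$ into three classes according to the comparability of $G_N(x,y)$ with $\mf g_N(A,B)$, a partition made possible by the hypothesis \eqref{30} which guarantees that the sequences $\{G_N(x,y) : N\ge 1\}$ are comparable. For bonds with $G_N(x,y) \succ \mf g_N(A,B)$ the prefactor diverges; since every summand is non-negative and the total is bounded, this forces $[f_N(y) - f_N(x)]^2 \to 0$, whence $f(x) = f(y)$ in the limit. This settles the first assertion of the lemma.

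For bonds with $G_N(x,y) \prec \mf g_N(A,B)$ the ratio tends to zero while $|f_N(y) - f_N(x)| \le 1$, and the contribution is negligible. For a bond $\{x,y\} \in \mf B(A,B)$, the factorization $G_N(x,y) = g_N(x,y)\, G_N(c(x,y))$ from \eqref{27}, together with the strict ordering of the scales $G_N(0), \dots, G_N(\mf j)$, forces $c(x,y)$ to equal the index $j$ for which $\mf g_N(A,B) = G_N(j)$; hence $G_N(x,y)/\mf g_N(A,B) = g_N(x,y) \to g(x,y)$. Combined with the pointwise convergence of $f_N$, each such term converges to $g(x,y)[f(y) - f(x)]^2$, and since the sum is finite this gives the claimed limit. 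Finiteness of the limit follows from the upper bound in Lemma \ref{s03}; strict positivity from the matching lower bound.

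The main subtlety is the argument that the ratios $G_N(x,y)/\mf g_N(A,B)$ actually \emph{converge} (rather than merely remain bounded) for bonds in $\mf B(A,B)$; this is exactly what the multi-scale decomposition \eqref{27}, and ultimately assumption \eqref{30}, is designed to provide. The rest is the extraction of pointwise limits from a uniformly bounded finite sum, coupled with the standard variational characterization of the capacity.
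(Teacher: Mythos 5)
Your proof of the first assertion ($f(x)=f(y)$ when $G_N(x,y)\succ\mf g_N(A,B)$) is correct and matches the paper's argument: a single summand of the Dirichlet form is dominated by $\Cap_N(A,B)$, which Lemma \ref{s03} bounds by $C_1\,\mf g_N(A,B)$, forcing $[f_N(y)-f_N(x)]^2\to 0$.

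However, there is a genuine gap in the passage from this to the limiting value of $\Cap_N(A,B)/\mf g_N(A,B)$. You attempt to compute the limit term by term in the expansion $D_N(f_N)/\mf g_N(A,B) = \sum_{\{x,y\}} \frac{G_N(x,y)}{\mf g_N(A,B)}[f_N(y)-f_N(x)]^2$, and you correctly handle the bonds in $\mf B(A,B)$ and those with $G_N\prec\mf g_N(A,B)$. But for the bonds with $G_N(x,y)\succ\mf g_N(A,B)$ you show only that $[f_N(y)-f_N(x)]^2\to 0$ and that the summand is \emph{bounded}; the product $\frac{G_N(x,y)}{\mf g_N(A,B)}[f_N(y)-f_N(x)]^2$ is of the indeterminate form $\infty\cdot 0$, and nothing in your argument shows it tends to zero. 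Without that, you cannot conclude the sum converges to $\sum_{\{x,y\}\in\mf B(A,B)}g(x,y)[f(y)-f(x)]^2$; the limit (or limsup along a subsequence) could exceed this by the unaccounted contribution from those bonds. The paper avoids this by splitting the argument into a lower and an upper bound. The lower bound restricts the Dirichlet form of $f_N$ to $\mf B(A,B)$, giving $\liminf\ge\sum g(x,y)[f(y)-f(x)]^2$. For the upper bound, it does \emph{not} use $f_N$: it plugs the limiting function $f$ itself into the variational formula \eqref{25}, getting $\Cap_N(A,B)\le\sum_{\{x,y\}}G_N(x,y)[f(y)-f(x)]^2$, and then the already-established identity $f(x)=f(y)$ on the bonds with $G_N\succ\mf g_N(A,B)$ kills those terms \emph{exactly}, not merely asymptotically. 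This is the step your argument lacks.
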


\begin{proof}
Fix two disjoint subsets of $E$: $A$, $B\subset E$, $A\cap
B=\varnothing$ and let $f_N : E\to [0,1]$ be the function $f_N(x) =
\mb P^N_x[T_A < T_B]$. It is well known that
\begin{equation*}
\Cap_N(A,B) \;=\; D_N (f_N) \;=\;
\sum_{\{x,y\}\subset E}  G_N(x,y) [f_N (y) - f_N(x)]^2 \;. 
\end{equation*}

We first show that $f(y) = f(x)$ if $G_N(x,y) \succ \mf
g_N(A,B)$. Indeed, fix such a pair and note that
\begin{equation*}
G_N(x,y)\, [f_N (y) - f_N(x)]^2 \;\le\;  \Cap_N(A,B)\;.
\end{equation*}
By Lemma \ref{s03}, the right hand side is bounded above by $C_1 \mf
g_N(A,B)$ for some finite constant $C_1$ independent of $N$.  Since
$f_N$ converges to $f$ pointwisely and since $G_N(x,y) \succ \mf
g_N(A,B)$, $f(y)=f(x)$, proving the claim.

Let $\mf B = \mf B(A,B)$. To prove a lower bound for the capacity,
note that
\begin{equation*}
\Cap_N(A,B) \;\ge\;  
\sum_{\{x,y\} \in \mf B} G_N(x,y) \, [f_N (y) - f_N(x)]^2 \;. 
\end{equation*}
In view of \eqref{27}, as $N\uparrow\infty$ the right hand side
divided by $\mf g_N(A,B)$ converges to
\begin{equation*}
\sum_{\{x,y\}\in \mf B} g(x,y) [f (y) - f(x)]^2 \;. 
\end{equation*}

To prove the upper bound, recall the variational formula \eqref{25}
for the capacity to write
\begin{equation*}
\Cap_N(A,B) \;\le\; 
\sum_{\{x,y\}\subset E} G_N(x,y) [f (y) - f(x)]^2\;.
\end{equation*}
Since $f(y) = f(x)$ if $G_N(x,y) \succ \mf g_N(A,B)$, and since $f$
is absolutely bounded by $1$, we may restrict the sum to the pairs
$(x,y)$ in $\mf B(A,B)$. Hence,
\begin{equation*}
\limsup_{N\to\infty} \frac{\Cap_N(A,B)}{\mf g_N(A,B)} \;\le\; 
\sum_{\{x,y\} \in \mf B} g(x,y) [f (y) - f(x)]^2\;,
\end{equation*}
which proves the second assertion of the lemma. Moreover, by Lemma
\ref{s03} and since $G_N(A,B) \approx \mf g_N(A,B)$, the limit belongs
to $(0,\infty)$.
\end{proof}

This result shows that the sequence of capacities are comparable if
the sequence of hitting functions $f_{A,B}^N(x) = \mb P^N_x[T_A<T_B]$
converge. This remark highlights the interest of the next result.
Recall that we denote by $R^F_N(x,y)$, $x$, $y\in F$, the jump rates
of the trace process $\{\eta^F_t : t\ge 0\}$, $F\subset E$.

\begin{lemma}
\label{s29}
For every subset $F$ of $E$, the sequences $(R^F_N(x,y):N\ge 1)$,
$x\not = y\in F$, are comparable. Moreover, for every subsets $A$, $B$
of $E$, $A\cap B=\varnothing$, and for every $x\in E$, the following
limits exist
\begin{equation*} 
f_{A,B}(x)\;:=\; \lim_{N\to\infty} \mb P^{N}_x [T_{A} < T_{B}] \;.
\end{equation*}
\end{lemma}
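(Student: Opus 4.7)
The plan is to prove the two assertions in reverse order: first convergence of the hitting probabilities, and then comparability of the trace rates as a consequence.

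\medskip

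\noindent\emph{Step 1: convergence of hitting probabilities.} Fix disjoint $A,B\subset E$. The function $f^N(x)=\mb P^N_x[T_A<T_B]$ is the unique solution of the Dirichlet problem $(L_N f^N)(z)=0$ for $z\notin A\cup B$, with boundary values $f^N\equiv 1$ on $A$ and $f^N\equiv 0$ on $B$. Solving this finite linear system by Cramer's rule---or, equivalently, invoking the Markov-chain tree (Kirchhoff) formula---expresses
\begin{equation*}
f^N(x)\;=\;\frac{P_N(x)}{Q_N}\,,
\end{equation*}
where $P_N(x)$ and $Q_N$ are polynomials in the rates $R_N(u,v)$ with \emph{non-negative} coefficients, each monomial being a product of rates indexed by a spanning forest of $E$ rooted at $A\cup B$. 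By \eqref{24} every such monomial equals (a positive sequence converging to a strictly positive constant) times $\prod_{i=1}^{\mf n}\lambda_N(i)^{k_i}$ for some tuple $(k_1,\dots,k_{\mf n})$ of non-negative integers. Only finitely many such tuples appear; by \eqref{30} the associated products are pairwise comparable under $\prec,\succ,\approx$, so each of $P_N(x)$ and $Q_N$ is asymptotically dominated by its leading-scale monomial(s). The non-negativity of all coefficients rules out any cancellation of this leading term. Writing $P_N(x)\sim p(x)\,\Lambda_N^{(P)}$ and $Q_N\sim q\,\Lambda_N^{(Q)}$ with $p(x),q$ strictly positive limits and $\Lambda_N^{(\cdot)}$ products of $\lambda_N(i)$'s, a further application of \eqref{30} shows that $\Lambda_N^{(P)}/\Lambda_N^{(Q)}$ converges in $[0,\infty]$; the bound $f^N(x)\in[0,1]$ then forces the limit into $[0,1]$, proving that $f_{A,B}(x)$ exists.

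\medskip

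\noindent\emph{Step 2: comparability of trace rates.} Fix $F\subset E$ and $x\neq y\in F$. The standard excursion-decomposition identity (see \cite[Section~2]{bl2}) yields
\begin{equation*}
R^F_N(x,y)\;=\;R_N(x,y)\;+\;\sum_{w\in E\setminus F}R_N(x,w)\,\mb P^N_w\bigl[T_y=T_F\bigr]\,.
\end{equation*}
The probability $\mb P^N_w[T_y=T_F]=\mb P^N_w[T_{\{y\}}<T_{F\setminus\{y\}}]$ converges by Step~1; re-expanding it through its Kirchhoff representation writes it as (a convergent positive scalar) times a product $\Lambda_N(w,y)$ of $\lambda_N(i)$'s. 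Combining with \eqref{24}, the right-hand side displays $R^F_N(x,y)$ as a finite sum of terms of the form (convergent positive scalar)$\cdot\lambda_N(a(x,z))\,\Lambda_N(z,y)$; by \eqref{30} the finitely many scales appearing are pairwise comparable, so a unique dominant scale emerges and $R^F_N(x,y)$ itself is asymptotically (positive constant) times a product of $\lambda_N(i)$'s. Comparing the dominant scales of any two trace rates $R^F_N(x,y)$ and $R^F_N(x',y')$ via one final application of \eqref{30} yields the claimed comparability.

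\medskip

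\noindent\emph{Expected main obstacle.} The whole argument rests on excluding cancellation of leading-order contributions. For $f^N(x)$ this is automatic from the positivity of the Kirchhoff expansion. For the trace rates it is more delicate: if some limit hitting probability $p(w,y)=\lim_N\mb P^N_w[T_y=T_F]$ vanishes, one must track its precise \emph{order of vanishing} (read off from the Kirchhoff representation) and combine it with the scale of $R_N(x,w)$ in order to identify the true dominant scale of $R^F_N(x,y)$. Organising this bookkeeping cleanly---most naturally by induction on $|E\setminus F|$, reducing to the case of the original chain where \eqref{24}--\eqref{30} directly apply---is the main technical point.
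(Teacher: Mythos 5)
Your proof is correct, but it takes a genuinely different route from the paper. The paper proves the two assertions in the order stated: it first iterates the one-site trace formula $R^{E\setminus\{w\}}_N(x,y)=\big[\sum_z R_N(x,y)R_N(w,z)+R_N(x,w)R_N(w,y)\big]/\sum_z R_N(w,z)$ to write $R^F_N(x,y)$ as a ratio of non-negative sums of monomials in the original rates, with all monomials in the numerator having one common degree and all in the denominator another; assumption \eqref{30} then yields comparability of the trace rates directly. Convergence of the hitting probabilities then falls out essentially for free, since $\mb P^N_x[T_A<T_B]=\sum_{y\in A}p^{F}_N(x,y)$ for $F=\{x\}\cup A\cup B$, and the jump probabilities $p^F_N$ are ratios of comparable rate sums. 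You reverse the order: you first obtain $f_{A,B}$ by the matrix-tree (Kirchhoff) forest expansion of the Dirichlet problem, exploiting the non-negativity of that expansion to identify a dominant scale, and only then recover comparability of $R^F_N$ via the excursion decomposition $R^F_N(x,y)=R_N(x,y)+\sum_{w\notin F}R_N(x,w)\,\mb P^N_w[\eta_{T_F}=y]$. Both arguments are sound; the common mechanism is that each quantity is a ratio of non-negative polynomial expressions in the rates, so no cancellation can spoil the leading scale, and \eqref{24}--\eqref{30} then force convergence of ratios. The paper's route is more economical because the iterated trace formula delivers the needed polynomial structure in one stroke, whereas your route invokes the heavier matrix-tree machinery and, as you correctly flag, requires you to track the exact order of vanishing of $\mb P^N_w[\eta_{T_F}=y]$ (not just its limit) before you can read off the dominant scale of $R^F_N(x,y)$; the Kirchhoff representation does supply that order, so your proposed bookkeeping (e.g.\ induction on $|E\setminus F|$) closes the argument, but this extra layer is precisely what the paper's order of proof avoids.
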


\begin{proof}
It follows from the displayed formula presented just after Corollary
6.2 in \cite{bl2} that
\begin{equation*}
R^F_N(x,y) \;=\; \frac{ \sum_{z\in E} R_N (x,y) \, R_N(w,z) \;+\; R_N(x,w)
\,R_N(w,y)}{\sum_{z\in E} R_N(w,z)}
\end{equation*}
if $F = E\setminus \{w\}$. Iterating this formula, we may show that
for every proper subset $F$ of $E$, $R^F_N(x,y)$ may be expressed as a
ratio of sums of products of the rates $R_N(\,\cdot\,,\,\cdot\,)$.
The sum in the numerator contains only products with the same number
of terms and the same thing happens in the denominator. In particular,
by assumption \eqref{30}, the sequences $\{R^F_N(x,y) : N\ge 1\}$,
$x\not = y \in F$, are comparable. This proves the first assertion of
the lemma.

If we denote by $p^F_N(x,y)$ the jump probabilities associated to the
rates $R^F_N(x,y)$,
\begin{equation*}
p^F_N(x,y) \;=\; \frac{R^F_N(x,y)}{\sum_{z\in F} R^F_N(x,z)}\;, \quad 
x\not = y \in F\;,
\end{equation*}
$p^F_N(x,y)$ converges to some $p^F(x,y)$ as $N\uparrow\infty$.

Denote by $\mb P^{N,F}_x$, $x\in F$, the probability on the path space
$D(\bb R_+, F)$ induced by the trace process $\{\eta^F_t : t\ge 0\}$
starting from $x$. Clearly, $\mb P^{N}_x [T_{A} < T_{B}] = \mb
P^{N,F}_x [T_{A} < T_{B}]$, for $F=\{x\}\cup A\cup B$. If $x$ does not
belong to $A\cup B$, last probability is equal to $\sum_{y\in A}
p^F_N(x,y)$ and we proved that this expression converges as
$N\uparrow\infty$.
\end{proof}

\begin{corollary}
\label{s32}
For every subset $F$ of $E$ and every subsets $A$, $B$ of $F$, $A\cap
B=\varnothing$, the ratio of mean rates
\begin{equation*}
\frac{r^F_N(A,B)}{r^F_N(A,F\setminus A)} \;=\;
\frac{\sum_{x\in A} \sum_{y\in B} \mu^N(x) \, R^F_N (x,y)}
{\sum_{x\in A} \sum_{z\in F\setminus A} \mu^N(x) \, R^F_N (x,z)}
\end{equation*}
converges to some number $p_F(A,B)\in [0,1]$ as $N\uparrow\infty$.
\end{corollary}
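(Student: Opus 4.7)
The plan is to reduce the corollary to the comparability supplied by Lemma~\ref{s29} combined with the multi-scale form~\eqref{32} of $\mu_N$, and then normalize both sums by a common dominant magnitude.

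First I would extend comparability from $R^F_N(x,y)$ to $\mu_N(x)\,R^F_N(x,y)$. The proof of Lemma~\ref{s29} shows that $R^F_N(x,y)$ can be written as a quotient of sums of products of the original jump rates, with all terms in the numerator (and likewise in the denominator) containing the same number of factors. Combined with \eqref{24} and the representation~\eqref{32}, this yields
\begin{equation*}
\mu_N(x)\,R^F_N(x,y) \;=\; h_N(x,y)\,\prod_{i=1}^{\mf n}\lambda_N(i)^{k_i(x,y)}
\end{equation*}
for integers $k_i(x,y)$ and positive sequences $h_N(x,y)\to h(x,y)\in(0,\infty)$. Hypothesis~\eqref{30} then implies that for any two pairs the ratio of the corresponding sequences has a limit in $[0,\infty]$, which is exactly comparability of the finite family $\{\mu_N(x)R^F_N(x,y):x\neq y\in F\}$.

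Since this family is finite and comparable, the relation $\preceq$ restricted to it is a total preorder, and I would pick a maximal pair $(x^\ast,z^\ast)\in A\times(F\setminus A)$, so that $\mu_N(x)R^F_N(x,z)\preceq\mu_N(x^\ast)R^F_N(x^\ast,z^\ast)$ for every pair in $A\times(F\setminus A)$. If no such pair exists because every $R^F_N(x,z)$ with $x\in A$, $z\in F\setminus A$ vanishes, both numerator and denominator are zero and the claim is trivial. Otherwise set $\alpha_N:=\mu_N(x^\ast)R^F_N(x^\ast,z^\ast)$; then each term $\mu_N(x)R^F_N(x,z)/\alpha_N$ converges either to $0$ (when the pair is strictly subdominant) or to a positive constant (when it lies in the same magnitude class as $(x^\ast,z^\ast)$). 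The denominator divided by $\alpha_N$ therefore converges to a strictly positive finite number, since the $(x^\ast,z^\ast)$ term contributes $1$. The same argument gives convergence of the numerator divided by $\alpha_N$ to a non-negative finite number, which is no larger than the denominator limit because $B\subseteq F\setminus A$ makes the numerator pointwise bounded by the denominator. The quotient therefore converges to some $p_F(A,B)\in[0,1]$.

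I do not foresee a substantive obstacle. The only step requiring care is the first one: one must check that the explicit formula for $R^F_N(x,y)$ given by Lemma~\ref{s29} — a quotient of equinumerous sums of products of rates — when multiplied by the multi-scale expression~\eqref{32} for $\mu_N(x)$, still falls within the scope of hypothesis~\eqref{30}. This is purely bookkeeping with the exponents of the $\lambda_N(i)$ once the equinumerous structure is tracked through the iteration that produced the trace rates.
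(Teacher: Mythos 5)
Your proposal is correct and follows essentially the same route as the paper: the paper's proof also rests on the explicit quotient formula for $R^F_N$ from Lemma~\ref{s29}, the multi-scale structure of $\mu_N$ (which the paper invokes via \eqref{33}, but \eqref{32} is its direct consequence), and assumption~\eqref{30}, to conclude that the family $\{\mu_N(x)R^F_N(x,y)\}$ is comparable, from which the convergence of the ratio follows immediately. You merely spell out the final normalization-by-a-dominant-term step that the paper dismisses as "a simple consequence."
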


\begin{proof}
It follows from the explicit formula for the rates $R^F_N$, derived in
the proof of the previous lemma, from equation \eqref{33} and from
assumption \eqref{30} that the sequences $(\mu^N(x) \, R^F_N(x,y):N\ge
1)$, $x\not = y\in F$, are comparable. The result is a simple
consequence of this observation.
\end{proof}

\section{The shallowest valleys}
\label{ssec5}

Recall the definition of a valley with an attractor introduced in
\cite{bl2}. To avoid long sentences, in this article we call a valley
with an attractor simply a valley. We describe in this section the
shallowest valleys and we show that their depths are comparable.

We shall say that there exists an \emph{open path} from $x$ to $y$ if
there exists a sequence $x=x_0, x_1, \dots, x_n=y$ such that $R_N(x_i,
x_{i+1})\approx 1$, $0\le i < n$. Two sites $x \not = y$ are said to
be equivalent, $x \sim y$, if there exist an open path from $x$ to $y$
and an open path from $y$ to $x$.  If we also declare any site to be
equivalent to itself, $\sim$ is an equivalent relation. We denote by
$\mc C_1, \mc C_2, \dots, \mc C_{\alpha}$ the equivalent classes.

Some equivalent classes are connected to other equivalent classes by
open paths. By drawing an arrow from a set $\mc C_i$ to a set $\mc
C_j$ if there exist $x\in\mc C_i$, $y\in \mc C_j$ such that $R_N(x,
y)\approx 1$, the set $\{\mc C_1, \dots, \mc C_{\alpha}\}$ becomes an
oriented graph with no directed loops. We denote by $\mc E_1, \mc E_2,
\dots, \mc E_{\nu}$ the leaves of this graph, in the terminology of
graph theory, the equivalent classes with no successors. Denote by
$\Delta$ the union of the remaining sets so that $\{\mc E_1, \dots,
\mc E_{\nu}, \Delta\}$ forms a partition of $E$:
\begin{equation}
\label{17}
E \;=\; \mc E \cup \Delta \;, \quad
\mc E \;=\; \mc E_1 \cup  \cdots \cup \mc E_{\nu}
\; .
\end{equation}
For $1\le i\le \nu$, let $\breve{\mc E}_i$ be the union of all leaves
except $\mc E_i$:
\begin{equation*}
\breve{\mc E}_i \;=\; \bigcup_{j\not = i} {\mc E}_j\;.
\end{equation*}

By construction, all sites in an equivalent class $\mc C_j$ have
probability of the same magnitude: there exists a finite, positive
constant $C_0$ such that for all $1\le j\le \alpha$,
\begin{equation}
\label{f01}
C_0^{-1} \;\le\; \frac{\mu_N(x)}{\mu_N(y)} \;\le\; C_0\;, \quad x\;, y
\in \mc C_j\;.
\end{equation}

We may also estimate the capacity between two states in a leave $\mc
E_i$. 

\begin{lemma}
\label{s01}
Fix $1\le i\le \nu$.  There exists a finite constant $C_1$, which
depends only on $E$, such that for any $x\not = y$ in $\mc E_i$,
\begin{equation*}
C_1^{-1}  \;\le\; \frac{\Cap_N(\{x\}, \{y\})}{\mu_N(\mc E_i)}
\;\le\; C_1 \;.
\end{equation*}
\end{lemma}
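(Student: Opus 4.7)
The plan is to apply Lemma \ref{s03} for the lower bound and to test directly against the variational formula \eqref{25} for the upper bound. By Lemma \ref{s03}, the capacity $\Cap_N(\{x\},\{y\})$ has the same magnitude as $G_N(\{x\},\{y\})$, so the task reduces to showing that $G_N(\{x\},\{y\}) \approx \mu_N(\mc E_i)$, together with a direct Dirichlet-form upper bound that bypasses Lemma \ref{s03}.

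For the upper bound I would insert the trial function $f = \mb 1_{\{x\}}$ into \eqref{25}, which is admissible since $f(x)=1$ and $f(y)=0$. A direct computation gives $D_N(f) = \mu_N(x) \sum_{v\ne x} R_N(x,v)$. Assumption \eqref{24} combined with $\lambda_N(0)\equiv 1$ and $\lambda_N(a)\preceq 1$ for all $a$ yields that each $R_N(x,v)$ is bounded uniformly in $N$, whence $D_N(f) \le C\, \mu_N(x)$. Invoking the uniform comparison \eqref{f01} of masses inside $\mc E_i$ then gives $\Cap_N(\{x\},\{y\}) \le C' \mu_N(\mc E_i)$ for $N$ large.

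For the lower bound I would exhibit a self-avoiding path $\gamma=(x_0,\ldots,x_n)$ from $x$ to $y$ along which $R_N(x_j,x_{j+1})\approx 1$ and every $x_j$ lies in $\mc E_i$. The existence of such a path follows from the definition of $\mc E_i$ as an equivalence class: since $x\sim y$, there is an open forward path from $x$ to $y$. To check that every intermediate vertex $x_j$ is in $\mc E_i$, one notes that the prefix $(x_0,\ldots,x_j)$ is an open path from $x$ to $x_j$, while concatenating the suffix $(x_j,\ldots,x_n)$ with the backward open path from $y$ to $x$ (which exists since $x\sim y$) produces an open path from $x_j$ back to $x$. Hence $x_j\sim x$, so $x_j\in\mc E_i$.

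Along such a path, \eqref{f01} together with $R_N(x_j,x_{j+1})\approx 1$ gives $G_N(x_j,x_{j+1})=\mu_N(x_j)\,R_N(x_j,x_{j+1})\approx \mu_N(\mc E_i)$, so that $G_N(\gamma)\succeq \mu_N(\mc E_i)$ and therefore $G_N(\{x\},\{y\})\succeq \mu_N(\mc E_i)$. Lemma \ref{s03} then delivers $\Cap_N(\{x\},\{y\})\ge C_1^{-1} G_N(\{x\},\{y\})\ge c\, \mu_N(\mc E_i)$, which combined with the upper bound closes the sandwich. The only real obstacle is the verification that the connecting open path stays inside $\mc E_i$, and this is immediate from the transitive structure of the equivalence relation $\sim$ as explained above; everything else is a direct application of Lemma \ref{s03} and \eqref{f01}.
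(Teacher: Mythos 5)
Your proposal is correct, and the approach is essentially the paper's: the upper bound uses exactly the same trial function $\mb 1_{\{x\}}$, and the lower bound rests on the same path-restriction idea. The only structural difference is that the paper runs the one-dimensional Schwarz (series-resistance) computation directly along an open path joining $x$ to $y$, whereas you first show $G_N(\{x\},\{y\})\succeq\mu_N(\mc E_i)$ and then invoke the general Lemma~\ref{s03}; since the lower-bound half of Lemma~\ref{s03} is proved by precisely that Schwarz computation, the two routes are mathematically the same, merely packaged differently. One small thing you supply that the paper leaves implicit: the explicit check that every intermediate vertex $x_j$ of an open path from $x$ to $y$ lies in $\mc E_i$ (by combining the prefix $x\to x_j$ with the suffix plus a backward open path $y\to x$ to get $x_j\sim x$), which is exactly the fact needed to apply~\eqref{f01} along the path. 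This is a genuine clarification; otherwise the argument coincides with the paper's.
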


\begin{proof}
Fix $1\le i\le \nu$ and $x\not = y$ in $\mc E_i$.  Consider a function
$f:E\to \bb R$ such that $f(x)=1$, $f(y)=0$ and fix a self-avoiding
open path $\gamma =(x=x_0, \dots, x_n=y)$ from $x$ to $y$. By Schwarz
inequality,
\begin{equation*}
\begin{split}
& 1 \;=\; [f(y) - f(x)]^2 \;=\; \Big\{ \sum_{i=0}^{n-1}
\{f(x_{i+1})-f(x_i)\} \Big\}^2 \\
& \quad \le\; 
\sum_{i=0}^{n-1} \mu_N(x_i) R_N(x_i, x_{i+1}) \{f(x_{i+1})-f(x_i)\}^2 
\, \sum_{i=0}^{n-1} \frac 1{\mu_N(x_i) R_N(x_i, x_{i+1})}\; \cdot
\end{split}
\end{equation*}
Therefore, $D_N(f)$, is bounded below by 
\begin{equation*}
\sum_{i=0}^{n-1} \mu_N(x_i) R_N(x_i, x_{i+1})
\{f(x_{i+1})-f(x_i)\}^2 \;\ge\; \Big\{ \sum_{i=0}^{n-1} 
\frac 1{\mu_N(x_i) R_N(x_i, x_{i+1})} \Big\}^{-1}\;.
\end{equation*}
Since $\gamma$ is an open path, $R_N(x_i,x_{i+1})$ is of order
one. Hence, by \eqref{f01}, there exists a constant $C_1$ which
depends only on $E$ such that for any function $f:E\to \bb R$ such
that $f(x)=1$, $f(y)=0$, $D_N(f) \ge C^{-1}_1 \mu_N(\mc E_i)$.  This
proves that $\Cap_N(\{x\}, \{y\})\ge C^{-1}_1 \mu_N(\mc E_i)$.

To prove the reverse inequality, consider the function $f_*:E\to \bb
R$ which is equal to $1$ at $x$ and is $0$ elsewhere. Clearly,
\begin{equation*}
D_N(f_*) \;=\;  \mu_N(x) \sum_{z\not =x} R_N(x,z)\;.  
\end{equation*}
By hypothesis, $R_N(x,z) \le C_0$ so that $D_N(f_*) \le C'_0 \mu_N(\mc
E_i)$, proving the lemma.
\end{proof}

Recall Theorem 2.6 of \cite{bl2} which presents sufficient conditions
for a triple to be a valley in the context of reversible Markov
processes.

Fix a leave $\mc E_i$, $1\le i\le \nu$, and a site $x$ in $\mc E_i$.
Denote by $\mc E_i$ the set $\mc E_i$ as well as the constant sequence
of sets $(\mc E_i, \mc E_i , \dots)$ and by $x$ not only the site $x$
but also the constant sequence equal to $x$. This convention is used
from now on without further notice. Denote by $\mc B_i$ the set of
sites in $\Delta$ of measure of lower magnitude than $\mc E_i$: $\mc
B_i = \{y\in \Delta : \mu_N(y) \prec \mu_N(\mc E_i)\}$. Note that $\mc
B_i$ is the union of some equivalence classes.

\begin{lemma}
\label{s02}
Fix $1\le i\le \nu$ and $x$ in $\mc E_i$.  The triple $(\mc E_i, \mc
E_i \cup \mc B_i, x)$ is a valley of depth $\theta_{N,i} := \mu_N(\mc
E_i)/\Cap_N(\mc E_i , [\mc E_i \cup \mc B_i]^c)$.
\end{lemma}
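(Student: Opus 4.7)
The plan is to apply Theorem 2.6 of \cite{bl2}, which in the reversible setting gives sufficient conditions for a triple $(\mc W_0, \mc W, x)$ to be a valley of a given depth. With $\mc W_0 = \mc E_i$ and $\mc W = \mc E_i \cup \mc B_i$, the hypotheses to verify are essentially: (a) the mass comparison $\mu_N(\mc E_i \cup \mc B_i) \approx \mu_N(\mc E_i)$; (b) the capacity comparison $\Cap_N(\{x\}, [\mc E_i \cup \mc B_i]^c) \approx \Cap_N(\mc E_i, [\mc E_i \cup \mc B_i]^c)$, so that $\theta_{N,i}$ really is the expected exit time from the attractor; and (c) an attractor property asserting that starting from any $y \in \mc E_i \cup \mc B_i$ the chain reaches $x$ in time of order much smaller than $\theta_{N,i}$.

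Step (a) is immediate from the definition of $\mc B_i$: each $y \in \mc B_i$ satisfies $\mu_N(y) \prec \mu_N(\mc E_i)$ and $|\mc B_i| \le |E|$ is bounded, so $\mu_N(\mc B_i) \prec \mu_N(\mc E_i)$. For step (b), I use Lemma \ref{s03} to replace both capacities by their $G_N(\,\cdot\,,\,\cdot\,)$-surrogates. Pick an optimal self-avoiding path $\gamma = (y_0, \dots, y_n)$ witnessing $G_N(\mc E_i, [\mc E_i \cup \mc B_i]^c)$, with $y_0 \in \mc E_i$. Since $\mc E_i$ is an equivalence class of $\sim$, there is an open path from $x$ to $y_0$ inside $\mc E_i$, and all its bonds satisfy $G_N \approx \mu_N(\mc E_i)$; moreover, because $\mc E_i$ is a leaf, every bond leaving $\mc E_i$ has rate $\prec 1$ and therefore $G_N \prec \mu_N(\mc E_i)$, so the critical weight of $\gamma$ is strictly smaller than $\mu_N(\mc E_i)$. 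Concatenation of the open $x$-to-$y_0$ path with $\gamma$ yields a self-avoiding path from $x$ to $[\mc E_i \cup \mc B_i]^c$ whose critical bond is the same as that of $\gamma$, giving $G_N(\{x\}, [\mc E_i \cup \mc B_i]^c) \approx G_N(\mc E_i, [\mc E_i \cup \mc B_i]^c)$.

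For step (c), if $y \in \mc E_i$ then Lemma \ref{s01} provides $\Cap_N(\{x\},\{y\}) \approx \mu_N(\mc E_i)$, and the standard reversible commute-time identity together with $\mu_N(E) = 1$ yields $\mb E^N_y[T_x] = O(1)$, which is $\prec \theta_{N,i}$ because step (b) combined with $\Cap_N(\mc E_i, [\mc E_i \cup \mc B_i]^c) \prec \mu_N(\mc E_i)$ forces $\theta_{N,i} \to \infty$. For $y \in \mc B_i$, I first follow an open path through the equivalence class of $y$ and, using that $y$ is not in a leaf of the oriented graph of classes, iterate along outgoing arrows to reach some leaf in order-one time; a capacity estimate analogous to Lemma \ref{s01} then reduces the problem to the previous case after a short excursion.

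The main obstacle will be the attractor property from starting points $y \in \mc B_i$: a priori the trajectory could leak into some other leaf $\mc E_j$, $j \ne i$, before ever visiting $x$. The key input is the combination of reversibility with $\mu_N(y) \prec \mu_N(\mc E_i)$ and with step (b): the Dirichlet-form cost of an excursion exiting $\mc E_i \cup \mc B_i$ is of strictly lower magnitude than $\mu_N(\mc E_i)$, which forces such excursions to have negligible probability on the scale $\theta_{N,i}$ and thereby completes the verification of the valley-with-attractor hypotheses.
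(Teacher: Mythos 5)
Your proposal misidentifies the hypotheses of Theorem 2.6 of \cite{bl2}, which is the vehicle the paper uses. The two conditions to verify are: (i) $\mu_N(\mc B_i)/\mu_N(\mc E_i) \to 0$, and (ii) the capacity ratio
\begin{equation*}
\frac{\Cap_N(\mc E_i, [\mc B_i\cup \mc E_i]^c)}{\min_{y \in \mc E_i}\Cap_N(\{x\},\{y\})}\;\longrightarrow\; 0\,.
\end{equation*}
Your step (a) matches (i). But your step (b)---that $\Cap_N(\{x\}, [\mc E_i\cup\mc B_i]^c) \approx \Cap_N(\mc E_i, [\mc E_i\cup\mc B_i]^c)$---is not a hypothesis of that theorem (it is closer to \eqref{07}, which is proved later in Lemma \ref{s06} for a different purpose); and your step (c), a direct hitting-time estimate, is not one of its hypotheses either. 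The paper instead verifies (ii) by a short Dirichlet-form bound: for any $[0,1]$-valued $f$ constant on $\mc E_i$ and on $[\mc B_i\cup\mc E_i]^c$, bonds leaving $\mc E_i$ contribute $\preceq \mu_N(\mc E_i)\max_{y\in\mc E_i,\, z\notin\mc E_i}R_N(y,z) \prec \mu_N(\mc E_i)$ because $\mc E_i$ is a leaf, and bonds not touching $\mc E_i$ contribute $\preceq \mu_N(\mc B_i) \prec \mu_N(\mc E_i)$; combining with $\min_y \Cap_N(\{x\},\{y\}) \approx \mu_N(\mc E_i)$ from Lemma \ref{s01} gives (ii).

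Beyond this structural mismatch, step (c) contains an error: the commute-time identity gives $\mb E^N_y[T_x] + \mb E^N_x[T_y] = \mu_N(E)/\Cap_N(\{x\},\{y\}) \approx 1/\mu_N(\mc E_i)$, not $O(1)$; and $1/\mu_N(\mc E_i) \prec \theta_{N,i}$ is not automatic---it would require $\Cap_N(\mc E_i, [\mc E_i\cup\mc B_i]^c) \prec \mu_N(\mc E_i)^2$, which is not established. The concluding remark about Dirichlet-form costs of excursions is also too vague to replace the missing capacity-ratio estimate. In short, the proposed route neither reduces to the two checkable conditions of the cited theorem nor supplies a correct substitute.
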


\begin{proof}
By \cite[Theorem 2.6]{bl2}, to show that $(\mc E_i, \mc B_i \cup \mc
E_i, x)$ is a valley of depth $\mu_N(\mc E_i)/$ $\Cap_N(\mc E_i , [\mc
E_i \cup \mc B_i]^c)$ we need to check that $\mu_N(\mc B_i)/\mu_N(\mc
E_i)$ vanishes as $N\uparrow\infty$ and that
\begin{equation}
\label{01}
\lim_{N\to\infty} \frac{\Cap_N(\mc E_i, [\mc B_i\cup \mc E_i]^c)}
{\Cap_N(x)}\,=\,0\, ,
\end{equation}
where $\Cap_N(x) = \min_{y\in \mc E_i} \Cap_N(\{x\}, \{y\})$.  The
first condition follows from the definition of the set $\mc B_i$.  The
second one is simple to check. Fix a positive function $f: E \to \bb
R$ bounded by one and constant in $\mc E_i$ and in $[\mc B_i\cup \mc
E_i]^c$. In the expression of the Dirichlet form $D_N(f) =
\sum_{y,z} G_N(y,z) [f(z)-f(y)]^2$, there are two types of
non-vanishing terms.  Either $y$ belongs to $\mc E_i$ and we may
estimate $G_N(y,z) [f(z)-f(y)]^2$ by $\mu_N(\mc E_i) \max_{y\in \mc
  E_i, z\not \in \mc E_i} R_N(y,z)$ or $y$ does not belong to $\mc
E_i$ and we may estimate $G_N(y,z) [f(z)-f(y)]^2$ by $\mu_N(\mc B_i)$
because $R_N(y,z) \le C_0$. Both expressions are of an order much
smaller than the one of $\mu_N(\mc E_i)$ because $\mc E_i$ has no
successors.  Therefore, \eqref{01} follows from Lemma \ref{s01},
proving that $(\mc E_i, \mc B_i \cup \mc E_i, x)$ is a valley.
\end{proof}

Next lemma shows that a leave is attained from any site in a time
scale of magnitude one. Recall that $T_A$, $A\subset E$, stands for
the hitting time of $A$. 

\begin{lemma}
\label{s05}
There exists a finite constant $C_0$, independent of $N$, such that  
\begin{equation*}
\max_{x \in \Delta} \mb E^N_x \big[ T_{\mc E} \big]
\;\le\; C_0\;.
\end{equation*}
\end{lemma}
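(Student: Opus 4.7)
The plan is to show that, uniformly over $x\in\Delta$ and over large $N$, the process starting at $x$ reaches $\mc E$ within a bounded time interval with a probability bounded away from zero, and then to deduce the uniform bound on $\mb E^N_x[T_{\mc E}]$ by the strong Markov property.

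The first step is geometric. Given $x\in\Delta$, I would construct a self-avoiding path $x=z_0,z_1,\ldots,z_n$ with $z_n\in\mc E$, $n\le|E|$, consisting entirely of \emph{open} edges, i.e.\ with $R_N(z_i,z_{i+1})\approx 1$ for every $i$. This is possible because $x$ lies in some non-leaf equivalence class $\mc C_{j_0}$, within which any two points are joined by an open path; following such a path to an $x'\in\mc C_{j_0}$ for which there is an edge with rate $\approx 1$ to a $y'\in\mc C_{j_1}$ (whose existence is guaranteed by the non-leaf property) realizes a transition to a strictly later class in the oriented graph. Since that graph has no directed loops, iterating the procedure terminates at a leaf after at most $|E|$ steps. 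As $R_N(z_i,z_{i+1})\to r(z_i,z_{i+1})\in(0,\infty)$ along open edges, there is a constant $c>0$ with $R_N(z_i,z_{i+1})\ge c$ for all $i$ and all $N$ large.

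The second step converts this geometric data into a lower bound on $\mb P^N_x[T_{\mc E}\le t_0]$ for an appropriate $t_0$. Since $r_N(x,y)\to r(x,y)$ and $\lambda_N(j)\le 1$ for $N$ large, the total exit rate from any state is bounded above by a constant $C<\infty$ uniformly in $N$. Conditional on the sequence of successive jumps being $z_0\to z_1\to\cdots\to z_n$, an event of probability at least $(c/C)^{|E|}$, the total holding time is the sum of at most $|E|$ independent exponentials each of mean at most $1/c$. Choosing $t_0=2|E|/c$ and applying Markov's inequality, this sum does not exceed $t_0$ with probability at least $1/2$. Combining these two factors yields $\mb P^N_x[T_{\mc E}\le t_0]\ge p_0:=(c/C)^{|E|}/2$, uniformly in $x\in\Delta$ and $N$.

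The third step is a standard iteration: applying the Markov property at times $t_0,2t_0,\ldots$ gives $\mb P^N_x[T_{\mc E}>kt_0]\le(1-p_0)^k$, whence $\mb E^N_x[T_{\mc E}]\le t_0/p_0$, uniformly in $x\in\Delta$ and $N$. The only non-routine ingredient is the geometric construction in the first step, which leans on the description of $\Delta$ as the union of non-leaf classes in the oriented graph of equivalence classes; once the open path is in hand, the rest is a textbook Markov chain estimate whose constants depend only on $|E|$ and on the limiting rates $r(x,y)$.
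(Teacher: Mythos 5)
Your proof is correct and takes essentially the same approach as the paper: construct an open path from an arbitrary $x\in\Delta$ into $\mc E$ of length at most $|E|$ with uniformly bounded jump rates, extract a uniform positive lower bound on the probability of reaching $\mc E$ in a bounded window, and iterate via the Markov property. The only stylistic difference is that the paper iterates at the random jump times $\tau_n$ of the chain (working with $\min\{T_{\mc E},\tau_n\}$ and the jump chain $Y^N_k$), whereas you iterate at deterministic times $t_0,2t_0,\dots$ after converting the positive jump-chain probability and the Markov-inequality bound on the holding times into a lower bound for $\mb P^N_x[T_{\mc E}\le t_0]$; the two schemes are interchangeable and both yield $\mb E^N_x[T_{\mc E}]\le t_0/p_0$ with constants depending only on $|E|$ and the limiting rates.
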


\begin{proof}
Denote by $\{\tau_j : j\ge 0\}$ the jump times of the Markov process
$\{\eta^N_t : t\ge 0\}$:
\begin{equation*}
\tau_0 \;=\; 0\;, \quad \tau_{j+1} \;=\; 
\inf\{t>\tau_j : \eta^N_t \not = \eta^N_{\tau_j}\}\;, \quad j\ge 0\;.
\end{equation*}
Denote by $\{Y^N_k : k\ge 0\}$ the jump chain associated to the Markov
process $\{\eta^N_t : t\ge 0\}$, i.e., the discrete time Markov chain
formed by the successive sites visited by $\eta^N_t$:
\begin{equation*}
Y^N_k \;=\; \eta^N_{\tau_k}\;, \quad k\ge 0\;.
\end{equation*}

For each site $x$ in $\Delta$, there exists an open path $\gamma
=(x=x_0, x_1, \dots, x_{n(x)})$ such that $x_{n(x)}\in\mc E$,
$R_N(x_i, x_{i+1}) > C_0$, $0\le i < n(x)$, for some positive constant
$C_0$, independent of $N$, whose value may change from line to
line. In particular,
\begin{equation}
\label{14}
\mb P^N_x \big[ Y^N_k = x_k : 0\le k\le n(x) \big] \;\ge\; C_0\;.
\end{equation}
Let $n=\max \{ n(x) : x\in\Delta\}$.

By the strong Markov property and decomposing the space according to
the partition $\{T_{\mc E} \le \tau_n\}$, $\{T_{\mc E} > \tau_n\}$,
for every $x\in \Delta$, since on the set $\{T_{\mc E} > \tau_n\}$,
$T_{\mc E} = \tau_n + T_{\mc E} \circ \tau_n$,
\begin{equation*}
\mb E^N_x \big[ T_{\mc E} \big] \;=\; \mb E^N_x \big[ \min\{ T_{\mc
  E}, \tau_n \}  \big] \;+\; \mb E^N_x \big[ \mb 1 \{T_{\mc E} >
\tau_n\} \mb E^N_{\eta^N_{\tau_n}} \big[ T_{\mc E}  \big] \, \big] \;.
\end{equation*}
As $\eta^N_{\tau_n}$ belongs to $\Delta$ when $T_{\mc E} > \tau_n$, it
follows from the previous identity that
\begin{equation*}
\max_{x\in \Delta} \mb E^N_x \big[ T_{\mc E} \big]
\;\le\; \frac{\max_{x\in \Delta} \mb E^N_x \big[ 
\min\{ T_{\mc E}, \tau_n \} \big]}{1 - \max_{x\in \Delta} 
\mb P^N_x \big[ T_{\mc E} > \tau_n\} \big]} \;\cdot
\end{equation*}
It follows from \eqref{14} that the denominator is bounded below by a
strictly positive constant $C_0$. To estimate the numerator, observe
that $T_{\mc E} = \sum_{j\ge 1} \tau_j \mb 1\{A_j\}$, where $A_j = \{
Y^N_0\in \Delta, \dots, Y^N_{j-1} \in \Delta, Y^N_j \in \mc E\}$. Hence,
$\min\{ T_{\mc E}, \tau_n \} = \sum_{1\le j< n} \tau_j \mb 1\{A_j\} +
\tau_n \mb 1\{B_n\}$, where $B_n = \cup_{j\ge n}A_j$. Since on the set
$A_j$, $Y^N_k \in \Delta$, $0\le k<j$, on $A_j$ the random time $\tau_j$
can be estimated by the sum of $j$ mean $C_0$ independent exponential
random variables. Hence,
\begin{equation*}
\max_{x\in \Delta} \mb E^N_x \big[ \min\{ T_{\mc E}, \tau_n \} \big]
\;\le\; C_0 \sum_{j=1}^n j \;,
\end{equation*}
which concludes the proof of the lemma.
\end{proof}

A similar argument permits to increase the negligible set $\mc B_i$ of
the valley $(\mc E_i, \mc E_i \cup \mc B_i, x)$.

\begin{lemma}
\label{s06}
Fix $1\le i\le \nu$ and $x$ in $\mc E_i$.  The triple $(\mc E_i, \mc
E_i \cup \Delta, x)$ is a valley of depth $\theta_{N,i} = \mu_N(\mc
E_i)/\Cap_N(\mc E_i , [\mc E_i \cup \mc B_i]^c)$. Moreover,
$\Cap_N(\mc E_i , [\mc E_i \cup \mc B_i]^c) \approx \Cap_N(\mc E_i ,
\breve{\mc E}_i)$.
\end{lemma}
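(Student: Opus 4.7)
The plan is to establish the capacity equivalence first and then use it, together with Theorem~2.6 of \cite{bl2} and Lemma~\ref{s05}, to upgrade the valley of Lemma~\ref{s02} from the surrounding set $\mc E_i \cup \mc B_i$ to the larger one $\mc E_i \cup \Delta$.

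The inclusion $\breve{\mc E}_i \subset [\mc E_i \cup \mc B_i]^c$ and the variational formula \eqref{25} give the trivial bound $\Cap_N(\mc E_i, \breve{\mc E}_i) \le \Cap_N(\mc E_i, [\mc E_i \cup \mc B_i]^c)$. For the reverse magnitude bound I would invoke Lemma~\ref{s03} to reduce everything to the combinatorial quantity $G_N$. Writing $[\mc E_i \cup \mc B_i]^c = \breve{\mc E}_i \cup (\Delta \setminus \mc B_i)$ and applying \eqref{11}, the task becomes $G_N(\mc E_i, \Delta \setminus \mc B_i) \preceq G_N(\mc E_i, \breve{\mc E}_i)$. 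Fix $z \in \Delta \setminus \mc B_i$ and an optimal self-avoiding path $\gamma$ from $\mc E_i$ to $z$. Since $\mc E_i$ is a leaf of the DAG of equivalence classes, the first bond of $\gamma$ leaving $\mc E_i$ has $R_N \prec 1$, forcing $G_N(\gamma) \prec \mu_N(\mc E_i)$. On the other hand, from $z$ one can follow open arrows of the DAG down to some leaf $\mc E_\ell$, and by detailed balance \eqref{33} the stationary mass is non-decreasing in magnitude along such a descent, so each added bond has $G_N$ of order at least $\mu_N(z) \succeq \mu_N(\mc E_i) \succ G_N(\gamma)$. In the generic case $\ell \neq i$, the concatenation produces a path from $\mc E_i$ to $\breve{\mc E}_i$ whose bottleneck still equals $G_N(\gamma)$; in the degenerate case that every open descent of $z$ ends inside $\mc E_i$, one has $\mu_N(z) \approx \mu_N(\mc E_i)$ and the bound is patched by inserting a single additional closed bond leaving the basin of $\mc E_i$, whose $G_N \preceq \mu_N(\mc E_i)$ does not alter the bottleneck up to constants.

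With the capacity equivalence in hand, the depth may be rewritten as $\theta_{N,i} \approx \mu_N(\mc E_i)/\Cap_N(\mc E_i, [\mc E_i \cup \Delta]^c)$, and I would verify the hypotheses of Theorem~2.6 of \cite{bl2} for the triple $(\mc E_i, \mc E_i \cup \Delta, x)$. The capacity condition $\Cap_N(\mc E_i, [\mc E_i \cup \Delta]^c)/\Cap_N(x) \to 0$ is immediate from Lemma~\ref{s01} ($\Cap_N(x) \approx \mu_N(\mc E_i)$) together with $\Cap_N(\mc E_i, \breve{\mc E}_i) \prec \mu_N(\mc E_i)$, the latter being a consequence of Lemma~\ref{s03} and the closedness of bonds exiting the leaf $\mc E_i$. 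The remaining hypothesis, that the mass of the surrounding set outside $\mc E_i$ be negligible on the relevant scale, cannot be read off directly because $\Delta \setminus \mc B_i$ may carry mass of the same magnitude as $\mc E_i$; here Lemma~\ref{s05} is the key ingredient, since it asserts that the expected occupation time of $\Delta$ before reaching $\mc E$ is $O(1)$, so the contribution of $\Delta \setminus \mc B_i$ to the dynamics on the scale $\theta_{N,i}$ is negligible and the argument of Theorem~2.6 goes through verbatim.

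The main obstacle is the degenerate case of the capacity comparison, when $z \in \Delta \setminus \mc B_i$ has all its open descents terminating inside $\mc E_i$. The natural extension of $\gamma$ then fails to exit the basin of $\mc E_i$, and one must build an alternative path that leaves the basin through a second low-rate bond. What keeps the bookkeeping under control is that $G_N(\gamma)$ is already strictly smaller in order than $\mu_N(\mc E_i)$, so inserting any additional closed bond of cost at most of order $\mu_N(\mc E_i)$ preserves the magnitude of the bottleneck.
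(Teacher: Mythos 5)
Your plan follows the same broad outline as the paper — prove the capacity equivalence, then upgrade the valley of Lemma~\ref{s02} to the larger surrounding set — but both halves have problems.

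On the valley upgrade: you try to apply \cite[Theorem 2.6]{bl2} directly to the triple $(\mc E_i, \mc E_i \cup \Delta, x)$, and then candidly acknowledge that the mass hypothesis $\mu_N(\Delta\setminus\mc B_i)/\mu_N(\mc E_i)\to 0$ fails, since $\Delta\setminus\mc B_i$ can carry mass comparable to $\mc E_i$. Saying that ``the argument of Theorem~2.6 goes through verbatim'' once Lemma~\ref{s05} is invoked is not a proof: you cannot apply a cited theorem whose hypothesis is false. The paper sidesteps this precisely by proving a separate, purpose-built extension lemma, Lemma~\ref{s04}, which replaces the mass condition by the hitting-time condition \eqref{03}; with $\ms C = \Delta\setminus\mc B_i$ one must show $\max_{y\in\Delta\setminus\mc B_i}\mb P^N_y[T_{\breve{\mc E}_i} > \delta\theta_{N,i}]\to 0$, and this is verified not by Lemma~\ref{s05} directly (which only bounds the expected hitting time of the union $\mc E$, not the time to hit the specific set $\breve{\mc E}_i$) but by first observing that there is no open path from any $y\in\Delta\setminus\mc B_i$ to $\mc E_i$, and then using the coupling \eqref{28} with the limit chain $Z_k$. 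That ``no open path to $\mc E_i$'' observation is exactly what prevents the process from falling back into $\mc E_i$ on the way out.

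On the capacity comparison: your ``degenerate case,'' in which every open descent of $z\in\Delta\setminus\mc B_i$ terminates inside $\mc E_i$, is vacuous, and the patch you propose for it is not sound. Since $z\in\Delta\setminus\mc B_i$ means $\mu_N(z)\succeq\mu_N(\mc E_i)$, and along an open path the measures stay $\succeq\mu_N(\mc E_i)$, the last state $x_{l-1}$ of any open path entering $\mc E_i$ at $x_l$ would satisfy $\mu_N(x_{l-1})R_N(x_{l-1},x_l)\succeq\mu_N(\mc E_i)$ while by detailed balance this equals $\mu_N(x_l)R_N(x_l,x_{l-1})\prec\mu_N(\mc E_i)$ (because $\mc E_i$ is a leaf, so bonds exiting it have $R_N\prec 1$) — a contradiction. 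So the open descent never lands in $\mc E_i$, the concatenation always ends in $\breve{\mc E}_i$, and no patch is needed. And in any event the patch as stated (``inserting a single additional closed bond... whose $G_N\preceq\mu_N(\mc E_i)$'') would not control the bottleneck: a bond with $G_N$ merely $\preceq\mu_N(\mc E_i)$ could be far smaller than $G_N(\gamma)$ and would then lower $G_N$ of the concatenated path.
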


\begin{proof}
Fix $1\le i\le \nu$ and $x$ in $\mc E_i$.  By Lemmas \ref{s02} and
\ref{s04}, to prove the first claim of the proposition we need to show
that for every $\delta>0$
\begin{equation*}
\lim_{N\to \infty} \max_{y\in \Delta \setminus \mc B_i}
\mb P_y^N \big[ T_{\breve{\mc E}_i} > \delta \theta_{N,i} \big]\; =\;
0\; .
\end{equation*}

Fix $y$ in $\Delta \setminus \mc B_i$. By definition, $\mu_N(y)
\succeq \mu_N(\mc E_i)$. In particular, there is no open path from $y$
to $\mc E_i$. Indeed, if $y=y_0, \dots, y_{n-1}\not\in\mc E_i \,,\,
y_n\in\mc E_i$ is an open path from $y$ to $\mc E_i$, the relation
$\mu_N(y_{n-1}) \succeq \mu_N(y) \succeq \mu_N(\mc E_i)$ contradicts
the identity $\mu_N(y_{n-1}) R_N(y_{n-1}, y_n) = \mu_N(y_{n}) R_N(y_n,
y_{n-1})$ because $\max_{z\in\mc E_i, z'\not\in\mc E_i} R_N(z,z') \prec
1$.

Recall that we denote by $\{Y^N_k : k\ge 0\}$ the jump
chain associated to the Markov process $\eta^N_t$. Its jump
probabilities $p_N(x,y)$, $x,y\in E$, $x\not =y$, are given by
\begin{equation*}
p_N(x,y) \;=\; \frac{R_N(x,y)}{\sum_{z\in E} R_N(x,z)} \; \cdot
\end{equation*} 
In view of \eqref{24}, as $N\uparrow\infty$, $p_N(x,y)$ converges to
some $p(x,y) \in [0,1]$ such that $\sum_y p(x,y)=1$.  Let $\{Z_k :
k\ge 0\}$ be the discrete time Markov chain associated to the jump
probabilities $p(x,y)$. Note that the Markov chain $Z_k$ may not be
irreducible .

Clearly, we may couple both chains in a way that for every $n\ge 1$
\begin{equation}
\label{28}
\lim_{N\to\infty}
\mb P_y^N \Big[ \bigcup_{k=1}^n \{Y^N_k \not = Z_k\} \Big] \;=\; 0\; .
\end{equation}

On the other hand, before reaching $\mc E$ the Markov chain $Z_k$ only
uses open bonds. Since there is no open path from $y$ to $\mc E_i$ and
since there are open paths from $y$ to $\breve{\mc E}_i$ , the chain
$Z_k$ eventually reaches $\breve{\mc E}_i$.  Hence,
\begin{equation*}
\lim_{n\to\infty}
\mb P_y^N \Big[ \bigcap_{k=1}^n \{Z_k \not \in \breve{\mc E}_i\} \Big] 
\;=\; 0\; .
\end{equation*}

Recall that $\{\tau_n : n\ge 1\}$ stands for the jump times of the
Markov process $\eta^N_t$. On the set $[\cap_{k=1}^n \{Y^N_k =
Z_k\}] \cap [ \cup_{k=1}^n \{Z_k \in \breve{\mc E}_i\}]$,
$T_{\breve{\mc E}_i} \le \tau_k$ for some $k\le n$, and $\tau_k$ may
be bounded by the sum of $k$ mean $C_0$ i.i.d.\!  exponential random
variables, for some finite constant $C_0$, independent of $N$.
Therefore, since $\theta_{N,i}\succ 1$, for every $n\ge 1$,
\begin{equation*}
\lim_{N\to\infty}
\mb P_y^N \Big[ T_{\breve{\mc E}_i} > \delta \theta_{N,i} \,,\,
\bigcap_{k=1}^n \{Y^N_k = Z_k\} \,,\, \bigcup_{k=1}^n 
\{Z_k \in \breve{\mc E}_i\} \Big] \;=\; 0\; ,
\end{equation*}
which proves the first assertion of the lemma.

In view of Lemma \ref{s03}, to prove the second claim, it is enough to
show that $G_N(\mc E_i , [\mc E_i \cup \mc B_i]^c) \approx G_N(\mc E_i
, \breve{\mc E}_i)$. In fact, we assert that
\begin{equation}
\label{07}
G_N(\mc E_i , [\mc E_i \cup \mc B_i]^c) \;=\;
 G_N(\mc E_i , \breve{\mc E}_i)
\end{equation}
for all $N$ sufficiently large.

On the one hand, since $\breve{\mc E}_i \subset [\mc E_i \cup \mc
B_i]^c$, $G_N(\mc E_i , \breve{\mc E}_i) \le G_N(\mc E_i , [\mc E_i
\cup \mc B_i]^c)$. On the other hand, since the set $E$ is finite,
there exists a path $\gamma = (x_0, \dots, x_n)$ in $\Gamma_{\mc E_i ,
  [\mc E_i \cup \mc B_i]^c}$ such that $G_N(\mc E_i , [\mc E_i \cup
\mc B_i]^c) = G_N(\gamma)$. By definition $x_0\in \mc E_i$, $x_n
\not\in \mc E_i \cup \mc B_i$, and we may assume without loss of
generality that $x_1\not\in \mc E_i$.

Since $x_1\not\in \mc E_i$ and $\mc E_i$ is a leave, $G_N(x_0,x_1) \prec
\mu_N(\mc E_i)$ so that 
\begin{equation}
\label{06}
G_N(\gamma) \;=\;
\min_{0\le i <n} G_N(x_i, x_{i+1}) \; \prec \; \mu_N(\mc E_i)\;.
\end{equation}

Either $x_n$ belongs to $\Delta\setminus \mc B_i$ or $x_n$ belongs to
$\breve{\mc E_i}$. In the latter case, $\gamma$ is a path in
$\Gamma_{\mc E_i , \breve{\mc E_i}}$ so that $G_N(\gamma) \le G_N(\mc
E_i , \breve{\mc E_i})$ proving that $G_N(\mc E_i , [\mc E_i \cup \mc
B_i]^c) \le G_N(\mc E_i , \breve{\mc E_i})$.

If $x_n$ belongs to $\Delta\setminus \mc B_i$, by definition of $\mc
B_i$ and the leaves $\mc E_j$, there exists a self-avoiding path
$\tilde \gamma = (x_n , x_{n+1}, \dots, x_l)$ from $x_n$ to $\mc E$
such that $R_N(x_i, x_{i+1}) \ge C_0$, $n\le i < l$, for some finite
constant $C_0$, independent of $N$, whose value may change from line
to line. Since $x_n \in \Delta\setminus \mc B_i$, $\mu_N(x_n) \ge C_0
\mu_N(\mc E_i)$ and the same estimate holds for $\mu_N(x_j)$, $n<j\le
l$, because $R_N(x_i, x_{i+1}) \ge C_0$, $n\le i < l$, and $R_N(y,z)
\le C_0$ for all $y$, $z\in E$. From these estimates we derive two
facts. First, $x_l$ may not belong to $\mc E_i$ because
$\mu_N(x_{l-1}) \ge C_0 \mu_N(\mc E_i)$, $R_N(x_{l-1}, x_l) \approx 1$
and $R_N(y,z) \prec 1$ for all $y\in \mc E_i$, $z\in \mc
E^c_i$. Second, $\min_{n\le i <l} G_N(x_i, x_{i+1}) \ge C_0 \mu_N(\mc
E_i)$ because $R_N(x_i, x_{i+1}) \ge C_0$, $n\le i < l$.

Therefore, if $x_n$ belongs to $\Delta\setminus \mc B_i$, juxtaposing
the paths $\gamma$ and $\tilde \gamma$, in view of \eqref{06}, we
obtain a self-avoiding path from $\mc E_i$ to $\breve{\mc E_i}$ such
that $\min_{1\le i <l} G_N(x_i, x_{i+1}) = \min_{1\le i <n} G_N(x_i,
x_{i+1}) = G_N(\mc E_i , [\mc E_i \cup \mc B_i]^c)$ for $N$
sufficiently large. Hence, also in the case where $x_n$ belongs to
$\Delta\setminus \mc B_i$, $G_N(\mc E_i , \breve{\mc E_i}) \ge G_N(\mc
E_i , [\mc E_i \cup \mc B_i]^c)$, which proves \eqref{07}.
\end{proof}

We may in fact compute the asymptotic behavior of the depth
$\theta_{N,i}$ of the valley $(\mc E_i, \mc E_i \cup \Delta, x)$ with
the help of Lemma \ref{s21}.  Recall the definition of the Markov
chain $\{Z_k : k\ge 0\}$ introduced in the previous proposition.
Denote by $\mb P^Z_x$ the probability on the path space $D(\bb Z_+,
E)$ induced by the Markov chain $\{Z_k : k\ge 0\}$ starting from $x$.

\begin{lemma}
\label{s22}
Fix a subset $I$ of $\{1, \dots, \nu\}$ and let $J=\{1, \dots, \nu\}
\setminus I$, $f_N(x) = \mb P^N_x[T_{\mc E_I} < T_{\mc E_J}]$, where
$\mc E_I = \cup_{i\in I} \mc E_i$. We claim that 
\begin{equation*}
\lim_{N\to\infty} \mb P^N_x[T_{\mc E_I} < T_{\mc E_J}] \;=\;
f_{I,J}(x) \;:=\;
\mb P^Z_x[T_{\mc E_I} < T_{\mc E_J}]\;, \quad x\in E\;.
\end{equation*}
In particular,
\begin{equation*}
\lim_{N\to\infty} \frac {\Cap_N(\mc E_I, \mc E_J)}
{\mf g_N(\mc E_I,\mc E_J)}  \;=\; 
\sum_{(x,y) \in \mf B(\mc E_I, \mc E_J)} g(x,y) 
\, [f_{I,J} (y) - f_{I,J} (x)]^2 
\in (0,\infty)\;.
\end{equation*}
\end{lemma}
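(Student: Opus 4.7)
The plan is in two steps: establish the pointwise convergence $f_N(x) \to f_{I,J}(x)$, then invoke Lemma~\ref{s21}. For $x \in \mc E_I \cup \mc E_J$, both $f_N(x)$ and $f_{I,J}(x)$ equal the indicator $\mb 1\{x \in \mc E_I\}$: since $\mc E_I$ and $\mc E_J$ are disjoint, the starting point alone decides whether $T_{\mc E_I} < T_{\mc E_J}$, under both $\mb P^N_x$ (as $\eta^N_0 = x$) and $\mb P^Z_x$ (with the analogous convention $T_A = \inf\{k \geq 0 : Z_k \in A\}$).

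For $x \in \Delta$, I would couple the jump chain $Y^N$ of $\eta^N$ with $Z$. By Lemma~\ref{s05}, $\mb E^N_x[T_{\mc E}] \leq C_0$ uniformly in $N$; inspection of that proof (iterating the lower bound \eqref{14} on the probability of following a prescribed open path of bounded length) shows that the number of steps $J^N$ needed for $Y^N_k$ to enter $\mc E$ is stochastically dominated, uniformly in $N$, by a geometric random variable with parameter bounded away from zero. Hence, given $\epsilon > 0$, one may fix $K$ with $\mb P^N_x[J^N > K] < \epsilon$ for all $N$ large. Using the coupling \eqref{28} at horizon $K$, we may further arrange that $Y^N_k = Z_k$ for $0 \leq k \leq K$ with $\mb P^N_x$-probability $1 - o_N(1)$. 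On the intersection of these two events both chains enter $\mc E$ at the same step and at the same site, and since $\mc E = \mc E_I \cup \mc E_J$ is a disjoint union, this common entry site determines the event $\{T_{\mc E_I} < T_{\mc E_J}\}$. It follows that
\begin{equation*}
|f_N(x) - f_{I,J}(x)| \;\leq\; 2\epsilon + o_N(1),
\end{equation*}
and sending $N \to \infty$ then $\epsilon \to 0$ yields the pointwise convergence.

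With $f_N \to f_{I,J}$ in hand, the capacity formula follows by applying Lemma~\ref{s21} with $A = \mc E_I$ and $B = \mc E_J$: the convergence just established is exactly its hypothesis, and its conclusion is precisely the identity displayed here, including the fact that the limit lies in $(0, \infty)$. The step I expect to be most delicate is the uniform-in-$N$ tightness of $J^N$, since the coupling \eqref{28} is only valid over a fixed finite horizon and must be deployed at a scale large enough for $Y^N$ to actually reach $\mc E$; once this tightness is combined with the coupling, the rest reduces to routine bookkeeping.
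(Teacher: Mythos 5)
Your proof is correct and follows essentially the same plan as the paper: couple the jump chain $Y^N$ with the limiting chain $Z$ over a finite horizon via \eqref{28}, argue that both chains hit $\mc E$ at the same step and site on an event of high probability, deduce the pointwise convergence $f_N \to f_{I,J}$, and finish with Lemma~\ref{s21}. The only difference is in where the finite horizon comes from: the paper observes that the fixed chain $Z$ satisfies $\lim_{n}\mb P^Z_x[\cup_{k\le n}\{Z_k\in\mc E\}]=1$ (so it suffices to couple on a horizon $n$ chosen from $Z$ alone), whereas you derive a uniform-in-$N$ geometric tail bound on the hitting step $J^N$ of $Y^N$ by inspecting the proof of Lemma~\ref{s05}; both are valid, your version being a bit more explicit and effectively justifying what the paper dismisses as clear.
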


\begin{proof}
Clearly, for every $x\in E$,
\begin{equation*}
\lim_{n\to\infty} \mb P^Z_x \Big[ \bigcup_{k=1}^n \{Z_k \in \mc E\}
\Big] \;=\; 1\;.
\end{equation*}
It follows from this estimates and from \eqref{28} that 
for every $x \in \Delta$,
\begin{equation*}
\lim_{N\to\infty} \mb P_x^N \big[ T_{\mc E_I} < T_{\mc E_J} \big]
\;=\; \mb P_x^Z \big[ T_{\mc E_I} < T_{\mc E_J} \big]\;,
\end{equation*}
which proves the first assertion of the lemma. The second one follows
from the previous result and Lemma \ref{s21}.
\end{proof}

Note that this result is a particular case of Lemma \ref{s29}.  The
same argument provides the asymptotic value of the capacity between
$\mc E_i$ and $[\mc E_i \cup \mc B_i]^c$.

\begin{lemma}
\label{s23}
Fix $1\le i\le \nu$ and let $f_N:E\to [0,1]$ be given by $f_N(x) = \mb
P^N_x [T_{\mc E_i} < T_{[\mc E_i \cup \mc B_i]^c}]$. Then, $f_N$
converges pointwisely to $f_i (x) = \mb P^Z_x [T_{\mc E_i} < T_{[\mc E_i
  \cup \mc B_i]^c}]$. In particular,
\begin{equation*}
\lim_{N\to\infty} \frac {\Cap_N(\mc E_i, [\mc E_i \cup \mc B_i]^c)}
{\mf g_N(\mc E_i, [\mc E_i \cup \mc B_i]^c)}  \;=\; 
\sum_{(x,y) \in \mf B(\mc E_i, [\mc E_i \cup \mc B_i]^c)} g(x,y) 
\, [f_i (y) - f_i (x)]^2 \;.
\end{equation*}
\end{lemma}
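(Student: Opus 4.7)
The plan is to derive the lemma as an immediate corollary of Lemma \ref{s21} applied with $A = \mc E_i$ and $B = [\mc E_i \cup \mc B_i]^c$, once we establish the pointwise convergence $f_N(x) \to f_i(x)$ for every $x \in E$. The capacity formula then comes for free from Lemma \ref{s21}, so all the work reduces to proving pointwise convergence.

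To establish the pointwise convergence I would reuse the coupling between the jump chain $\{Y^N_k : k\ge 0\}$ of $\eta^N$ and the limiting discrete-time chain $\{Z_k : k\ge 0\}$ introduced in the proof of Lemma \ref{s06}. That proof yields \eqref{28}, i.e.\ $\mb P^N_y[\bigcup_{k=1}^n\{Y^N_k \ne Z_k\}] \to 0$ as $N\to\infty$ for every fixed $n$. The cases $x \in \mc E_i$ and $x \in [\mc E_i \cup \mc B_i]^c$ are trivial since both $f_N(x)$ and $f_i(x)$ equal $1$ or $0$ there. In the remaining case $x \in \mc B_i$, the event $\{T_{\mc E_i} < T_{[\mc E_i \cup \mc B_i]^c}\}$ is a functional of the jump chain alone, so if I set $\sigma = \inf\{k \ge 0 : Z_k \notin \mc B_i\}$, then on the intersection $\{Y^N_k = Z_k,\; 0 \le k \le n\} \cap \{\sigma \le n\}$ the event takes the same value for $\eta^N$ and for $Z$.

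The key step is therefore to verify that $\sigma < \infty$ $\mb P^Z_x$-almost surely for $x \in \mc B_i$. This uses the DAG structure of the equivalence classes $\mc C_1, \dots, \mc C_\alpha$: since $\mc B_i \subset \Delta$ is a union of non-leaf classes, at every visit to such a class the chain $Z$ has a strictly positive probability of jumping to a successor class, and an induction on the height in the DAG shows that $Z$ eventually reaches some leaf $\mc E_j \subset \mc E$, which is disjoint from $\mc B_i$. Given $\varepsilon > 0$, I would then choose $n$ large enough that $\mb P^Z_x[\sigma > n] < \varepsilon$; combining this with the coupling bound gives
\begin{equation*}
  |f_N(x) - f_i(x)| \;\le\; \varepsilon \;+\; \mb P^N_x\Big[\bigcup_{k=1}^n\{Y^N_k \ne Z_k\}\Big]\;,
\end{equation*}
and the last term vanishes by \eqref{28}. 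Since $\varepsilon$ is arbitrary, $f_N(x) \to f_i(x)$.

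The only real (and minor) obstacle is the verification that $Z$ almost surely escapes $\mc B_i$; once it is done, the displayed formula follows directly from Lemma \ref{s21}, noting that $G_N(\mc E_i, [\mc E_i \cup \mc B_i]^c)/\mf g_N(\mc E_i, [\mc E_i \cup \mc B_i]^c)$ has a strictly positive limit by the very definition of $\mf g_N$. The proof is an exact parallel of the one used for Lemma \ref{s22}, with $\{\mc E_I, \mc E_J\}$ replaced by $\{\mc E_i, [\mc E_i \cup \mc B_i]^c\}$.
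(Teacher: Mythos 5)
Your proposal is correct and matches the paper's approach: the paper gives no separate proof of Lemma \ref{s23}, simply noting that "the same argument" as in Lemma \ref{s22} applies, namely the jump-chain coupling \eqref{28} plus the fact that $Z$ almost surely escapes $\mc B_i$, followed by Lemma \ref{s21}. Your DAG-height induction just makes explicit the a.s.\ escape fact that the paper takes for granted (there stated as $Z$ a.s.\ hitting $\mc E$, which implies escape from $\mc B_i\subset\Delta$).
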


We may now state the first main result of this section.

\begin{proposition}
\label{s30}
For $1\le i\le \nu$, $(\mc E_i, \mc E_i \cup \Delta, x)$, $x\in \mc
E_i$, is a valley of depth $\theta_{N,i} = \mu_N(\mc E_i)/\Cap_N(\mc
E_i, \breve{\mc E_i})$. Moreover,
\begin{equation*}
\lim_{N\to\infty} \frac {\Cap_N(\mc E_i, [\mc E_i \cup \mc B_i]^c)}
{\Cap_N(\mc E_i, \breve{\mc E_i})}  \;=\; 1\;.
\end{equation*}
\end{proposition}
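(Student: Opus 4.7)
The first claim will follow from the second together with Lemma \ref{s06}, which already exhibits $(\mc E_i, \mc E_i\cup\Delta, x)$ as a valley of depth $\mu_N(\mc E_i)/\Cap_N(\mc E_i,[\mc E_i\cup\mc B_i]^c)$ and shows this capacity is of the same magnitude as $\Cap_N(\mc E_i,\breve{\mc E}_i)$. Upgrading the $\approx$ to the sharp limit $1$ identifies the two candidate depths asymptotically, so the valley assertion will reduce to the displayed equation, on which I focus.

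To prove that the ratio tends to $1$, I plan to apply Lemma \ref{s21} separately to the pairs $(\mc E_i,[\mc E_i\cup\mc B_i]^c)$ and $(\mc E_i,\breve{\mc E}_i)$. The identity \eqref{07} proved inside Lemma \ref{s06} gives $G_N(\mc E_i,[\mc E_i\cup\mc B_i]^c) = G_N(\mc E_i,\breve{\mc E}_i)$ for all large $N$, so the rescaling sequences $\mf g_N$ supplied by Lemma \ref{s21} agree for the two problems and, by their definition, so do the critical-bond sets $\mf B$. By Lemmas \ref{s23} and \ref{s22} (the latter with $I=\{i\}$), the corresponding hitting functions converge pointwise to
\[
f_i(x) \;=\; \mb P^Z_x[T_{\mc E_i} < T_{[\mc E_i\cup\mc B_i]^c}],\qquad
f_{I,J}(x) \;=\; \mb P^Z_x[T_{\mc E_i} < T_{\breve{\mc E}_i}].
\]
Inserting these into Lemma \ref{s21}, the ratio of the two rescaled capacities converges to a ratio of sums over the same index set $\mf B$, so the limit equals $1$ precisely when $f_i\equiv f_{I,J}$ on $E$.

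Both functions agree by construction on $\mc E_i$ (equal to $1$) and on $\breve{\mc E}_i$ (equal to $0$). For $y\in\Delta\setminus\mc B_i$, the detailed-balance argument from the proof of Lemma \ref{s06} shows that no open path from $y$ can reach the leaf $\mc E_i$: a hypothetical open final step $y_{n-1}\to y_n\in\mc E_i$ would force $\mu_N(y_{n-1})\prec\mu_N(\mc E_i)$, hence $y_{n-1}\in\mc B_i$, and iterating back to $y_0=y$ contradicts $y\notin\mc B_i$. Consequently the chain $Z$ started at $y$ almost surely avoids $\mc E_i$, giving $f_i(y)=f_{I,J}(y)=0$. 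For $x\in\mc B_i$, the two events $\{T_{\mc E_i}<T_{[\mc E_i\cup\mc B_i]^c}\}$ and $\{T_{\mc E_i}<T_{\breve{\mc E}_i}\}$ differ only on trajectories that first exit into $\Delta\setminus\mc B_i$ and later reach $\mc E_i$, but the preceding remark gives this event $\mb P^Z_x$-probability zero, so the equality extends to $\mc B_i$.

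The delicate step is this matching of $f_i$ with $f_{I,J}$ on $\mc B_i$: I expect all other points in the argument to be bookkeeping, whereas the impossibility of returning from $\Delta\setminus\mc B_i$ to the leaf $\mc E_i$ under the open chain is what genuinely ties the two hitting functions together. Once this is secured, Lemma \ref{s21} delivers the sharp limit $1$ for the capacity ratio and, combined with Lemma \ref{s06}, both assertions of the proposition follow.
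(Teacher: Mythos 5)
Your argument is correct and follows the paper's own route: both rely on \eqref{07} to identify the rescaling sequences and critical-bond sets for the two capacity problems, then invoke Lemmas \ref{s22} and \ref{s23} to express the two pointwise limits as $Z$-chain hitting probabilities, and finally match them using the no-open-path observation from the proof of Lemma \ref{s06}. Your case analysis on $\mc B_i$ and $\Delta\setminus\mc B_i$ simply spells out in more detail the matching that the paper states in one line; there is no gap.
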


\begin{proof}
Fix $1\le i\le \nu$ and note that the set $\mc E_J$ appearing in Lemma
\ref{s22} is equal to $\breve{\mc E_i}$ if $I=\{i\}$.  By \eqref{07},
$\mf g_N(\mc E_i, [\mc E_i \cup \mc B_i]^c) = \mf g_N(\mc E_i,
\breve{\mc E}_i)$ so that $\mf B(\mc E_i, [\mc E_i \cup \mc B_i]^c) =
\mf B(\mc E_i, \breve{\mc E}_i)$. Let $g_i = f_{I,J}$ when $I=\{i\}$.
Since there is an open path from any state in $\Delta \setminus \mc
B_i$ to $\breve{\mc E}_i$ and no open path from a state in $\Delta
\setminus \mc B_i$ to $\mc E_i$, $f_i(x) = \mb P^Z_x [T_{\mc E_i} <
T_{[\mc E_i \cup \mc B_i]^c}] = \mb P^Z_x [T_{\mc E_i} < T_{\breve{\mc
    E}_i}] = g_i(x)$. This proves the corollary in view of Lemmas
\ref{s22} and \ref{s23}.
\end{proof}

By Proposition \ref{s30} and Lemma \ref{s22},
\begin{equation}
\label{31}
u_i\;:=\; \lim_{N\to\infty} \frac {\mf g_N(\mc E_i, \breve{\mc E}_i)}
{\mu_N(\mc E_i)} \, \theta_{N,i} \; \in\; (0,\infty)\;.
\end{equation}  
In particular, the depths of the valleys are comparable.

\begin{proposition}
\label{s24}
The sequences $(\theta_{N,i}:N\ge 1)$, $1\le i\le\nu$, are comparable
and $\theta_{N,i} \succ 1$, $1\le i\le\nu$.
\end{proposition}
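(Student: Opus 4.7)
The plan is to reduce $\theta_{N,i}$ to a product of the form $\prod_\ell \lambda_N(\ell)^{k_\ell}$, so that both assertions follow from hypothesis \eqref{30} and the leaf argument already used in Lemma \ref{s06}. By \eqref{31} we have $\theta_{N,i} \approx \mu_N(\mc E_i)/\mf g_N(\mc E_i, \breve{\mc E}_i)$, so it suffices to analyze the right hand side. Since each leaf $\mc E_i$ is a single equivalence class, the estimate \eqref{f01} forces the function $b$ in \eqref{32} to be constant on $\mc E_i$ (otherwise the ratio of two $M_N$-values, which is $\prec 1$ or $\succ 1$, would contradict \eqref{f01}); call this common value $b_i$. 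Then $\mu_N(\mc E_i) \approx M_N(b_i)$, and by \eqref{27}, $\mf g_N(\mc E_i, \breve{\mc E}_i) = G_N(j_i)$ for some $j_i \in \{0, \dots, \mf j\}$. By the remarks following \eqref{32} and \eqref{27}, both $M_N(b_i)$ and $G_N(j_i)$ are of the form $\prod_{\ell=1}^{\mf n}\lambda_N(\ell)^{k_\ell}$ with integer exponents.

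For comparability, the ratio $\theta_{N,i}/\theta_{N,j}$ is then, up to an asymptotically positive constant, again a product $\prod_\ell \lambda_N(\ell)^{m_\ell}$ with integer exponents. By hypothesis \eqref{30}, this product has a limit $c_{i,j} \in [0,\infty]$. The three cases $c_{i,j}=0$, $c_{i,j}=\infty$, $c_{i,j}\in(0,\infty)$ correspond respectively to $\theta_{N,i} \prec \theta_{N,j}$, $\theta_{N,j} \prec \theta_{N,i}$, $\theta_{N,i} \approx \theta_{N,j}$; so the family is comparable.

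For the lower bound $\theta_{N,i} \succ 1$, I would invoke Lemma \ref{s03} to replace $\Cap_N(\mc E_i, \breve{\mc E}_i)$ by $G_N(\mc E_i, \breve{\mc E}_i)$ up to a bounded multiplicative constant, and then reuse the argument appearing around \eqref{06}. Namely, for any self-avoiding path $\gamma = (x_0, x_1, \dots, x_n)$ from $\mc E_i$ to $\breve{\mc E}_i$, the initial bond satisfies $x_1 \notin \mc E_i$; since $\mc E_i$ is a leaf of the open graph, no bond from $\mc E_i$ to its complement is open, so $R_N(x_0, x_1) \prec 1$ and therefore $G_N(\gamma) \le G_N(x_0, x_1) = \mu_N(x_0)\, R_N(x_0, x_1) \prec \mu_N(\mc E_i)$. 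As there are only finitely many self-avoiding paths, maximizing yields $G_N(\mc E_i, \breve{\mc E}_i) \prec \mu_N(\mc E_i)$, and hence $\theta_{N,i} \succ 1$.

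The step requiring the most care is the preliminary bookkeeping: verifying that $b$ is truly constant on each leaf (via \eqref{f01}) and that $\mf g_N(\mc E_i, \breve{\mc E}_i)$ really corresponds to one index $j_i$ in the scale \eqref{27}, so that the reduction to a product of $\lambda_N(\ell)^{k_\ell}$ is exact up to positive asymptotic constants. Once these structural facts are secured, both conclusions follow cleanly from \eqref{30} and the leaf property.
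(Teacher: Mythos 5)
Your proof takes essentially the same route as the paper's: for comparability you reduce $\theta_{N,i}/\theta_{N,j}$ via \eqref{31}, \eqref{32}, \eqref{27} to a ratio of products $\prod_\ell\lambda_N(\ell)^{k_\ell}$ and invoke \eqref{30}, and for $\theta_{N,i}\succ 1$ you use Lemma \ref{s03} together with the leaf property. One small slip in the second part: it is \emph{not} true that every self-avoiding path $\gamma=(x_0,\dots,x_n)$ from $\mc E_i$ to $\breve{\mc E}_i$ has $x_1\notin\mc E_i$, since the path may begin with several open bonds inside the equivalence class $\mc E_i$; instead take the first bond $(x_j,x_{j+1})$ with $x_j\in\mc E_i$ and $x_{j+1}\notin\mc E_i$ (which exists and, by the leaf property, is not open), so that $G_N(\gamma)\le G_N(x_j,x_{j+1})\prec\mu_N(x_j)\approx\mu_N(\mc E_i)$ — this is exactly the bond the paper uses.
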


\begin{proof}
To prove this lemma, we have to show that, as $N\uparrow\infty$, the
sequences $\theta_{N,i}/\theta_{N,j}$, $i\not = j$, either vanish,
diverge, or converge. Fix $i\not = j$. By \eqref{31},
\begin{equation*}
\lim_{N\to\infty} \frac{\theta_{N,i}}{\theta_{N,j}} \;=\; 
\frac{u_i}{u_j} \lim_{N\to\infty} 
\frac {\mu_N(\mc E_i)}{\mu_N(\mc E_j)} \,
\frac {\mf g_N(\mc E_j, \breve{\mc E}_j)}
{\mf g_N(\mc E_i, \breve{\mc E}_i)}\;\cdot
\end{equation*}
By \eqref{f01} and \eqref{32}, $\mu_N(\mc E_k) = \mu_N(x_k) a_N$ for
some $x_k\in \mc E_k$ and some sequence $a_N$ which converges to some
$a\in (0,\infty)$ as $N\uparrow\infty$. On the other hand, by
definition, $\mf g_N(\mc E_k, \breve{\mc E}_k) = G_N(y_k,z_k) b_N =
\mu_N(y_k) R_N(y_k,z_k)b_N$ for some bond $(y_k,z_k)$, where $b_N$
converges to some limit $b\in (0,\infty)$ as $N\uparrow\infty$. Hence,
\begin{equation*}
\frac{\theta_{N,i}}{\theta_{N,j}}\;=\;
c_N \, \frac{\mu_N(x_i) \, \mu_N(y_j) \, R_N(y_j,z_j) }
{\mu_N(x_j) \,\mu_N(y_i) \,R_N(y_i,z_i)}
\end{equation*}
for some sequence $c_N$ which converges to some limit $c\in
(0,\infty)$ as $N\uparrow\infty$. In view of identity \eqref{33} and
assumption \eqref{30}, the sequences $\theta_{N,i}$ are
comparable. This proves the first assertion of the lemma.

Fix $1\le i\le \nu$ and recall the definition of $\theta_{N,i}$ given
in Proposition \ref{s30}. By Lemma \ref{s03}, it is enough to show
that $\mu_N(\mc E_i)/G_N(\mc E_i, \breve{\mc E}_i)\succ 1$. Fix a
self-avoiding path $\gamma =(x_0, \dots, x_n)$ from $\mc E_i$ to
$\breve{\mc E}_i$ such that $G_N(\gamma) = G_N(\mc E_i, \breve{\mc
  E}_i)$. There exists a bond $(x_j,x_{j+1})$ such that $x_j\in\mc
E_i$, $x_{j+1}\not\in\mc E_i$. By definition of $G_N(\gamma)$, by
\eqref{f01} and since $\mc E_i$ is a leave, $G_N(\gamma)\le
G_N(x_j,x_{j+1}) \prec \mu_N(x_j) \approx \mu_N(\mc E_i)$, proving the
second assertion of the lemma.
\end{proof}

\section{Metastability among the shallowest valleys}
\label{ssec2}

We describe in this section the asymptotic behavior of the Markov
process $\{\eta^N_t : t\ge 0\}$ on the smallest time scale needed for
the process to jump from one leave to another.

Let $\theta_N(1) = \min \{\theta_{N,i} : 1\le i\le \nu\}$ and denote by
$S_1$ the indices of the shallowest leaves, i.e., the ones whose
valleys have depth of magnitude $\theta_N(1)$:
\begin{equation*}
S_1\;=\; \big\{ i : \theta_{N,i} \approx \theta_N (1) \big\}\; .
\end{equation*}

Since, by Proposition \ref{s24}, the depths of the valleys are
comparable and since $\theta_N(1)$ is the depth of the shallowest
valley, $\theta_N(1)/\theta_{N,i}$ converges as $N\uparrow\infty$:
\begin{equation}
\label{08}
\lambda (i) \;:=\; \lim_{N\to\infty} \frac{\theta_N(1)}{\theta_{N,i}} 
\;\in\; (0,\infty)\;.  
\end{equation}

\begin{lemma}
\label{s25}
For any $1\le i\not = j\le \nu$, $\theta_N(1) \, r^{\mc E}_N(\mc E_i,
\mc E_j)$ converges, as $N\uparrow\infty$, to some number $r(i,j)\in
[0,\infty)$.
\end{lemma}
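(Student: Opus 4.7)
The plan is to factor $\theta_N(1)\, r^{\mc E}_N(\mc E_i, \mc E_j)$ into two pieces whose limits have already been identified. The key tool is the identity
\begin{equation*}
\mu_N(\mc E_i) \, r^{\mc E}_N(\mc E_i, \breve{\mc E}_i) \;=\; \Cap_N(\mc E_i, \breve{\mc E}_i)\;,
\end{equation*}
which I would derive as follows. The trace process $\{\eta^{\mc E}_t : t \ge 0\}$ is reversible with respect to $\mu_N$ restricted to $\mc E$, so its Dirichlet form reads $D^{\mc E}_N(f) = \sum_{\{x,y\} \subset \mc E} \mu_N(x) R^{\mc E}_N(x,y) \{f(y) - f(x)\}^2$. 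The only function on $\mc E$ taking value $1$ on $\mc E_i$ and $0$ on $\breve{\mc E}_i$ is $\mb 1_{\mc E_i}$, so the trace-process capacity between these two sets equals $D^{\mc E}_N(\mb 1_{\mc E_i}) = \sum_{x\in \mc E_i,\, y\in \breve{\mc E}_i} \mu_N(x) R^{\mc E}_N(x,y) = \mu_N(\mc E_i) r^{\mc E}_N(\mc E_i, \breve{\mc E}_i)$. Since capacity is invariant under the trace operation for reversible Markov processes, a standard fact from \cite{bl2}, this coincides with $\Cap_N(\mc E_i, \breve{\mc E}_i)$. By the definition of $\theta_{N,i}$ in Proposition \ref{s30}, $r^{\mc E}_N(\mc E_i, \breve{\mc E}_i) = 1/\theta_{N,i}$, and therefore, by \eqref{08} combined with Proposition \ref{s24},
\begin{equation*}
\theta_N(1)\, r^{\mc E}_N(\mc E_i, \breve{\mc E}_i) \;=\; \frac{\theta_N(1)}{\theta_{N,i}} \;\longrightarrow\; \lambda(i) \;\in\; [0,\infty)\;,
\end{equation*}
the limit being positive precisely when $i\in S_1$ and zero otherwise.

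Second, I would apply Corollary \ref{s32} with $F = \mc E$, $A = \mc E_i$, $B = \mc E_j$, so that $F\setminus A = \breve{\mc E}_i$: this gives that the ratio $r^{\mc E}_N(\mc E_i, \mc E_j)/r^{\mc E}_N(\mc E_i, \breve{\mc E}_i)$ converges to some $p_{\mc E}(\mc E_i, \mc E_j) \in [0,1]$.

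Putting the two convergences together, I factor
\begin{equation*}
\theta_N(1)\, r^{\mc E}_N(\mc E_i, \mc E_j) \;=\; \bigl[\theta_N(1)\, r^{\mc E}_N(\mc E_i, \breve{\mc E}_i)\bigr] \cdot \frac{r^{\mc E}_N(\mc E_i, \mc E_j)}{r^{\mc E}_N(\mc E_i, \breve{\mc E}_i)}
\end{equation*}
and pass to the limit to obtain $r(i,j) := \lambda(i)\, p_{\mc E}(\mc E_i, \mc E_j)$, which lies in $[0,\infty)$ since $\lambda(i)\le 1$ and $p_{\mc E}(\mc E_i,\mc E_j)\le 1$. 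The main subtlety is the capacity-rate identity above; everything else is a direct assembly of the results of Sections \ref{ssec1} and \ref{ssec5}.
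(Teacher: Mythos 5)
Your proof is correct and takes essentially the same route as the paper: factor $\theta_N(1)\, r^{\mc E}_N(\mc E_i, \mc E_j)$ as $[\theta_N(1)\, r^{\mc E}_N(\mc E_i, \breve{\mc E}_i)]\cdot r^{\mc E}_N(\mc E_i,\mc E_j)/r^{\mc E}_N(\mc E_i,\breve{\mc E}_i)$, identify the first factor with $\theta_N(1)/\theta_{N,i}\to\lambda(i)$, and apply Corollary~\ref{s32} to the second. The only difference is that the paper simply cites \cite[Lemma 6.7]{bl2} for the identity $\mu_N(\mc E_i)\, r^{\mc E}_N(\mc E_i,\breve{\mc E}_i)=\Cap_N(\mc E_i,\breve{\mc E}_i)$, whereas you rederive it via the observation that $\mb 1_{\mc E_i}$ is the unique admissible function for the trace-process variational problem on $\mc E=\mc E_i\cup\breve{\mc E}_i$, together with capacity invariance under trace (here one should be slightly careful about normalization: the trace process is reversible with respect to the \emph{conditioned} measure $\mu_N(\,\cdot\,|\mc E)$, so the paper's \cite[Lemma 6.9]{bl2} gives trace capacity equal to $\Cap_N/\mu_N(\mc E)$, which matches your unnormalized Dirichlet form — the statement is correct but the phrase ``capacity is invariant'' is convention-dependent).
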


\begin{proof}
Fix $1\le i\not = j\le \nu$.  By \cite[Lemma 6.7]{bl2} and by
Proposition \ref{s30}, we may rewrite $\theta_N(1) \, r^{\mc E}_N(\mc
E_i, \mc E_j)$ as
\begin{equation*}
\theta_N(1) \, r^{\mc E}_N(\mc E_i, \breve{\mc E}_i)  \,
\frac{r^{\mc E}_N(\mc E_i, \mc E_j)}
{r^{\mc E}_N(\mc E_i, \breve{\mc E}_i)} \;=\;
\theta_N(1) \, \frac{\Cap_N (\mc E_i, \breve{\mc E}_i)}
{\mu_N(\mc E_i)}\, \frac{r^{\mc E}_N(\mc E_i, \mc E_j)}
{r^{\mc E}_N(\mc E_i, \breve{\mc E}_i)} 
\;=\; \frac{\theta_N(1)} {\theta_{N,i}}\, 
\frac{r^{\mc E}_N(\mc E_i, \mc E_j)}
{r^{\mc E}_N(\mc E_i, \breve{\mc E}_i)}\;\cdot
\end{equation*}
By \eqref{08}, $\theta_N(1)/\theta_{N,i}$ converges to $\lambda(i)$.
On the other hand, by Corollary \ref{s32}, $r^{\mc E}_N(\mc E_i, \mc
E_j)/r^{\mc E}_N(\mc E_i, \breve{\mc E}_i)$ converges, as
$N\uparrow\infty$, to some number $q(i,j)\in [0,1]$. This proves the
lemma with $r(i,j) = \lambda(i)\, q(i,j)$.
\end{proof}

Let $\Psi: \mc E\to \{1,\dots, \nu\}$ be given by $\Psi(x) =
\sum_{1\le i\le\nu} i \, \mb 1\{x\in \mc E_i\}$.

\begin{lemma}
\label{s26}
Fix $1\le i\le \nu$ and $x\in \mc E_i$. Under $\mb P^N_x$, the speeded
up process $X^N_t = \Psi(\eta^{\mc E}_{t\theta_N(1)})$ converges to a
Markov process on $\{1, \dots, \nu\}$ with rates
$r(\,\cdot\,,\,\cdot\,)$ starting from $i$.
\end{lemma}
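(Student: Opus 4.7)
The strategy is to apply a general convergence theorem for blind trace processes from \cite{bl2}, which yields that a process of the form $X^N_t = \Psi(\eta^{\mc E}_{t\theta_N(1)})$ converges in the Skorokhod topology to the Markov chain on $\{1,\ldots,\nu\}$ with rates $r(i,j)$ provided two ingredients are in place: (H1) the scaled average rates $\theta_N(1)\,r^{\mc E}_N(\mc E_i,\mc E_j)$ converge to $r(i,j)$, and (H2) on the time scale $\theta_N(1)$, the trace process thermalizes within each well $\mc E_i$ before exiting it.

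Hypothesis (H1) is exactly Lemma \ref{s25}. To verify (H2), I would establish property (M1') for the partition $\{\mc E_1, \dots, \mc E_\nu\}$: for every $x, y \in \mc E_i$ and every $\delta > 0$,
\begin{equation*}
\lim_{N\to\infty} \mb P^N_y\bigl[T_x > \delta\theta_N(1)\bigr] \;=\; 0\;.
\end{equation*}
Standard bounds relating expected hitting times to capacities in reversible chains, combined with the estimate $\Cap_N(\{x\},\{y\}) \approx \mu_N(\mc E_i)$ from Lemma \ref{s01}, yield $\mb E^N_y[T_x] = O(1)$. Since $\theta_N(1) \succ 1$ by Proposition \ref{s24}, Markov's inequality then closes the estimate.

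With (H1) and (H2) in hand, the convergence follows via the standard two-step route. For the finite-dimensional distributions, I would apply the strong Markov property at successive entrance times to distinct wells; the thermalization (H2) then implies that the sojourn durations between consecutive inter-well jumps are asymptotically independent exponentials with rate $\sum_j r(i,j)$, and that the target well is chosen according to $r(i,\cdot)/\sum_j r(i,j)$ independently of the sojourn. Tightness in the Skorokhod topology follows from Aldous' criterion, using that the number of inter-well jumps of $X^N_\cdot$ in $[0,T]$ is $O(1)$ since $\theta_N(1)\,r^{\mc E}_N(\mc E_i,\breve{\mc E}_i)$ converges to $\sum_{j\neq i} r(i,j)$.

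The main conceptual obstacle is the non-Markovian character of $X^N_t$: for each finite $N$, the law of the sojourn in $\mc E_i$ and of the next well visited depends on the precise starting state within $\mc E_i$. Property (M1') is exactly what washes out this dependence---by visiting every state of $\mc E_i$ with high probability before exiting, the process effectively samples from the restricted equilibrium measure, so the exit rates and exit distributions are determined, in the limit, by the averaged quantities of Lemma \ref{s25} rather than by the initial configuration.
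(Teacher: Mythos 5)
Your proposal correctly identifies Lemma \ref{s25} as supplying the rate-convergence ingredient, but the second ingredient you propose — and your argument for it — diverge from the paper and contain a genuine error.

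The paper's proof of Lemma \ref{s26} simply verifies the two hypotheses (\textbf{H0}) and (\textbf{H1}) of \cite[Theorem 2.7]{bl2} and stops. Condition (\textbf{H0}) is the rate convergence from Lemma \ref{s25}, as you note. But condition (\textbf{H1}) is not a hitting-time (thermalization) statement; it is a capacity condition, roughly that $\Cap_N(\mc E_i,\breve{\mc E}_i)/\Cap_N(\{x\},\{y\})\to 0$ for $x,y\in\mc E_i$, and it follows directly from Lemma \ref{s01} ($\Cap_N(\{x\},\{y\})\approx\mu_N(\mc E_i)$) together with Proposition \ref{s24} ($\theta_{N,i}\to\infty$, i.e.\ $\Cap_N(\mc E_i,\breve{\mc E}_i)\prec\mu_N(\mc E_i)$). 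Property (\textbf{M1}') is established separately, in Proposition \ref{s07}, and is \emph{not} an input to Lemma \ref{s26}. So you are verifying a condition the theorem does not ask for, and your sketched route (strong Markov at entrance times, asymptotic independence of sojourn and target, Aldous tightness) amounts to re-deriving \cite[Theorem 2.7]{bl2} from scratch rather than invoking it.

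More seriously, your proof of (\textbf{M1}') is wrong. You assert that capacity bounds give $\mb E^N_y[T_x]=O(1)$. The commute-time identity yields only $\mb E^N_y[T_x]\le \Cap_N(\{x\},\{y\})^{-1}\approx\mu_N(\mc E_i)^{-1}$, and $\mu_N(\mc E_i)$ may vanish, so this expectation can diverge — and indeed need not be $\prec\theta_N(1)$. The paper is explicit about this obstruction in the proof of Proposition \ref{s07}: ``we may not estimate the expectation of $T_{x_i}$ which might be very large if the process leaves the metastable set $\mc E_i$ before reaching the state $x_i$.'' The reason is that once the process escapes $\mc E_i$ it may wander for a very long time before returning to hit $x$. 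For this reason the paper proves (\textbf{M1}') by a jump-chain argument: one bounds, uniformly in $N$, the probability that the embedded chain fails to reach $x$ within $\ell n$ steps while staying inside $\mc E_i$ (geometric decay in $\ell$), and separately bounds the probability that the chain exits $\mc E_i$ within $\ell n$ jumps (order $\ell n\lambda_N$ with $\lambda_N\prec 1$) and the probability that $\ell n$ jumps inside $\mc E_i$ take longer than $\delta\theta_N$ (Chebyshev, order $\ell n/\theta_N$). This avoids any control of the unconditional mean hitting time. To repair your argument you would need to replace the Markov-inequality step with a conditioning on not escaping $\mc E_i$, essentially reproducing the paper's jump-chain estimate.
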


\begin{proof}
We need to check that the assumptions of \cite[Theorem 2.7]{bl2} are
fulfilled. On the one hand, condition ({\bf H1}) follows from Lemma
\ref{s01} and Proposition \ref{s24} which asserts that $\theta_{N,i}
\uparrow\infty$ as $N\uparrow\infty$. On the other hand, condition
({\bf H0}) has been proven in Lemma \ref{s25}.
\end{proof}

Note that $\lambda(j)=0$ if $j\not\in S_1$. The points in $S^c_1$ are
therefore absorbing for the asymptotic dynamics. 

Recall Definition 3.7 of \cite{bl2}.  The main result of this section,
stated below in Proposition \ref{s07}, asserts that the Markov process
$\{\eta^N_t : t\ge 0\}$ exhibits a metastable behavior on the time
scale $\theta_N(1)$ with asymptotic dynamics characterized by the
jumps rates $r(i,j)$ introduced in Lemma \ref{s25}. Denote by $\{\bb
P_i : 1\le i\le \nu\}$ the laws on the path space $D(\bb R_+, \{1,
\dots, \nu\})$ of a Markov process on $\{1, \dots, \nu\}$ whose sites
in $S_1^c$ are absorbing and which jumps from $i\in S_1$ to $j\not =
i$ at rate $r(i,j)$.

\begin{proposition}
\label{s07}
Fix a site $x_i$ on each leave $\mc E_i$. The sequence of Markov
process $\{\eta^N_t : t\ge 0\}$ exhibits a metastable behavior on the
time scale $\theta_N(1)$ with metastates $\{\mc E_i : 1\le i\le
\nu\}$, metapoints $\{x_i : 1\le i\le \nu\}$ and asymptotic Markov
dynamics $\{\bb P_i : 1\le i\le \nu\}$.
\end{proposition}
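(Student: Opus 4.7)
The plan is to verify the three conditions in Definition 3.7 of \cite{bl2} one by one, pulling each from a lemma already available in this section. Concretely, the sequence $\{\eta^N_t\}$ exhibits a metastable behavior on time scale $\theta_N(1)$ once we check: (i) the speeded-up trace process on $\mc E$ converges to the prescribed asymptotic Markov chain, (ii) a thermalization property inside each leaf $\mc E_i$ (the process hits the marked point $x_i$ within time $\delta\theta_N(1)$ with high probability), and (iii) the time the process spends off $\mc E$ on the scale $\theta_N(1)$ is negligible.

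First, convergence of the trace process is Lemma \ref{s26}: starting from any $x\in\mc E_i$, the blind chain $X^N_t=\Psi(\eta^{\mc E}_{t\theta_N(1)})$ converges to the Markov process on $\{1,\dots,\nu\}$ with rates $r(i,j)$ given by Lemma \ref{s25}, which is exactly the law $\bb P_i$. Since $\lambda(j)=0$ for $j\notin S_1$, the indices in $S_1^c$ are automatically absorbing for the limit, consistent with the statement.

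Next, the thermalization condition. I would invoke Lemma \ref{s06} applied with the roles of the states reshuffled: for $x_i\in\mc E_i$ and any $y\in\mc E_i$, the triple $(\mc E_i,\mc E_i\cup\Delta,x_i)$ is a valley of depth $\theta_{N,i}$, and the standard characterization of valleys in \cite{bl2} yields
\begin{equation*}
\lim_{N\to\infty}\max_{y\in\mc E_i}\mb P^N_y\bigl[T_{x_i}>\delta\theta_N(1)\bigr]\;=\;0
\end{equation*}
for every $\delta>0$, because $\theta_N(1)\approx\theta_{N,i}$ for $i\in S_1$, and because inside $\mc E_i$ the capacity from any $y$ to $x_i$ is of order $\mu_N(\mc E_i)$ by Lemma \ref{s01}, much larger than $\Cap_N(\mc E_i,\breve{\mc E}_i)$. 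For $i\notin S_1$ the depth $\theta_{N,i}$ is of strictly larger magnitude than $\theta_N(1)$, so the same estimate holds trivially.

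Finally, the negligibility of time outside $\mc E$ on the scale $\theta_N(1)$: using the strong Markov property together with Lemma \ref{s05}, from any initial state the expected total time $\eta^N$ spends in $\Delta$ up to time $t\theta_N(1)$ is bounded by a constant times the expected number of excursions into $\Delta$, each of duration $O(1)$, and one shows this is $o(\theta_N(1))$ since returns to $\Delta$ from $\mc E$ take at least order one time and Lemma \ref{s05} caps the sojourn time in $\Delta$ between successive visits to $\mc E$. Concretely, writing
\begin{equation*}
\mb E^N_x\Bigl[\int_0^{t\theta_N(1)}\mb 1\{\eta^N_s\in\Delta\}\,ds\Bigr]
\;\le\;\max_{y\in\Delta}\mb E^N_y[T_{\mc E}]\cdot\bigl(1+\mb E^N_x[\#\text{visits to }\Delta\text{ before }t\theta_N(1)]\bigr),
\end{equation*}
the first factor is $O(1)$ by Lemma \ref{s05} and the second is controlled by the convergence result of Lemma \ref{s26}, giving property (M3'). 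Combining (i)--(iii) establishes the proposition.

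The main obstacle is item (iii): one needs to argue that excursions outside $\mc E$ do not accumulate on the scale $\theta_N(1)$. This is not immediate from Lemma \ref{s05} alone (which only bounds a single excursion) and requires combining it with the fact that the time between two successive returns to $\Delta$ is of order $\theta_N(1)\succ 1$, a fact ultimately read off from the convergence of the trace process in Lemma \ref{s26}. Everything else is essentially assembly of results already proved in Sections \ref{ssec1} and \ref{ssec5}.
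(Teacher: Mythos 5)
Your high-level plan — verify the three conditions of Definition 3.7 of \cite{bl2}, with (M2) supplied by Lemma~\ref{s26} — matches the paper's structure, and the attribution of (M2) to Lemma~\ref{s26} is correct. However, the arguments you offer for (M1') and (M3') each contain a genuine gap.

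For (M1'), you assert that it follows from the valley structure of Lemma~\ref{s06} together with the capacity comparison $\Cap_N(\{x_i\},\{y\})\approx\mu_N(\mc E_i)\gg\Cap_N(\mc E_i,\breve{\mc E}_i)$. But the valley definition only gives an exponential law for the exit time (and that the attractor is visited \emph{before} exit); it does not say that $T_{x_i}/\theta_N(1)\to 0$ in probability from every state of $\mc E_i$. The capacity comparison by itself does not close this gap either, because — as the paper explicitly warns — $\mb E^N_y[T_{x_i}]$ could be enormous on the event that the process leaves $\mc E_i$ before reaching $x_i$. The paper's actual proof is a jump-chain argument: choose $n$ so that an $n$-step open path inside $\mc E_i$ from any $y$ to $x_i$ has probability bounded below, then decompose
$\mb P^N_y[T_{x_i}>\delta\theta_N]$
according to whether $\tau_{n\ell}\le\min\{T_{x_i},T_{\mc E_i^c}\}$ (cost $a^\ell$), whether the process has left $\mc E_i$ by jump $n\ell$ (cost $O(\ell n\lambda_N)$ with $\lambda_N\prec 1$), or whether $\tau_{n\ell}>\delta\theta_N$ inside $\mc E_i$ (cost $O(n\ell/\delta\theta_N)$ by Chebyshev), and let $N\to\infty$ then $\ell\to\infty$. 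None of this is implied by the valley property.

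For (M3'), the inequality you write bounds the time in $\Delta$ by $\max_{y\in\Delta}\mb E^N_y[T_{\mc E}]$ times one plus the expected number of visits to $\Delta$; but the excursion durations and the number of excursions are not independent, so this product-of-expectations bound is not justified. More importantly, your claim that ``the time between two successive returns to $\Delta$ is of order $\theta_N(1)$'' is wrong: a sojourn in $\mc E$ ends at the first jump out of a leaf, which happens at rate $\lambda_N=\max_j\max_{y\in\mc E_j}\sum_{z\not\in\mc E_j}R_N(y,z)\prec 1$, so the inter-return time is of order $\lambda_N^{-1}$ and can be of much smaller magnitude than $\theta_N(1)$ (e.g.\ when the process bounces between $\mc E_i$ and $\Delta$ many times before reaching another leaf). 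Lemma~\ref{s26}, which concerns the trace process and is blind to $\Delta$, cannot be used to count these excursions. The paper instead introduces the sojourn times $U_k,V_k$ in $\Delta,\Delta^c$, stochastically bounds each $V_k$ from below by an i.i.d.\ exponential $\hat V_k$ of rate $\lambda_N$ that is constructed independently of the $\{U_k\}$, and then estimates the number of excursions by the Poisson process $\hat N_t$ associated to $\{\hat V_k\}$; this yields the bound $C_0\{1+t\,\theta_N(1)\,\lambda_N\}$, which after dividing by $\theta_N(1)$ tends to zero since $\theta_N(1)\succ 1$ and $\lambda_N\prec 1$. This explicit coupling is exactly what is needed to make the ``excursion count times excursion length'' heuristic rigorous, and it is the missing ingredient in your (iii).
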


\begin{proof}
Condition ({\bf M2}) has been proven in Lemma \ref{s26}.

To prove ({\bf M3'}), observe that for every $x\in E$,
\begin{equation*}
\mb E^N_x \Big[ \int_0^{t} \mb 1\{ \eta^N_{s\theta_N(1)} \in \Delta \} \, ds
\Big] \;\le\; \max_{y\in \Delta} \, \mb E^N_y \Big[ \int_0^{t} 
\mb 1\{ \eta^N_{s\theta_N(1)} \in \Delta \} \, ds \Big] \;.
\end{equation*}

Fix $y\in \Delta$ and denote by $U_k$, $V_k$, $k\ge 1$, the successive
lengths of the sojourns in $\Delta$ and $\Delta^c$:
\begin{equation*}
\begin{split}
& U_1 \;=\; \inf\{t>0 : \eta^N_t \not\in \Delta \}\;, \quad
V_1 \;=\; \inf\{t>0 : \eta^N_{t+U_1} \in \Delta \} \;, \\
& \quad
U_{k+1} \;=\; \inf\{t>0 : \eta^N_{t+V_k} \not\in \Delta\}  \;, \quad
V_{k+1} \;=\; \inf\{t>0 : \eta^N_{t+U_{k+1}} \in \Delta\}  \;.
\end{split}
\end{equation*}
Denote by $\{N_t : t\ge 0\}$ the counting process associated to the
sequence $\{V_k : k\ge 1\}$: $\{N_t = k\} = \{ V_1 + \dots + V_{k} \le
t < V_1 + \dots + V_{k+1}\}$, $k\ge 0$, and observe that 
\begin{equation*}
\int_0^{t \theta_N(1)} \mb 1\{ \eta^N_s \in \Delta \} \, ds \;\le\;
U_1 \;+\; \sum_{k=1}^{N_{t \theta_N(1)}} U_{k+1}\;. 
\end{equation*}

Let $\lambda_N = \max_{1\le j\le \nu} \max_{y\in \mc E_j} \sum_{z\not
  \in \mc E_j} R_N(y,z) \prec 1$.  We may estimate from below the
random variables $\{V_k :k\ge 1\}$ by independent exponential times of
rate $\lambda_N$: $V_k \ge \hat V_k$, where $\{\hat V_k : k\ge 1\}$ is
a sequence of i.i.d. mean $\lambda_N^{-1}$ exponential random
variables, independent also from the sequence $\{U_k :k\ge 1\}$.

Let $\{\hat N_t : t\ge 0\}$ be the Poisson process associated to the
sequence $\{\hat V_k : k\ge 1\}$. Since the sequence $\{\hat V_k :
k\ge 1\}$ is independent of the sequence $\{U_k :k\ge 1\}$, in view of
the previous estimate,
\begin{equation*}
\begin{split}
& \mb E^N_y \Big[ \int_0^{t\theta_N(1)} \mb 1\{ \eta^N_{s} \in \Delta \} \, 
ds \Big] \;\le\; 
\mb E^N_y \Big[U_1 + \sum_{k=1}^{\hat N_{t \theta_N(1)}} U_{k+1} \Big] \\
& \quad  \le\; \mb E^N_y \big[ U_1 \big] \;+\; \sum_{\ell \ge 0}
\mb E^N_y \Big[ \sum_{k=1}^{\ell} U_{k+1} \Big] 
P^N_y \Big[\hat N_{t \theta_N(1)} = \ell \big] \;.  
\end{split}
\end{equation*}
By Lemma \ref{s05} this expression is bounded by $C_0 \{ 1 + t\,
\theta_N(1) \, \lambda_N\}$, which proves condition ({\bf M3'}).

The proof of condition ({\bf M1}') is similar to the one of Lemma
\ref{s05}. However, we may not estimate the expectation of $T_{x_i}$
which might be very large if the process leaves the metastable set
$\mc E_i$ before reaching the state $x_i$. We may of course assume
that $\mc E_i$ is not a singleton so that $\sum_{z\in E} R_N(y,z)$ is
of magnitude one for all $y\in\mc E_i$.

Recall that we denote by $\{\tau_k : k\ge 0\}$ the successive jump
times of $\eta^N_t$ and by $\{Y^N_k : k\ge 0\}$ the jump chain.  Fix
$1\le i\le \nu$, $x_i\in \mc E_i$ and let now $\lambda_N = \max_{y\in
  \mc E_i, z\not \in \mc E_i} R_N(y,z) \prec 1$. For each $y\in \mc
E_i$, there exists an open path $\gamma = (y_0=y, \dots, y_{n(y)}=x)$
from $y$ to $x$ contained in $\mc E_i$. Let $n= \max \{n(y) : y\in \mc
E_i \,,\, y\not = x\}$. There exists a constant $a$, independent of
$N$, such that
\begin{equation*}
\max_{y\in \mc E_i}
\mb P^N_y \big[ Y^N_k \not = x_i \,, 0 \le k\le n \big] \;\le\; a\; <\;
1\; . 
\end{equation*}

On the one hand, for every $\ell\ge 1$, $y\in\mc E_i$,
\begin{equation*}
\mb P^N_y \big[ \tau_{\ell n} \le \min \{ T_{x_i}, T_{\mc E^c_i} \}
\big] \;\le\; 
\mb P^N_y \big[ Y^N_k \not = x_i \,,\, Y^N_k \in \mc E_i \,,\,
0 \le k\le \ell n \big]\;.
\end{equation*}
By the Markov property and by the previous estimate, this expression
is bounded by $a^\ell$.  On the other hand, since the process jumps
from $\mc E_i$ to $\mc E^c_i$ at rate $\lambda_N \prec 1$,
\begin{equation*}
\mb P^N_y \big[ \tau_{n\ell} \ge  T_{\mc E^c_i} \big] \;\le\;
\mb P^N_y \Big[ \bigcup_{k=1}^{n\ell} Y^N_k \not \in \mc E_i \Big]
\;\le\; C_0\, \ell \,n\, \lambda_N 
\end{equation*}
for some finite constant $C_0$ independent of $N$.

In view of the previous bounds, to estimate $\mb P^N_y [ T_{x_i} >
\delta \theta_N ]$ it remains to consider the term
\begin{equation*}
\mb P^N_y \big[ T_{x_i} > \delta \theta_N \,,\,
T_{x_i} < \tau_{n\ell} < T_{\mc E^c_i} \big] \;\le\;
\mb P^N_y \big[ \delta \theta_N  < \tau_{n\ell} 
< T_{\mc E^c_i} \big] \;.
\end{equation*}
Since $\sum_{z\in E} R_N(y,z)$ is of magnitude one for all $y\in\mc E_i$,
before leaving the set $\mc E_i$, we may estimate the times between
jumps by i.i.d. exponential random times with finite mean independent
of $N$. By Tchebycheff inequality, the previous expression is thus
bounded by $C_0 n\ell /\delta \theta_N$ for some finite constant $C_0$
independent of $N$.

We have thus proved that for every $\delta>0$, $y\in\mc E_i$,
\begin{equation*}
\mb P^N_y \big[ T_{x_i} > \delta \theta_N \big] \;\le\;
a^\ell \;+\; C_0\, \ell \,n\, \lambda_N \;+\; \frac{C_0 n\ell}{\delta
  \theta_N}\; \cdot
\end{equation*}
The second assertion of the lemma follows by taking $N\uparrow\infty$
and then $\ell\uparrow\infty$.
\end{proof}

We conclude this section with two remarks. Denote by $P_N(x,i,j)$,
$1\le i\not = j\le \nu$, $x\in \mc E_i$, the hitting
probabilities
\begin{equation*}
P_N(x,i,j) \;:=\; 
\mb P^N_x \big[ T_{\mc E_j} = T_{\breve{\mc E}_i} \big]\;.
\end{equation*}
By Lemma \ref{s29}, $P_N(x,i,j)$ converges to some $P(x,i,j)\in
[0,1]$. Since, by Proposition \ref{s30}, $(\mc E_i, \mc E_i \cup \Delta,
y)$, $y\in \mc E_i$, is a valley, it is not difficult to show that the
limit $P(x,i,j)$ does not depend on the starting point $x$. Therefore,
by Lemma \ref{s39}, for any $1\le i\not = j\le \nu$,
\begin{equation}
\label{41a}
r(i,j) \;=\; \lambda(i) \,p(i,j)\;, \quad \text{where}
\quad p(i,j) \;:=\; \lim_{N\to\infty}
\mb P^N_x \big[ T_{\mc E_j} = T_{\breve{\mc E}_i} \big]
\end{equation}
and where $\lambda(i)$ is defined in \eqref{08}.

Consider a leave $\mc E_i$, $i\in S_1$, and a leave $\mc E_j$ such
that $\mu_N(\mc E_j) \prec \mu_N(\mc E_i)$. By reversibility,
\begin{equation*}
\mu_N(\mc E_i) \, r^{\mc E}_N(\mc E_i, \mc E_j) \;=\; \mu_N(\mc E_j) 
\, r^{\mc E}_N(\mc E_j, \mc E_i) \;.
\end{equation*}
By \cite[Lemma 6.7]{bl2}, $r^{\mc E}_N(\mc E_j, \mc E_i) \le r^{\mc
  E}_N(\mc E_j, \breve{\mc E}_j) = \Cap_N(\mc E_j, \breve{\mc
  E}_j)/\mu_N(\mc E_j) = 1/\theta_{N,j}$, so that $\theta_N(1) \,
r^{\mc E}_N(\mc E_j, \mc E_i)$ is bounded. Therefore, $\theta_N(1) \,
r^{\mc E}_N(\mc E_i, \mc E_j)$ vanishes as $N\uparrow\infty$. We have
just proved that
\begin{equation}
\label{10}
r(i,j)\;=\; \lim_{N\to\infty} \theta_N(1) \, r^{\mc E}_N(\mc E_i, \mc E_j) \;=\; 0
\quad \text{for all $j$; } \mu_N(\mc E_j) \prec \mu_N(\mc
E_i)\; .
\end{equation}
Hence, in the asymptotic dynamics, the process may only jump from a
leave $\mc E_i$ to a leave $\mc E_j$ if the measure of $\mc E_j$ is of
the same or of a larger magnitude than the one of $\mc E_i$.

\section{Multiscale analysis.}
\label{ssec3}

In the previous section, we proved that the Markov process $\{\eta^N_t
: t\ge 0\}$ exhibits a metastable behaviour on the time scale
$\theta_N(1)$ with metastates $\{\mc E_i : 1\le i\le \nu\}$,
metapoints $\{x_i : 1\le i\le \nu\}$ and asymptotic Markov dynamics
$\{\bb P_i : 1\le i\le \nu\}$.

We describe in this section, by a recursive argument, the metastable
behaviour of the Markov process $\{\eta^N_t : t\ge 0\}$ on longer time
scales. In the statement of the hypothesis {\bf T} below, by
convention, $\theta_N(0) \equiv 1$, $\nu(0) = |E|$ and the sets $\mc
E^{(0)}_1, \dots, \mc E^{(0)}_{\nu(0)}$ are all singletons of $E$.

\smallskip
\noindent {\bf Assumption {\bf T} at level $\mf p$:}\;\; For each
$1\le k\le \mf p$ there exists a sequence $(\theta_N(k) : N\ge 1)$,
$1\prec \theta_N(k) \prec \theta_N(k+1)$, $1\le k <\mf p$, and a
partition $\{\mc E^{(k)}_1 , \dots, \mc E^{(k)}_{\nu(k)}, \Delta_k\}$
of the state space $E$, such that

\renewcommand{\theenumi}{\arabic{enumi}}
\renewcommand{\labelenumi}{({\bf T\theenumi})}

\begin{enumerate}
\item $1\le \nu(k)<\nu(k-1)$.

\item For $1\le i\le \nu(k)$, $\mc E^{(k)}_i = \cup_{j\in I_{k,i}}
  \mc E^{(k-1)}_j$, where $I_{k,1}, \dots, I_{k,\nu(k)}$ are disjoint
  subsets of $\{1, \dots, \nu(k-1)\}$. 

\item For all $1\le i\le \nu(k)$, $\mu_N(x) \approx \mu_N(\mc E^{(k)}_i)$ for
  all $x\in \mc E^{(k)}_i$.

\item There exists a positive constant $C_1$, independent of $N$, such
  that for all $1\le i\le \nu(k)$ and all $x$, $y \in \mc E^{(k)}_i$,
  $x\not = y$, $\Cap_N (x,y) \ge C_1 \mu_N(\mc
  E^{(k)}_i)/\theta_N(k-1)$.

\item For all $1\le i\le \nu(k)$, $\mu_N(\mc E^{(k)}_i)/\Cap_N(\mc
  E^{(k)}_i , \breve{\mc E}^{(k)}_i) \succeq \theta_N (k)$, where
  $\breve{\mc E}^{(k)}_i = \cup_{j\not = i} \mc E^{(k)}_j$.

\item Let 
\begin{equation*}
\mc E^{(k)} \;=\; \bigcup_{i=1}^{\nu(k)} \mc E^{(k)}_i \;,
\quad S_k = \Big\{ i : \frac{\mu_N(\mc E^{(k)}_i)}{\Cap_N(\mc E^{(k)}_i ,
\breve{\mc E}^{(k)}_i)} \approx \theta_N (k) \Big\}\;.
\end{equation*}
Then,
\begin{equation}
\label{37}
\begin{split}
&\quad \lim_{N\to\infty} \theta_N(k) \, r^{\mc E^{(k)}}_N(\mc E^{(k)}_i, \mc
E^{(k)}_j) \;=\;  \mf r_k(i,j)\;, \quad 1\le i\not =j \le \nu(k)\;, \\
& {}\qquad \sum_{j\not = i} \mf r_k(i,j)>0 \text{ for each $i\in S_k$ and }
\sum_{j\not = i} \mf r_k(i,j)=0 \text{ for each $i\not\in S_k$,} \\
&\qquad\quad \text{ and}\quad  \mf r_k(i,j) \;=\; 0 
\quad \text{if}\quad \mu_N(\mc E^{(k)}_j) \;\prec\;
\mu_N(\mc E^{(k)}_i) \; .
\end{split}
\end{equation}
Moreover, recall the definition of the speeded up blind process
$X^{N,k}_t = \Psi_k(\eta^{N, k}_{t \theta_N (k)})$ introduced in the
statement of Theorem \ref{s19}. For every $1\le i\le \nu(k)$, and
$x\in \mc E^{(k)}_i$, under the measure $\mb P^N_x$,
\begin{equation}
\label{20}
\text{the speeded up blind process $X^{N,k}_{t}$ converges}
\end{equation}
to a Markov process on $\{1\, \dots, \nu(k)\}$ characterized by rates
$\mf r_k(l,m)$, $1\le l \not = m\le \nu(k)$, starting from
$i$. 

\item Property ({\bf M1}') of metastability holds. For every $1\le
  i\le \nu(k)$, every $x\in \mc E^{(k)}_i$ and $\delta>0$,
\begin{equation*}
\lim_{N\to \infty} \max_{y\in \mc E^{(k)}_i} \mb P^N_y \big[
T_{x} > \delta \theta_N(k) \big] \;=\; 0\;. 
\end{equation*}

\item Property ({\bf M3}') of metastability holds. For every $t>0$,
\begin{equation*}
\lim_{N\to \infty} \max_{x\in E} \, \mb E^N_x \Big[
\int_0^t \mb 1\{ \eta^N_{s\theta_N(k)} \in \Delta_k\} \, ds  \Big] \;=\; 0\;. 
\end{equation*}

\end{enumerate}
\smallskip

Note that all these properties have been proved in the previous
section for $\mf p=1$ with $\nu(1) = \nu$; $\mc E^{(1)}_1, \dots, \mc
E^{(1)}_{\nu(1)}, \Delta_1$ given by the sets $\mc E_1, \dots, \mc
E_{\nu}, \Delta$ defined just before \eqref{17}; $\theta_N(1) =
\theta_N = p_N(1)$ defined at the beginning of Section \ref{ssec2};
and $\mf r_1 = r$ defined at Lemma \ref{s25}. 

The main result of this section states that if Assumption {\bf T}
holds at level $\mf p$ and $\nu(\mf p)\ge 2$, then it holds at level
$\mf p +1$.

To begin the recursive argument, suppose that $\nu(\mf p)>1$.  We
first describe the metastates at level $\mf p + 1$. We say that there
exists an open path from $\mc E^{(\mf p)}_a$ to $\mc E^{(\mf p)}_b$ if
there exists a sequence $a=a_0 , a_1, \dots, a_n =b$ such that $\mf
r_{\mf p}(a_k, a_{k+1})>0$, where $\mf r_{\mf p}$ is the asymptotic
jump rate introduced in \eqref{20}.  We say that two sets $\mc E^{(\mf
  p)}_a$, $\mc E^{(\mf p)}_b$ are equivalent, $\mc E^{(\mf p)}_a \sim
\mc E^{(\mf p)}_b$, if there exist an open path from $\mc E^{(\mf
  p)}_a$ to $\mc E^{(\mf p)}_b$ and an open path from $\mc E^{(\mf
  p)}_b$ to $\mc E^{(\mf p)}_a$.

Two equivalent sets $\mc E^{(\mf p)}_a$, $\mc E^{(\mf p)}_b$ have
measure of the same magnitude. Indeed, if $\mc E^{(\mf p)}_a$, $\mc
E^{(\mf p)}_b$ are equivalent, there exists an open path $(a=a_0\, ,
\dots, a_n = b\, , \dots, a_{n+m}=a)$ from $\mc E^{(\mf p)}_a$ to $\mc
E^{(\mf p)}_a$ passing by $\mc E^{(\mf p)}_b$. By \eqref{37},
$\mu_N(\mc E^{(\mf p)}_{a_i}) \preceq \mu_N(\mc E^{(\mf
  p)}_{a_{i+1}})$, $0\le i <n+m$. Since $\mc E^{(\mf p)}_{a_0} = \mc
E^{(\mf p)}_{a_{n+m}} = \mc E^{(\mf p)}_a$, we obtain that
\begin{equation}
\label{13}
\mu_N(\mc E^{(\mf p)}_a) \;\approx\; \mu_N(\mc E^{(\mf p)}_b)\;,
\end{equation}
as claimed.

We call a metastate in the time scale $\theta_N(k)$ a $k$-metastate.
If we declare a $\mf p$-metastate equivalent to itself, the relation
$\sim$ introduced in the penultimate paragraph becomes an equivalent
relation among the $\mf p$-metastates $\mc E^{(\mf p)}_1, \dots, \mc
E^{(\mf p)}_{\nu (\mf p)}$. Denote by $\mc C^{(\mf p +1)}_1, \mc
C^{(\mf p +1)}_2, \dots, \mc C^{(\mf p +1)}_{\alpha(\mf p +1)}$ the
equivalent classes.  Some equivalent classes are connected to other
equivalent classes. By drawing an arrow from a set $\mc C^{(\mf p
  +1)}_i$ to a set $\mc C^{(\mf p +1)}_j$ if there exist $\mc E^{(\mf
  p)}_a \subset \mc C^{(\mf p +1)}_i$, $\mc E^{(\mf p)}_b \subset \mc
C^{(\mf p +1)}_j$ such that $\mf r_{\mf p}(a,b)>0$, the set $\{\mc
C^{(\mf p +1)}_1, \dots, \mc C^{(\mf p +1)}_{\alpha (\mf p +1)}\}$
becomes an oriented graph with no directed loops. We denote by $\mc
E^{(\mf p +1)}_1, \mc E^{(\mf p +1)}_2, \dots, \mc E^{(\mf p +1)}_{\nu
  (\mf p+1)}$ the leaves of this graph, i.e., the set of equivalent
classes with no successors in the terminology of graph theory, and by
$\Delta^o_{\mf p+1}$ the union of the remaining sets so that $\{\mc
E^{(\mf p +1)}_1, \dots, \mc E^{(\mf p +1)}_{\nu (\mf p+1)},
\Delta_{\mf p +1}\}$, $\Delta_{\mf p +1} = \Delta^o_{\mf p+1} \cup
\Delta_{\mf p}$, forms a partition of $E$:
\begin{equation*}
E \;=\; \mc E^{(\mf p +1)} \cup \Delta_{\mf p +1} \;, \quad
\mc E^{(\mf p +1)} \;=\; \mc E^{(\mf p +1)}_1 \cup  \cdots \cup 
\mc E^{(\mf p +1)}_{\nu(\mf p+1)} \; .
\end{equation*}
For $1\le i\le \nu (\mf p+1)$, let $\breve{\mc E}^{(\mf p +1)}_i$ be
the union of all leaves except $\mc E^{(\mf p +1)}_i$:
\begin{equation*}
\breve{\mc E}^{(\mf p +1)}_i \;=\; \bigcup_{j\not = i} \mc E^{(\mf p +1)}_j\;.
\end{equation*}

We may now state the main result of this section. 

\renewcommand{\theenumi}{\arabic{enumi}}
\renewcommand{\labelenumi}{({\bf \theenumi})}

\begin{theorem}
\label{s37}
Let $\{\eta^N_t : t\ge 0\}$ be a sequence of irreducible, reversible
Markov processes on a finite state space $E$ satisfying assumptions
\eqref{24} and \eqref{30}. Suppose that Assumption {\bf T} at level
$\mf p$ holds and that $\nu(\mf p)\ge 2$. Define $\nu(\mf p+1)$, $\mc
E^{(\mf p+1)}_i$, $\breve{\mc E}^{(\mf p+1)}_i$, $1\le i\le \nu(\mf
p+1)$, $\mc E^{(\mf p+1)}$, $\Delta^o_{\mf p +1}$, $\Delta_{\mf p+1}$
as above. Then,
\begin{enumerate}
\item For $1\le i\le \nu (\mf p+1)$ and $x$ in $\mc E^{(\mf p +1)}_i$,
  the triple $(\mc E^{(\mf p +1)}_i, \mc E^{(\mf p +1)}_i \cup
  \Delta^o_{\mf p+1} ,x)$ is a valley for the trace process
  $\{\eta^{N,\mf p}_t : t\ge 0\}$ of depth $\theta_{N,i} = \mu_N(\mc
  E^{(\mf p +1)}_i)/$ $\Cap_N (\mc E^{(\mf p +1)}_i , \breve{\mc
    E}^{(\mf p +1)}_i)$.

\item The sequences $(\theta_{N,i} : N\ge 1)$, $1\le i\le \nu(\mf
  p+1)$ are comparable.

\item Let $\theta_N(\mf p+1) = \min\{\theta_{N,i} : 1\le i\le \nu (\mf
  p+1)\}$. Then, $\theta_N(\mf p+1) \succ \theta_N(\mf p)$.

\item Assumption {\bf T} at level $\mf p+1$ holds.
\end{enumerate}
\end{theorem}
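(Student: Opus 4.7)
The strategy is to repeat the analysis of Sections \ref{ssec5} and \ref{ssec2} one level up, with the trace process $\{\eta^{N,\mf p}_t : t\ge 0\}$ on $\mc E^{(\mf p)}$ playing the role of the original process, the $\mf p$-metastates playing the role of sites, and the limiting rates $\mf r_{\mf p}(a,b)$ supplied by (T6) playing the role of the convergent limiting rates $r(x,y)$ in the base case. The relation $\sim$ defined via two-sided open paths in $\mf r_{\mf p}$ partitions the $\mf p$-metastates into equivalence classes $\mc C^{(\mf p+1)}_j$; the leaves of the induced oriented graph are the new metastates $\mc E^{(\mf p+1)}_i$, in exact analogy with the construction preceding \eqref{17}.

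\emph{Items (1), (2), (3).} For (1), I would verify the hypotheses of \cite[Theorem 2.6]{bl2} for the trace process applied to $(\mc E^{(\mf p+1)}_i, \mc E^{(\mf p+1)}_i\cup\Delta^o_{\mf p+1},x)$, mirroring Lemmas \ref{s02} and \ref{s06}. The negligibility of $\mu_N$ on $\Delta^o_{\mf p+1}\setminus\mc B^{(\mf p+1)}_i$ follows by iterating the second line of \eqref{37} along open paths, since $\mf r_{\mf p}$-open transitions only travel from smaller to equal or larger measure; the capacity estimate analogous to \eqref{01} is obtained by a Dirichlet form upper bound, bounding each nonvanishing term by $\mu_N(\mc E^{(\mf p+1)}_i)$ times a transition rate between $\mf p$-metastates whose rescaling $\theta_N(\mf p)\cdot$ vanishes. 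Comparability of the $\theta_{N,i}$ in (2) follows from the argument of Proposition \ref{s24}: each ratio $\theta_{N,i}/\theta_{N,j}$ is a product of measures and rates satisfying \eqref{30}. For (3), $\theta_N(\mf p+1)\succ\theta_N(\mf p)$ because the defining capacity $\Cap_N(\mc E^{(\mf p+1)}_i,\breve{\mc E}^{(\mf p+1)}_i)$ is realized on bonds joining distinct equivalence classes of $\mf r_{\mf p}$, along which $\theta_N(\mf p)\, r^{\mc E^{(\mf p)}}_N(\,\cdot\,,\,\cdot\,)\to 0$ by (T6); consequently $\mu_N(\mc E^{(\mf p+1)}_i)/\Cap_N(\mc E^{(\mf p+1)}_i,\breve{\mc E}^{(\mf p+1)}_i)\succ \theta_N(\mf p)$.

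\emph{Item (4): Assumption T at level $\mf p+1$.} Properties (T1), (T2) are immediate from the construction, (T3) follows from \eqref{13} and (T3) at level $\mf p$, and (T5) is the definition of $\theta_N(\mf p+1)$. For (T4), given $x,y\in\mc E^{(\mf p+1)}_i$ one chains an intra-metastate path by first invoking (T4) at level $\mf p$ inside the ambient $\mf p$-metastates, then linking $\mf p$-metastates by $\mf r_{\mf p}$-open bonds, whose effective conductances are of order $\mu_N(\mc E^{(\mf p+1)}_i)/\theta_N(\mf p)$; the Schwarz-inequality argument of Lemma \ref{s01} then yields the required bound. For (T6), I would imitate Lemma \ref{s25} by factoring $\theta_N(\mf p+1)\, r^{\mc E^{(\mf p+1)}}_N(\mc E^{(\mf p+1)}_i,\mc E^{(\mf p+1)}_j)$ through $\theta_N(\mf p+1)\Cap_N(\mc E^{(\mf p+1)}_i,\breve{\mc E}^{(\mf p+1)}_i)/\mu_N(\mc E^{(\mf p+1)}_i)$ and applying Corollary \ref{s32}; convergence of the speeded-up blind process then comes from \cite[Theorem 2.7]{bl2}, whose (H1) is supplied by the just-proved (T4) at level $\mf p+1$ together with (T5), and whose (H0) is the convergence of rates. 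The vanishing of $\mf r_{\mf p+1}(i,j)$ when $\mu_N(\mc E^{(\mf p+1)}_j)\prec \mu_N(\mc E^{(\mf p+1)}_i)$ follows from reversibility exactly as in \eqref{10}.

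\emph{Items (T7), (T8) and main obstacle.} Property (T8) follows by the Poisson-coupling argument of Proposition \ref{s07} applied to $\Delta_{\mf p+1}$, using (T8) at level $\mf p$ to absorb the contribution of $\Delta_{\mf p}$ and the scale gap $\theta_N(\mf p+1)\succ\theta_N(\mf p)$ to absorb the contribution of $\Delta^o_{\mf p+1}$. The most delicate point is (T7): starting from $y\in\mc E^{(\mf p+1)}_i$ one must visit every $x\in\mc E^{(\mf p+1)}_i$ before leaving. The jump chain now lives on $\mf p$-metastates with macroscopic rates $\mf r_{\mf p}$, and one must first reach the $\mf p$-metastate containing $x$ via $\mf r_{\mf p}$-open paths (available since equivalence classes are strongly connected in $\mf r_{\mf p}$) and only then hit the point $x$ inside it, invoking (T7) at level $\mf p$. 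The interlocking of the two scales via $\theta_N(\mf p+1)\succ\theta_N(\mf p)$, together with maintaining the consistency of the open-path/leaf construction through the recursion, is the main technical hurdle.
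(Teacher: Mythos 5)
Your proposal is correct and follows essentially the same route as the paper: you replicate the leaf/equivalence-class construction of Section \ref{ssec5} for the trace process $\eta^{N,\mf p}_t$, verify the valley conditions of \cite[Theorem 2.6]{bl2} via \eqref{37} and the Dirichlet-form bound (mirroring Lemmas \ref{s15}--\ref{s18}), obtain comparability and the scale gap from \eqref{30} and the vanishing of the rescaled rates on inter-class bonds (Lemmas \ref{s28}, and the discussion after), and then prove (T1)--(T8) in exactly the order the paper does (Lemmas \ref{s33}, \ref{s09}, \ref{s17}, \ref{s16}, \ref{s35}, \ref{s10}). The one spot where your sketch diverges is (T8) on $\Delta^o_{\mf p+1}$: the paper (Lemma \ref{s10}) uses a time-change to the scale $\theta_N(\mf p)$ combined with hitting-time estimates for $T_1 = T_{\mc E^{(\mf p+1)}}$ and the return time $T_2$ to $\Delta^o_{\mf p+1}$, whereas you invoke the Poisson-coupling of Proposition \ref{s07}; that coupling can be made to work but requires the additional observation that sojourns in $\Delta^o_{\mf p+1}$ are of order $\theta_N(\mf p)$ (which needs an analogue of Lemma \ref{s05} at the level of $\mf p$-metastates), whereas the paper's hitting-time route is more direct.
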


In the next remark, we summarize what informations are needed in each
model to prove all its metastable behavior. It says, in essence, that
to prove the metastable behavior of a particular dynamics, we need
only to obtain informations on the measure, on the capacity and on the
hitting times of subsets of the process.

\begin{remark}
\label{s38}
{\rm In the applications, once the metastable behavior in the time
scale $\theta_N(1)$ among the shallowest valleys has been determined,
we shall use Theorem \ref{s37} to describe the metastable behavior of
the process in the longer time scales. We first characterize the
$k$-metastates following the recipe presented above the statement of
Theorem \ref{s37}. According to this theorem, the $k$-metastates form
valleys of different depths. To determine the time scale at which
metastability at level $k$ can be observed we need to compute the
depth of each valley. This computation requires estimates on the
capacities among metastates and estimates on the measure of each
metastate. Once this has been done, we may define the time scale
$\theta_N(k)$. At this point, to complete the description of the
metastable behavior of the process at level $k$, it remains to obtain
the rates $\mf r_k(i,j)$. Theorem \ref{s37} asserts that the
asymptotic rates $\mf r_k(i,j)$ exist. By \eqref{41} the rates may be
expressed in terms of the asymptotic depths of the valleys and the
hitting probabilities of the metastates. Hence, to conclude we need to
compute in each model, the limit of the hitting probabilities defined
in \eqref{38}, which exist in virtue of Lemma \ref{s29}.

For some evolutions, as the Kawasaki dynamics, it may be difficult to
obtain an exact expression for the limit of the hitting
probabilities. Nevertheless, if we may at least determine if the rates
$\mf r_k(i,j)$ are positive or equal to $0$, we may apply Theorem
\ref{s37} and determine the time scales at which a metastable behavior
is observed and the metastates at each time scale, without an exact
knowledge of the asymptotic dynamics among the metastates. }
\end{remark}

The proof of Theorem \ref{s37} is divided in several lemmas.  We first
show that conditions ({\bf T1}) and ({\bf T2}) are satisfied for
$k=\mf p+1$.

\begin{lemma}
\label{s33}
We have that $\nu(\mf p+1)<\nu(\mf p)$ and that $\mc E^{(\mf p+1)}_i =
\cup_{a\in I_{\mf p+1,i}} \mc E^{(\mf p)}_a$, $1\le i\le \nu(\mf
p+1)$, where $I_{\mf p+1,1}, \dots, I_{\mf p+1,\nu(\mf p+1)}$ are
disjoint subsets of $\{1, \dots, \nu(\mf p)\}$.
\end{lemma}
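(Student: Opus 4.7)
The partition claim is essentially tautological from the construction. Each leaf $\mc E^{(\mf p+1)}_i$ is one of the equivalence classes $\mc C^{(\mf p+1)}_j$ under the relation $\sim$ on $\{\mc E^{(\mf p)}_1,\dots,\mc E^{(\mf p)}_{\nu(\mf p)}\}$; hence it is a union of $\mf p$-metastates, and distinct equivalence classes being disjoint gives that the index sets $I_{\mf p+1,1},\dots,I_{\mf p+1,\nu(\mf p+1)}\subset\{1,\dots,\nu(\mf p)\}$ are pairwise disjoint. Write this out in one sentence.

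For the strict inequality $\nu(\mf p+1)<\nu(\mf p)$, let $\alpha=\alpha(\mf p+1)$ denote the number of equivalence classes. Clearly $\nu(\mf p+1)\le\alpha\le\nu(\mf p)$. I would split into two cases. If $\alpha<\nu(\mf p)$, some equivalence class contains at least two $\mf p$-metastates, and then $\nu(\mf p+1)\le\alpha<\nu(\mf p)$ is immediate. The interesting case is $\alpha=\nu(\mf p)$, where every equivalence class is a singleton $\{\mc E^{(\mf p)}_i\}$.

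In this case I would invoke property ({\bf T6}) at level $\mf p$: the set $S_{\mf p}$ is nonempty (it contains any index at which the minimum defining $\theta_N(\mf p)$ is attained, which exists since $\theta_N(\mf p)=\min_i\mu_N(\mc E^{(\mf p)}_i)/\Cap_N(\mc E^{(\mf p)}_i,\breve{\mc E}^{(\mf p)}_i)$). Fix $i\in S_{\mf p}$; then $\sum_{j\neq i}\mf r_{\mf p}(i,j)>0$, so there exists $j\neq i$ with $\mf r_{\mf p}(i,j)>0$. Since each equivalence class is a singleton, $\{\mc E^{(\mf p)}_j\}$ is a different equivalence class from $\{\mc E^{(\mf p)}_i\}$, and by the construction of the oriented graph there is an arrow from $\{\mc E^{(\mf p)}_i\}$ to $\{\mc E^{(\mf p)}_j\}$. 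Hence $\{\mc E^{(\mf p)}_i\}$ has a successor and is not a leaf, which forces $\nu(\mf p+1)<\alpha=\nu(\mf p)$.

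The only place where one must think a little is the second case; everything else is unwinding definitions. A minor point worth stating explicitly is that $\nu(\mf p+1)\ge 1$, which holds because the oriented graph has no directed loops (a finite DAG always has a sink); this is needed so that the assertion $\nu(\mf p+1)<\nu(\mf p)$ combined with $\nu(\mf p)\ge 2$ is consistent. No capacity estimates or probabilistic arguments enter this lemma; all the work has already been done in establishing ({\bf T6}) at level $\mf p$.
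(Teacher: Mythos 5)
Your proof is correct and essentially the same as the paper's: both invoke property (\textbf{T6}) at level $\mf p$ (equivalently \eqref{37}) to conclude that any $a\in S_{\mf p}$ has a successor, so that the equivalence class of $\mc E^{(\mf p)}_a$ cannot be a singleton leaf, whence $\nu(\mf p+1)<\nu(\mf p)$. The paper phrases this as a dichotomy (for $a\in S_{\mf p}$, the class of $\mc E^{(\mf p)}_a$ is either in $\Delta^o_{\mf p+1}$ or is a leaf containing more than one $\mf p$-metastate) rather than your case split on $\alpha$ versus $\nu(\mf p)$, but the content is identical.
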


\begin{proof}
A $\mf p$-metastate $\mc E^{(\mf p)}_a$, $a\in S_{\mf p}$, is either
contained in $\Delta^o_{\mf p+1}$ or part of a larger leave $\mc
E^{(\mf p +1)}_i$, in the sense that $\mc E^{(\mf p)}_a \subsetneq \mc
E^{(\mf p +1)}_i$, because by \eqref{37} each $\mf p$-metastate whose
index belongs to $S_{\mf p}$ has at least one successor. In
particular, the number of leaves at level $\mf p+1$ is strictly
smaller than the number of $\mf p$-metastates so that $\nu(\mf p+1) <
\nu(\mf p)$, proving condition ({\bf T1}). Condition ({\bf T2})
follows from the construction.
\end{proof}

Next lemma shows that conditions ({\bf T3}), ({\bf T4}) are in force
for $k=\mf p+1$.

\begin{lemma}
\label{s09}
For all $1\le i\le \nu(\mf p+1)$, $x\in \mc E^{(\mf p +1)}_i$,
$\mu_N(x) \approx \mu_N(\mc E^{(\mf p +1)}_i)$. Moreover, there exists
a positive constant $C_1$ such that for all $1\le i\le \nu(\mf p+1)$
and all $x$, $y \in \mc E^{(\mf p +1)}_i$, $x\not = y$, $\Cap_N (x,y)
\ge C_1 \mu_N(\mc E^{(\mf p +1)}_i)/\theta_N(\mf p)$.
\end{lemma}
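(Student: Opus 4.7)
The plan is as follows. Since $\mc E^{(\mf p+1)}_i = \bigcup_{a \in I_{\mf p+1,i}} \mc E^{(\mf p)}_a$ is, by construction, a union of $\mf p$-metastates forming an equivalence class under $\sim$, relation \eqref{13} ensures that $\mu_N(\mc E^{(\mf p)}_a) \approx \mu_N(\mc E^{(\mf p)}_{a'})$ for every $a, a' \in I_{\mf p+1, i}$. Combining this with condition ({\bf T3}) at level $\mf p$, which gives $\mu_N(x) \approx \mu_N(\mc E^{(\mf p)}_a)$ whenever $x \in \mc E^{(\mf p)}_a$, one immediately obtains $\mu_N(x) \approx \mu_N(\mc E^{(\mf p+1)}_i)$ for every $x \in \mc E^{(\mf p+1)}_i$.

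For the capacity bound I will split according to whether $x$ and $y$ lie in the same $\mf p$-metastate. If $x, y \in \mc E^{(\mf p)}_a$ for some $a \in I_{\mf p+1, i}$, then condition ({\bf T4}) at level $\mf p$ gives $\Cap_N(x,y) \ge C_1 \mu_N(\mc E^{(\mf p)}_a)/\theta_N(\mf p-1)$, which in view of the measure claim just proved and the fact that $\theta_N(\mf p - 1) \prec \theta_N(\mf p)$ is, for all $N$ large enough, at least a constant multiple of $\mu_N(\mc E^{(\mf p+1)}_i)/\theta_N(\mf p)$. The interesting case is therefore $x \in \mc E^{(\mf p)}_a$, $y \in \mc E^{(\mf p)}_b$ with $a \ne b$ but $a \sim b$; the plan is then to pick a chain $a = a_0, a_1, \dots, a_n = b$ in $I_{\mf p+1, i}$ with $\mf r_{\mf p}(a_k, a_{k+1}) > 0$ for each $k$, which exists by definition of the equivalence relation.

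For each link of this chain, property ({\bf T6}) at level $\mf p$ and the definition \eqref{35} yield
\begin{equation*}
\sum_{u \in \mc E^{(\mf p)}_{a_k}} \sum_{v \in \mc E^{(\mf p)}_{a_{k+1}}}
\mu_N(u) R^{\mc E^{(\mf p)}}_N(u, v)
\;=\; \mu_N(\mc E^{(\mf p)}_{a_k})\, r^{\mc E^{(\mf p)}}_N(\mc E^{(\mf p)}_{a_k}, \mc E^{(\mf p)}_{a_{k+1}})
\;\approx\; \mu_N(\mc E^{(\mf p+1)}_i)/\theta_N(\mf p).
\end{equation*}
The number of pairs is bounded independently of $N$ and each sequence $\mu_N(u) R^{\mc E^{(\mf p)}}_N(u, v)$ is comparable (Lemma \ref{s29} together with \eqref{30}), so by a pigeonhole argument on magnitudes at least one pair $(u_k, v_k)$ must satisfy $G^{\mc E^{(\mf p)}}_N(u_k, v_k) \succeq \mu_N(\mc E^{(\mf p+1)}_i)/\theta_N(\mf p)$. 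Applying Lemma \ref{s03} inside the trace process on $\mc E^{(\mf p)}$, whose rates fulfill the multi-scale hypothesis thanks to Lemma \ref{s29}, and using invariance of the capacity under traces, one deduces $\Cap_N(u_k, v_k) \succeq \mu_N(\mc E^{(\mf p+1)}_i)/\theta_N(\mf p)$.

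The proof will conclude by invoking the triangle inequality for effective resistance, $\Cap_N(x,y)^{-1} \le \sum_\ell \Cap_N(z_\ell, z_{\ell+1})^{-1}$, applied to the chain $z_0 = x, z_1 = u_0, z_2 = v_0, z_3 = u_1, \dots, z_{2n-1} = u_{n-1}, z_{2n} = v_{n-1}, z_{2n+1} = y$. Each consecutive pair either lies in the same $\mf p$-metastate (controlled by the first case) or is one of the bridge pairs $(u_k, v_k)$; either way the reciprocal capacity is at most a constant times $\theta_N(\mf p)/\mu_N(\mc E^{(\mf p+1)}_i)$. Since the chain has length at most $O(|E|)$, the sought inequality $\Cap_N(x,y) \ge C_1 \mu_N(\mc E^{(\mf p+1)}_i)/\theta_N(\mf p)$ follows with a constant depending only on $E$. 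The main obstacle is the passage from the asymptotic average-rate positivity $\mf r_{\mf p}(a_k, a_{k+1}) > 0$ to a capacity lower bound between two specific points of the underlying process; this is resolved by the combination of the pigeonhole-in-magnitude argument, the trace identity for capacities, and the series/triangle inequality for effective resistance.
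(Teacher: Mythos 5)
Your proof is correct and reaches the stated conclusion, but for the capacity lower bound it takes a genuinely different route from the paper. The first part (the measure claim via \eqref{13} and ({\bf T3})) and the split into ``same $\mf p$-metastate'' vs.\ ``different $\mf p$-metastates'' coincide with the paper's argument. For the cross-metastate case, the paper first lower bounds $\Cap_N(\mc E^{(\mf p)}_{a_m}, \mc E^{(\mf p)}_{a_{m+1}})$ directly through inequality \eqref{18}, passes to $G_N$ via Lemma \ref{s03}, extracts explicit ``bridge'' paths $\gamma_m$ and intra-metastate paths $\gamma_m'$, juxtaposes them into one long path from $x$ to $y$, and invokes Lemma \ref{s03} once more. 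You instead pigeonhole the average rate sum to produce a single high-conductance bridge pair $(u_k,v_k)$ per link, control its point-to-point capacity in the trace chain, relate the trace capacity to the original capacity, and close with the series (triangle) inequality for effective resistance. Both are rigorous; yours swaps the explicit path construction for the resistance metric, a standard tool not otherwise used in this paper, and avoids \eqref{18}. Two minor points of tidiness: ``invariance of the capacity under traces'' is only invariance up to the factor $\mu_N(\mc E^{(\mf p)})\approx 1$ (as in [BL, Lemma 6.9]), so to get a lower bound on $\Cap_N$ from one on $\Cap^{\mc E^{(\mf p)}}_N$ you should note this factor is bounded away from zero; and the appeal to Lemma \ref{s03} inside the trace process is stronger than needed here, since $\Cap^{\mc E^{(\mf p)}}_N(u_k,v_k)\ge G^{\mc E^{(\mf p)}}_N(u_k,v_k)$ follows trivially by restricting the Dirichlet form to the single bond $\{u_k,v_k\}$.
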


\begin{proof}
Fix $1\le i\le \nu(\mf p+1)$, $x\in \mc E^{(\mf p +1)}_i$. By
definition, the leave $\mc E^{(\mf p +1)}_i$ is the union of $\mf
p$-metastates: $\mc E^{(\mf p +1)}_i = \cup_{a\in I} \mc E^{(\mf
  p)}_a$, where $I$ is a subset of $\{1, \dots, \nu(\mf p)\}$. 

By \eqref{13}, all $\mf p$-metastates $\mc E^{(\mf p)}_a$, $a\in I$,
have measures of the same magnitude so that $\mu_N(\mc E^{(\mf p
  +1)}_i) \approx \mu_N(\mc E^{(\mf p)}_a)$ for all $a\in I$. By
assumption ({\bf T3}) for $k=\mf p$, $\mu_N(y) \approx \mu_N(\mc
E^{(\mf p)}_a)$ for all $y\in \mc E^{(\mf p)}_a$, which proves the
first claim of the lemma.

To prove the second claim, fix $x$, $y$ in $\mc E^{(\mf p +1)}_i$. If
$x$, $y$ belong to the same set $\mc E^{(\mf p)}_a$, the lemma follows
from assumption ({\bf T4}) for $k=\mf p$, the first part of the lemma
and the fact that $\theta_N(\mf p-1) \prec \theta_N(\mf p)$.

Assume that $x$, $y$ belongs to different $\mf p$-metastates, say
$x\in \mc E^{(\mf p)}_a$, $y\in \mc E^{(\mf p)}_b$, $a\not = b$. Since
$\mc E^{(\mf p)}_a \sim \mc E^{(\mf p)}_b$, there exists an open path,
$a=a_0, \dots, a_n = b$, from $\mc E^{(\mf p)}_a$ to $\mc E^{(\mf
  p)}_b$. This means that $\theta_N(\mf p) \, r^{\mc E^{(\mf
    p)}}_N(\mc E^{(\mf p)}_{a_m}, \mc E^{(\mf p)}_{a_{m+1}})$
converges to a positive number for $0\le m <n$. Therefore, by
\eqref{18}, there exists a positive number $C_0>0$, independent of $N$
and which may change from line to line, such that
\begin{equation*}
\theta_N(\mf p) \, \frac{\Cap_N(\mc E^{(\mf p)}_{a_m},
\mc E^{(\mf p)}_{a_{m+1}})}{\mu_N(\mc E^{(\mf p)}_{a_m})} \;\ge\; C_0
\end{equation*}
for all $N$ large enough and $0\le m <n$.  Since by Lemma \ref{s03}
$\Cap_N (A,B) \approx G_N (A,B)$, $G_N(\mc E^{(\mf p)}_{a_m}, \mc
E^{(\mf p)}_{a_{m+1}}) \ge C_0 \, \mu_N(\mc E_{a_m})/ \theta_N(\mf
p)$. There exists, in particular, a path $\gamma_m$ from $x_m\in \mc
E^{(\mf p)}_{a_m}$ to $y_{m+1}\in \mc E^{(\mf p)}_{a_{m+1}}$, $0\le m
<n$, with $G_N(\gamma_m) \ge C_0\, \mu_N(\mc E^{(\mf p)}_{a_m})/
\theta_N(\mf p)$.

By assumption ({\bf T4}) for $k=\mf p$ and similar arguments to the
ones used above, there exist a path $\gamma'_0$ from $x\in \mc E^{(\mf
  p)}_a$ to $x_0\in \mc E^{(\mf p)}_a$ such that $G_N(\gamma'_0) \ge
C_0\, \mu_N(\mc E^{(\mf p)}_{a})/$ $\theta_N(\mf p-1)$; paths
$\gamma'_m$ from $y_{m}\in \mc E^{(\mf p)}_{a_{m}}$ to $x_{m}\in \mc
E^{(\mf p)}_{a_{m}}$, $1\le m<n$, such that $G_N(\gamma_m') \ge C_0
\mu_N(\mc E^{(\mf p)}_{a_m})/\theta_N(\mf p-1)$; and a path
$\gamma'_n$ from $y_n\in \mc E^{(\mf p)}_b$ to $y\in \mc E^{(\mf
  p)}_b$ such that $G_N(\gamma_n') \ge C_0\, \mu_N(\mc E^{(\mf
  p)}_b)/\theta_N(\mf p-1)$.

Since, by the first part of the lemma, $\mu_N(\mc E^{(\mf p +1)}_i)
\approx \mu_N(\mc E^{(\mf p)}_c)$ for all $c\in I$, juxtaposing all
these paths, we obtain a path $\gamma$ from $x$ to $y$ such that
$G_N(\gamma) \ge C_0\, \mu_N(\mc E^{(\mf p +1)}_i)/\theta_N(\mf
p)$. This shows that
\begin{equation*}
\Cap_N(x,y) \;\approx\; G_N(\{x\},\{y\}) \;\succeq\; G_N(\gamma)
\;\ge\; \frac{C_0\, \mu_N(\mc E^{(\mf p +1)}_i)}{\theta_N(\mf
  p)}\; ,
\end{equation*}
which proves the lemma.
\end{proof}

We next show that condition ({\bf T7}) is in force on any time scale
longer than $\theta_N(\mf p)$.

\begin{lemma}
\label{s17}
Let $\{\theta_N : N\ge 1\}$ be a sequence such that $\theta_N \succ
\theta_N (\mf p)$. Then for every $1\le i\le \nu(\mf p +1)$, $x\in \mc
E^{(\mf p+1)}_i$ and $\delta>0$,
\begin{equation*}
\lim_{N\to \infty} \max_{y\in \mc E^{(\mf p+1)}_i} \mb P^N_y \big[
T_{x} > \delta \, \theta_N \big] \;=\; 0\;. 
\end{equation*}
\end{lemma}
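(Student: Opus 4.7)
The plan is to show that from any $y \in \mc E^{(\mf p+1)}_i$ the process reaches $x$ in time of order $\theta_N(\mf p)$ with probability tending to $1$; since $\theta_N \succ \theta_N(\mf p)$, this implies the lemma. Write $x \in \mc E^{(\mf p)}_{a_0}$ and $y \in \mc E^{(\mf p)}_{a}$ with $a_0, a \in I_{\mf p+1,i}$. By construction of $\mc E^{(\mf p+1)}_i$ as an equivalence class under mutual reachability by open paths, there is a sequence $a = c_0, c_1, \ldots, c_m = a_0$ of indices in $I_{\mf p+1,i}$ with $\mf r_{\mf p}(c_j, c_{j+1}) > 0$ for $0 \le j < m$.

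The core is a telescoping application of (T6) and (T7) at level $\mf p$. Fix $\delta_0 > 0$ small and pick representatives $z_{c_j} \in \mc E^{(\mf p)}_{c_j}$ with $z_{c_m} := x$. Starting from $y$, condition (T7) at level $\mf p$ gives that the process visits $z_{c_0}$ before time $\delta_0 \theta_N(\mf p)$ with probability $1 - o(1)$. From $z_{c_0}$, the convergence \eqref{20} of the speeded-up blind process $X^{N,\mf p}_t$ to the Markov chain with rates $\mf r_{\mf p}$ guarantees that the original process enters $\mc E^{(\mf p)}_{c_1}$ in time at most $K \theta_N(\mf p)$ with probability at least some $q > 0$, for $K$ large enough (using that the limit chain jumps from $c_0$ to $c_1$ at the positive rate $\mf r_{\mf p}(c_0, c_1)$). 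Applying the strong Markov property at the entry time and (T7) at level $\mf p$ inside $\mc E^{(\mf p)}_{c_1}$, we reach $z_{c_1}$ in additional time $\delta_0 \theta_N(\mf p)$. Iterating along the chain $c_0, \ldots, c_m$, one full attempt succeeds with probability at least $q^m - o(1)$ and takes total time $O(m(K + \delta_0)\theta_N(\mf p))$.

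If an attempt fails, property (T8) at level $\mf p$ ensures that the process is (essentially always) in $\mc E^{(\mf p)}$, so we may restart the attempt from the current location by the strong Markov property. After $L_N$ such attempts the probability of never hitting $x$ is bounded by $(1 - q^m)^{L_N} + o(1)$, which vanishes whenever $L_N \to \infty$. Choosing $L_N \to \infty$ slowly enough that $L_N\, m(K+\delta_0)\theta_N(\mf p) \prec \theta_N$ (possible since $\theta_N \succ \theta_N(\mf p)$) yields $\mb P^N_y[T_x > \delta \theta_N] \to 0$. Uniformity in $y \in \mc E^{(\mf p+1)}_i$ follows from the internal uniformity in (T7) at level $\mf p$ combined with the finiteness of $I_{\mf p+1,i}$.

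The main obstacle will be turning the qualitative convergence in (T6) into a quantitative lower bound $q > 0$ on the probability of making a prescribed sequence of inter-metastate jumps in bounded macroscopic time, uniformly over the starting point within a $\mf p$-metastate. This requires a careful Skorokhod-type argument combined with repeated use of the strong Markov property at successive entry times to the metastates $\mc E^{(\mf p)}_{c_j}$, and an appeal to (T8) at level $\mf p$ to ensure that the occasional sojourns in $\Delta_{\mf p}$ do not spoil the time budget.
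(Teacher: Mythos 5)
Your high-level strategy (reach $x$ in time of order $\theta_N(\mf p)$ with probability $1-o(1)$, then use $\theta_N \succ \theta_N(\mf p)$) matches the paper's, and your use of (T7), (T8) and the weak convergence in (T6) is in the right spirit. But the step-by-step iteration along a fixed chain $c_0,\dots,c_m$ with a ``restart on failure'' mechanism is a genuinely different and more delicate route than the paper's, and as written it has a gap.

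The paper's proof uses a single decomposition via the strong Markov property at the hitting time of the $\mf p$-metastate $\mc E^{(\mf p)}_a$ containing $x$:
\begin{equation*}
\mb P^N_y\big[T_x > \delta\theta_N\big] \;\le\; \mb P^N_y\big[T_{\mc E^{(\mf p)}_a} > t\,\theta_N(\mf p)\big] \;+\; \max_{z\in\mc E^{(\mf p)}_a}\mb P^N_z\big[T_x > \delta\theta_N/2\big]\;,
\end{equation*}
for $N$ large. The second term vanishes directly by (T7) since $\theta_N \succ \theta_N(\mf p)$. For the first term, one splits off the time spent in $\Delta_{\mf p}$ using (T8) and applies (T6) to the trace process; the crucial observation is that, because $\mc E^{(\mf p+1)}_i$ is a \emph{leave}, the limit chain with rates $\mf r_{\mf p}$ is irreducible on $I_{\mf p+1,i}$ and never exits it, so $\bb P_b[T_a > t-\epsilon] \to 0$ as $t\to\infty$. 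No explicit chain, no restart machinery, no quantitative lower bound $q$ is required — the qualitative hitting-time finiteness of the irreducible limit chain does all the work.

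The gap in your argument is the restart step. You claim that after a failed attempt, (T8) ensures the process is in $\mc E^{(\mf p)}$, and that one may ``restart the attempt from the current location'' with the same success probability $\ge q^m$. But the process at the end of a failed attempt may be in $\mc E^{(\mf p)}_{c'}$ for some $c' \in I_{\mf p+1,i}$ different from $c_0$ (so your fixed chain does not start there, and a different chain, possibly of different length, is needed), or worse, outside $\mc E^{(\mf p+1)}_i$ entirely (in which case there may be no chain to $a_0$ at all and the attempt probability could be zero). The bound $(1-q^m)^{L_N}$ tacitly assumes a uniform success probability at every restart point, which is not established. To repair this you would need (i) to uniformize the chain length and rate lower bound over all indices in $I_{\mf p+1,i}$, which is fine since $E$ is finite, and (ii) to show that the process does not escape $\mc E^{(\mf p+1)}_i$ over the time horizon $L_N\, m(K+\delta_0)\theta_N(\mf p)$; this is true because the valley depth $\theta_{N,i}$ is $\succ \theta_N(\mf p)$ (Lemma \ref{s15}), so choosing $L_N \to\infty$ slowly enough controls the escape probability, but you do not make this argument. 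The ``Skorokhod-type argument'' you flag as the main obstacle is actually the routine part given finiteness of $E$; the unaddressed issue is the escape control and the uniformity of the restart success probability.
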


\begin{proof}
Fix $1\le i\le \nu(\mf p +1)$, $x, y\in \mc E^{(\mf p+1)}_i$ and
$\delta>0$. Denote by $\mc E^{(\mf p)}_a$, $\mc E^{(\mf p)}_b \subset
\mc E^{(\mf p+1)}_i$ the $\mf p$-metastates which contain $x$, $y$,
respectively. Since $\theta_N \succ \theta_N (\mf p)$, by the strong
Markov property, for every $t>0$ and for every $N$ large enough,
\begin{equation}
\label{21}
\mb P^N_y \big[ T_{x} > \delta \theta_N \big]
\;\le\; \mb P^N_y \big[ T_{\mc E^{(\mf p)}_a} > t \, \theta_N (\mf p) \big]
\;+\; \max_{z \in \mc E^{(\mf p)}_a} 
\mb P^N_z \big[ T_{x} > \delta \theta_N/2 \big]\; .
\end{equation} 

We claim that both expression vanishes as $N\uparrow\infty$ and then
$t\uparrow\infty$.  Denote by $T^{(\mf p)}_{\mc E^{(\mf p)}_a}$ the
hitting time of $\mc E^{(\mf p)}_a$ by the trace process $\eta^{N,\mf
  p}_t$ defined just before \eqref{20}. The first term on the right
hand side of the previous formula is bounded above by
\begin{equation*}
\mb P^N_y \Big[ \int_0^t \mb 1\{ \eta^N_{s \theta_N (\mf p)} 
\in \Delta_{\mf p} \} \, ds > \epsilon \Big] 
\;+\; \mb P^N_y \Big[ T^{(\mf p)}_{\mc E^{(\mf p)}_a} > (t-\epsilon)
\, \theta_N (\mf p) \Big]
\end{equation*}
for every $0<\epsilon<t$.  By property ({\bf T8}) for $k=\mf p$, the
first term vanishes as $N\uparrow\infty$ for every $\epsilon>0$. By
the convergence of the process $\Psi_{\mf p} (\eta^{N,\mf p}_{t
  \theta_N (\mf p)})$ to the Markov process with rates $\mf r_{\mf
  p}(i,j)$, assumed in ({\bf T6}), the second term converges as
$N\uparrow\infty$ to $\bb P_b [ T_a > (t-\epsilon) ]$, where $T_a$
stands for the hitting time of $a$. Since $\mc E^{(\mf p +1)}_i$ is a
leave, the asymptotic dynamics is a irreducible Markov process on the
set of indices $c \in \{1\, \dots, \nu(\mf p)\}$ such that $\mc
E^{(\mf p)}_c \subset \mc E^{(\mf p +1)}_i$. In particular, $\bb P_b [
T_a > (t-\epsilon) ]$ vanishes as $t\uparrow\infty$. This proves that
the first term in \eqref{21} vanishes as $N\uparrow\infty$ and then
$t\uparrow\infty$.

The second term in \eqref{21} vanishes as $N\uparrow\infty$ by
property ({\bf T7}) for $k=\mf p$. This proves the lemma.
\end{proof}

Next lemma shows that we may from now on restrict our attention to the
trace process $\{\eta^{N,\mf p}_t : t\ge 0\}$ in our investigation of the
metastability of $\{\eta^{N}_t : t\ge 0\}$ on a time scale longer than
$\theta_N(\mf p)$.

\begin{lemma}
\label{s16}
Assume that the trace process $\{\eta^{N,\mf p}_t : t\ge 0\}$ satisfy
condition ({\bf T8}) on some time scale $\theta_N \succ \theta_N (\mf
p)$ and for some subset $\Delta^*_{\mf p +1}$ of $\mc E^{(\mf p)}$:
\begin{equation*}
\lim_{N\to \infty} \max_{x\in \mc E^{(\mf p)}} \mb E^N_x \Big[ \int_0^t 
\mb 1\{ \eta^{N,\mf p}_{s \theta_N} \in \Delta^*_{\mf p +1} \} \, ds
\Big]\;=\; 0\;.
\end{equation*}
Then, the same property holds for the Markov process $\{\eta^N_t :
t\ge 0\}$ with $\Delta_{\mf p} \cup \Delta^*_{\mf p +1}$ in place of
$\Delta^*_{\mf p +1}$:
\begin{equation*}
\lim_{N\to \infty} \max_{x\in E_N} \mb E^N_x \Big[ \int_0^t 
\mb 1\{ \eta^{N}_{s \theta_N} \in \Delta_{\mf p} \cup 
\Delta^*_{\mf p +1} \} \, ds \Big]\;=\; 0\;.
\end{equation*}
\end{lemma}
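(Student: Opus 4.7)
The plan is to split the integrand according to the disjoint decomposition $\Delta_{\mf p} \cup \Delta^*_{\mf p+1}$ (they are disjoint because $\Delta^*_{\mf p+1}\subset \mc E^{(\mf p)}$ while $\Delta_{\mf p} = E\setminus \mc E^{(\mf p)}$) and then bound each contribution separately. It is enough to prove
\[
\lim_{N\to\infty} \sup_{x\in E}\mb E^N_x\Big[\int_0^t \mb 1\{\eta^N_{s\theta_N}\in \Delta_{\mf p}\}\,ds\Big] \;=\;0
\qquad\text{and the analogous statement for }\Delta^*_{\mf p+1}.
\]

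For the $\Delta_{\mf p}$ piece the key step is to upgrade assumption ({\bf T8}) at level $\mf p$, which is stated on the scale $\theta_N(\mf p)$, to the longer scale $\theta_N$. Setting
\[
\epsilon_N \;:=\; \sup_{y\in E}\mb E^N_y\Big[\int_0^1 \mb 1\{\eta^N_{s\theta_N(\mf p)}\in \Delta_{\mf p}\}\,ds\Big] \;=\; \frac{1}{\theta_N(\mf p)}\sup_{y\in E}\mb E^N_y\Big[\int_0^{\theta_N(\mf p)}\mb 1\{\eta^N_u\in\Delta_{\mf p}\}\,du\Big],
\]
we know $\epsilon_N\to 0$ by ({\bf T8}). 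Partitioning $[0,t\theta_N]$ into $T_N = \lceil t\theta_N/\theta_N(\mf p)\rceil$ intervals of length $\theta_N(\mf p)$ and applying the Markov property at each break-point bounds the Lebesgue integral by $T_N\cdot\theta_N(\mf p)\cdot\epsilon_N$. Dividing by $\theta_N$ to undo the time change yields
\[
\sup_{x\in E}\mb E^N_x\Big[\int_0^t \mb 1\{\eta^N_{s\theta_N}\in \Delta_{\mf p}\}\,ds\Big] \;\le\; \Big(t+\frac{\theta_N(\mf p)}{\theta_N}\Big)\,\epsilon_N,
\]
which vanishes because $\theta_N\succ \theta_N(\mf p)$.

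For the $\Delta^*_{\mf p+1}$ piece I would use that $\Delta^*_{\mf p+1}\subset \mc E^{(\mf p)}$ and exploit the very definition of the trace process. If $\sigma(v)=\inf\{u: \int_0^u \mb 1\{\eta^N_r\in \mc E^{(\mf p)}\}\,dr > v\}$, then $\eta^{N,\mf p}_v = \eta^N_{\sigma(v)}$; a change of variable gives, for any $T\ge 0$ and any starting point in $\mc E^{(\mf p)}$,
\[
\int_0^T \mb 1\{\eta^N_u\in \Delta^*_{\mf p+1}\}\,du \;=\; \int_0^{T_{\mc E^{(\mf p)}}(T)} \mb 1\{\eta^{N,\mf p}_v\in \Delta^*_{\mf p+1}\}\,dv \;\le\; \int_0^T \mb 1\{\eta^{N,\mf p}_v\in \Delta^*_{\mf p+1}\}\,dv,
\]
because $T_{\mc E^{(\mf p)}}(T)\le T$. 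For $x\in \mc E^{(\mf p)}$ this immediately reduces the claim to the hypothesis on the trace process. For $x\in \Delta_{\mf p}$, let $\tau = T_{\mc E^{(\mf p)}}$; the contribution of $[0,\min(\tau,t\theta_N)]$ is zero because $\Delta^*_{\mf p+1}$ and $\Delta_{\mf p}$ are disjoint, and the strong Markov property at $\tau$ together with the previous inequality reduces the rest to the hypothesis once again.

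The only genuinely delicate point, which I expect to be the main obstacle, is the amplification argument in the $\Delta_{\mf p}$ step: one must be careful that the Markov decomposition produces a uniform supremum over the restart states (otherwise summing over $T_N\to\infty$ intervals would fail). This is handled because ({\bf T8}) is stated with $\max_{x\in E}$, so the single constant $\epsilon_N$ controls every summand simultaneously, and the ratio $T_N\theta_N(\mf p)/\theta_N$ stays bounded by $t+o(1)$.
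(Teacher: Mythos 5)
Your argument is correct and is essentially the paper's own: the same union-bound decomposition into the $\Delta_{\mf p}$ and $\Delta^*_{\mf p+1}$ pieces, the same reduction of the $\Delta^*_{\mf p+1}$ piece to the trace-process hypothesis via the time change (which the paper leaves implicit in its first displayed inequality), and the same Markov-property partitioning trick to amplify ({\bf T8}) from scale $\theta_N(\mf p)$ to $\theta_N$. The only difference is cosmetic: you measure the sub-intervals in time units of $\theta_N(\mf p)$ with the $t=1$ version of ({\bf T8}), whereas the paper keeps length-$t$ sub-intervals after rescaling, but the two bookkeepings are identical.
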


\begin{proof}
Fix $x \in E$ and observe that
\begin{equation*}
\begin{split}
& \mb E^N_x \Big[ \int_0^t \mb 1\{ \eta^{N}_{s \theta_N} 
\in \Delta_{\mf p} \cup \Delta^*_{\mf p +1} \} \, ds \Big] \\ 
&\qquad \;\le\; \mb E^N_x \Big[ \int_0^t \mb 1\{ \eta^{N}_{s \theta_N} 
\in \Delta_{\mf p}\} \, ds \Big] \;+\; \max_{y\in \mc E^{(\mf p)}}
\mb E^N_y \Big[ \int_0^t \mb 1\{ \eta^{N, \mf p}_{s \theta_N} 
\in \Delta^*_{\mf p +1} \} \, ds \Big]\;.    
\end{split}
\end{equation*}
The second term vanishes as $N\uparrow\infty$ by assumption.
The first one is bounded by
\begin{equation*}
\frac {\theta_N(\mf p)}{\theta_N} \sum_{n=0}^{[\theta_N/\theta_N(\mf p)]} 
\mb E^N_x \Big[ \int_{nt}^{(n+1)t} \mb 1\{ \eta^{N}_{s \theta_N(\mf p)} 
\in \Delta_{\mf p}\} \, ds \Big]\;,
\end{equation*}
where $[r]$ stands for the integer part of $r$. By the Markov
property, this expression is bounded above by
\begin{equation*}
2\, \max_{y\in E} 
\mb E^N_y \Big[ \int_{0}^{t} \mb 1\{ \eta^{N}_{s \theta_N(\mf p)} 
\in \Delta_{\mf p}\} \, ds \Big]\; ,
\end{equation*}
which vanishes as $N\uparrow\infty$ in virtue of ({\bf T8}) for $k=\mf
p$.
\end{proof}

Consider the trace process $\{\eta^{N,\mf p}_t : t\ge 0\}$. By formula
(6.12) in \cite{bl2}, its invariant probability measure is the measure
$\mu_N$ conditioned to $\mc E^{(\mf p)}$, and by \cite[Lemma 6.9]{bl2}
the capacity between two disjoint subsets of $\mc E^{(\mf p)}$ for the
trace process $\{\eta^{N,\mf p}_t : t\ge 0\}$ is equal to the the
capacity for the original process divided by $\mu_N(\mc E^{(\mf p)})$.

The evolution the trace process $\{\eta^{N,\mf p}_t : t\ge 0\}$ on
$\mc E^{(\mf p)}$ is similar to the one of $\{\eta^{N}_t : t\ge 0\}$
among the shallowest valleys. We claim, for instance, that $(\mc
E^{(\mf p +1)}_i , \breve{\mc E}^{(\mf p +1)}_i$, $x)$, $x\in \mc
E^{(\mf p +1)}_i$, $1\le i\le \nu(\mf p +1)$, are valleys for the
trace process $\{\eta^{N,\mf p}_t : t\ge 0\}$.  The proof of this
assertion is divided in several steps. We first show that
\begin{equation}
\label{19}
G_N(\mc E^{(\mf p +1)}_i, \breve{\mc E}^{(\mf p +1)}_i) \;\prec\;
\frac{\mu_N(\mc E^{(\mf p +1)}_i)}{\theta_N(\mf p)}\;, 
\quad 1\le i\le \nu(\mf p +1)\; . 
\end{equation}

Indeed, since $\mc E^{(\mf p +1)}_i$ is a leave, there is no open path
from some $\mc E^{(\mf p)}_a \subset \mc E^{(\mf p +1)}_i$ to some
$\mc E^{(\mf p)}_b\not \subset \mc E^{(\mf p +1)}_i$. Therefore, since
by Lemma \ref{s09} $\mu_N(x) \approx \mu_N(\mc E^{(\mf p +1)}_i)$,
$x\in \mc E^{(\mf p +1)}_i$, by the definition of the average rate,
\begin{equation*}
\begin{split}
& \lim_{N\to\infty} \theta_N(\mf p) \, r^{\mc E^{(\mf p)}}_N 
\Big(\mc E^{(\mf p +1)}_i \,,\,\bigcup_b \mc E^{(\mf p)}_b \Big)\\
&\qquad =\; \lim_{N\to\infty} \theta_N(\mf p) \sum_{a} r^{\mc E^{(\mf p)}}_N 
\Big(\mc E^{(\mf p)}_a \,,\, \bigcup_b \mc E^{(\mf p)}_b \Big) \;=\; 0
\; ,
\end{split}
\end{equation*}
where the sum is performed over all $\mf p$-metastates $\mc E^{(\mf
  p)}_a \subset \mc E^{(\mf p +1)}_i$ and the union over all $\mf
p$-metastates $\mc E^{(\mf p)}_b\not \subset \mc E^{(\mf p
  +1)}_i$. Hence, by \cite[Lemma 6.7]{bl2} and Lemma \ref{s03},
\begin{equation}
\label{22}
\lim_{N\to\infty} \theta_N(\mf p) \, 
\frac{ G_N \Big(\mc E^{(\mf p +1)}_i ,\bigcup_b \mc E^{(\mf p)}_b
  \Big)}{\mu_N(\mc E^{(\mf p +1)}_i)} \;=\; 0\;.
\end{equation}
This proves \eqref{19} in view of \eqref{11} and because $\breve{\mc
  E}^{(\mf p +1)}_i \subset \cup_b \, \mc E^{(\mf p)}_b$. 

Recall the definition of the set $\Delta^o_{\mf p+1}$ introduced just
before Lemma \ref{s09}. Denote by $\mc B^{(\mf p +1)}_i$, $1\le i\le
\nu(\mf p +1)$, the union of all $\mf p$-metastates $\mc E^{(\mf
  p)}_b$ which have measure of lower magnitude than $\mc E^{(\mf p
  +1)}_i$ and which are contained in $\Delta^o_{\mf p+1}$. Let also
\begin{equation*}
\mc F^{(\mf p +1)}_i \;=\; \mc E^{(\mf p)} \setminus \big[ \mc E^{(\mf
  p +1)}_i \cup \mc B^{(\mf p+1)}_i \big]\;, \quad 1\le i\le \nu(\mf p+1)
\;.
\end{equation*}

\begin{lemma}
\label{s15}
Fix $1\le i\le \nu (\mf p+1)$ and $x$ in $\mc E^{(\mf p +1)}_i$.  The
triple $(\mc E^{(\mf p +1)}_i, \mc E^{(\mf p +1)}_i \cup \mc B^{(\mf p
  +1)}_i ,x)$ is a valley for the trace process $\{\eta^{N,\mf p}_t :
t\ge 0\}$ of depth $\theta_{N,i} = \mu_N(\mc E^{(\mf p +1)}_i)/$
$\Cap_N (\mc E^{(\mf p +1)}_i , \mc F^{(\mf p +1)}_i)$. Moreover,
$\theta_{N,i} \succ \theta_N(\mf p)$.
\end{lemma}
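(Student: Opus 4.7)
The plan is to apply Theorem 2.6 of \cite{bl2} directly to the trace process $\{\eta^{N,\mf p}_t : t\ge 0\}$, which lives on $\mc E^{(\mf p)}$ with invariant measure $\mu_N(\cdot)/\mu_N(\mc E^{(\mf p)})$. By \cite[Lemma 6.9]{bl2} the trace capacity between two disjoint subsets of $\mc E^{(\mf p)}$ equals the original capacity divided by the common factor $\mu_N(\mc E^{(\mf p)})$, so ratios of measures and ratios of capacities are unchanged by the trace construction and I may verify the abstract criterion using $\mu_N$ and $\Cap_N$ themselves. Theorem 2.6 requires two conditions: the negligibility condition $\mu_N(\mc B^{(\mf p+1)}_i)/\mu_N(\mc E^{(\mf p+1)}_i)\to 0$, and, with $\Cap_N(x):=\min_{y\in \mc E^{(\mf p+1)}_i\setminus\{x\}}\Cap_N(\{x\},\{y\})$, the capacity-separation condition
\begin{equation*}
\lim_{N\to\infty}\frac{\Cap_N(\mc E^{(\mf p+1)}_i,\mc F^{(\mf p+1)}_i)}{\Cap_N(x)}\;=\;0\;.
\end{equation*}

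The first condition is immediate from the definition: $\mc B^{(\mf p+1)}_i$ is a finite union of $\mf p$-metastates $\mc E^{(\mf p)}_b$ each of measure of strictly lower magnitude than $\mu_N(\mc E^{(\mf p+1)}_i)$. For the numerator in the second condition, observe that $\mc F^{(\mf p+1)}_i$ is contained in $\bigcup_b \mc E^{(\mf p)}_b$, the union running over all $\mf p$-metastates not contained in $\mc E^{(\mf p+1)}_i$. Identity \eqref{11} then gives $G_N(\mc E^{(\mf p+1)}_i,\mc F^{(\mf p+1)}_i) \le G_N(\mc E^{(\mf p+1)}_i,\bigcup_b \mc E^{(\mf p)}_b)$, and the already-established estimate \eqref{22} shows that this is of strictly lower order than $\mu_N(\mc E^{(\mf p+1)}_i)/\theta_N(\mf p)$. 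Invoking Lemma \ref{s03} yields
\begin{equation*}
\Cap_N(\mc E^{(\mf p+1)}_i,\mc F^{(\mf p+1)}_i)\;\prec\;\mu_N(\mc E^{(\mf p+1)}_i)/\theta_N(\mf p)\;.
\end{equation*}
For the denominator, Lemma \ref{s09} supplies the matching lower bound $\Cap_N(\{x\},\{y\})\ge C_1\mu_N(\mc E^{(\mf p+1)}_i)/\theta_N(\mf p)$ for every $y\in\mc E^{(\mf p+1)}_i\setminus\{x\}$. Dividing the two bounds establishes the capacity-separation condition, whence Theorem 2.6 of \cite{bl2} identifies the triple as a valley of the claimed depth $\theta_{N,i}=\mu_N(\mc E^{(\mf p+1)}_i)/\Cap_N(\mc E^{(\mf p+1)}_i,\mc F^{(\mf p+1)}_i)$.

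The moreover claim $\theta_{N,i}\succ\theta_N(\mf p)$ is a direct restatement of the capacity upper bound just proved. I do not anticipate any genuine obstacle: the substantive work has already been carried out, with the point-to-point lower bound inside a leave coming from Lemma \ref{s09} and the vanishing of the trace escape rate $\theta_N(\mf p)\,r^{\mc E^{(\mf p)}}_N(\mc E^{(\mf p+1)}_i,\cdot)$ coming from \eqref{22}; the present lemma just assembles these two ingredients inside the abstract valley criterion. If anything, the most delicate bookkeeping step is keeping straight that the capacity in $\mc F^{(\mf p+1)}_i$, rather than in the a priori larger set $\bigcup_b \mc E^{(\mf p)}_b$, is the relevant one, which is handled by the monotonicity in \eqref{11}.
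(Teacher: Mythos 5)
Your proof is correct and follows essentially the same route as the paper: both apply \cite[Theorem 2.6]{bl2} (via Lemma 6.9 to transfer to the trace process), reduce to the two standard conditions, bound $\Cap_N(\mc E^{(\mf p+1)}_i,\mc F^{(\mf p+1)}_i)$ above through $G_N$-monotonicity, \eqref{22} and Lemma~\ref{s03}, bound $\Cap_N(\{x\},\{y\})$ below by Lemma~\ref{s09}, and obtain the ``moreover'' claim as a by-product of the capacity upper bound. The only cosmetic difference is that you spell out the negligibility of $\mu_N(\mc B^{(\mf p+1)}_i)$ explicitly, while the paper treats it as immediate.
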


\begin{proof}
Fix $1\le i\le \nu (\mf p+1)$ and $x$ in $\mc E^{(\mf p +1)}_i$.  In
view of Theorem 2.6, formula (6.12) and Lemma 6.9 in \cite{bl2}, we
only need to check that
\begin{equation*}
\lim_{N\to\infty} \max_{y\in \mc E^{(\mf p +1)}_i} 
\frac{\Cap_N(\mc E^{(\mf p +1)}_i , \mc F^{(\mf p +1)}_i)}
{\Cap_N(x,y)}\;=\; 0\;.
\end{equation*}
This follows from Lemma \ref{s09}, Lemma \ref{s03}, \eqref{11} and
\eqref{22}.

It remains to show that $\theta_{N,i} \succ \theta_N(\mf p)$.  Since
$\mc F^{(\mf p +1)}_i$ is contained in $\cup_b \mc E^{(\mf p)}_b$,
where the union is performed over all $\mf p$-metastates which are not
contained in $\mc E^{(\mf p +1)}_i$, and since $\Cap_N(A,B) \le
\Cap_N(A,C)$ if $B\subset C$, by Lemma \ref{s03}, $\theta_N(\mf
p)/\theta_{N,i}$ is bounded above by
\begin{equation*}
C_1\, \theta_N(\mf p) \, 
\frac{ G_N \Big(\mc E^{(\mf p +1)}_i ,\bigcup_b \mc E^{(\mf p)}_b
  \Big)}{\mu_N(\mc E^{(\mf p +1)}_i)}  
\end{equation*}
for some finite constant $C_1$ independent of $N$. By \eqref{22} this
expression vanishes as $N\uparrow\infty$.
\end{proof}

Denote by $\mb P_x^{N,\mf p}$, $x\in \mc E^{(\mf p)}$, the probability
on the path space $D(\bb R_+, \mc E^{(\mf p)})$ induced by the trace
process $\{\eta^{N,\mf p}_t : t\ge 0\}$ starting from $x$.

\begin{lemma}
\label{s18}
Fix $1\le i\le \nu (\mf p+1)$ and $x$ in $\mc E^{(\mf p +1)}_i$.  The
triple $(\mc E^{(\mf p +1)}_i, \mc E^{(\mf p +1)}_i \cup \Delta^o_{\mf
  p+1} ,x)$ is a valley for the trace process $\{\eta^{N,\mf p}_t :
t\ge 0\}$ of depth $\theta_{N,i} = \mu_N(\mc E^{(\mf p +1)}_i)/$ $\Cap_N
$ $(\mc E^{(\mf p +1)}_i , \mc F^{(\mf p +1)}_i)$. Moreover, $\Cap_N
(\mc E^{(\mf p +1)}_i , \mc F^{(\mf p +1)}_i) \approx \Cap_N$ $(\mc
E^{(\mf p +1)}_i , \breve{\mc E}^{(\mf p +1)}_i)$.
\end{lemma}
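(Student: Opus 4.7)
The plan is to mirror the proof of Lemma \ref{s06}, lifting it one level: the trace process $\{\eta^{N,\mf p}_t\}$ plays the role of $\{\eta^N_t\}$ and the Markov chain with asymptotic rates $\mf r_{\mf p}$ from ({\bf T6}) plays the role of the limit jump chain $\{Z_k\}$. Lemma \ref{s15} already identifies $(\mc E^{(\mf p+1)}_i, \mc E^{(\mf p+1)}_i \cup \mc B^{(\mf p+1)}_i, x)$ as a valley for $\{\eta^{N,\mf p}_t\}$ of depth $\theta_{N,i}$, so by the criterion used in the proof of Lemma \ref{s06} (based on \cite[Theorem 2.6]{bl2}), enlarging the negligible set to $\Delta^o_{\mf p+1}$ reduces to showing that for every $\delta > 0$,
\begin{equation*}
\lim_{N\to\infty} \max_{y \in \Delta^o_{\mf p+1} \setminus \mc B^{(\mf p+1)}_i} \mb P^{N,\mf p}_y\big[ T_{\breve{\mc E}^{(\mf p+1)}_i} > \delta\, \theta_{N,i} \big] \;=\; 0.
\end{equation*}

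Fix such a $y$ in some $\mc E^{(\mf p)}_a$. One must have $a \in S_{\mf p}$, since otherwise \eqref{37} gives $\sum_{j\neq a} \mf r_{\mf p}(a,j) = 0$, making $\{a\}$ a leaf of the level-$\mf p+1$ equivalence graph and placing $\mc E^{(\mf p)}_a$ in some $\mc E^{(\mf p+1)}_k$ rather than in $\Delta^o_{\mf p+1}$; and by definition of $\mc B^{(\mf p+1)}_i$, $\mu_N(\mc E^{(\mf p)}_a) \succeq \mu_N(\mc E^{(\mf p+1)}_i)$. The key claim is that the limiting chain with rates $\mf r_{\mf p}$ starting at $a$ reaches some leaf $\mc E^{(\mf p+1)}_j$, $j\neq i$, in finite time almost surely: absorption in some leaf is forced by the absence of directed loops, while absorption in $\mc E^{(\mf p+1)}_i$ is ruled out as follows. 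If $a = a_0, \ldots, a_n$ were an asymptotic open path with $a_n \in \mc E^{(\mf p+1)}_i$, then $\mf r_{\mf p}(a_{n-1}, a_n) > 0$ and $\mf r_{\mf p}(a_n, a_{n-1}) = 0$ by the leaf property. The detailed balance identity
\begin{equation*}
\mu_N(\mc E^{(\mf p)}_{a_{n-1}})\, r^{\mc E^{(\mf p)}}_N\!\big(\mc E^{(\mf p)}_{a_{n-1}}, \mc E^{(\mf p)}_{a_n}\big) \;=\; \mu_N(\mc E^{(\mf p)}_{a_n})\, r^{\mc E^{(\mf p)}}_N\!\big(\mc E^{(\mf p)}_{a_n}, \mc E^{(\mf p)}_{a_{n-1}}\big)
\end{equation*}
then forces $\mu_N(\mc E^{(\mf p)}_{a_{n-1}}) \prec \mu_N(\mc E^{(\mf p)}_{a_n}) \approx \mu_N(\mc E^{(\mf p+1)}_i)$, while the same identity at the earlier steps (where both forward and backward rates may be positive) yields $\mu_N(\mc E^{(\mf p)}_{a_k}) \preceq \mu_N(\mc E^{(\mf p)}_{a_{k+1}})$. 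Hence $\mu_N(\mc E^{(\mf p)}_a) \prec \mu_N(\mc E^{(\mf p+1)}_i)$, a contradiction. Combining this with Lemma \ref{s15} ($\theta_{N,i} \succ \theta_N(\mf p)$) and property ({\bf T7}) at level $\mf p$, a coupling argument parallel to the one in Lemma \ref{s06} yields the required vanishing.

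For the second claim, I adapt \eqref{07}. The inequality $G_N(\mc E^{(\mf p+1)}_i, \breve{\mc E}^{(\mf p+1)}_i) \le G_N(\mc E^{(\mf p+1)}_i, \mc F^{(\mf p+1)}_i)$ is immediate. For the reverse, take an optimal self-avoiding path $\gamma = (x_0, \ldots, x_n)$ from $\mc E^{(\mf p+1)}_i$ to $\mc F^{(\mf p+1)}_i$ with $x_1 \notin \mc E^{(\mf p+1)}_i$; by \eqref{19}, $G_N(\gamma) \prec \mu_N(\mc E^{(\mf p+1)}_i)/\theta_N(\mf p)$. If $x_n \in \breve{\mc E}^{(\mf p+1)}_i$ we are done; otherwise $x_n$ lies in some $\mc E^{(\mf p)}_a \subset \Delta^o_{\mf p+1} \setminus \mc B^{(\mf p+1)}_i$, and we extend $\gamma$ along the asymptotic open path $a = a_0, \ldots, a_k = j$ into a leaf with $j \neq i$ constructed above. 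As in the proof of Lemma \ref{s09}, each arrow $\mf r_{\mf p}(a_m, a_{m+1}) > 0$ lifts via \cite[Lemma 6.7]{bl2} and Lemma \ref{s03} to a concrete subpath in $E$ whose $G_N$-values are bounded below by $C_0\, \mu_N(\mc E^{(\mf p+1)}_i)/\theta_N(\mf p)$ for $N$ large. The critical bond of the concatenation therefore remains on $\gamma$, producing a path in $\Gamma_{\mc E^{(\mf p+1)}_i,\, \breve{\mc E}^{(\mf p+1)}_i}$ of the same $G_N$-value. The equivalence of capacities then follows from Lemma \ref{s03}.

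The main obstacle is executing the coupling in the first step: the convergence in ({\bf T6}) is formulated for the blind speeded up process $X^{N,\mf p}_t$, and one must propagate it to a pointwise bound on the hitting time of $\breve{\mc E}^{(\mf p+1)}_i$ by the unprojected trace process started from $y$. The remedy is to decompose the trajectory into a fast local equilibration phase inside $\mc E^{(\mf p)}_a$ (controlled by ({\bf T7}) at level $\mf p$) and a slower macroscopic phase between $\mf p$-metastates at scale $\theta_N(\mf p)$ (controlled by ({\bf T6})), exploiting the strict separation $\theta_N(\mf p) \prec \theta_{N,i}$ from Lemma \ref{s15} to give ample slack for both phases to complete.
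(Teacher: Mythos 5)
Your proof follows the paper's argument in all essential respects: you reduce via Lemma \ref{s15} and the valley-enlargement criterion of Lemma \ref{s04} to a single hitting-time estimate, establish the no-open-path claim by combining the third clause of \eqref{37} along the path with a reversibility argument at the boundary step into the leaf (the paper phrases the contradiction as $\mf r_{\mf p}(a,c)>0$ violating the leaf property, you phrase it as $\mu_N(\mc E^{(\mf p)}_a)\prec \mu_N(\mc E^{(\mf p+1)}_i)$ violating $y\notin\mc B^{(\mf p+1)}_i$; the two are interchangeable), use the convergence in ({\bf T6}) for the hitting-time bound, and for the second claim pass to $G_N$ via Lemma \ref{s03}, take an optimal path and extend it along an asymptotic open path when its endpoint lands in $\Delta^o_{\mf p+1}\setminus\mc B^{(\mf p+1)}_i$.

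Two small corrections. The ``main obstacle'' you flag is not one: with $A=\{c:\mc E^{(\mf p)}_c\subset \breve{\mc E}^{(\mf p+1)}_i\}$ we have $\breve{\mc E}^{(\mf p+1)}_i=\Psi_{\mf p}^{-1}(A)$, so the event $\{T_{\breve{\mc E}^{(\mf p+1)}_i}>t\theta_N(\mf p)\}$ for $\eta^{N,\mf p}_\cdot$ is identical to $\{T_A>t\}$ for the blind process $X^{N,\mf p}_\cdot$; the convergence in \eqref{20} therefore yields $\limsup_N \mb P^{N,\mf p}_y[T_{\breve{\mc E}^{(\mf p+1)}_i}>t\theta_N(\mf p)]\le \bb P_b[T_A>t]$ directly, without any two-phase decomposition or pointwise coupling argument. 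Second, in the capacity part you justify $G_N(\gamma)\prec \mu_N(\mc E^{(\mf p+1)}_i)/\theta_N(\mf p)$ by \eqref{19}, but this points the wrong way: since $\breve{\mc E}^{(\mf p+1)}_i\subset\mc F^{(\mf p+1)}_i$, \eqref{19} only controls the \emph{smaller} quantity $G_N(\mc E^{(\mf p+1)}_i,\breve{\mc E}^{(\mf p+1)}_i)$, not $G_N(\gamma)=G_N(\mc E^{(\mf p+1)}_i,\mc F^{(\mf p+1)}_i)$. The correct justification is \eqref{22} together with the inclusion $\mc F^{(\mf p+1)}_i\subset\bigcup_b\mc E^{(\mf p)}_b$, which is what the paper invokes.
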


\begin{proof}
Fix $1\le i\le \nu (\mf p+1)$ and recall the definition of
$\theta_{N,i}$ introduced in Lemma \ref{s15}. By this lemma and by
Lemma \ref{s04}, to prove the first assertion we need to show that for
every $\delta>0$, 
\begin{equation*}
\lim_{N\to\infty} \max_{y\in \Delta^o_{\mf p+1} \setminus \mc B^{(\mf p
  +1)}_i} \mb P^{N, \mf p}_y \big[ T_{\breve{\mc E}^{(\mf p +1)}_i} > 
\delta \theta_{N,i} \big] \;=\; 0\; .
\end{equation*}
Since, by Lemma \ref{s15}, $\theta_{N,i} \succ \theta_N(\mf p)$, it
is enough to show that 
\begin{equation*}
\lim_{t\to\infty} 
\lim_{N\to\infty} \max_{y\in \Delta^o_{\mf p+1} \setminus \mc B^{(\mf p
  +1)}_i} \mb P^{N, \mf p}_y \big[ T_{\breve{\mc E}^{(\mf p +1)}_i} > 
t\,\theta_N(\mf p)  \big] \;=\; 0\; .
\end{equation*}

Fix $y\in \Delta^o_{\mf p+1}\setminus \mc B^{(\mf p +1)}_i$. By
definition, $y$ belongs to some $\mf p$-metastate $\mc E^{(\mf p)}_b
\not \subset \mc E^{(\mf p +1)}_i$ and $\mu_N(\mc E^{(\mf p)}_b) \succeq
\mu_N(\mc E^{(\mf p +1)}_i)$. We claim that there is no open path from 
$\mc E^{(\mf p)}_b$ to $\mc E^{(\mf p +1)}_i$.

Indeed, suppose that there is an open path. In this case, since
$\mu_N(\mc E^{(\mf p)}_b) \succeq \mu_N(\mc E^{(\mf p +1)}_i)$, by
\eqref{37}, we necessarily have $\mu_N(\mc E^{(\mf p)}_b) \approx
\mu_N(\mc E^{(\mf p +1)}_i)$. Considering the last two $\mf
p$-metastates of the open path from $\mc E^{(\mf p)}_b$ to $\mc
E^{(\mf p +1)}_i$, we find a $\mf p$-metastate $\mc E^{(\mf p)}_c \not
\subset \mc E^{(\mf p +1)}_i$, $\mu_N(\mc E^{(\mf p)}_c) \approx
\mu_N(\mc E^{(\mf p +1)}_i)$, and a $\mf p$-metastate $\mc E^{(\mf
  p)}_a \subset \mc E^{(\mf p +1)}_i$ such that $\mf r_{\mf
  p}(c,a)>0$. Therefore, by \eqref{37} and by reversibility,
\begin{equation*}
\begin{split}
& \mf r_{\mf p}(a,c) \;=\; \lim_{N\to\infty} \theta_N(\mf p)\,
r^{\mc E^{(\mf p)}}_N (\mc E^{(\mf p)}_a , \mc E^{(\mf p)}_c) \\
&\qquad \;=\; \lim_{N\to\infty} 
\frac{\mu_N(\mc E^{(\mf p)}_c)}{\mu_N(\mc E^{(\mf p)}_a)}\,
\theta_N(\mf p)\, r^{\mc E^{(\mf p)}}_N (\mc E^{(\mf p)}_c , 
\mc E^{(\mf p)}_a) \;=\; \mf r_{\mf p}(a,c) 
\lim_{N\to\infty} 
\frac{\mu_N(\mc E^{(\mf p)}_c)}{\mu_N(\mc E^{(\mf p)}_a)} \;>\; 0\;,   
\end{split}
\end{equation*}
which contradicts the fact that $\mc E^{(\mf p +1)}_i$ is a leave.

By \eqref{20} with $k=\mf p$, starting from $y$ the process $X^{N,\mf
  p}_{t\theta_N(\mf p)}$ converges to the Markov process on $\{1,
\dots, \nu(\mf p)\}$ with rates $\mf r_{\mf p}$ starting from
$b$. Therefore,
\begin{equation*}
\lim_{N\to\infty} \mb P^{N, \mf p}_y \big[ T_{\breve{\mc E}^{(\mf p +1)}_i} > 
t\,\theta_N(\mf p)  \big] \;\le \; \bb P_b \big[ T_{A} > t \big]\;,
\end{equation*}
where $A = \{c : \mc E^{(\mf p)}_c \subset \breve{\mc E}^{(\mf p
  +1)}_i\}$. Since there is no open path from $\mc E^{(\mf p)}_b$ to
$\mc E^{(\mf p +1)}_i$ and since $\mc E^{(\mf p)}_b\subset
\Delta^o_{\mf p+1}$, the state $b$ is transient for the Markov process
on $\{1, \dots, \nu(\mf p)\}$ with rates $\mf r_{\mf p}$ and all its
limit points are contained in $A$. Hence, $\bb P_b [ T_{A} > t ]$
vanishes as $t\uparrow\infty$. This proves the first assertion of the
lemma.

To prove the second statement, note that $\Cap_N (\mc E^{(\mf p
  +1)}_i, \mc F^{(\mf p +1)}_i) \succeq \Cap_N$ $(\mc E^{(\mf p +1)}_i
, \breve{\mc E}^{(\mf p +1)}_i)$ because $\breve{\mc E}^{(\mf p +1)}_i
\subset \mc F^{(\mf p +1)}_i$.

By Lemma \ref{s03}, to prove the reverse inequality we may replace the
capacities by the function $G_N$. There exists a path $\gamma = (x_0,
\dots, x_n)$ from $\mc E^{(\mf p +1)}_i$ to $\mc F^{(\mf p +1)}_i$
such that $G_N(\gamma) = G_N(\mc E^{(\mf p +1)}_i, \mc F^{(\mf p
  +1)}_i)$. If $x_n$ belongs to $\breve{\mc E}^{(\mf p +1)}_i$, we
have that $G_N(\gamma) \le G_N(\mc E^{(\mf p +1)}_i , \breve{\mc
  E}^{(\mf p +1)}_i)$ and the statement is proved.

If, on the other hand, $x_n$ belongs to some metastate $\mc E^{(\mf
  p)}_b \subset \Delta^o_{\mf p+1} \setminus \mc B^{(\mf p +1)}_i$ we
proceed as follows. We have already showed in the first part of the
proof that there exists an open path from $\mc E^{(\mf p)}_b$ to
$\breve{\mc E}^{(\mf p +1)}_i$. Repeating the arguments presented in
the proof of Lemma \ref{s09} and keeping in mind the second assertion
of \eqref{37}, we show that there exists a path $\tilde \gamma$ from
$x_n$ to $\breve{\mc E}^{(\mf p +1)}_i$ such that $G_N(\tilde \gamma)
\ge C_0 \mu_N(x_n)/\theta_N(\mf p)$ for some finite constant $C_0$
independent of $N$. By definition of $\mc B^{(\mf p +1)}_i$, this
latter expression is bounded below $C_0 \mu_N(\mc E^{(\mf p
  +1)}_i)/\theta_N(\mf p)$. By \eqref{22}, $G_N(\gamma) = G_N(\mc
E^{(\mf p +1)}_i, \mc F^{(\mf p +1)}_i) \prec \mu_N(\mc E^{(\mf p
  +1)}_i)/\theta_N(\mf p)$.  Hence, if we denote by $\gamma \oplus
\tilde \gamma$ the juxtaposition of $\gamma$ and $\tilde \gamma$, we
have a path $\gamma \oplus \tilde \gamma$ from $\mc E^{(\mf p +1)}_i$
to $\breve{\mc E}^{(\mf p +1)}_i$ such that $G_N(\gamma \oplus \tilde
\gamma) = G_N(\gamma) = G_N(\mc E^{(\mf p +1)}_i, \mc F^{(\mf p
  +1)}_i)$.  This proves the second assertion of the lemma.
\end{proof}

It follows from the two previous lemmas that the depth $\theta_{N,i}$
of the valley $(\mc E^{(\mf p +1)}_i$, $\mc E^{(\mf p +1)}_i \cup
\Delta^o_{\mf p+1} ,x)$, $1\le i\le \nu(\mf p+1)$, is of the same
magnitude as $\mu_N(\mc E^{(\mf p +1)}_i) / \Cap_N$ $(\mc E^{(\mf p
  +1)}_i , \breve{\mc E}^{(\mf p +1)}_i)$ and much larger than
$\theta_N(\mf p)$. 

Fix a subset $I$ of $\{1, \dots, \nu(\mf p +1)\}$ and let $J=\{1,
\dots, \nu(\mf p+1)\} \setminus I$, $\mc E_{K, \mf p+1} = \cup_{i\in
  K} \mc E^{(\mf p+1)}_i$, $K\subset \{1, \dots, \nu(\mf p+1)\}$. By
Lemma \ref{s29}, the following limit exists
\begin{equation*}
f^{(\mf p+1)}_{I,J}(x) \;:=\; 
\lim_{N\to\infty} \mb P^{N}_x[T_{\mc E_{I, \mf p+1}} < 
T_{\mc E_{J, \mf p+1}}] \;.
\end{equation*}
In particular, by Lemma \ref{s21},
\begin{equation*}
\lim_{N\to\infty} \frac {\Cap_N(\mc E^{(\mf p+1)}_I, \mc E^{(\mf p+1)}_J)}
{\mf g_N(\mc E^{(\mf p+1)}_I,\mc E^{(\mf p+1)}_J)}  \;=\; 
\frac 12 \sum g(x,y) 
\, [f^{(\mf p+1)}_{I,J} (y) - f^{(\mf p+1)}_{I,J} (x)]^2 \;\in\;
(0,\infty) \;,
\end{equation*}
where the sum on the right hand side is performed over all pairs
$(x,y) \in \mf B(\mc E^{(\mf p+1)}_I, \mc E^{(\mf p+1)}_J)$.

By the same reasons, the limit 
\begin{equation*}
f^{(\mf p+1)}_{i}(x) \;:=\; 
\lim_{N\to\infty} \mb P^{N}_x\Big[T_{\mc E^{(\mf p+1)}_{i}} < 
T_{\mc F^{(\mf p +1)}_i}\Big] 
\end{equation*}
exists and
\begin{equation*}
\lim_{N\to\infty} \frac {\Cap_N(\mc E^{(\mf p+1)}_{i} , 
\mc F^{(\mf p +1)}_i)}
{\mf g_N(\mc E^{(\mf p+1)}_{i} , \mc F^{(\mf p +1)}_i )} \;=\;  
\frac 12 \sum g(x,y) 
\, [f^{(\mf p+1)}_{i} (y) - f^{(\mf p+1)}_{i} (x)]^2  \;\in\;
(0,\infty) \;,
\end{equation*}
where the sum on the right hand side is performed over all pairs
$(x,y) \in \mf B(\mc E^{(\mf p+1)}_{i} , \mc F^{(\mf p +1)}_i)$.

Let $g^{(\mf p+1)}_i = f^{(\mf p+1)}_{I,J}$ for $I=\{i\}$.  We claim
that $g^{(\mf p+1)}_i = f^{(\mf p+1)}_{i}$, in other words, that for
all $x\in E$,
\begin{equation}
\label{39}
\lim_{N\to\infty} \mb P^{N}_x[T_{\mc E^{(\mf p +1)}_{i}} < 
T_{\mc F^{(\mf p +1)}_i}] \;=\;
\lim_{N\to\infty} \mb P^{N}_x[T_{\mc E^{(\mf p +1)}_{i}} < 
T_{\breve{\mc E}^{(\mf p +1)}_i}] \;.
\end{equation}

Indeed, fix $x\in E$.  Since $\lim_{N\to\infty} \mb P^{N}_x[T_{\mc
  E^{(\mf p)}} =T_y]$, $y\in \mc E^{(\mf p)}$, exists by Lemma
\ref{s29}, and since all sets involved are contained in $\mc E^{(\mf
  p)}$, taking conditional expectation with respect to $T_{\mc E^{(\mf
    p)}}$ and applying the strong Markov property, to prove \eqref{39}
it is enough to show that for all $y\in \mc E^{(\mf p)}$
\begin{equation*}
\lim_{N\to\infty} \mb P^{N}_y[T_{\mc E^{(\mf p +1)}_{i}} < 
T_{\mc F^{(\mf p +1)}_i}] \;=\;
\lim_{N\to\infty} \mb P^{N}_y[T_{\mc E^{(\mf p +1)}_{i}} < 
T_{\breve{\mc E}^{(\mf p +1)}_i}] \;.
\end{equation*}
At this point we may replace the process $\eta^N_t$ by the trace
process $\eta^{N,\mf p}_t$. Since $\breve{\mc E}^{(\mf p +1)}_i$ is
contained in $\mc F^{(\mf p +1)}_i$, by the strong Markov property, to
prove the previous identity we have to show that for every $z\in \mc
F^{(\mf p +1)}_i$
\begin{equation*}
\lim_{N\to\infty} \mb P^{N}_z[T_{\mc E^{(\mf p +1)}_{i}} < 
T_{\breve{\mc E}^{(\mf p +1)}_i}] \;=\; 0\;.
\end{equation*}
Since there is no open path from $\mc F^{(\mf p +1)}_i$ to $\mc
E^{(\mf p +1)}_{i}$, since by \eqref{20} the speeded-up blind process
$X^{N,\mf p}_t$ converges to the Markov process with rates $\mf r_{\mf
  p}$ whose recurrent states are the indices $a \in \{1, \dots,
\nu(\mf p)\}$ such that $\mc E^{(\mf p)}_a \subset \mc E^{(\mf p
  +1)}$, the previous identity holds, proving claim \eqref{39}.

We proved in Lemma \ref{s18} that $\Cap_N (\mc E^{(\mf p +1)}_i , \mc
F^{(\mf p +1)}_i) \approx \Cap_N$ $(\mc E^{(\mf p +1)}_i , \breve{\mc
  E}^{(\mf p +1)}_i)$. Hence, by Lemma \ref{s03}, $G_N (\mc E^{(\mf p
  +1)}_i , \mc F^{(\mf p +1)}_i) \approx G_N$ $(\mc E^{(\mf p +1)}_i ,
\breve{\mc E}^{(\mf p +1)}_i)$. In particular, $\mf g_N (\mc E^{(\mf p
  +1)}_i , \mc F^{(\mf p +1)}_i) =\mf g_N (\mc E^{(\mf p +1)}_i ,
\breve{\mc E}^{(\mf p +1)}_i)$ and, in consequence, $\mf B(\mc E^{(\mf
  p +1)}_i , \mc F^{(\mf p +1)}_i) = \mf B(\mc E^{(\mf p +1)}_i ,
\breve{\mc E}^{(\mf p +1)}_i)$. 

If follows from the previous considerations that
\begin{equation*}
\lim_{N\to\infty} \frac{\Cap_N(\mc E^{(\mf p +1)}_i ,
\breve{\mc E}^{(\mf p +1)}_i)}{\mf g_N (\mc E^{(\mf p +1)}_i ,
\breve{\mc E}^{(\mf p +1)}_i)} \;=\;
\lim_{N\to\infty} \frac{\Cap_N(\mc E^{(\mf p +1)}_i ,
\mc F^{(\mf p +1)}_i)}{\mf g_N (\mc E^{(\mf p +1)}_i ,
\breve{\mc E}^{(\mf p +1)}_i)} \;\in\; (0,\infty)\;,
\end{equation*}
so that
\begin{equation*}
\lim_{N\to\infty} \frac{\Cap_N(\mc E^{(\mf p +1)}_i ,
\breve{\mc E}^{(\mf p +1)}_i)}{\Cap_N(\mc E^{(\mf p +1)}_i ,
\mc F^{(\mf p +1)}_i)} \;=\; 1\;.
\end{equation*}
In consequence, by Lemma \ref{s18}, the following result holds.

\begin{lemma}
\label{s34}
Fix $1\le i\le \nu (\mf p+1)$ and $x$ in $\mc E^{(\mf p +1)}_i$.  The
triple $(\mc E^{(\mf p +1)}_i, \mc E^{(\mf p +1)}_i \cup \Delta^o_{\mf
  p+1} ,x)$ is a valley for the trace process $\{\eta^{N,\mf p}_t :
t\ge 0\}$ of depth $\theta_{N,i} = \mu_N(\mc E^{(\mf p +1)}_i)/$
$\Cap_N (\mc E^{(\mf p +1)}_i , \breve{\mc E}^{(\mf p
  +1)}_i)$. Moreover,
\begin{equation*}
u_{\mf p+1, i} \;:=\; \lim_{N\to\infty} \frac{\mf g_N (\mc E^{(\mf p +1)}_i ,
\breve{\mc E}^{(\mf p +1)}_i)}{\mu_N(\mc E^{(\mf p +1)}_i)}\,
\theta_{N,i} \;\in\; (0,\infty)\;.
\end{equation*}
\end{lemma}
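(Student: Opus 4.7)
My plan is to assemble the lemma from results already established just above its statement. The work is essentially to translate from the depth $\mu_N(\mc E^{(\mf p+1)}_i)/\Cap_N(\mc E^{(\mf p+1)}_i , \mc F^{(\mf p+1)}_i)$, which appears in Lemma \ref{s18}, to the depth $\mu_N(\mc E^{(\mf p+1)}_i)/\Cap_N(\mc E^{(\mf p+1)}_i , \breve{\mc E}^{(\mf p+1)}_i)$, which appears in the statement of Lemma \ref{s34}, and then to invoke Lemma \ref{s21} for the finiteness and positivity of the limit.

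First I would recall from Lemma \ref{s18} that $(\mc E^{(\mf p+1)}_i, \mc E^{(\mf p+1)}_i \cup \Delta^o_{\mf p+1}, x)$ is already a valley for the trace process $\{\eta^{N,\mf p}_t : t\ge 0\}$ with depth $\mu_N(\mc E^{(\mf p+1)}_i)/\Cap_N(\mc E^{(\mf p+1)}_i , \mc F^{(\mf p+1)}_i)$, and that this depth is of the same magnitude as $\mu_N(\mc E^{(\mf p+1)}_i)/\Cap_N(\mc E^{(\mf p+1)}_i , \breve{\mc E}^{(\mf p+1)}_i)$. In the paragraph immediately preceding the statement of Lemma \ref{s34}, this asymptotic equivalence was sharpened to
\begin{equation*}
\lim_{N\to\infty} \frac{\Cap_N(\mc E^{(\mf p+1)}_i , \breve{\mc E}^{(\mf p+1)}_i)}
{\Cap_N(\mc E^{(\mf p+1)}_i , \mc F^{(\mf p+1)}_i)} \;=\; 1\;.
\end{equation*}
Since the two depth sequences have ratio tending to $1$, the valley property is transferred without loss: $(\mc E^{(\mf p+1)}_i, \mc E^{(\mf p+1)}_i \cup \Delta^o_{\mf p+1}, x)$ is equally a valley of depth $\theta_{N,i} = \mu_N(\mc E^{(\mf p+1)}_i)/\Cap_N(\mc E^{(\mf p+1)}_i , \breve{\mc E}^{(\mf p+1)}_i)$.

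For the moreover part, I would apply Lemma \ref{s21} to the disjoint pair $A = \mc E^{(\mf p+1)}_i$, $B = \breve{\mc E}^{(\mf p+1)}_i$, using that the hitting probabilities $\mb P^N_x[T_A < T_B]$ converge pointwise by Lemma \ref{s29}. This yields
\begin{equation*}
\lim_{N\to\infty} \frac{\Cap_N(\mc E^{(\mf p+1)}_i , \breve{\mc E}^{(\mf p+1)}_i)}
{\mf g_N(\mc E^{(\mf p+1)}_i , \breve{\mc E}^{(\mf p+1)}_i)} \;\in\; (0,\infty)\;.
\end{equation*}
Since the left-hand side is precisely the reciprocal of $[\mf g_N(\mc E^{(\mf p+1)}_i , \breve{\mc E}^{(\mf p+1)}_i)/\mu_N(\mc E^{(\mf p+1)}_i)]\,\theta_{N,i}$, the displayed limit $u_{\mf p+1,i}$ exists and belongs to $(0,\infty)$.

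There is no real obstacle: the whole lemma is a packaging step consolidating Lemmas \ref{s18}, \ref{s21} and \ref{s29}. The only point that needs a modest justification is that a triple which is a valley of depth $a_N$ is automatically a valley of depth $b_N$ whenever $a_N/b_N \to 1$, which follows immediately from the definition of a valley with attractor in \cite{bl2} since all the conditions (vanishing ratios of measures and of capacities) are invariant under multiplying the depth by a sequence converging to $1$.
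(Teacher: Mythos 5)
Your proposal is correct and follows essentially the same route as the paper: the paper establishes Lemma \ref{s18}, then shows in the paragraphs immediately preceding the statement that the capacity ratio $\Cap_N(\mc E^{(\mf p+1)}_i, \breve{\mc E}^{(\mf p+1)}_i)/\Cap_N(\mc E^{(\mf p+1)}_i, \mc F^{(\mf p+1)}_i)$ tends to $1$ (via Lemma \ref{s21}, Lemma \ref{s29} and claim \eqref{39}), and concludes Lemma \ref{s34} "in consequence." Your observation that the valley property is preserved under a reparametrization $a_N \to b_N$ with $a_N/b_N\to 1$, and your direct application of Lemma \ref{s21} for the finiteness and positivity of $u_{\mf p+1,i}$, match the intended argument.
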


Since the sequences $\mf g_N (\mc E^{(\mf p +1)}_i , \breve{\mc
  E}^{(\mf p +1)}_i)/\mu_N(\mc E^{(\mf p +1)}_i)$, $1\le i \le \nu(\mf
p+1)$, are comparable, repeating the arguments presented in the proof
of Proposition \ref{s24} we deduce the next result.

\begin{lemma}
\label{s28}
The sequences $\{\theta_{N,i} : N\ge 1\}$, $1\le i \le \nu(\mf p+1)$
are comparable.
\end{lemma}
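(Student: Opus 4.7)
The plan is to mimic the proof of Proposition \ref{s24}, with the level-$(\mf p+1)$ objects in place of the leaves. By Lemma \ref{s34},
\[
\theta_{N,i} \;\approx\; \frac{\mu_N(\mc E^{(\mf p+1)}_i)}
{\mf g_N(\mc E^{(\mf p+1)}_i, \breve{\mc E}^{(\mf p+1)}_i)}\;,
\quad 1\le i\le \nu(\mf p+1)\;,
\]
so it suffices to prove that for every $i\ne j$ the sequence $\theta_{N,i}/\theta_{N,j}$ either vanishes, diverges, or converges to a strictly positive finite limit.

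To achieve this, I would first reduce the ratio to an expression involving single sites and single bonds. For each $k$, Lemma \ref{s09} together with the representation \eqref{32} provides a point $x_k\in \mc E^{(\mf p+1)}_k$ and a sequence $a_{N,k}$ converging to a strictly positive finite limit such that $\mu_N(\mc E^{(\mf p+1)}_k) = a_{N,k}\,\mu_N(x_k)$. Likewise, by the definition of $\mf g_N$ combined with \eqref{27}, there exist a bond $(y_k,z_k)$ and a sequence $b_{N,k}$ with positive limit such that $\mf g_N(\mc E^{(\mf p+1)}_k, \breve{\mc E}^{(\mf p+1)}_k) = b_{N,k}\,\mu_N(y_k)\,R_N(y_k,z_k)$. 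Substituting these representations yields
\[
\frac{\theta_{N,i}}{\theta_{N,j}} \;=\; c_N\,
\frac{\mu_N(x_i)\,\mu_N(y_j)\,R_N(y_j,z_j)}
{\mu_N(x_j)\,\mu_N(y_i)\,R_N(y_i,z_i)}
\]
for a sequence $c_N$ converging to a number in $(0,\infty)$.

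Finally, I would apply the detailed balance identity \eqref{33} to rewrite each quotient $\mu_N(\cdot)/\mu_N(\cdot)$ as a finite product of ratios of the form $R_N(\cdot,\cdot)/R_N(\cdot,\cdot)$, and then invoke \eqref{24} together with the multi-scale hypothesis \eqref{30} to conclude that each such product has a limit in $[0,\infty]$. The explicit remaining factor $R_N(y_j,z_j)/R_N(y_i,z_i)$ likewise converges in $[0,\infty]$ by \eqref{24} and \eqref{30}. Hence $\theta_{N,i}/\theta_{N,j}$ admits a limit in $[0,\infty]$, and by symmetry so does its reciprocal; any limit that is neither $0$ nor $+\infty$ therefore belongs to $(0,\infty)$, which is precisely the comparability asserted in the lemma. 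No genuine obstacle arises at this stage: the substantive content, namely the identification of $\theta_{N,i}$ with $\mu_N(\mc E^{(\mf p+1)}_i)/\mf g_N(\mc E^{(\mf p+1)}_i,\breve{\mc E}^{(\mf p+1)}_i)$ up to multiplicative constants, was already carried out in Lemma \ref{s34}; what remains is a bookkeeping argument in which the comparability hypothesis \eqref{30} plays the decisive role.
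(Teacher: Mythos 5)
Your proof is correct and mirrors exactly what the paper does: the paper reduces the lemma to Lemma \ref{s34} (giving $\theta_{N,i}\approx\mu_N(\mc E^{(\mf p+1)}_i)/\mf g_N(\mc E^{(\mf p+1)}_i,\breve{\mc E}^{(\mf p+1)}_i)$) and then instructs the reader to repeat the proof of Proposition \ref{s24}, which is precisely the single-site/single-bond reduction followed by detailed balance \eqref{33} and the multi-scale hypothesis \eqref{30} that you carry out. No substantive difference.
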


Let $\theta_N(\mf p +1) = \min \{\theta_{N,i} : 1\le i\le \nu(\mf
p+1)\}$ and let $S_{\mf p+1} = \{i : \theta_{N,i} \approx \theta_N(\mf
p +1)\}$. Observe that $\theta_N(\mf p) \prec \theta_N(\mf p +1)$ and
that ({\bf T5}) holds for $k=\mf p+1$ with this definition.  \medskip

Denote by $X^{N,\mf p+1}_t = \Psi_k(\eta^{N, \mf p+1}_{t\theta_N(\mf p
  +1)})$ the speeded up blind process introduced in the statement of
Theorem \ref{s19}.  

\begin{lemma}
\label{s35}
Condition {\rm({\bf T6})} holds for $k=\mf p+1$.
\end{lemma}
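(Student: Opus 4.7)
The plan is to mimic verbatim the argument of Section~\ref{ssec2}, replacing the original process $\{\eta^N_t\}$ with the trace process $\{\eta^{N,\mf p+1}_t\}$ on $\mc E^{(\mf p+1)}$, the shallowest leaves $\mc E_i$ with the $(\mf p+1)$-metastates $\mc E^{(\mf p+1)}_i$, and the time scale $\theta_N(1)$ with $\theta_N(\mf p+1)$. Condition ({\bf T6}) decomposes into three pieces: (a) existence of the limits $\mf r_{\mf p+1}(i,j)$ together with the positivity/vanishing dichotomy governed by $S_{\mf p+1}$; (b) vanishing of the rate whenever the target metastate has smaller measure; and (c) convergence of the speeded up blind process $X^{N,\mf p+1}_t$.

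For (a) I would follow Lemma~\ref{s25} and decompose
\begin{equation*}
\theta_N(\mf p+1)\, r^{\mc E^{(\mf p+1)}}_N(\mc E^{(\mf p+1)}_i, \mc E^{(\mf p+1)}_j)
\;=\; \frac{\theta_N(\mf p+1)}{\theta_{N,i}}\, \cdot\,
\frac{r^{\mc E^{(\mf p+1)}}_N(\mc E^{(\mf p+1)}_i, \mc E^{(\mf p+1)}_j)}
{r^{\mc E^{(\mf p+1)}}_N(\mc E^{(\mf p+1)}_i, \breve{\mc E}^{(\mf p+1)}_i)}\;,
\end{equation*}
using \cite[Lemma~6.7]{bl2} combined with Lemma~\ref{s34} to identify the denominator with $1/\theta_{N,i}$. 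By Lemma~\ref{s28} and the definition of $\theta_N(\mf p+1)$, the first factor converges to $\lambda_{\mf p+1}(i):=\lim_N \theta_N(\mf p+1)/\theta_{N,i}\in[0,\infty)$, which is strictly positive precisely when $i\in S_{\mf p+1}$. By Corollary~\ref{s32} applied with $F=\mc E^{(\mf p+1)}$, the second factor converges to some $q(i,j)\in[0,1]$; since $\breve{\mc E}^{(\mf p+1)}_i$ is the disjoint union of the remaining $\mc E^{(\mf p+1)}_j$, the $q(i,\cdot)$ form a probability measure. Setting $\mf r_{\mf p+1}(i,j)=\lambda_{\mf p+1}(i)\,q(i,j)$ delivers (a). For (b), I would repeat the reversibility argument behind \eqref{10}: by detailed balance
\begin{equation*}
\mu_N(\mc E^{(\mf p+1)}_i)\, r^{\mc E^{(\mf p+1)}}_N(\mc E^{(\mf p+1)}_i, \mc E^{(\mf p+1)}_j)
\;=\; \mu_N(\mc E^{(\mf p+1)}_j)\, r^{\mc E^{(\mf p+1)}}_N(\mc E^{(\mf p+1)}_j, \mc E^{(\mf p+1)}_i)\;,
\end{equation*}
while \cite[Lemma~6.7]{bl2} and Lemma~\ref{s34} bound $r^{\mc E^{(\mf p+1)}}_N(\mc E^{(\mf p+1)}_j, \mc E^{(\mf p+1)}_i)\le 1/\theta_{N,j}$. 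Since $\theta_N(\mf p+1)/\theta_{N,j}$ stays bounded by Lemma~\ref{s28} and $\mu_N(\mc E^{(\mf p+1)}_j)/\mu_N(\mc E^{(\mf p+1)}_i)\to 0$ by hypothesis, multiplying through by $\theta_N(\mf p+1)/\mu_N(\mc E^{(\mf p+1)}_i)$ forces $\mf r_{\mf p+1}(i,j)=0$.

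For (c) I would invoke \cite[Theorem~2.7]{bl2} applied to the trace Markov process on $\mc E^{(\mf p+1)}$, exactly as in the proof of Lemma~\ref{s26}. Hypothesis ({\bf H0}) of that theorem is precisely (a) above. Hypothesis ({\bf H1}) requires that, for any $x,y\in\mc E^{(\mf p+1)}_i$, the ratio $\theta_N(\mf p+1)\,\Cap^{\mc E^{(\mf p+1)}}_N(\{x\},\{y\})/\mu^{\mc E^{(\mf p+1)}}_N(\mc E^{(\mf p+1)}_i)$ diverges. By \cite[Lemma~6.9]{bl2} this ratio coincides with $\theta_N(\mf p+1)\,\Cap_N(\{x\},\{y\})/\mu_N(\mc E^{(\mf p+1)}_i)$, and Lemma~\ref{s09} bounds the latter below by $C_1\,\theta_N(\mf p+1)/\theta_N(\mf p)$; divergence follows from $\theta_N(\mf p+1)\succ\theta_N(\mf p)$.

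The only genuine technical obstacle in this plan is the verification of hypothesis ({\bf H1}), which demands a uniform capacity lower bound inside each $(\mf p+1)$-metastate on a shorter scale than $\theta_N(\mf p+1)$. That obstacle has already been surmounted in Lemma~\ref{s09} via the path-juxtaposition argument along chains of equivalent $\mf p$-metastates, so by the point we reach Lemma~\ref{s35} everything reduces to invoking the general convergence theorem of \cite{bl2}.
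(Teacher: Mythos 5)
Your proposal is correct and follows essentially the same route as the paper: identify the denominator of the rate decomposition with $1/\theta_{N,i}$ via Lemma~\ref{s34} and \cite[Lemma~6.7]{bl2}, invoke Lemma~\ref{s28} and Corollary~\ref{s32} for the two limiting factors, repeat the reversibility bound of \eqref{10} for the vanishing of rates toward smaller-measure metastates, and verify the hypotheses of \cite[Theorem~2.7]{bl2} with ({\bf H1}) supplied by Lemma~\ref{s09} together with $\theta_N(\mf p+1)\succ\theta_N(\mf p)$. The paper's own proof is terser---it simply states that the arguments of Section~\ref{ssec2} through Lemma~\ref{s25} transfer to the present context and then checks ({\bf H0}), ({\bf H1})---but the content is the same, and your filled-in details match what those arguments actually produce.
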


\begin{proof}
The arguments presented in Section \ref{ssec2} until Lemma \ref{s25}
apply to the present context and show that conditions \eqref{37} are
fulfilled for $k=\mf p+1$.

It remains to prove the convergence of $X^{N,\mf p+1}_t$. We need to
check that the assumptions of \cite[Theorem 2.7]{bl2} are
fulfilled. On the one hand, condition ({\bf H1}) follows from
condition ({\bf T4}) for $k=\mf p+1$, proved in Lemma \ref{s09}, and
from the fact that $\theta_{N,i} \succeq \theta_N(\mf p+1) \succ
\theta_N(\mf p)$, proved right after Lemma \ref{s28}.  On the other
hand, condition ({\bf H0}) is part of \eqref{37} which has already
been proven.
\end{proof}

To conclude the recurrence argument, it remains to show that property
({\bf T8}) holds for $k=\mf p+1$. We first show that it holds for the
trace process $\eta^{N,\mf p}_t$.

\begin{lemma}
\label{s10}
For all $t>0$,
\begin{equation*}
\lim_{N\to \infty} \max_{x\in \mc E^{(\mf p)}} \mb E^N_x \Big[ \int_0^t 
\mb 1\{ \eta^{N,\mf p}_{s \theta_N(\mf p+1)} \in \Delta^o_{\mf p +1} \} 
\, ds \Big]\;=\; 0\;.
\end{equation*}
\end{lemma}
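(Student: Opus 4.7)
The plan is to adapt the argument used to prove property ({\bf M3}') in Proposition \ref{s07}, applied now to the trace process $\{\eta^{N,\mf p}_t : t\ge 0\}$ on the time scale $\theta_N(\mf p+1)$. Denote by $U_1, V_1, U_2, V_2, \dots$ the successive sojourn times of $\eta^{N,\mf p}_t$ in $\Delta^o_{\mf p+1}$ and in $\mc E^{(\mf p+1)}$, respectively, and let $N_t$ be the counting process associated with the $V_k$'s (as in the proof of Proposition \ref{s07}). Then
\begin{equation*}
\int_0^{t\theta_N(\mf p+1)} \mb 1\{\eta^{N,\mf p}_s \in \Delta^o_{\mf p+1}\}\, ds \;\le\; U_1 \;+\; \sum_{k=1}^{N_{t\theta_N(\mf p+1)}+1} U_{k+1}\;,
\end{equation*}
and the task reduces to showing that the expectation of the right hand side is $o(\theta_N(\mf p+1))$ uniformly in the starting state $x\in \mc E^{(\mf p)}$.

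The first ingredient is the uniform bound $\max_{z\in \mc E^{(\mf p)}} \mb E^{N,\mf p}_z [T_{\mc E^{(\mf p+1)}}] \le C_0 \theta_N(\mf p)$, which yields $\mb E[U_k] \le C_0 \theta_N(\mf p)$ for every $k$. I would obtain it from property ({\bf T6}) at level $\mf p$: the speeded up blind process $X^{N,\mf p}_t = \Psi_{\mf p}(\eta^{N,\mf p}_{t\theta_N(\mf p)})$ converges to the Markov chain on $\{1,\dots,\nu(\mf p)\}$ with rates $\mf r_{\mf p}$, and in this limit chain every index $b$ with $\mc E^{(\mf p)}_b \subset \Delta^o_{\mf p+1}$ lies in a non-leaf equivalence class, hence is transient and almost surely reaches the set of leaves in finite time. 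Tightness of the hitting time combined with the strong Markov property (applied geometrically as in the proof of ({\bf M1}') in Proposition \ref{s07}) produces the uniform estimate.

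The second ingredient is the stochastic lower bound $V_k \ge \hat V_k$, with $\hat V_k$ iid exponential of rate
\begin{equation*}
\lambda_N \;:=\; \max_{y\in \mc E^{(\mf p+1)}} \sum_{z\in \Delta^o_{\mf p+1}} R^{\mc E^{(\mf p)}}_N(y,z)\;,
\end{equation*}
taken independent of the $U_k$'s. I would then show $\lambda_N = o(1/\theta_N(\mf p))$. Since each $\mc E^{(\mf p+1)}_i$ is a leaf of the arrow graph at level $\mf p+1$, no arrow points from it to another equivalence class, so $\mf r_{\mf p}(a,b) = 0$ whenever $\mc E^{(\mf p)}_a \subset \mc E^{(\mf p+1)}_i$ and $\mc E^{(\mf p)}_b \not\subset \mc E^{(\mf p+1)}_i$. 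By \eqref{37} at level $\mf p$, this gives $\theta_N(\mf p)\, r^{\mc E^{(\mf p)}}_N(\mc E^{(\mf p)}_a, \mc E^{(\mf p)}_b) \to 0$. Property ({\bf T3}) at level $\mf p$ ensures that $\mu_N$ is comparable within $\mc E^{(\mf p)}_a$, so the per-state rate $\sum_{z\in \mc E^{(\mf p)}_b} R^{\mc E^{(\mf p)}}_N(y,z)$ is of the same order as the weighted average; summing over the finitely many pairs $(a,b)$ with $\mc E^{(\mf p)}_a \subset \mc E^{(\mf p+1)}$ and $\mc E^{(\mf p)}_b \subset \Delta^o_{\mf p+1}$ yields the claim.

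Putting the two ingredients together as in Proposition \ref{s07}, the expected integral is bounded by $C_0 \theta_N(\mf p)\bigl(1 + \lambda_N\, t\, \theta_N(\mf p+1)\bigr)$. Dividing by $\theta_N(\mf p+1)$ produces the upper bound $C_0\, \theta_N(\mf p)/\theta_N(\mf p+1) + C_0\, t\, \lambda_N\, \theta_N(\mf p)$, which vanishes because $\theta_N(\mf p) \prec \theta_N(\mf p+1)$ and $\lambda_N\, \theta_N(\mf p) \to 0$. The hardest step is the bound $\lambda_N = o(1/\theta_N(\mf p))$: \eqref{37} naturally provides only the average exit rate from a $\mf p$-metastate, so one must convert this into a pointwise maximum. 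This conversion depends decisively on the comparability of $\mu_N$ within each $\mf p$-metastate guaranteed by property ({\bf T3}), without which the maximum could conceivably blow up despite a vanishing average.
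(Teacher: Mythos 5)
Your proposal is correct but takes a genuinely different route from the paper's. The paper does \emph{not} re-use the sojourn-time/Poisson decomposition of Proposition~\ref{s07} for this lemma; instead it first changes the time scale by blocking: using $\theta_N(\mf p)\prec \theta_N(\mf p+1)$ and the Markov property, it reduces the claim to
\begin{equation*}
\lim_{T\to\infty}\limsup_{N\to\infty}\;\max_{y\in \Delta^o_{\mf p+1}}\;\frac 1T\,\mb E^N_y\Big[\int_0^T\mb 1\{\eta^{N,\mf p}_{s\theta_N(\mf p)}\in\Delta^o_{\mf p+1}\}\,ds\Big]\;=\;0\;,
\end{equation*}
and then bounds the inner fraction by $\mb P^{N,\mf p}_y[T_1>t_0\theta_N(\mf p)]+\mb P^{N,\mf p}_y[T_2\le T\theta_N(\mf p)]+t_0/T$, where $T_1$ is the first hitting time of $\mc E^{(\mf p+1)}$ and $T_2$ the first return time to $\Delta^o_{\mf p+1}$ thereafter. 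The first term is handled exactly as in Lemma~\ref{s18} (tightness of the rescaled hitting time via ({\bf T6}) and transience in the limit chain), and the second term vanishes because there is no open path from $\mc E^{(\mf p+1)}$ to $\Delta^o_{\mf p+1}$. Your approach needs strictly more than the paper's: instead of the tightness statement $\mb P[T_1>t_0\theta_N(\mf p)]\to 0$, you need a uniform expectation bound $\mb E^{N,\mf p}_z[T_{\mc E^{(\mf p+1)}}]\le C_0\theta_N(\mf p)$, which must be bootstrapped from tightness by a geometric iteration as you indicate; and you also need to construct the dominating i.i.d.\ exponentials $\hat V_k$ independently of the $U_k$'s. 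Both are doable, and your passage from the vanishing \emph{average} exit rate in \eqref{37} to the vanishing \emph{per-state} rate $\lambda_N$ via ({\bf T3}) is sound (you only need the one-sided inequality, pointwise rate $\le$ constant $\times$ average, which ({\bf T3}) gives since the weights $\mu_N(y)/\mu_N(\mc E^{(\mf p)}_a)$ are bounded below uniformly). The paper's blocking argument is leaner because it avoids the geometric bootstrapping and the auxiliary Poisson coupling; your argument buys a slightly more explicit quantitative bound of the form $C_0\theta_N(\mf p)/\theta_N(\mf p+1)+C_0 t\,\lambda_N\theta_N(\mf p)$.
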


\begin{proof}
Since $\theta_N(\mf p) \prec \theta_N(\mf p+1)$, a change of variables
in the time integral and the Markov property show that for every
$T>0$ and for every $N$ large enough,
\begin{equation*}
\mb E^N_x \Big[ \int_0^t 
\mb 1\{ \eta^{N,\mf p}_{s \theta_N(\mf p+1)} \in \Delta^o_{\mf p +1} \} 
\, ds \Big] \;\le\; \frac {2t}T\,
\max_{y\in \mc E^{(\mf p)}}  \mb E^N_y \Big[ \int_0^T
\mb 1\{ \eta^{N,\mf p}_{s \theta_N(\mf p)} \in \Delta^o_{\mf p +1} \} 
\, ds \Big]
\end{equation*}
for every $x\in \mc E^{(\mf p)}$. Note that the process on the right
hand side is speeded up by $\theta_N(\mf p)$ and not by $\theta_N(\mf
p+1)$ anymore.

We estimate the expression on the right hand side of the previous
formula. We may, of course, restrict the maximum to $\Delta^o_{\mf p
  +1}$. Let $T_1$ be the first time the trace process hits $\mc
E^{(\mf p+1)}$ and let $T_2$ be the time it takes for the process to
return to $\Delta^o_{\mf p +1}$ after $T_1$:
\begin{equation*}
T_1 \;=\; T_{\mc E^{(\mf p+1)}}\; , \quad
T_2 \;=\; \inf \big\{ s> 0 : \eta^{N,\mf p}_{T_1+s} \in \Delta^o_{\mf p
  +1}\big\} \; .
\end{equation*}

Fix $x\in \Delta^o_{\mf p +1}$ and note that
\begin{equation*}
\begin{split}
&  \mb E^N_x \Big[ \frac 1T \int_0^T
\mb 1\{ \eta^{N,\mf p}_{s \theta_N(\mf p)} \in \Delta^o_{\mf p +1} \} 
\, ds \Big]  \\
& \qquad \;\le\; 
\mb P^{N,\mf p}_x \big[ T_1 > t_0 \theta_N(\mf p) \big]\; +\; 
\mb P^{N,\mf p}_x \big[ T_{2} \le T \theta_N(\mf p) \big] 
\;+\; \frac{t_0}T
\end{split}
\end{equation*}
for all $t_0>0$. We have proved, in Lemma \ref{s18} for instance, that
the first term on the right hand side vanishes as $N\uparrow\infty$
and then $t_0\uparrow\infty$. By the strong Markov property, the
second term is bounded by $\max_{y\in \mc E^{(\mf p+1)}} \mb P^{N,\mf p}_y [ 
T_{\Delta^o_{\mf p +1}} \le T \theta_N(\mf p) ]$. Since there is no
open path from $\mc E^{(\mf p+1)}$ to $\Delta^o_{\mf p +1}$ this
probability vanishes as $N\uparrow\infty$ for all $T>0$. This
concludes the proof.
\end{proof}

Next result follows from Lemma \ref{s16} and Lemma \ref{s10} and
concludes the proof of Theorem \ref{s37}.

\begin{corollary}
\label{s36}
Condition {\rm ({\bf T8})} holds for $k=\mf p+1$: 
\begin{equation*}
\lim_{N\to \infty} \max_{x\in E} \, \mb E^N_x \Big[
\int_0^t \mb 1\{ \eta^N_{s\theta_N(\mf p+1)} \in \Delta_{\mf p+1}\} 
\, ds  \Big] \;=\; 0\;. 
\end{equation*}
\end{corollary}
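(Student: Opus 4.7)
The plan is to observe that this corollary is essentially an immediate combination of the two preceding results, so the work reduces to verifying that the hypotheses of Lemma \ref{s16} are met with the choices $\theta_N = \theta_N(\mf p+1)$ and $\Delta^*_{\mf p+1} = \Delta^o_{\mf p+1}$.

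First I would recall from the discussion immediately following Lemma \ref{s28} that $\theta_N(\mf p) \prec \theta_N(\mf p+1)$, so the scale $\theta_N(\mf p+1)$ satisfies the strict-comparison requirement $\theta_N \succ \theta_N(\mf p)$ demanded by Lemma \ref{s16}. Next, Lemma \ref{s10} provides precisely the statement that the trace process $\{\eta^{N,\mf p}_t : t\ge 0\}$ satisfies
\begin{equation*}
\lim_{N\to\infty} \max_{x\in \mc E^{(\mf p)}} \mb E^N_x \Big[\int_0^t \mb 1\{\eta^{N,\mf p}_{s\theta_N(\mf p+1)} \in \Delta^o_{\mf p+1}\}\,ds\Big] \;=\; 0\;,
\end{equation*}
which is exactly the hypothesis of Lemma \ref{s16} for the subset $\Delta^*_{\mf p+1} = \Delta^o_{\mf p+1}$ of $\mc E^{(\mf p)}$.

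Applying Lemma \ref{s16} then yields the same vanishing-time assertion for the original process $\{\eta^N_t : t\ge 0\}$, with $\Delta_{\mf p} \cup \Delta^o_{\mf p+1}$ in place of $\Delta^o_{\mf p+1}$. Since by construction (just before Lemma \ref{s09}) $\Delta_{\mf p+1} = \Delta^o_{\mf p+1} \cup \Delta_{\mf p}$, this is precisely the conclusion of the corollary. There is no genuine obstacle here: all the substantive work has been done in Lemmas \ref{s10} and \ref{s16}. The only thing to double-check when writing the proof is that the maximum in the statement runs over all of $E$ (as opposed to $\mc E^{(\mf p)}$), but this is handled within Lemma \ref{s16} itself, whose conclusion already features $\max_{x\in E}$.
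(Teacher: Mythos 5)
Your proposal is correct and is exactly the paper's argument: the paper itself states that Corollary \ref{s36} ``follows from Lemma \ref{s16} and Lemma \ref{s10},'' and you have spelled out the instantiation ($\theta_N = \theta_N(\mf p+1)$, $\Delta^*_{\mf p+1} = \Delta^o_{\mf p+1}$) and the identity $\Delta_{\mf p+1} = \Delta^o_{\mf p+1} \cup \Delta_{\mf p}$ that make the two lemmas compose.
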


We conclude this section with a remark. Fix a level $\mf q$ and denote
by $P_N(x,i,j)$, $1\le i\not = j\le \nu(\mf q)$, $x\in \mc E^{(\mf
  q)}_i$, the hitting probabilities
\begin{equation}
\label{38}
P_N(x,i,j) \;:=\; 
\mb P^N_x \big[ T_{\mc E^{(\mf q)}_j} = T_{\breve{\mc E}^{(\mf q)}_i} \big]\;.
\end{equation}
Recall from Lemma \ref{s34} that $\theta_{N,i}= \mu_N(\ms E^{(\mf
  q)}_i)/ \Cap_N(\mc E^{(\mf q)}_i , \breve{\mc E}^{(\mf q)}_i)$. It
follows from Lemma \ref{s28} with $\mf q=\mf p+1$ that $\theta_N(\mf
q)/\theta_{N,i}$ converges to some number denoted by $\Lambda(i) \in
[0,\infty)$.  On the other hand, by Lemma \ref{s29}, $P_N(x,i,j)$
converges to some $P(x,i,j)\in [0,1]$. Since by Lemma \ref{s34} $(\mc
E^{(\mf q)}_i, \mc E^{(\mf q)}_i \cup \Delta^o_{\mf q}, y)$, $y\in \mc
E^{(\mf q)}_i$, is a valley for the trace process $\eta^{N,\mf q}_t$,
it is not difficult to show that the limit $P(x,i,j)$ does not depend
on the starting point $x$. Therefore, by Lemma \ref{s39}, for any
$1\le i\not = j\le \nu(\mf q)$,
\begin{equation}
\label{41}
\mf r_{\mf q}(i,j) \;=\; \lim_{N\to\infty} \frac{\theta_N(\mf
  q)}{\theta_{N,i}}\; \lim_{N\to\infty}
\mb P^N_x \big[ T_{\mc E^{(\mf q)}_j} = T_{\breve{\mc E}^{(\mf q)}_i}
\big]\;. 
\end{equation}

\section{Valleys and Hitting times of the Ising model at low
  temperature}
\label{si2}

The proof of Theorem \ref{t04} follows the strategy presented in the
previous sections. As we have seen, the approach relies on the
characterization of the shallowest valleys of the model and on the
computation of the depths and the hitting times of these valleys. We
present in this section the shallowest valleys of the Ising model at
low temperature and some estimates of the capacities and the hitting
times.

In the present context, a path $\gamma = (\eta_0, \dots, \eta_p)$ is a
sequence of configuration in $\Omega$ such that for each $0\le j<p$,
$\eta_{j+1} = \eta^{x}_j$ for some $x\in\Lambda_L$.  We shall say
that two configurations $\xi$ and $\eta$ in $\Omega$ are neighbors if
$\xi = \eta^x$ for some $x\in \Lambda_L$.

\begin{lemma}
\label{t01}
Fix a configuration $\sigma$ in $\Omega_o$, $\sigma \not = + \mb 1, -
\mb 1$. For all $\beta>0$,
\begin{equation}
\label{e01}
G_\beta(\{\sigma\} , \Omega_\sigma) \;=\; 
\begin{cases}
\mu_\beta (\sigma) \, e^{- \beta [\ell(\sigma) -1] h} 
& \text{if $\ell(\sigma) \le n_0$,} \\
\mu_\beta (\sigma) \, e^{-\beta (2-h)} & \text{otherwise.}
\end{cases}
\end{equation}
Moreover,
\begin{equation*}
G_\beta(\{-\mb 1\} , \Omega_{-\mb 1}) \;=\; 
\mu_\beta (-\mb 1) \, e^{- \beta (8-3 h)} \;,\quad
G_\beta(\{+\mb 1\} , \Omega_{+\mb 1}) \;=\; 
\mu_\beta (+\mb 1) \, e^{- \beta (8+3 h)}\;.
\end{equation*}

\end{lemma}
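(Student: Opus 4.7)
By the reversibility identity $\mu_\beta(\sigma) R_\beta(\sigma, \sigma^x) = \min\{\mu_\beta(\sigma), \mu_\beta(\sigma^x)\}$ recorded just above the lemma, on any self-avoiding path $\gamma = (\eta_0, \ldots, \eta_p)$ one has
\[
G_\beta(\gamma) \;=\; \min_{0 \leq i \leq p} \mu_\beta(\eta_i) \;=\; Z_\beta^{-1} \exp\bigl\{-\beta \max_{0 \leq i \leq p} \bb H(\eta_i)\bigr\},
\]
since each $\eta_j$ appears in at least one edge pair. Taking the maximum over paths yields $G_\beta(\{\sigma\}, \Omega_\sigma) = \mu_\beta(\sigma)\exp\{-\beta E^*(\sigma)\}$, where $E^*(\sigma) = \min_\gamma \max_i [\bb H(\eta_i) - \bb H(\sigma)]$ is the communication height between $\sigma$ and $\Omega_o \setminus \{\sigma\}$. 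The proof reduces to identifying $E^*(\sigma)$ in each case.

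The upper bound is obtained by exhibiting near-optimal paths, with the energy tracked via $\Delta \bb H(x, \eta) = \eta(x)[\sum_{y \sim x}\eta(y) + h]$. For $2 \leq \ell(\sigma) \leq n_0$, fix a positive $\ell(\sigma) \times m$ rectangle of $\sigma$ with $m \geq \ell(\sigma)$ and flip the $\ell(\sigma)$ spins of one of its shortest sides from a corner to the opposite corner; when $\ell(\sigma) \geq 3$, each intermediate flipped spin has two positive and two negative neighbours, contributing $+h$, while the final flip at the second corner has one positive and three negative neighbours, contributing $h - 2$, so the peak is $(\ell(\sigma) - 1) h$. The $\ell(\sigma) = 2$ case is analogous. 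For $\ell(\sigma) > n_0$, attach a single positive spin to a long side of a rectangle or ring at cost $2 - h$ and then fill adjacent positions at cost $-h$ each until an enlarged configuration in $\Omega_o$ is reached, giving peak $2 - h$. The strict inequalities $(\ell - 1) h < 2 - h$ for $\ell \leq n_0$ and $(\ell - 1) h > 2 - h$ for $\ell > n_0$, both consequences of $n_0 = [2/h]$ and $2/h \notin \bb N$, confirm that these constructions are the cheapest in each regime. For $\sigma = -\mb 1$, nucleate a $2 \times 2$ positive square by four flips with energy increments $4 - h$, $2 - h$, $2 - h$, $-h$, whose maximum $8 - 3h$ is attained at the L-shaped configuration preceding the closure of the square. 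The case $\sigma = + \mb 1$ is handled by the analogous construction (with the roles of the two phases symmetrically interchanged) producing the matching peak $8 + 3h$.

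The matching lower bound is the principal technical obstacle and relies on the classical isoperimetric analysis of Neves and Schonmann \cite{ns1}. One shows that any self-avoiding path from $\sigma$ to $\Omega_\sigma$ must visit a configuration whose energy exceeds $\bb H(\sigma)$ by at least the claimed amount: for $\ell(\sigma) \leq n_0$, the rectangle structure forces the cumulative removal of a full column of $\ell(\sigma)$ spins, in which at least one intermediate state carries $\ell(\sigma) - 1$ ``unfavourable'' increments of $+h$; for $\ell(\sigma) > n_0$, enlarging a supercritical droplet or ring cannot avoid a protrusion of cost $2 - h$; and in the $\pm \mb 1$ cases any escape path must traverse a critical nucleus at cost $8 \mp 3h$. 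Combined with the explicit constructions above, these bounds pin down $E^*(\sigma)$ exactly and yield the stated formulas for $G_\beta(\{\sigma\}, \Omega_\sigma)$.
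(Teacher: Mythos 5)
Your proposal is correct and follows essentially the same route as the paper: you reduce the exact identity $G_\beta(\{\sigma\},\Omega_\sigma)=\mu_\beta(\sigma)e^{-\beta E^*(\sigma)}$ to identifying the communication height $E^*(\sigma)$, you exhibit the same corner-peeling path (for $\ell\le n_0$), single-protrusion path (for $\ell>n_0$), and $2\times 2$-nucleation path (for $\pm\mb 1$) for the upper bound on $E^*$, and you invoke a lower bound to conclude. The clean min-max reformulation in terms of $E^*$ makes explicit what the paper does implicitly, which is a helpful clarification. The one place you deviate is the lower bound: you defer to the full Neves--Schonmann isoperimetric analysis, whereas the paper's actual argument is considerably lighter and purely local. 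Starting from $\sigma$, one simply checks, for each of the finitely many types of spin flips available, that four of the five types already raise the energy above the target level $(\ell(\sigma)-1)h$ (using $\ell\le n_0<2/h$), so the only admissible first move is a corner flip; iterating, the only admissible moves at each subsequent step are flips of positive spins with two negative neighbours, and one cannot reach $\Omega_\sigma$ before the energy has climbed to $(\ell-1)h$. You might note that this local exploration suffices and the global isoperimetric machinery is not needed; also, your phrase ``confirm that these constructions are the cheapest in each regime'' from comparing the two candidate peaks $(\ell-1)h$ and $2-h$ is not by itself a proof that nothing cheaper exists, and the local argument is what actually closes that gap.
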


\begin{proof}
Fix a configuration $\sigma$ satisfying the assumptions of the lemma
and assume that $\ell :=\ell(\sigma) \le n_0$. Fix a positive
rectangle $R$ of $\sigma$ of size $\ell \times m$ and assume that
$m\ge 3$. Consider the sequence of configurations $\sigma = \eta_0,
\dots, \eta_\ell$ obtained by first flipping the spin at a corner of
the rectangle $R$ and then flipping contiguous spins along the smaller
side. The last configuration $\eta_\ell$ is the configuration $\sigma$
where the rectangle $R$ has been replaced by a rectangle $R'\subset R$
of size $\ell \times (m-1)$.

The configuration $\eta_\ell$ belongs to $\Omega_\sigma$ and the path
$\gamma$ to $\Gamma_{\{\sigma\} , \Omega_\sigma}$. A simple
computation shows that $\mu_\beta(\eta_{\ell-1}) =
\min\{\mu_\beta(\eta_k) : 0\le k\le \ell\}$ so that
$G_\beta(\{\sigma\} , \Omega_\sigma) \ge G_\beta(\gamma) =
\mu_\beta(\eta_{\ell-1}) = \mu_\beta(\sigma) e^{-\beta (\ell-1)h}$.

To prove the reverse inequality, note that the configuration $\sigma$
has five types of different neighbors $\sigma^x$. A simple computation
shows that $\mu_\beta(\sigma^x) < \mu_\beta(\sigma) e^{-\beta
  (\ell-1)h}$ in four cases because $\ell \le n_0 < 2/h$. The only
type where this inequality does not hold occurs when we flip the spin
at a corner of a positive rectangle of $\sigma$.

To compute $G_\beta(\{\sigma\} , \Omega_\sigma)$ we need to maximize
$G_\beta(\gamma)$ over all paths $\gamma$ from $\sigma$ to
$\Omega_\sigma$. The previous observations shows that the unique
possible paths are those where we start flipping the corner of a
positive rectangle.

This argument can be iterated. At each step we are only allowed to
flip a positive spin which has two negative neighbors. After $k$ flips
we reach configurations of measure $\mu_\beta(\sigma) e^{-\beta k
  h}$. Since we are not allowed to pass the level $\mu_\beta(\sigma)
e^{-\beta (\ell-1)h}$, the only configurations in $\Omega_o$ which can
be reached after $\ell$ flips are the ones where a rectangle $R$ of
length $\ell \times m$ is replaced by a rectangle $R'\subset R$ of
length $\ell \times (m-1)$.

The case of a rectangle $R$ of size $2\times 2$ is treated in a
similar way. In this case, once one corner is removed, the next spins
of the square flip at rate one to reach the configuration where the
square $R$ is removed. This proves the lemma in the case $\ell
(\sigma) \le n_0$.

Assume now that $\ell (\sigma) > n_0$. Consider the path $\gamma =
(\sigma = \eta_0, \dots, \eta_m)$, where $\eta_1$ is the configuration
obtained from $\sigma$ by flipping a negative spin contiguous to a
positive rectangle, and where $\eta_{j+1}$ is obtained from $\eta_j$,
$2\le j<m$, by flipping a negative spin surrounded by two positive
spins. The final configuration $\eta_m$ is reached when no negative
spin has two positive neighbors.  Clearly, $\mu_\beta(\eta_{1}) =
\min\{\mu_\beta(\eta_k) : 0\le k\le m\}$ so that $G_\beta(\{\sigma\} ,
\Omega_\sigma) \ge G_\beta(\gamma) = \mu_\beta(\eta_{1}) =
\mu_\beta(\sigma) e^{-\beta (2-h)}$.

A similar argument to the one presented in the first part of the proof
of this lemma shows that the path proposed is the optimal one. This
concludes the proof of the first part of the lemma.

Consider the the path $\gamma = (\sigma_0= -\mb 1, \sigma_1, \dots,
\sigma_4)$ where $\sigma_{j+1}$ is the configuration obtained from
$\sigma_j$, $0\le j\le 3$, by flipping a negative spin from a site
with the largest possible number of neighbors with a positive
spin. Hence, $\sigma_4\in \Omega_{-\mb 1}$ is obtained from $-\mb 1$
by flipping the spins of a $2\times 2$ square and $G_\beta(\gamma)=
\mu_\beta (-\mb 1) \, e^{- \beta (8-3 h)}$. In particular,
$G_\beta(\{-\mb 1\} , \Omega_{-\mb 1}) \ge \mu_\beta (-\mb 1) \,
e^{- \beta (8-3 h)}$.

To prove the reverse inequality, consider a path $\gamma =
(\sigma_0, \dots, \sigma_p)$ from $-\mb 1$ to $\Omega_{-\mb 1}$. Let
$\sigma_i$ be the first configuration in the path $\gamma$ which has
three positive spins. A simple computation shows that $\mu_\beta
(\sigma_i) \le \mu_\beta (-\mb 1) \, e^{- \beta (8-3 h)}$. This proves
that $G_\beta(\gamma) \le \mu_\beta (-\mb 1) \, e^{- \beta (8-3 h)}$
so that $G_\beta(\{-\mb 1\} , \Omega_{-\mb 1}) \le \mu_\beta (-\mb 1)
\, e^{- \beta (8-3 h)}$, which proves the penultimate assertion of the
lemma. The last statement is proved in a similar way. 
\end{proof} 

Recall the definition of the transition probabilities $p(\sigma,
\sigma')$, $\sigma\in \Omega_o$, $\sigma'\in \bb S(\sigma)$,
introduced in \eqref{e04}, \eqref{e06}. For $\ell(\sigma) =2$ and
$\ell(\sigma)>n_0$, cases where $\bb S(\sigma) = \bb D(\sigma)$, let
$q(\sigma, \sigma')=p(\sigma, \sigma')$. For $\sigma\in \Omega_o$,
$3\le \ell(\sigma) \le n_0$, $\sigma'\in \bb D(\sigma)$, let
$q(\sigma, \sigma')$ be defined by
\begin{equation*}
q(\sigma, \sigma')\;=\; \frac 1{|\bb D(\sigma)|}\;\cdot
\end{equation*}
Note that $q(\sigma, \sigma') = p(\sigma, \sigma')$ for $\sigma'\in
\bb D(\sigma) \cap \bb S(\sigma) = \bb D(\sigma) \cap
\Omega_{o,\ell(\sigma)-1}$.

\begin{lemma}
\label{t03}
Fix a configuration $\sigma$ in $\Omega_o$, $\sigma \not = + \mb 1, -
\mb 1$, and a configuration $\sigma'\in \bb D(\sigma)$. Then,
\begin{equation*}
\lim_{\beta\to\infty} \mb P^\beta_\sigma \big[ T_{\sigma'} = 
T_{\Omega_\sigma} \big] \;=\; q(\sigma, \sigma')\;.
\end{equation*} 
\end{lemma}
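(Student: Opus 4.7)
The plan is to reduce $\mb P^\beta_\sigma[T_{\sigma'}=T_{\Omega_\sigma}]$ to a gambler's-ruin computation on an effective birth-death chain encoding the sub-dynamics near the saddle set $\bb W(\sigma)$. First I would show that, with probability tending to one as $\beta\to\infty$, the process starting at $\sigma$ visits $\bb W(\sigma)$ before it hits $\Omega_\sigma$. This follows from Lemma \ref{t01}: every path from $\sigma$ to $\Omega_\sigma$ of optimal bottleneck capacity $\mu_\beta(\sigma) e^{-\beta(\ell-1)h}$ (when $\ell:=\ell(\sigma)\le n_0$) or $\mu_\beta(\sigma) e^{-\beta(2-h)}$ (when $\ell>n_0$) attains its maximum energy precisely on a configuration of $\bb W(\sigma)$, so paths bypassing the barrier are exponentially suppressed.

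For $3\le \ell\le n_0$, each $\sigma'\in\bb D(\sigma)$ is associated with a specific side of length $\ell$ of a rectangle or square of $\sigma$, and the effective sub-dynamics on the number $k$ of contiguous flipped spins on that side (starting from an endpoint corner) is, to leading order in $\beta$, a birth-death chain on $\{0,1,\dots,\ell\}$: forward rates $e^{-\beta h}$ for $k\le\ell-2$ (each additional flip climbs the barrier by $h$), both forward and backward rates equal to $1$ at $k=\ell-1$ (the final descent to $\sigma'$ competing against a symmetric back-step), and backward rates $1$ throughout. The standard gambler's-ruin formula gives that from state $k=1$ the chain reaches $k=\ell$ before returning to $k=0$ with probability asymptotic to $\tfrac{1}{2} e^{-(\ell-2)\beta h}$; multiplying by the rate $2e^{-\beta h}$ at which one of the two endpoint corners of the side is flipped yields an effective escape rate of $e^{-(\ell-1)\beta h}$ per side. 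Because distinct rectangles in $\Omega_o$ lie at graph distance at least $2$, the local rates near each side are identical, so this constant is universal; summing over the $|\bb D(\sigma)|$ sides and normalizing then produces $q(\sigma,\sigma')=1/|\bb D(\sigma)|$.

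The cases $\ell=2$ and $\ell>n_0$ follow the same scheme but with richer local combinatorics. For $\ell=2$, the L-shape obtained by flipping a corner of a $2\times 2$ square admits three rate-$1$ continuations (one return to $\sigma$ and two forward moves that complete the square's removal by two further rate-$1$ flips), yielding escape probability $2/3$ per corner and hence a total factor of $4\cdot(2/3)=8/3$ per square, versus $2\cdot(1/2)=1$ per side of a proper $2\times m$ rectangle (whose L-shape has only two rate-$1$ options). For $\ell>n_0$, from each $w\in\bb W_j(\sigma)$ the cascade of rate-$1$ flips of negative spins surrounded by two positive spins competes with a single return move that undoes the initial protrusion; there are $j$ forward moves and $1$ return move, producing commitment probability $j/(j+1)$ and hence the weights in \eqref{e06}.

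The main technical obstacle is rigorously justifying the birth-death reduction: one must show that trajectories which wander among several sub-barrier intermediate configurations, as well as diversions into the ``wrong'' side at a corner shared by two sides (notably at corners of squares), contribute only lower-order corrections. This is handled by a quasi-stationary / local-equilibration argument within the basin of $\sigma$, the key input being Lemma \ref{t01}, which ensures a uniform $\Omega(1)$ gap in $\beta$ between the barrier height and the next-highest energy visited along typical trajectories; this gap confines the analysis to a bounded (in $\beta$) number of relevant transitions and makes the effective birth-death computation rigorous.
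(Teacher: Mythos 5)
Your route is genuinely different from the paper's: you aim to compute escape rates via a gambler's-ruin reduction to an effective linear chain, whereas the paper works directly with the harmonic function $f_\beta(\eta)=\mb P^\beta_\eta[T_{\sigma'}=T_{\Omega_\sigma}]$, iterates the harmonic identity inward from $\sigma$ to show $f$ is asymptotically constant on the sub-barrier basin $F_1(\sigma)\cup(F_2(\sigma)\setminus\bb W(\sigma))$, computes $f$ explicitly on every saddle configuration in $\bb W(\sigma)$, and then solves the resulting balance relation $\sum_{\eta\in\bb W(\sigma)}\{f(\eta)-f(\sigma)\}=0$.

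There is a concrete gap in your argument for $3\le\ell\le n_0$. Your birth-death chain indexed by the number of spins flipped contiguously from a fixed endpoint corner reaches only saddle configurations in $\bb W_1(\sigma)$ (last remaining spin at a corner, transition probability $1/2$ over the top). It never reaches the configurations in $\bb W_2(\sigma)$, in which the last remaining spin is interior to the side and the process has one forward and \emph{two} backward rate-one moves, giving transition probability $1/3$. These are reached by flipping inward from both corners simultaneously, and they are not a lower-order correction: counting per side, $|\bb W_1|$ contributes $2$ saddle configurations and $|\bb W_2|$ contributes $\ell-2$. Indeed, comparing with Lemma \ref{t02}, the capacity prefactor per side is $\tfrac{1}{2}\cdot 2+\tfrac{2}{3}(\ell-2)=\tfrac{2\ell-1}{3}$, so the escape rate per side is $\tfrac{2\ell-1}{3}\,e^{-\beta(\ell-1)h}$, not the $e^{-\beta(\ell-1)h}$ you derive; your claim that the omitted paths ``contribute only lower-order corrections'' is therefore false. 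The same objection applies to branching at corners of squares, which you also wave away. You do land on the correct answer $q=1/|\bb D(\sigma)|$ because the (incorrectly computed) prefactor happens to cancel in the normalization, but that cancellation is not a consequence of your chain reduction --- it is the symmetry ``all sides contribute equally'', which your calculation does not actually establish since the neglected terms could in principle have differed between rectangle and square sides. The paper's proof avoids this entirely: by iterating the harmonic identity it produces the global balance over all of $\bb W(\sigma)$, including the $\bb W_2(\sigma)$ configurations and the square-corner branching, with the error terms tracked explicitly through the $o_1(\beta)$, $o_2(\beta)$ estimates. Any rigorous version of your argument would have to replace the one-dimensional chain by the full tree of sub-barrier configurations, at which point it essentially becomes the paper's computation.

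Your treatment of the cases $\ell=2$ and $\ell>n_0$ is essentially correct and in those cases coincides in substance with the paper (the saddle is one step from $\sigma$, so the ``chain'' degenerates and only the local commitment probabilities $j/(j+1)$ enter); the difficulty is concentrated in the intermediate regime $3\le\ell\le n_0$.
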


\begin{proof}
Fix a configuration $\sigma$ satisfying the assumptions of the lemma,
a configuration $\sigma'\in \bb D(\sigma)$ and assume that $3\le \ell
(\sigma) \le n_0$.  Denote by $\bb W(\sigma, \sigma')$ the set of
configurations in $\bb W(\sigma)$ which are equal to $\sigma'$ when we
flip the positive spin surrounded by three negative spins. Note that
$|\bb W(\sigma, \sigma')| = \ell (\sigma)$.

We present the proof for $\ell(\sigma)= 3$, the other cases being
analogous.  Since $3 = \ell(\sigma) \le n_0 < 2/h$, we have that
$h<2/3$. For a configuration $\eta$ for which all positive spins are
surrounded by at most two negative spins, let $F_1(\eta)$ be the set
of all configurations obtained from $\eta$ by flipping a positive spin
surrounded by two negative spins.

Let $f_\beta(\eta) = \mb P^\beta_\eta [ T_{\sigma'} =
T_{\Omega_\sigma} ]$ and denote by $f$ a limit point of the sequence
$f_\beta$, as $\beta\uparrow\infty$. We need to show that $f(\sigma) =
1/|\bb D(\sigma)|$. Since $f_\beta$ is harmonic, a simple computation
shows that
\begin{equation}
\label{e02}
f_\beta(\sigma) \;=\; \frac 1{|F_1(\sigma)|} \sum_{\xi \in F_1(\sigma)}
f_\beta(\xi) \;+\; o_1(\beta)\;,
\end{equation}
where $o_1(\beta)$ is an expression absolutely bounded by $C_0 \exp\{
-2\beta [1-h]\}$ for some finite constant $C_0$ independent of $\beta$
which may change from line to line. It follows from this identity that
$f(\sigma) \;=\; |F_1(\sigma)|^{-1} \sum_{\xi\in F_1(\sigma)} f(\xi)$.

A similar argument shows that $f(\eta)=f(\sigma)$ for any
configuration $\eta$ in $F_1(\sigma)$. Let $F_2(\sigma)$ be the set of
configurations obtained from a configuration in $F_1(\sigma)$ by
flipping a positive spin surrounded by two negative spins. By the same
reasons, $f(\xi)=f(\sigma)$ for any configuration $\xi$ in
$F_2(\sigma) \setminus \bb W(\sigma)$. Fix now a configuration $\eta$
in $\bb W(\sigma, \sigma')$. If $\eta$ differs from $\sigma'$ by a
spin in a corner of a positive rectangle of $\sigma$, $f(\eta) = (1/2)
[1+f(\sigma)]$, while if $\eta$ differs from $\sigma'$ by a spin not
in a corner, $f(\eta) = (1/3) [1+2f(\sigma)]$. For a configuration
$\eta$ in $\bb W(\sigma) \setminus \bb W(\sigma, \sigma')$, if $\eta$
differs from $\sigma'$ by a spin in a corner of a positive rectangle
of $\sigma$, $f(\eta) = (1/2) f(\sigma)$, while if $\eta$ differs from
$\sigma'$ by a spin not in a corner, $f(\eta) = (2/3) f(\sigma)$.

Finally, observe that applying the harmonic identity to the terms
$f_\beta(\xi)$ in equation \eqref{e02}, after some elementary
computations we obtain that
\begin{equation*}
\sum_{\xi\in F_1(\sigma)} \sum_{\eta\in F_1(\xi)} 
\{ f_\beta(\eta) - f_\beta(\sigma) \} \;=\;  o_2(\beta) \;,
\end{equation*}
where $o_2(\beta)$ is absolutely bounded by $C_0 \{e^{-\beta h} +
e^{-\beta [2-3h]}\}$. Since $h<2/3$, the right hand side vanishes as
$\beta\uparrow\infty$ so that $\sum_{\eta\in F_2(\sigma)} \{ f(\eta) -
f(\sigma) \} = 0$. By the previous identities, this relation is
reduced to $\sum_{\eta\in \bb W(\sigma)} \{ f(\eta) - f(\sigma) \} =
0$. From this identity and the explicit values of $f$ in $\bb
W(\sigma)$, we obtain that $f(\sigma) = 1/|\bb D(\sigma)|$, which
proves the lemma.

Suppose now that $\ell(\sigma)=2\le n_0$ and note that equation
\eqref{e02} holds. The argument is analogous to the previous one, with
one difference. If $\xi\in F_1(\sigma)$ is configuration in which a
spin of a $2\times 2$ positive square $Q$ of $\sigma$ has been
flipped, we have that $3 f(\xi) = f(\sigma) + f(\eta_1) + f(\eta_2)$,
where $f$ is any limit point of the sequence $f_\beta$ and $\eta_1$,
$\eta_2$ are configurations obtained from $\sigma$ by flipping a row
or a column of the square $Q$. Iterating the argument based on the
harmonicity of $f_\beta$, we conclude that $3 f(\xi) = f(\sigma) + 2
f(\sigma^*)$, where $\sigma^*$ is the configuration obtained from
$\sigma$ by flipping all spins of $Q$.

The proof for $\ell (\sigma)>n_0$ is similar. Observe first that
$n_0=1$ if $h>1$. In this case, it is easier to flip a negative spin
surrounded by a positive spin than to flip a positive spin surrounded
by two negative spins and the proof presented below simplifies. We
assume that $h<1$ so that $n_0\ge 2$.

Recall the definition of the set $F_1(\sigma)$ introduced in the
beginning of the proof.  By the harmonic property of $f_\beta$,
\begin{equation*}
f_\beta(\sigma) \;=\; \frac 1{|F_1(\sigma)|} \sum_{\xi \in F_1(\sigma)}
f_\beta(\xi) \;+\; \frac{e^{-2\beta[1-h]}}{|F_1(\sigma)|^2}
\sum_{\eta \in G_1(\sigma)} \sum_{\xi \in F_1(\sigma)}
[f_\beta(\eta)-f_\beta(\xi)] \;+\; o(\beta)\;,
\end{equation*}
where $G_1(\sigma)$ is the set of configurations obtained from
$\sigma$ by flipping a negative spin surrounded by a positive spin and
where $o(\beta)$ an expression which vanishes faster than
$e^{-2\beta[1-h]}$ as $\beta\uparrow\infty$.

We claim that
\begin{equation}
\label{e03}
\lim_{\beta\to\infty} e^{2\beta[1-h]} \sum_{\xi \in F_1(\sigma)}
\{f_\beta(\xi) - f_\beta(\sigma)\} \;=\; 0\;.
\end{equation}
To prove this claim, denote by $F_k(\sigma)$, $1\le k\le n_0$, the
configurations obtained from $\sigma$ by successively flipping $k$
distinct positive spins surrounded by two negative spins:
$F_{j+1}(\sigma) = \cup_{\xi\in F_{j}(\sigma)} F_1(\xi)$. Denote by
$G_1(\eta)$ the predecessors of $\eta$, that is, the configurations
obtained from $\eta$ by flipping a negative spin surrounded by two
positive spins. Hence, $G_1(\eta) \subset F_{j-1}(\sigma)$ if $\eta$
belongs to $F_j(\sigma)$. By the harmonic property, for every $\eta\in
F_j(\sigma)$, $1\le j< n_0$,
\begin{equation*}
\sum_{\xi \in G_1(\eta)} \{f_\beta(\eta) - f_\beta(\xi)\} \;=\;
e^{-\beta h} \sum_{\zeta \in F_1(\eta)} 
\{f_\beta(\zeta) - f_\beta(\eta)\} \;+\;
O(e^{-\beta [2-h]})\;.
\end{equation*}
Replacing this identity in the sum appearing in \eqref{e03}, we reduce
the proof of \eqref{e03} to the proof that
\begin{equation*}
e^{2\beta[1-h]} e^{-(n_0-1)\beta h} 
\sum_{\xi_1 \in F_1(\sigma)}
\sum_{\xi_2 \in F_1(\xi_1)} \cdots \sum_{\xi_{n_0} \in F_1(\xi_{n_0-1})}
\{f_\beta(\xi_{n_0}) - f_\beta(\xi_{n_0-1})\} 
\end{equation*}
vanishes as $\beta\uparrow\infty$. This holds because $f_\beta$ is
bounded by one and $2/h < n_0+1$.

By the harmonic property of $f_\beta$ at $\xi\in F_1(\sigma)$, $f(\xi)
= f(\sigma)$ for any limit point $f$ of the sequence
$f_\beta$. Moreover, by \eqref{e03} and by the displayed formula
appearing just before \eqref{e03},
\begin{equation*}
\sum_{\eta \in G_1(\sigma)} [f (\eta)-f (\sigma)] \;=\; 0\;.
\end{equation*}
Recall the notation introduced in Section \ref{si1}.  Note that
$G_1(\sigma) = \bb W(\sigma)$ and that $f(\eta) = [j+f(\sigma)]/(j+1)$
if $\eta$ belongs to $\bb W_j(\sigma, \sigma')$, $1\le j\le 3$, while
$f(\eta) = f(\sigma)/(j+1)$ if $\eta\in \bb W_j(\sigma) \setminus \bb
W_j(\sigma, \sigma')$. This observation permits to conclude the proof
of the lemma.
\end{proof} 

Recall the definition of the sets $\Omega_{o,k}$, $1\le k\le n_0$, and
$\bb S(\sigma)$ introduced in Section \ref{si1}.

\begin{corollary}
\label{t05}
Fix a configuration $\sigma$ in $\Omega_{o,k} \setminus
\Omega_{o,k+1}$, $1\le k\le n_0$, $\sigma \not = + \mb 1, - \mb 1$.
Let $\Omega_{k, \sigma} = \Omega_{o,k} \setminus \{\sigma\}$.  
Then, for all $\sigma'\in \Omega_{k, \sigma}$,
\begin{equation*}
\lim_{\beta\to\infty} \mb P^\beta_\sigma \big[ T_{\sigma'} = 
T_{\Omega_{k, \sigma}} \big] \;=\; p(\sigma, \sigma')\;.
\end{equation*} 
\end{corollary}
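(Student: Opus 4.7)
My plan is to reduce Corollary \ref{t05} to an iterated application of Lemma \ref{t03}, splitting by the value of $\ell(\sigma)=k+1$. When $\ell(\sigma)=2$ or $\ell(\sigma)>n_0$, we have $\bb S(\sigma)=\bb D(\sigma)$ and $p(\sigma,\cdot)=q(\sigma,\cdot)$ by definition; moreover every $\sigma'\in\bb D(\sigma)$ satisfies $\ell(\sigma')\ge\ell(\sigma)=k+1$, so $\bb D(\sigma)\subset\Omega_{o,k}$. Hence on the event $\{T_{\sigma'}=T_{\Omega_\sigma}\}$ one has $T_{\Omega_{k,\sigma}}=T_{\Omega_\sigma}$, and Lemma \ref{t03} yields the claim directly; for $\sigma'\in\Omega_{k,\sigma}\setminus\bb D(\sigma)$ the limit is $0$ because the first hit of $\Omega_\sigma$ lies asymptotically in $\bb D(\sigma)$.

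The substantive case is $3\le\ell(\sigma)\le n_0$, where $\bb D(\sigma)\setminus\bb S(\sigma)$ consists of the $4 N_s(\sigma)$ configurations obtained by flipping a single side of length $\ell$ of some positive $\ell\times\ell$ square $Q$ of $\sigma$; each such direct successor $\xi$ is an $(\ell-1)\times\ell$ rectangle with $\ell(\xi)=\ell-1=k$, so $\xi\notin\Omega_{o,k}$. For $\sigma'\in\bb S(\sigma)\setminus\bb S_s(\sigma)$ (a non-square successor) one has $\sigma'\in\bb D(\sigma)\cap\Omega_{o,k}$ and $q(\sigma,\sigma')=p(\sigma,\sigma')=1/|\bb D(\sigma)|$, so Lemma \ref{t03} applies as in the easy case. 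For $\sigma'=\sigma_Q^\ast\in\bb S_s(\sigma)$, the key claim to establish is: conditionally on the first hit of $\Omega_\sigma$ being one of the four square-direct-successors associated with $Q$, the process reaches $\sigma_Q^\ast$ before any other element of $\Omega_{o,k}$, with probability tending to one. Granted this, Lemma \ref{t03} contributes $1/|\bb D(\sigma)|$ from each of the four square-direct-successors of $Q$, giving in the limit $4/|\bb D(\sigma)|=p(\sigma,\sigma_Q^\ast)$; for $\sigma'\notin\bb S(\sigma)$ the entirely analogous argument yields limit $0$.

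To establish the dismantling claim I would iterate Lemma \ref{t03}. From $\xi$, the set $\bb D(\xi)$ consists only of the two configurations obtained by flipping one of the two sides of length $\ell-1$ of the $(\ell-1)\times\ell$ rectangle replacing $Q$: all other positive rectangles of $\sigma$ have shortest side $\ge\ell>\ell-1=\ell(\xi)$ and therefore do not produce direct successors of $\xi$ in the sense of Section \ref{si1}. Both elements of $\bb D(\xi)$ are $(\ell-1)\times(\ell-1)$ squares, hence outside $\Omega_{o,k}$. Iterating, the cascade shrinks the current rectangle one short-side at a time and remains outside $\Omega_{o,k}$ until it reaches the $2\times 2$ case, at which point a brief sojourn outside $\Omega_o$ followed by rate-$1$ moves on isolated positive spins (exactly as in the $\ell(\sigma)=2$ portion of the proof of Lemma \ref{t03}) produces $\sigma_Q^\ast$; the latter belongs to $\Omega_{o,k}$ because every other positive rectangle of $\sigma$ has shortest side $\ge\ell>k$. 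The only way to exit the cascade toward a different $\Omega_{o,k}$-configuration is to refill the row just removed, which requires a transition of rate $e^{-\beta(2-h)}$; since $n_0\ge 3$ forces $h<2/3$, one has $2-h>h$ and this probability is asymptotically negligible compared to the dismantling rate $e^{-\beta h}$.

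The main obstacle is the careful execution of the cascade: at every intermediate configuration one must identify $\bb D$, rule out escapes (refilling toward $\sigma$ or initiating the dismantling of another rectangle of $\sigma$), and handle the brief departure from $\Omega_o$ near the $2\times 2$ step. The combinatorial coefficients in \eqref{e04}--\eqref{e05} then fall out of this bookkeeping, and the factor $4$ distinguishing square from non-square successors is traced precisely to the four sides of $Q$ that each offer an independent entry point into the dismantling cascade.
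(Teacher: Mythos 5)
Your proof follows the paper's strategy exactly: dispatch the cases $\ell(\sigma)=2$ and $\ell(\sigma)>n_0$ (where $\bb S(\sigma)=\bb D(\sigma)$ and $p=q$) directly from Lemma~\ref{t03}, and for $3\le\ell(\sigma)\le n_0$ split $\bb S(\sigma)$ into its non-square and square parts, treating the latter via the strong Markov property at the four side-flip configurations $\sigma_j$ and an iterated application of Lemma~\ref{t03} showing the dismantling cascade stays in $\Omega\setminus\Omega_{o,k}$ until it reaches $\sigma^\ast_Q$. The paper closes by observing that $\sum_{\sigma'\in\bb S(\sigma)}p(\sigma,\sigma')=1$, which upgrades all the $\liminf$ bounds to exact limits in one stroke and dispenses with a separate vanishing argument for $\sigma'\notin\bb S(\sigma)$; your closing rate comparison $e^{-\beta(2-h)}\prec e^{-\beta h}$ is a harmless sanity check but is already subsumed by the iterated use of Lemma~\ref{t03}, which rules out any exit from $\bb D$ of the current configuration, not only refilling.
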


\begin{proof}
Fix $1\le k\le n_0$ and a configuration $\sigma$ in $\Omega_{o,k}
\setminus \Omega_{o,k+1}$, $\sigma \not = + \mb 1, - \mb 1$.  For
$k=1$ and $k=n_0$, since $\bb D(\sigma) = \bb S(\sigma)$ and
$\sum_{\sigma'\in \bb D(\sigma)} q(\sigma, \sigma')=1$, by Lemma
\ref{t03},
\begin{equation*}
\lim_{\beta\to\infty} \mb P^\beta_\sigma \big[ T_{\bb S(\sigma)} = 
T_{\Omega_{\sigma}} \big] \;=\; 1\;.
\end{equation*}
Since $\bb S(\sigma)\subset \Omega_{k, \sigma} \subset
\Omega_{\sigma}$ we may replace $T_{\bb S(\sigma)}$ by $T_{\Omega_{k,
    \sigma}}$ in the previous equation. The corollary follows now from
Lemma \ref{t03} and the fact that $p=q$ for $k=1$ and $k=n_0$.

Consider now the case $2\le k<n_0$. Fix a configuration $\sigma'\in
\bb D(\sigma) \cap \Omega_{o,k} \subset \bb S(\sigma)$. Since
$T_{\Omega_{\sigma}} \le T_{\Omega_{k,\sigma}}$,  and since $p(\sigma,
\sigma^*) = q(\sigma, \sigma^*)$ for $\sigma^*\in \bb S(\sigma)$, by
Lemma \ref{t03},
\begin{equation*}
\liminf_{\beta\to\infty} \mb P^\beta_\sigma \big[ T_{\sigma'} = 
T_{\Omega_{k, \sigma}} \big] \;\ge\;  
\lim_{\beta\to\infty} \mb P^\beta_\sigma \big[ T_{\sigma'} = 
T_{\Omega_{\sigma}} \big] \;=\; p(\sigma, \sigma')\;.
\end{equation*}
Fix now a configuration $\sigma'\in \bb S(\sigma) \setminus \bb
D(\sigma)$. This configuration is obtained from $\sigma$ by flipping
all spins of a positive $\ell(\sigma) \times \ell(\sigma)$ square of
$\sigma$. Denote by $\sigma_j$, $1\le j\le 4$, the four configurations
obtained from $\sigma$ by flipping all spins from one of the sides of
this square. Of course,
\begin{equation*}
\mb P^\beta_\sigma \big[ T_{\sigma'} = T_{\Omega_{k, \sigma}} \big]
\;\ge\; \sum_{j=1}^4 \mb P^\beta_\sigma 
\big[ T_{\sigma'} = T_{\Omega_{k, \sigma}} \,,\, T_{\sigma_j} =
T_{\Omega_{\sigma}} \big]\;.
\end{equation*}
Since $T_{\Omega_{\sigma}} \le T_{\Omega_{k, \sigma}}$ and
$\sigma_j\not\in\Omega_{k, \sigma}$, by the strong Markov property,
the right hand side is equal to
\begin{equation*}
\sum_{j=1}^4 \mb P^\beta_\sigma 
\big[ T_{\sigma_j} = T_{\Omega_{\sigma}} \big] \, \mb P^\beta_{\sigma_j}
\big[ T_{\sigma'} = T_{\Omega_{k, \sigma}} \big] \;.
\end{equation*}
By Lemma \ref{t03}, $\mb P^\beta_\sigma \big[ T_{\sigma_j} =
T_{\Omega_{\sigma}} \big]$ converges to $q(\sigma, \sigma_j)$ as
$\beta\uparrow\infty$. We also claim that $\mb P^\beta_{\sigma_j}
\big[ T_{\sigma'} = T_{\Omega_{k, \sigma}} \big]$ converges to $1$ as
$\beta\uparrow\infty$ for $1\le j\le 4$. Indeed, for a fixed $j$,
$\ell(\sigma_j) = \ell(\sigma)-1$ and the configuration $\sigma_j$ has
one and only one positive rectangle $R$ with a side of length
$\ell(\sigma)-1$. It follows from Lemma \ref{t03} and from the
definition of the sets $\bb D(\sigma^*)$ that the process first flips
the spins of one side of the rectangle $R$ transforming it into a
positive $[\ell(\sigma)-1]\times [\ell(\sigma)-1]$ square. Then, it
flips the spins of one side of this square transforming it into a
positive $[\ell(\sigma)-2]\times [\ell(\sigma)-1]$ rectangle and so
on, until the process reaches a configuration where the initial
rectangle $R$ is transformed into a $2\times 2$ square, without
flipping in this process any other site which is not contained in the
original $\ell(\sigma) \times \ell(\sigma)$ positive square of
$\sigma$. In the last step, all spins of the $2\times 2$ positive
square are flipped and the process reaches the configuration $\sigma'$
which belongs to $\Omega_{k, \sigma}$ and is the first one to belong
to this set in the evolution just described. This proves the claim.

It follows from this argument that
\begin{equation*}
\liminf_{\beta\to\infty} \mb P^\beta_\sigma \big[ T_{\sigma'} 
= T_{\Omega_{k, \sigma}} \big] \;\ge\; \sum_{j=1}^4 q(\sigma,
\sigma_j) \; =\; p(\sigma, \sigma')\;.
\end{equation*}
Since this inequality holds for all $\sigma'\in \bb S(\sigma)$ and
$\sum_{\sigma'\in \bb S(\sigma)} p(\sigma, \sigma')=1$, the lemma is
proved.
\end{proof}

The proof of Lemma \ref{t03} describes the asymptotic behavior of $\mb
P^\beta_\eta [ T_\sigma < T_{\Omega_\sigma}]$ for some configurations
$\eta$, but not for all. We may not, therefore, apply blindly Lemma
\ref{s21} to deduce the limit of the capacity $\Cap_\beta(\{\sigma\} ,
\Omega_\sigma)$. Next result fills the gaps.

For $3\le \ell (\sigma) \le n_0$, denote by $\bb W_1(\sigma)$ the
configurations in $\bb W(\sigma)$ whose positive spin surrounded by
three negative spins is in the corner of a positive rectangle of
$\sigma$ and denote by $\bb W_2(\sigma)$ the remaining configurations
of $\bb W (\sigma)$. Note that configurations in $\bb W_j(\sigma)$
jump to $\Omega_\sigma$ with probability $(j+1)^{-1} + o(\beta)$ and
that $|\bb W_1(\sigma)| = 4 N_r(\sigma) + 8 N_s(\sigma)$, $|\bb
W_2(\sigma)| = 2[\ell(\sigma)-2] N_r(\sigma) + 4[\ell(\sigma)-2]
N_s(\sigma)$. 

\begin{lemma}
\label{t02}
Fix a configuration $\sigma$ in $\Omega_o$, $\sigma \not = + \mb 1, -
\mb 1$. If $2\le \ell:=\ell(\sigma) \le n_0$,
\begin{equation*}
\lim_{\beta\to\infty}  e^{\beta [\ell -1] h} \, \mu_\beta
(\sigma)^{-1} \, \Cap_\beta(\{\sigma\} , \Omega_\sigma) \;=\; 
\theta(\sigma)\;, 
\end{equation*}
and if $\ell > n_0$,
\begin{equation*}
\lim_{\beta\to\infty} e^{\beta (2-h)} \,
\mu_\beta (\sigma)^{-1} \, \Cap_\beta(\{\sigma\} , \Omega_\sigma) 
\;=\; \theta(\sigma)\;, 
\end{equation*}
where $\theta(\sigma)$ has been defined in \eqref{e07}.
\end{lemma}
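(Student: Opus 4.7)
The plan is to deduce Lemma~\ref{t02} from Lemma~\ref{s21} applied with $A = \{\sigma\}$ and $B = \Omega_\sigma$, using $\mf g_\beta = \mu_\beta(\sigma)\, e^{-\beta(\ell-1)h}$ for $2 \le \ell \le n_0$ and $\mf g_\beta = \mu_\beta(\sigma)\, e^{-\beta(2-h)}$ for $\ell > n_0$, the scales identified in Lemma~\ref{t01}. That lemma reduces the asymptotic to identifying the critical bond set $\mf B$ and the pointwise limit $f$ of the equilibrium potential $f_\beta(x) = \mb P^\beta_x[T_\sigma < T_{\Omega_\sigma}]$ on its endpoints, and then evaluating $\sum_{\{x,y\} \in \mf B} g(x,y)\,[f(y) - f(x)]^2$.

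Since the Metropolis rates give $G_\beta(x,y) = \min\{\mu_\beta(x), \mu_\beta(y)\}$, the critical pairs are exactly those whose smaller-measure endpoint sits at the saddle level $\mf g_\beta$. Reading off the measures along the optimal paths in the proof of Lemma~\ref{t01}, this isolates the pairs $\{\xi, \eta\}$ with $\xi \in \bb W(\sigma)$ and $\eta$ a rate-$1$ neighbor of $\xi$: for $\ell \le n_0$, the ``forward'' $\eta$ completes the flip to a direct successor in $\bb D(\sigma) \subset \Omega_\sigma$ and the ``backward'' $\eta$ sits in $F_{\ell-2}(\sigma)$; for $\ell > n_0$, the ``backward'' $\eta$ is $\sigma$ itself and the ``forward'' $\eta$'s fill in the positive region further toward $\bb D(\sigma)$. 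The subsets $\bb W_j(\sigma)$ introduced in Section~\ref{si1} are exactly the bookkeeping of how many rate-$1$ neighbors of each type every saddle has.

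I would compute $f$ at the relevant endpoints by transcribing the harmonic-identity argument from the proof of Lemma~\ref{t03}: iterated harmonicity at $\sigma$ and at each level $F_k(\sigma)$ with $k \le \ell - 2$ forces $f \equiv 1$ on these upstream configurations, the symmetric downstream argument forces $f \equiv 0$ past $\bb D(\sigma)$ (or past the first forward rate-$1$ jump when $\ell > n_0$), and at $\xi \in \bb W_j$ a final application of the harmonic identity with the local rate-$1$ transitions yields $f(\xi) = j/(j+1)$ for $\ell \le n_0$ and $f(\xi) = 1/(j+1)$ for $\ell > n_0$. Substituting into Lemma~\ref{s21} and noting $g(x,y) = 1$ on each critical bond, each $\xi \in \bb W_j$ contributes $j/(j+1)$ to the sum (its forward bond gives $j^2/(j+1)^2$ and its $j$ backward bonds together give $j/(j+1)^2$, or vice versa); summing over $j$ and substituting the closed-form cardinalities $|\bb W_j|$ reproduces the three formulas for $\theta(\sigma)$.

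The hard part will be ruling out contributions from the other candidate critical bonds that the outline above tacitly discards: bonds incident on $\bb D(\sigma)$ itself, bonds reaching intermediate configurations through non-optimal flips (for instance, the diagonal two-corner-flipped configurations that arise in the $\ell = 2$, $2 \times 2$ square geometry, where $\bb D(\sigma)$ is exceptionally defined by flipping all four spins rather than a side of length~$2$), and sideways bonds among configurations downstream of $\bb W(\sigma)$. Each such bond must be shown either to satisfy $G_\beta \prec \mf g_\beta$ or to have both endpoints carrying the same limiting $f$-value and hence to contribute zero; carrying out this verification cleanly in the $\ell = 2$ case, with its exceptional $\bb D(\sigma)$ and the resulting asymmetry in the coefficients of $N_r$ and $N_s$ in $\theta(\sigma)$, requires a short separate combinatorial check and is where the bookkeeping is most delicate.
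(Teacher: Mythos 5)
Your plan is a genuine alternative to the paper's argument, and it reproduces the right answer, but the two routes differ in a way the paper itself flags just before the statement of this lemma: ``The proof of Lemma \ref{t03} describes the asymptotic behavior of [the hitting probabilities] for some configurations $\eta$, but not for all. We may not, therefore, apply blindly Lemma \ref{s21}.'' You propose to apply Lemma \ref{s21} anyway, closing this gap by transcribing the harmonic arguments of Lemma \ref{t03} to the equilibrium potential $f_\beta(\eta)=\mb P^\beta_\eta[T_\sigma<T_{\Omega_\sigma}]$ (a different harmonic function than the one analysed there, though the same iterated-harmonicity machinery applies). That is defensible, and your per-saddle bookkeeping is correct: if a saddle has one rate-one bond carrying $f$-difference $k/(k+1)$ and $k$ bonds each carrying $1/(k+1)$, the total contribution is $k^2/(k+1)^2 + k/(k+1)^2 = k/(k+1)$, matching the coefficients in \eqref{e07} (up to the apparent typo in the $\ell=2$ formula: summing $k/(k+1)$ over the four $k=2$ saddles of a $2\times 2$ square gives $8/3$, which is also the normalizing constant $2N_r+(8/3)N_s$ of \eqref{e04}, not $2/3$). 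The paper instead proves a two-sided sandwich: for the upper bound it plugs an explicit test function ($1$ on $B$, $j/(j+1)$ on $\bb W_j$, $0$ elsewhere) into the Dirichlet form, so it never needs the true equilibrium potential; for the lower bound it discards all bonds outside a small neighbourhood $A_0\cup\bb D(\sigma)$ of $\sigma$, so the minimiser it must analyse lives on a controlled subgraph. Your single-shot use of Lemma \ref{s21} pays for its conceptual economy by requiring control of $f$ on \emph{every} bond in $\mf B(\{\sigma\},\Omega_\sigma)$, including pairs of configurations far from $\sigma$ (e.g.\ simultaneous flips in two different droplets) that can sit exactly at the saddle level; you do flag this in your last paragraph, but to make the argument airtight you must show either that no such far-away critical bonds exist or that $f$ is constant across them, and the paper's restriction to $A_0\cup\bb D(\sigma)$ is precisely the device that avoids this global verification. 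You also correctly single out the $\ell=2$ square geometry as the place where the critical-bond set is genuinely larger (the forward bonds out of the L-shapes do not land in $\Omega_\sigma$ and one must propagate $f$ through the dominoes), which is the same reason the paper relegates $\ell=2$ and $\ell>n_0$ to the reader.
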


\begin{proof}
Fix a configuration $\sigma$ satisfying the assumptions of the lemma
and assume that $3\le \ell :=\ell(\sigma) \le n_0$. By Lemmas
\ref{s03} and \ref{t01}, we know that $\Cap_\beta(\{\sigma\} ,
\Omega_\sigma)$ is of order $\mu_\beta (\sigma) e^{- \beta (\ell -1)
  h}$.

We start with the proof of the upper bound for the capacity.  Recall
that we denote by $\bb W(\sigma)$ the set of saddle configurations of
the valley $(\{\sigma\}, \{\sigma\} \cup \Delta, \sigma)$.  Denote by
$B$ the set of all configurations $\eta$ which do not belong to $\bb
W(\sigma)$ and which can be reached from $\sigma$ by self-avoiding
paths $\gamma = (\sigma = \eta_0, \eta_1, \dots, \eta_p = \eta)$ such
that $\mu_\beta (\eta_k) \ge \mu_\beta (\sigma) e^{- \beta [\ell -1]
  h}$, $0\le k\le p$. It follows from the proof of Lemma \ref{t01}
that all these configurations are obtained from $\sigma$ by
successively flipping at most $\ell-1$ positive spins which are
surrounded by two negative spins. Note that all neighbors $\xi$ of a
configuration $\eta \in B$ which do not belong to $B$ have measure
$\mu_\beta (\xi) < \mu_\beta (\sigma) e^{- \beta [\ell -1] h}$.

Consider the function $f:\Omega \to [0,1]$ defined as follows.  Set
$f(\sigma)=1$, $f=1$ on $B$, $f=j/(j+1)$ on $\bb W_j(\sigma)$ and
$f=0$ elsewhere. By definition of capacity and by definition of the
function $f$, $\Cap_\beta (\{\sigma\} , \Omega_\sigma) \le D_\beta(f)
= \mu_\beta (\sigma) e^{- \beta [\ell -1] h} \{(2/3) |\bb W_2(\sigma)|
+ (1/2) |\bb W_1(\sigma)|\} + o(\beta)$, where $o(\beta) \prec
\mu_\beta (\sigma) e^{- \beta [\ell -1] h}$.  This proves the upper
bound.

To prove the lower bound, consider a function $f$ equal to $1$ at
$\sigma$ and $0$ on $\Omega_\sigma$. Denote by $A_0$ the set of all
configurations $\eta$ which can be reached from $\sigma$ by
self-avoiding paths $\gamma = (\sigma = \eta_0, \eta_1, \dots, \eta_p
= \eta)$ such that $\mu_\beta (\eta_k) \ge \mu_\beta (\sigma) e^{-
  \beta [\ell -1] h}$, $0\le k\le p$, and let $A=A_0 \cup \bb
D(\sigma)$. By definition of the Dirichlet form,
\begin{equation*}
D_\beta(f) \;\ge\; \sum_{\{\eta, \xi\}\subset A} G_\beta(\eta,\xi)
[f(\xi)-f(\eta)]^2 \;.
\end{equation*}
Denote by $f_\beta : A\to [0,1]$ the function which minimizes the
right hand side with the boundary conditions imposed above. It is well
known that $f_\beta(\eta) = \mb P^{A,\beta}_\eta [T_\sigma < T_{\bb
  D(\sigma)}]$ where $\mb P^{A,\beta}$ stands for the probability on
the path space induced by the reversible Markov process whose
Dirichlet form is the one appearing on the right hand side of the
previous formula. The asymptotic behavior of $f_\beta (\eta)$, as
$\beta\uparrow\infty$, has been examined in the previous lemma for
certain configurations. The arguments presented in the proof of the
lower bound of Lemma \ref{s21} permit to conclude.

The proofs for $\ell(\sigma)=2\le n_0$ and $\ell(\sigma)> n_0$ are
simpler and left to the reader.
\end{proof}

Recall the definition of the set $\Omega_{k,\sigma}$ introduced in
Corollary \ref{t05} and fix $2\le k\le n_0$. Since $\Omega_{k,\sigma}
\subset \Omega_{\sigma}$, $\Cap_\beta(\{\sigma\} , \Omega_{k,\sigma})
\le \Cap_\beta(\{\sigma\} , \Omega_{\sigma})$. The method of the proof
of the lower bound for $\Cap_\beta(\{\sigma\} , \Omega_{\sigma})$
together with the asymptotic behavior of the hitting times stated in
Corollary \ref{t05} provide the next result.

\begin{corollary}
\label{t06}
Fix $2\le k<n_0$ and a configuration $\sigma$ in
$\Omega_{k,\sigma}\setminus \Omega_{k+1,\sigma}$. Then,
\begin{equation*}
\lim_{\beta\to\infty}  e^{\beta k h} \, \mu_\beta
(\sigma)^{-1} \, \Cap_\beta(\{\sigma\} , \Omega_{k,\sigma}) \;=\; 
\theta(\sigma)\;.
\end{equation*}
Moreover, for $\sigma$ in $\Omega_{n_0,\sigma}\setminus
\Omega_{n_0+1,\sigma}$,
\begin{equation*}
\lim_{\beta\to\infty}  e^{\beta (2-h)} \, \mu_\beta
(\sigma)^{-1} \, \Cap_\beta(\{\sigma\} , \Omega_{n_0,\sigma}) \;=\; 
\theta(\sigma)\;.
\end{equation*}
\end{corollary}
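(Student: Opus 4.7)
The plan is to sandwich $\Cap_\beta(\{\sigma\}, \Omega_{k,\sigma})$ between two quantities with the same asymptotic behavior, using Lemma \ref{t02} as the main input and Corollary \ref{t05} to absorb the handful of configurations that lie in $\Omega_\sigma$ but not in $\Omega_{k,\sigma}$. The upper bound is immediate from the monotonicity of capacity: since $\Omega_{k,\sigma}=\Omega_{o,k}\setminus\{\sigma\}\subset\Omega_\sigma$, we have $\Cap_\beta(\{\sigma\}, \Omega_{k,\sigma}) \le \Cap_\beta(\{\sigma\}, \Omega_\sigma)$, and Lemma \ref{t02} (applied with $\ell(\sigma) = k+1$ in the first case and $\ell(\sigma)>n_0$ in the second) delivers exactly the matching upper bound on the rescaled capacity.

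For the lower bound one repeats the argument used for the lower bound in Lemma \ref{t02}: apply the Dirichlet principle to the harmonic function $f_\beta(\eta):=\mb P^\beta_\eta[T_\sigma < T_{\Omega_{k,\sigma}}]$ and restrict the Dirichlet form to the level set $A=A_0\cup\bb D(\sigma)$ constructed there. The same manipulations then yield the matching lower bound
\[
\liminf_{\beta\to\infty}\; e^{\beta k h}\,\mu_\beta(\sigma)^{-1}\,\Cap_\beta(\{\sigma\},\Omega_{k,\sigma})\;\ge\;\theta(\sigma)
\]
(with $e^{\beta k h}$ replaced by $e^{\beta(2-h)}$ when $\ell(\sigma)>n_0$), provided that $f_\beta$ has the same asymptotic values on the saddle set $\bb W(\sigma)$ as the corresponding harmonic function analyzed in the proof of Lemma \ref{t02}.

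When $\ell(\sigma)>n_0$ this agreement is automatic: by the definition of direct successors in that regime every element of $\bb D(\sigma)$ itself has $\ell>n_0$ and hence belongs to $\Omega_{n_0,\sigma}$, so the boundary condition for $f_\beta$ is unchanged and the analysis of Lemma \ref{t02} applies verbatim. When $3\le\ell(\sigma)=k+1\le n_0$, the only obstruction is that a direct successor $\sigma^*\in \bb D(\sigma)\setminus \bb S(\sigma)$ obtained by flipping one side of a positive $(k+1)\times(k+1)$ square of $\sigma$ has $\ell(\sigma^*)=k$ and therefore lies outside $\Omega_{k,\sigma}$, so that $f_\beta(\sigma^*)$ is not a priori constrained. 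The crucial claim is that $f_\beta(\sigma^*)\to 0$ as $\beta\to\infty$; this is precisely the descent argument carried out at the end of the proof of Corollary \ref{t05}, which shows that starting from $\sigma^*$ the process reaches some $\sigma'\in\bb S(\sigma)\subset\Omega_{k,\sigma}$ with asymptotic probability one, by successively flipping positive spins with two negative neighbors, without returning to $\sigma$.

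The main technical step is to convert this descent into a rigorous Dirichlet-form lower bound: one enlarges the set $A$ from Lemma \ref{t02} so as to also cover the descent paths from each $\sigma^*\in\bb D(\sigma)\setminus\Omega_{o,k}$ down to $\bb S(\sigma)$, and iterates along them the harmonic identity used in the proof of Lemma \ref{t03}. Once $f_\beta(\sigma^*)\to 0$ is in place, the harmonic relations characterizing $f_\beta$ on $\bb W_1(\sigma)$ and $\bb W_2(\sigma)$ coincide with those already solved in the proof of Lemma \ref{t02}, and the same asymptotic computation of the restricted Dirichlet form produces the limit $\theta(\sigma)$ and completes the proof.
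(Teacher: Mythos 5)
Your proof is correct and follows the same route the paper sketches: the upper bound is the monotonicity $\Cap_\beta(\{\sigma\},\Omega_{k,\sigma})\le\Cap_\beta(\{\sigma\},\Omega_\sigma)$ combined with Lemma \ref{t02}, and the lower bound repeats the restricted Dirichlet-form argument from Lemma \ref{t02} with the new equilibrium potential $f_\beta(\cdot)=\mb P^\beta_\cdot[T_\sigma<T_{\Omega_{k,\sigma}}]$, observing (via the descent argument in Corollary \ref{t05}) that $f_\beta\to 0$ on $\bb D(\sigma)\setminus\Omega_{o,k}$ so that the boundary values on $\bb D(\sigma)$ agree asymptotically with those of Lemma \ref{t02}, and (immediately, when $\ell(\sigma)>n_0$) that $\bb D(\sigma)\subset\Omega_{n_0,\sigma}$ so nothing changes. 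This is precisely what the paper means by invoking ``the method of the proof of the lower bound'' together with ``the asymptotic behavior of the hitting times stated in Corollary \ref{t05}.''
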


\section {Proofs of Theorem \ref{t04} and Theorem \ref{t08}}

The proof of Theorem \ref{t04} is based on the theory developed in
the first sections of this article.  A simple computation shows that
$\bb H(\sigma^{x}) - \bb H(\sigma) = \sum_{y: |y-x|=1} \sigma(y)\,
\sigma(x) + h \, \sigma(x)$, where $|\,\cdot\,|$ stands for the
Euclidean norm. Since $0<h<2$, the jump rates $c (x,\sigma)$ may only
assume the values $1$, $e^{-\beta [4+h]}$, $e^{-\beta [2+h]}$,
$e^{-\beta h}$, $e^{-\beta [4-h]}$ and $e^{-\beta [2-h]}$. Assumptions
\eqref{24}, \eqref{30} are therefore satisfied.

Recall the terminology and the notation introduced in Section
\ref{ssec5}. According to the theory developed in the previous
sections, the first step in the proof of the metastable behavior of a
Markov process is the description of the evolution among the
shallowest valleys which we now determine.  Since a negative
(resp. positive) spin surrounded by two (resp. three) positive
(resp. negative) spins flips at rate one, it is not difficult to show
that the leaves $\mc E_1, \dots, \mc E_\nu$ defined in Section
\ref{ssec5} are all the singletons formed by the elements of
$\Omega_o$ so that $\nu = |\Omega_o|$ and $\Delta =
\Omega\setminus\Omega_o$.

Denote by $\mc E_\sigma$ the singleton $\{\sigma\}$, $\sigma \in
\Omega_o$. By Lemma \ref{s06}, Proposition \ref{s30} and Lemma
\ref{t02}, $(\{\sigma\}, \{\sigma\}\cup \Delta, \sigma)$, $\sigma \not
= -\mb 1$, $+\mb 1$, is a valley of depth $e^{\beta [\ell(\sigma)-1]h}
\theta(\sigma)^{-1}$ if $2\le \ell(\sigma)\le n_0$ and of depth
$e^{\beta (2-h)} \theta(\sigma)^{-1}$ if $\ell(\sigma)>
n_0$. Moreover, by Lemma \ref{t01}, Lemma \ref{s03} and the same
results invoked above, $(\{\pm \mb 1\}, \{\pm \mb 1\}\cup \Delta,
\pm\mb 1)$ is a valley whose depth is of order $e^{\beta (8\pm
  3h)}$. The exact depth of these latter valleys is not important at
this stage.

To describe the evolution among the shallowest valleys, recall the
notation introduced in Section \ref{ssec2}. For a subset $F$ of
$\Omega$, denote by $R^F_\beta (\sigma, \sigma')$, $\sigma$,
$\sigma'\in F$, the jump rates of the trace $\sigma^F_t$ of the
process $\sigma_t$ on $F$. Let $r^{F}_\beta(A,B)$, $A$, $B\subset F$,
$A\cap B=\varnothing$, be the average jump rates of $\sigma^{F}_t$
from $A$ to $B$:
\begin{equation*}
r^{F}_\beta(A,B) \;=\; \frac{1}{\mu_\beta(A)}  \sum_{\sigma\in A} 
\mu_\beta(\sigma) \sum_{\sigma'\in B} R^{F}_\beta(\sigma, \sigma')\;.
\end{equation*}
In view of the depths of the valleys $(\{\sigma\}, \{\sigma\}\cup
\Delta, \sigma)$, $\sigma \in \Omega_o$, the set $S_1$ can be
identified to the set $\Omega_{o,1} \setminus \Omega_{o,2}$. Recall
that $\theta_\beta(1) = e^{\beta h}$ if $n_0\ge 2$ and
$\theta_\beta(1) = e^{\beta (2-h)}$ if $n_0= 1$. By Lemma \ref{s25},
Corollary \ref{t05}, the explicit expression for the depth of the
valleys obtained above, and Lemma \ref{s39}, the scaled average rates
$e^{\beta h} \, r_\beta^{\Omega_o} (\sigma,\sigma')$, $\sigma$,
$\sigma'\in \Omega_o$, converge to $r(\sigma, \sigma') =
\theta(\sigma) p(\sigma, \sigma')$, where $p(\sigma, \sigma')$ and
$\theta(\sigma)$ have been introduced in \eqref{e04}--\eqref{e07}.

Recall that we denote by $\sigma^{\beta,1}_t$ the trace of the Markov
process $\sigma^{\beta}_t$ on $\Omega_{o,1}$. By Lemma \ref{s26} with
$\theta_\beta(1) = e^{\beta h}$ and by the observations of the
previous paragraph, the speeded-up process $\sigma^{\beta,1}_{t
  \theta_\beta(1)}$ converges to a Markov process on $\Omega_{o,1}$
with jump rates $r(\sigma, \sigma') = \theta(\sigma) p(\sigma,
\sigma')$. By Proposition \ref{s07} on the time scale
$\theta_\beta(1)$ the time spent in $\Delta$ is negligible.  This
proves Theorem \ref{t04} for $k=1$.

The proof of Theorem \ref{t04} in the longer time scales is based on
Theorem \ref{s37} and follows the strategy presented in Remark
\ref{s38}. Recall the notation introduced in Section \ref{ssec3} and
Assumption {\bf T}. Since Theorem \ref{t04} has been proven for $k=1$,
Assumption {\bf T} holds at level one because all $1$-metastates are
singletons.

Theorem \ref{t04} for $2\le k\le n_0$ follows from Theorem
\ref{s37}. As explained in Remark \ref{s38}, we just need to to
characterize the metastates at each level, the depth of each valley
and the asymptotic rates. This has been done for $2\le k\le n_0$ in
Corollary \ref{t05} and Corollary \ref{t06}, in view of Lemma
\ref{s39}. We present in details the case $k=2$ and leave the rest of
the recursive argument to the reader.

Assume that $n_0\ge 2$. It follows from the dynamics generated by the
rates $r$ introduced above that the leaves at level $2$, $\mc
E^{(2)}_1, \dots, \mc E^{(2)}_{\nu(2)}$, are all the singletons formed
by the elements of $\Omega_{o,2}$ so that $\nu(2) = |\Omega_{o,2}|$
and $\Delta_2 = \Omega\setminus\Omega_{o,2}$, $\Delta^o_2 =
\Omega_{o,1} \setminus \Omega_{o,2}$.

By Theorem \ref{s37} with $\mf p=1$ and Corollary \ref{t06}, the
triples $(\{\sigma\}, \{\sigma\}\cup \Delta^o_2, \sigma)$, $\sigma \in
\Omega_{o,2}$, $\sigma \not = -\mb 1$, $+\mb 1$, are valleys for the
trace process $\sigma^{\beta, 1}_t$ of depth $e^{\beta
  [\ell(\sigma)-1]h} \theta(\sigma)^{-1}$ if $3\le \ell(\sigma) \le
n_0$ and of depth $e^{\beta (2-h)} \theta(\sigma)^{-1}$ if
$\ell(\sigma)> n_0$. Moreover, by Lemma \ref{t01} and Lemma \ref{s03},
$(\{\pm \mb 1\} \{\pm \mb 1\}\cup \Delta^o_2, \pm\mb 1)$ is a valley
for the trace process $\sigma^{\beta, 1}_t$ whose depth is of order
$e^{\beta (8\pm 3h)}$.

Note that the Ising model presents the particularity that the $\mf
p$-metastates are $1$-metastates, and not a union of $1$-metastates.

Recall the definition of the set $S_2$ introduced just after Lemma
\ref{s28}. The set $S_2$ can be identified to the set $\Omega_{o,2}
\setminus \Omega_{o,3}$. Set $\theta_\beta(2) = e^{2 \beta h}$ if
$n_0>2$ and $\theta_\beta(2) = e^{\beta(2-h)}$ if $n_0=2$. By Theorem
\ref{s37} with $\mf p=1$, Lemma \ref{s39}, Corollary \ref{t05} and
Corollary \ref{t06}, $\sigma^{\beta,2}_{t \theta_\beta(2)}$ converges
to a Markov process on $\Omega_{o,2}$ with jump rates $r(\sigma,
\sigma') = \theta(\sigma) p(\sigma, \sigma')$ introduced in
\eqref{e04}--\eqref{e07}. Furthermore, by Theorem \ref{s37} with $\mf
p=1$, on the time scale $\theta_\beta(2)$ the time spent in $\Delta_2$
is negligible.  This proves Theorem \ref{t04} for $k=2$. \qed
\smallskip

We now turn to the proof of Theorem \ref{t08}. It relies on the
following lemma.  Recall the definition of the sets $\bb W(-\mb 1)$,
$\bb W_1(-\mb 1)$ and $\bb W_2(-\mb 1)$ and of the number $\theta(-\mb
1)$ introduced just before the statement of Theorem \ref{t08}.

\begin{lemma}
\label{t07}
For $\beta>0$, $G_\beta (\{-\mb 1\}, \{+\mb 1\}) = \mu_\beta
(\sigma^*)$, for any $\sigma^* \in \bb W(-\mb 1)$.
Moreover,
\begin{equation*}
\lim_{\beta\to\infty}  e^{\beta \, c(h)} \, \mu_\beta
(-\mb 1)^{-1} \, \Cap_\beta(\{- \mb 1\} , \{+\mb 1\}) \;=\; 
\theta(-\mb 1)\;, 
\end{equation*}
where $c(h) = 4 (n_0+1) - h [(n_0+1)n_0+1]$ and $\theta(-\mb 1) =
(2/3) |\bb W_2(-\mb 1)| + (1/2) |\bb W_1(-\mb 1)|$.
\end{lemma}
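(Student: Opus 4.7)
The plan is to follow the two-step strategy used in Lemmas \ref{t01}, \ref{t02} and \ref{t03}: (i) compute $G_\beta(\{-\mb 1\},\{+\mb 1\})$ by exhibiting an optimal nucleation path and invoking the isoperimetric lower bound of \cite{ns1}, then (ii) apply the variational identity $\Cap_\beta(\{-\mb 1\},\{+\mb 1\}) = D_\beta(f_\beta^\ast)$ with $f_\beta^\ast(\sigma) = \mb P_\sigma^\beta[T_{-\mb 1} < T_{+\mb 1}]$ to upgrade the magnitude estimate to the sharp asymptotic constant $\theta(-\mb 1)$.

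For the first statement, a direct count gives that every $\sigma^\ast \in \bb W(-\mb 1)$ has $n_0(n_0+1)+1$ positive spins and positive-cluster perimeter $4(n_0+1)$, so $\bb H(\sigma^\ast) - \bb H(-\mb 1) = c(h)$ and $\mu_\beta(\sigma^\ast) = \mu_\beta(-\mb 1)\,e^{-\beta c(h)}$ is the same for all $\sigma^\ast \in \bb W(-\mb 1)$. The lower bound $G_\beta(\{-\mb 1\},\{+\mb 1\}) \ge \mu_\beta(\sigma^\ast)$ is achieved by the explicit growth path producing positive rectangles $2\times 2,\, 2\times 3,\, 3\times 3,\,\dots,\,(n_0+1)\times n_0,\,(n_0+1)\times(n_0+1),\,\dots$, filling each new row or column by first flipping a corner spin and then its neighbours, exactly as in Lemma \ref{t01}. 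All the intermediate saddles sit at energy strictly below $c(h)$ above $-\mb 1$, with the single exception of the $(n_0+1)\times n_0 \to (n_0+1)\times(n_0+1)$ transition, whose saddles are precisely the elements of $\bb W(-\mb 1)$. The matching upper bound is the classical isoperimetric content of \cite{ns1}: any path from $-\mb 1$ to $+\mb 1$ must visit a configuration of energy at least $c(h)$ above $-\mb 1$.

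For the sharp asymptotic, I would apply the strategy of Lemma \ref{s21} to $f_\beta^\ast$; by Lemma \ref{s03} the bottleneck bonds are those with $G_\beta \approx \mu_\beta(\sigma^\ast)$, namely the bonds $\{\sigma^\ast, \eta\}$ with $\sigma^\ast \in \bb W(-\mb 1)$ and $\eta$ a rate-one neighbour of $\sigma^\ast$ of higher or equal measure. For each $\sigma^\ast$, these are the flip of the extra positive spin back to the $(n_0+1)\times n_0$ rectangle, together with the flip of the one (for $\sigma^\ast \in \bb W_1$) or two (for $\sigma^\ast \in \bb W_2$) negative sites sandwiched between the extra spin and the rectangle. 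A harmonic-function analysis parallel to the proof of Lemma \ref{t03} yields $f_\beta^\ast(\sigma^\ast) \to 1/2$ on $\bb W_1(-\mb 1)$ and $f_\beta^\ast(\sigma^\ast) \to 1/3$ on $\bb W_2(-\mb 1)$, while $f_\beta^\ast \to 1$ on the subcritical rectangle neighbour and $f_\beta^\ast \to 0$ on the supercritical long-side L-shape (of energy $c(h)-h$, which flows by rate-one moves to the supercritical square $(n_0+1)\times(n_0+1)$). Summing yields
\[
e^{\beta c(h)}\,\mu_\beta(-\mb 1)^{-1}\, D_\beta(f_\beta^\ast) \;\longrightarrow\; |\bb W_1(-\mb 1)|\Big[\tfrac14+\tfrac14\Big] + |\bb W_2(-\mb 1)|\Big[\tfrac49+\tfrac29\Big] \;=\; \theta(-\mb 1),
\]
and the matching upper bound is furnished by an explicit test function taking these same limiting boundary values at the saddles, as in Lemma \ref{t02}.

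The main technical obstacle is that $\bb W(-\mb 1)$ does not exhaust the set of critical-energy configurations: the ``short-side attached'' saddles, obtained from the $(n_0+1)\times n_0$ rectangle by flipping a negative site adjacent to one of the sides of length $n_0$, also have energy exactly $c(h)$ but are excluded from $\bb W(-\mb 1)$. Their contribution to $D_\beta(f_\beta^\ast)$ must be shown negligible, which reduces to proving $f_\beta^\ast \to 1$ on both rate-one neighbours of each such saddle. This in turn reduces to identifying the forward neighbour as the rectangle $(n_0+2)\times n_0$ and establishing that it is itself subcritical: its shrinkage barrier is $(n_0-1)h$ whereas its growth barrier is $2-h$, and the assumption $2/h\notin\bb N$ together with $n_0=[2/h]$ forces $hn_0<2$, hence $(n_0-1)h < 2-h$. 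The complementary inequality $h(n_0+1)>2$ certifies that $(n_0+1)\times(n_0+1)$ is genuinely supercritical and closes the analysis.
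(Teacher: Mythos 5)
Your proof is correct and follows essentially the same two-step strategy as the paper: the sharp upper bound comes from a test function equal to $1/(j+1)$ on $\bb W_j(-\mb 1)$ and constant on the subcritical and supercritical basins, while the sharp lower bound comes from the Dirichlet form of the equilibrium potential $g_\beta(\sigma)=\mb P^\beta_\sigma[T_{-\mb 1}<T_{+\mb 1}]$, whose limiting values at $\bb W(-\mb 1)$ and its rate-one neighbours are pinned down by harmonicity and Corollary \ref{t05}. Your explicit treatment of the short-side saddles is a useful amplification of a point the paper passes over quickly: these configurations also sit at energy $c(h)$ but lie in the interior of the set $B$ (they lead only to the subcritical $(n_0+2)\times n_0$ rectangle, which shrinks since $(n_0-1)h<2-h$), so both their rate-one neighbours carry $g_\beta\to 1$ and the corresponding bonds drop out of the leading order. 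Incidentally, this same observation shows that the paper's passing remark that all of $B$ consists of configurations with at most $n_0(n_0+1)$ positive spins is slightly imprecise, but the Dirichlet form estimate is unaffected for exactly the reason you give.
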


\begin{proof}
The proof of the first assertion is left to the reader. The proof of
the second one is similar to the one of Lemma \ref{s21}.

Denote by $B$ the set of all configurations $\eta$ which do not
belong to $\bb W(-\mb 1)$ and which can be reached from $-\mb 1$ by
self-avoiding paths $\gamma = (-\mb 1 = \eta_0, \eta_1, \dots, \eta_p
= \eta)$ such that $\mu_\beta (\eta_k) \ge \mu_\beta (\sigma^*)$,
$0\le k\le p$, for some $\sigma^* \in \bb W(-\mb 1)$. All these
configurations are obtained from $-\mb 1$ by flipping at most
$n_0(n_0+1)$ negative spins. Note that all neighbors $\xi$ of a
configuration $\eta \in B$ which do not belong to $B$ have measure
$\mu_\beta (\xi) < \mu_\beta (\sigma^*)$.

Consider the function $f:\Omega \to [0,1]$ defined as follows.  Set
$f(-\mb 1)=1$, $f=1$ on $B$, $f=1/(j+1)$ on $\bb W_j(\sigma)$ and
$f=0$ elsewhere. By definition of capacity and by definition of the
function $f$, $\Cap_\beta (\{-\mb 1\} , \{+\mb 1\}) \le D_\beta(f) =
\mu_\beta (\sigma^*) \{(2/3) |\bb W_2(-\mb 1)| + (1/2) |\bb W_1(-\mb
1)|\} + o(\beta)$, where $o(\beta) \prec \mu_\beta (\sigma^*)$.  This
proves the upper bound.

To prove the lower bound, recall that the function $f:\Omega \to \bb
R$ which minimizes the Dirichlet form under the constraint that
$f(-\mb 1)=1$, $f(+\mb 1)=0$ is the hitting time $g_\beta(\sigma) =
\mb P^\beta_\sigma [T_{-\mb 1} < T_{+\mb 1}]$.

Denote by $A$ the set of all neighbors $\xi$ of $\bb W(-\mb 1)$ which
are obtained from a configuration $\eta \in \bb W(-\mb 1)$ by either
flipping the positive spin surrounded by three negative spins or by
flipping a negative spin surrounded by two positive spins. By
definition of the Dirichlet form,
\begin{equation*}
\Cap_\beta(\{- \mb 1\} , \{+\mb 1\}) \;=\; D_\beta(g_\beta) \;\ge\; 
\sum_{\eta \in \bb W(-\mb 1), \xi \in A} G_\beta(\eta,\xi)
[g_\beta (\xi)- g_\beta (\eta)]^2 \;. 
\end{equation*}
It follows from Corollary \ref{t05} that $g_\beta (\xi)$ converges to
$0$ (resp. $1$) as $\beta\uparrow\infty$ if $\xi$ is a configuration
obtained from a configuration in $\bb W(-\mb 1)$ by flipping a
negative spin surrounded by two positive spins (resp. by flipping the
positive spin surrounded by three negative spins). On the other hand,
since $g_\beta$ is harmonic and since a configuration $\eta\in \bb
W_j(-\mb 1)$ jumps to configurations in $A$, where the asymptotic
behavior of $g_\beta$ is known, at rates of order one, and jumps to
other configurations at rate $o(\beta)$, $g_\beta (\eta)$ converges,
as $\beta\uparrow\infty$, to $1/(j+1)$ if $\eta\in W_j(-\mb 1)$. This
proves the lower bound since $G_\beta(\eta,\xi) = \mu_\beta
(\sigma^*)$ for $\eta \in \bb W(-\mb 1)$, $\xi \in A$.
\end{proof}

We are now in a position to prove Theorem \ref{t08} which relies on
Theorem \ref{s37} and the strategy presented in Remark \ref{s38}. Up
to this point we proved Assumption {\bf T} at level $n_0$. In view of
the asymptotic dynamics of the trace process $\sigma^{\beta, n_0}_t$
described in Theorem \ref{t04}, there are only two
$(n_0+1)$-metastates, $\{- \mb 1\}$ and $\{+ \mb 1\}$.  By Theorem
\ref{s37} and by Lemma \ref{t07}, $(\{- \mb 1\} , \{- \mb 1\} \cup
\Delta^o_{n_0+1}, - \mb 1)$ is a valley for the trace process
$\sigma^{\beta, n_0}_t$ of depth $e^{\beta c(h)} \theta(-\mb
1)^{-1}$. A similar computation to the one presented in Lemma
\ref{t07} shows that $(\{+ \mb 1\} , \{+ \mb 1\}\cup \Delta^o_{n_0+1},
+ \mb 1)$ is a valley for the trace process $\sigma^{\beta, n_0}_t$
whose depth is of magnitude larger than the one of the valley $(\{-
\mb 1\} , \{- \mb 1\} \cup \Delta^o_{n_0+1}, - \mb 1)$. Recall that
$\theta_\beta(n_0+1) = e^{\beta c(h)}$ and note that we may identify
the set $S_{n_0+1}$ with the singleton $\{- \mb 1\}$.

Since the state space of the trace process $\sigma^{\beta, n_0+1}_{t}$
is a pair, by Theorem \ref{s37}, by the explicit computation of the
depth of the valley $(\{- \mb 1\} , \{- \mb 1\} \cup \Delta^o_{n_0+1},
- \mb 1)$ and by Lemma \ref{s39}, the speeded-up trace process
$\sigma^{\beta, n_0+1}_{t \theta_\beta(n_0+1)}$ converges to the
Markov process on $\{-\mb 1, +\mb 1\}$ in which $+\mb 1$ is an
absorbing state and which jumps from $-\mb 1$ to $+ \mb 1$ at rate
$\theta(-\mb 1)$. The second assertion of Theorem \ref{t08} also
follows from Theorem \ref{s37}. \qed

\section{General results}
\label{sec2}

We state in this section some general results on metastability of
continuous time Markov chains used in the previous sections. We assume
that the reader is familiar with the notation and terminology of
\cite{bl2}.

Fix a sequence $(E_N: N\ge 1)$ of countable state spaces. The elements
of $E_N$ are denoted by the Greek letters $\eta$, $\xi$. For each
$N\ge 1$ consider a matrix $R_N : E_N \times E_N \to \bb R$ such that
$R_N(\eta, \xi) \ge 0$ for $\eta \not = \xi$, $-\infty < R_N (\eta,
\eta)\le 0$ and $\sum_{\xi\in E_N} R_N(\eta,\xi)=0$ for all $\eta\in
E_N$.  

Let $\{\eta^N_t : t\ge 0\}$ be the {\sl minimal} right-continuous
Markov process associated to the jump rates $R_N(\eta,\xi)$ \cite{n}.
It is well known that $\{\eta^N_t : t\ge 0\}$ is a strong Markov
process with respect to the filtration $\{\mc F^N_t : t\ge 0\}$ given
by $\mc F^N_t = \sigma (\eta^N_s : s\le t)$. Let $\mb P_{\eta}$,
$\eta\in E_N$, be the probability measure on $D(\bb R_+,E_N)$ induced
by the Markov process $\{\eta^N_t : t\ge 0\}$ starting from $\eta$.

Consider two sequences $\ms W = (W_N\subseteq E_N : N\ge 1)$, $\ms B =
(B_N \subseteq E_N : N\ge 1)$ of subsets of $E_N$, the second one
containing the first and being properly contained in $E_N$: $W_N
\subseteq B_N \subsetneqq E_N$.  Fix a point $\bs \xi = (\xi_N \in W_N
: N\ge 1)$ in $\ms W$ and a sequence of positive numbers $\bs \theta =
(\theta_N : N\ge 1)$.

\renewcommand{\theenumi}{\arabic{enumi}}
\renewcommand{\labelenumi}{(\theenumi)}

\begin{lemma}
\label{s04}
Assume that the triple $(\ms W, \ms B, \bs \xi)$ is a valley of depth
$\bs \theta$ and attractor $\bs \xi$. Let $\ms C = (C_N \subset E_N :
N\ge 1)$ be a sequence of sets such that
\begin{enumerate}
\item $C_N \cap B_N = \varnothing$,

\item For every $\delta>0$,
\begin{equation}
\label{03}
\lim_{N\to\infty}
\sup_{\eta\in C_N} \mb P_{\eta} \Big[ \frac 1{\theta_N} T_{(\ms B \cup
  \ms C)^c} > \delta \Big] \;=\; 0\; .
\end{equation}
\end{enumerate}
Then, the triple $(\ms W, \ms B \cup \ms C, \bs \xi)$ is a
valley of depth $\bs \theta$ and attractor $\bs \xi$.
\end{lemma}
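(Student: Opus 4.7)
The plan is to verify directly the two defining conditions of a valley with attractor from \cite{bl2} for the enlarged triple $(\ms W, \ms B \cup \ms C, \bs \xi)$: (i) the attraction property $\sup_{\eta \in W_N} \mb P_\eta[T_{\xi_N} > \delta \theta_N] \to 0$ for every $\delta>0$, and (ii) the exit-time property that, under $\mb P_{\xi_N}$, the rescaled hitting time $T_{(B_N \cup C_N)^c}/\theta_N$ converges in distribution to a mean-one exponential.

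Property (i) transfers unchanged. It involves only the well $\ms W$, the attractor $\bs \xi$, and the hitting time $T_{\xi_N}$, none of which depend on the basin, so it is inherited from the hypothesis that $(\ms W, \ms B, \bs \xi)$ is a valley of depth $\bs \theta$ with attractor $\bs \xi$.

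For (ii), I would sandwich $T_{(B_N \cup C_N)^c}$ between $T_{B_N^c}$ and $T_{B_N^c} + \delta \theta_N$. The lower bound $T_{B_N^c} \le T_{(B_N \cup C_N)^c}$ is a pathwise consequence of the inclusion $(B_N \cup C_N)^c \subseteq B_N^c$. For the upper bound, set $\tau := T_{B_N^c}$ and observe that $\eta^N_\tau$ belongs either to $(B_N \cup C_N)^c$, in which case $T_{(B_N \cup C_N)^c} = \tau$, or to $C_N$, in which case the strong Markov property combined with hypothesis \eqref{03} yields, for every $\delta>0$,
\begin{equation*}
\mb P_{\xi_N}\bigl[ T_{(B_N \cup C_N)^c} - \tau > \delta \theta_N \bigr] \;\le\; \sup_{\eta \in C_N} \mb P_\eta\Bigl[\frac{T_{(\ms B \cup \ms C)^c}}{\theta_N} > \delta \Bigr] \;\xrightarrow[N\to\infty]{}\; 0\;.
\end{equation*}
Together with the convergence of $\tau/\theta_N$ to $\mathrm{Exp}(1)$, which is part of the valley hypothesis on $(\ms W, \ms B, \bs \xi)$, and the arbitrariness of $\delta$, this delivers property (ii).

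The only subtle point is that between the first exit $\tau$ from $B_N$ and the definitive exit from $B_N \cup C_N$ the process may re-enter $B_N$ through the boundary $\partial B_N \cap \partial C_N$ any number of times. No excursion-counting argument is required, however, because hypothesis \eqref{03} is already formulated in terms of the total hitting time $T_{(\ms B \cup \ms C)^c}$ starting from $C_N$, which absorbs all such re-entries into a single uniform estimate. The rest is routine bookkeeping.
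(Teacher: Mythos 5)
You have treated a valley with attractor as being defined by two conditions (attraction to $\bs\xi$ and exponential exit time), but Definition 2.1 of \cite{bl2} has \emph{three} conditions, and the paper's proof opens with ``We need to check the three conditions.'' Your argument for the exponential exit time, ({\bf V2}), is essentially the same sandwich argument the paper uses: $T_{B_N^c} \le T_{(B_N\cup C_N)^c}$ gives the lower tail bound, and the strong Markov property at $\tau = T_{B_N^c}$ combined with hypothesis \eqref{03} controls the overshoot from the upper side. That part is fine. Your handling of the first condition is also acceptable: the paper dispatches ({\bf V1}) by the monotonicity $(B_N\cup C_N)^c \subset B_N^c$, whereas you argue $T_{\xi_N}$ is basin-independent, but in either formulation the point is harmless.

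What is missing entirely is ({\bf V3}): the condition that the time spent in $\bs\Delta \cup \ms C$ (the part of the \emph{new} basin $B_N\cup C_N$ outside the well $W_N$) before exiting the basin is $o(\theta_N)$ in probability, starting from any point of $\ms W$. This does not transfer from the hypothesis for free, because the original valley $(\ms W, \ms B, \bs\xi)$ only controls the occupation time of $\Delta_N = B_N\setminus W_N$; you have just enlarged the basin by $C_N$, and you must now control the occupation time of $\Delta_N \cup C_N$. The paper devotes a full paragraph to this: it decomposes the event on whether $T_{\ms C} < T_{(\ms B\cup\ms C)^c}$ or not, writes the occupation time of $\Delta_N\cup C_N$ as the sum of the occupation time of $\Delta_N$ before $T_{B_N^c}$ (controlled by the hypothesis on $(\ms W,\ms B,\bs\xi)$) plus the occupation time after first entering $C_N$, and bounds the latter by the full elapsed time $T_{(B_N\cup C_N)^c}\circ T_{\ms C}$, which is small by \eqref{03} and the strong Markov property. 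Without this step, the lemma is not proved.
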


\begin{proof}
We need to check the three conditions of \cite[Definition 2.1]{bl2}. 
Since $(B_N \cup C_N)^c \subset B_N^c$, condition ({\bf V1}) is
clearly fulfilled.

To prove ({\bf V3}), decompose the event $T_{(\ms B \cup \ms C)^c}
(\bs \Delta \cup \ms C) > \delta \theta_N$ according to whether $T_{\ms
  C} < T_{(\ms B \cup \ms C)^{c}}$ or $T_{\ms C} > T_{(\ms B \cup \ms
  C)^{c}}$. In the latter case, $T_{(\ms B \cup \ms C)^c} (\bs \Delta
\cup \ms C) = T_{\ms B^c} (\bs \Delta)$ so that for every point $\bs
\eta = (\eta^N : N\ge 1)$ in $\ms W$,
\begin{equation*}
\begin{split}
& \limsup_{N\to\infty} \mb P_{\eta^N} \Big[ \frac 1{\theta_N} 
T_{(\ms B \cup \ms C)^c} (\bs \Delta \cup \ms C) > \delta 
\, , \, T_{\ms C} > T_{(\ms B \cup \ms C)^{c}}  \Big] \\
&\quad \le\; \lim_{N\to\infty} \mb P_{\eta^N} \Big[ \frac 1{\theta_N} 
T_{\ms B^c} (\bs \Delta) > \delta \Big] \;=\; 0  \; , 
\end{split}
\end{equation*}
where the last identity follows from the fact that the triple $(\ms W,
\ms B, \bs \xi)$ is a valley and from condition ({\bf V3}) in the
definition of a valley. On the other hand, since on the set $T_{\ms C}
< T_{(\ms B \cup \ms C)^{c}}$,
\begin{equation*}
\begin{split}
T_{(\ms B \cup \ms C)^c} (\bs \Delta \cup \ms C) 
\; & =\; \int_0^{T_{\ms C}} \mb 1\{ \eta^N_s \in \Delta_N \cup
C_N\} \, ds \;+\; \int_{T_{\ms C}} ^{T_{(\ms B \cup \ms C)^c}} 
\mb 1\{ \eta^N_s \in \Delta_N \cup C_N\} \, ds \\
& =\; \int_0^{T_{\ms B^c}} \mb 1\{ \eta^N_s \in \Delta_N\} 
\, ds \;+\; \int_{T_{\ms C}} ^{T_{(\ms B \cup \ms C)^c}} 
\mb 1\{ \eta^N_s \in \Delta_N \cup C_N\} \, ds\;,
\end{split}
\end{equation*}
by the strong Markov property,
\begin{equation*}
\begin{split}
& \mb P_{\eta^N} \Big[ \frac 1{\theta_N} 
T_{(\ms B \cup \ms C)^c} (\bs \Delta \cup \ms C) > \delta 
\, , \, T_{\ms C} < T_{(\ms B \cup \ms C)^{c}}  \Big]  
\;\le\; \mb P_{\eta^N} \Big[ \frac 1{\theta_N} 
T_{\ms B^c} (\bs \Delta) > \delta/2 \Big] \\
&\quad  
\;+\; \sup_{\eta\in C_N} \mb P_{\eta} \Big[ \frac 1{\theta_N} 
\int_{0} ^{T_{(\ms B \cup \ms C)^c}} 
\mb 1\{ \eta^N_s \in \Delta_N \cup C_N\} \, ds > \delta/2 \Big]\; .
\end{split}
\end{equation*}
The right hand side of this inequality vanishes as $N\uparrow\infty$
by hypothesis \eqref{03} and by the fact that the triple $(\ms W, \ms
B, \bs \xi)$ is a valley.

Putting together the two previous estimates, we obtain that for every
$\delta>0$ and every point $\bs \eta = (\eta^N : N\ge 1)$ in $\ms W$,
\begin{equation*}
\lim_{N\to\infty} \mb P_{\eta^N} \Big[ \frac 1{\theta_N} 
T_{(\ms B \cup \ms C)^c} (\bs \Delta \cup \ms C) > \delta \Big]\;=\;
0\; .
\end{equation*}
This shows that the triple $(\ms W, \ms B \cup \ms C, \bs \xi)$
satisfies assumption ({\bf V3}) of a valley with depth $\theta_N$.

It remains to check that the assumption ({\bf V2}) of a valley is
fulfilled. On the one hand, since $T_{(\ms B \cup \ms C)^c} \ge T_{\ms
  B^c}$ and since the triple $(\ms W, \ms B , \bs \xi)$ is a valley of
depth $\theta_N$, for every $t>0$ and every point $\bs \eta = (\eta^N
: N\ge 1)$ in $\ms W$,
\begin{equation}
\label{04}
\liminf_{N\to\infty} \mb P_{\eta^N} \Big[ \frac 1{\theta_N} 
T_{(\ms B \cup \ms C)^c} > t \Big] \;\ge\;
\lim_{N\to\infty} \mb P_{\eta^N} \Big[ \frac 1{\theta_N} 
T_{\ms B^c} > t \Big] \;=\; e^{-t}\; .
\end{equation}

On the other hand, decompose the set $\{ T_{(\ms B \cup \ms C)^c} > t
\theta_N\}$ according to the partition $T_{\ms C} < T_{(\ms B \cup \ms
  C)^{c}}$, $T_{\ms C} > T_{(\ms B \cup \ms C)^{c}}$. In the latter
set, $T_{(\ms B \cup \ms C)^c} = T_{\ms B^c}$, while in the first one,
$T_{(\ms B \cup \ms C)^c} = T_{\ms B^c} + T_{(\ms B \cup \ms C)^{c}}
\circ T_{\ms C}$.  Therefore, for every $t>0$ and every point $\bs
\eta = (\eta^N : N\ge 1)$ in $\ms W$,
\begin{equation*}
\begin{split}
\mb P_{\eta^N} \Big[ \frac 1{\theta_N} T_{(\ms B \cup \ms C)^c} > t
\Big] \; &=\; \mb P_{\eta^N} \Big[ \frac 1{\theta_N} T_{\ms B^c} > t 
\,,\,  T_{\ms C} > T_{(\ms B \cup \ms C)^{c}} \Big] \\
& +\; \mb P_{\eta^N} \Big[ 
T_{\ms B^c} + T_{(\ms B \cup \ms C)^{c}} \circ T_{\ms C}  > t \theta_N
\,,\,  T_{\ms C} < T_{(\ms B \cup \ms C)^{c}} \Big]\;. 
\end{split}
\end{equation*}
By the strong Markov property, the second term on the right hand side
is bounded above by
\begin{equation*}
\sup_{\eta\in C_N} \mb P_{\eta} \Big[ T_{(\ms B \cup \ms C)^{c}}  
> \delta \theta_N \Big] \;+\; 
\mb P_{\eta^N} \Big[ T_{\ms B^c}   > (t-\delta) \theta_N
\,,\,  T_{\ms C} < T_{(\ms B \cup \ms C)^{c}} \Big]  
\end{equation*}
for every $\delta>0$. Therefore, in view of the two previous displayed
formulas, for every $\delta>0$,
\begin{equation*}
\mb P_{\eta^N} \Big[ \frac 1{\theta_N} T_{(\ms B \cup \ms C)^c} > t
\Big] \; \le\; \mb P_{\eta^N} \Big[ \frac 1{\theta_N} T_{\ms B^c} >
t -\delta \Big]
\; +\; \sup_{\eta\in C_N} \mb P_{\eta} \Big[ T_{(\ms B \cup \ms C)^{c}}  
> \delta \theta_N \Big]\;.
\end{equation*}
By \eqref{03}, the second term on the right hand side vanishes as
$N\uparrow\infty$ for every $\delta>0$. Since the triple $(\ms W, \ms
B , \bs \xi)$ is a valley of depth $\theta_N$, by condition ({\bf V2})
of a valley, the first term converges to $e^{-(t-\delta)}$ as
$N\uparrow\infty$. Hence, letting $\delta\downarrow 0$ after
$N\uparrow\infty$, we obtain that for every $t>0$ and every point $\bs
\eta = (\eta^N : N\ge 1)$ in $\ms W$,
\begin{equation*}
\limsup_{N\to\infty} \mb P_{\eta^N} \Big[ \frac 1{\theta_N} 
T_{(\ms B \cup \ms C)^c} > t \Big] \;\le\; e^{-t}\;.
\end{equation*}
This estimate together with \eqref{04} shows that the triple $(\ms W,
\ms B \cup \ms C, \bs \xi)$ satisfies condition ({\bf V2}) of a
valley with depth $\theta_N$.
\end{proof}

Of course, this result is only interesting if the process may jump
from $B_N$ to $C_N$.

\subsection{The positive recurrent reversible case}

We assume from now on that the Markov process $\{\eta^N_t : t\ge 0\}$
is positive recurrent and reversible with respect to its unique
invariant probability measure denoted by $\mu_N$.

Fix $N\ge 1$ and a proper subset $F_N$ of $E_N$. Denote by
$R^{F_N}(\eta, \xi)$ the jump rates of the trace of the process
$\{\eta^N_t : t\ge 0\}$ on the set $F_N$. We refer to \cite[Section
6.1]{bl2} for a precise definition. For each pair $A$, $B$ of disjoint
subsets of $F_N$, denote by $r_{F_N}(A,B)$ the average rate at which
the trace process on $F_N$ jumps from $A$ to $B$:
\begin{equation*}
r_{F_N}(A,B) \;=\; \frac 1{\mu_N(A)} \sum_{\eta\in A} \mu_N(\eta)
\sum_{\xi \in B} R^{F_N}(\eta, \xi)\;.
\end{equation*}

We claim that
\begin{equation}
\label{18}
r_{F_N}(A,B)\; \le\; \frac{\Cap_N(A,B)}{\mu_N(A)}\; ,
\end{equation}
where $\Cap_N(A,B)$ stands for the capacity between $A$ and $B$ for
the process $\{\eta^N_t : t\ge 0\}$. Indeed, denote by $R^{A\cup B}$
the jump rates of the trace of $\{\eta^N_t : t\ge 0\}$ on $A\cup
B$. By \cite[Corollary 6.2]{bl2}, $R^{A\cup B} (\eta,\xi) \ge
R^{F_N}(\eta, \xi)$ for every $\eta$, $\xi\in A\cup B$, $\eta\not =
\xi$. Hence, by definition of the average rates and by \cite[Lemma
6.7]{bl2},
\begin{equation*}
r_{F_N}(A,B) \;=\; \frac 1{\mu_N(A)} \sum_{\eta\in A} \mu_N(\eta)
\sum_{\xi \in B} R^{F_N}(\eta, \xi) \;\le \; r_{A\cup B}(A,B)
\;=\; \frac{\Cap_N(A,B)}{\mu_N(A)}\; ,
\end{equation*}
which proves \eqref{18}.

Fix a finite number of disjoint subsets $\ms E^1_N, \dots, \ms
E^\kappa_N$, $\kappa\ge 2$, of $E_N$: $\ms E^x_N\cap \ms
E^y_N=\varnothing$, $x\neq y$. Let $\ms E_N=\cup_{x\in S}\ms E^x_N$
and let $\breve{\ms E}^x_N := \ms E_N\setminus \ms E^x_N$. 

Denote by $r_N(\ms E^x_N,\ms E^y_N)$ the average rates $r_{\ms
  E_N}(\ms E^x_N,\ms E^y_N)$. The next result shows that if the
average rates appropriately rescaled converge, their limit can be
expressed in terms of the depth of the metastates and their hitting
probabilities.

Denote by $\Lambda_N (x)$, $1\le x\le \kappa$, the inverse of the
depth of a metastate and by $P_N(\eta,x,y)$, $1\le x\not = y\le
\kappa$, $\eta\in \ms E^x_N$, the hitting probabilities among
metastates:
\begin{equation*}
\Lambda_N(x) \;:=\; \frac{\Cap_N(\ms E^x , \breve{\ms E}^{x})}
{\mu_N(\ms E^{x})}\;, \quad P_N(\eta,x,y) \;:=\; 
\mb P_\eta \big[ T_{\ms E^{y}} = T_{\breve{\ms E}^{x}} \big]\;.
\end{equation*}

\begin{lemma}
\label{s39}
Suppose that for each $1\le x\le \kappa$ there exists a point $\bs
\xi_x = (\xi^N_x : N\ge 1)$ in $\ms E^x$ such that the triple $(\ms
E^x, \ms E^x \cup \bs \Delta, \bs \xi_x)$ is a valley of depth
$\mu_N(\ms E^{x})/\Cap_N(\ms E^x , \breve{\ms E}^{x})$ and such that
\begin{equation*}
\lim_{N\to\infty} \sup_{\eta \in \ms E^{x}_N} 
\frac{\Cap_N(\ms E^{x} , \breve{\ms E}^{x})}
{\Cap_N(\xi^N_x, \eta)} \;=\; 0\; .
\end{equation*}
Suppose, furthermore, that there exists a sequence $(\theta_N : N\ge
1)$ for which the mean rates, the depth and the jump probabilities
converge: For any $1\le x\not = y \le \kappa$ and any sequence
$(\eta^N : N\ge 1)$ in $\ms E^x$,
\begin{equation*}
\begin{split}
& \lim_{N\to\infty} \theta_N \, \Lambda_N(x) \;=\; \Lambda(x)\; , \quad
\lim_{N\to\infty} P_N(\eta^N,x,y) \;=\; P(x,y)\; \\
& \qquad \lim_{N\to\infty} \theta_N \, r_N(\ms E^x, \ms E^y) 
\;=\; r(x,y)\;.
\end{split}
\end{equation*}
Then, $r(x,y) = \Lambda(x) \, P(x,y)$.
\end{lemma}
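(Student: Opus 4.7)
The plan is to factor
$$\theta_N\, r_N(\ms E^x, \ms E^y) \;=\; \bigl[\theta_N\, r_N(\ms E^x, \breve{\ms E}^x)\bigr] \cdot \frac{r_N(\ms E^x, \ms E^y)}{r_N(\ms E^x, \breve{\ms E}^x)}$$
and pass to the limit in each factor. For the first bracket, \cite[Lemma 6.7]{bl2} applied to the trace chain on $\ms E$ yields $r_N(\ms E^x, \breve{\ms E}^x) = \Cap_N(\ms E^x, \breve{\ms E}^x)/\mu_N(\ms E^x) = \Lambda_N(x)$, so the hypothesis $\theta_N \Lambda_N(x) \to \Lambda(x)$ disposes of this factor immediately. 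Everything reduces to proving that the ratio in the second factor converges to $P(x, y)$.

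To that end, I would exploit the harmonicity on $\ms E^x$ of the function $h_N(\eta) := P_N(\eta, x, y)$ with respect to the trace chain on $\ms E$, extended by the boundary values $h_N\equiv \mb 1_{\ms E^y}$ on $\breve{\ms E}^x$, so that for each $\eta \in \ms E^x$
$$\lambda(\eta)\, h_N(\eta) \;=\; R^{\ms E}_N(\eta, \ms E^y) \;+\; \sum_{\xi \in \ms E^x \setminus \{\eta\}} R^{\ms E}_N(\eta, \xi)\, h_N(\xi),$$
where $R^{\ms E}_N(\eta, A) := \sum_{\xi \in A} R^{\ms E}_N(\eta, \xi)$ and $\lambda(\eta) = R^{\ms E}_N(\eta, \ms E \setminus \{\eta\})$. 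Multiplying by $\mu_N(\eta)$, summing over $\eta \in \ms E^x$, and reindexing the double sum by the detailed-balance identity $\mu_N(\eta) R^{\ms E}_N(\eta, \xi) = \mu_N(\xi) R^{\ms E}_N(\xi, \eta)$ (valid since $\mu_N$ restricted and renormalized is reversible for the trace chain), the $\lambda(\eta) h_N(\eta)$ contribution on the left cancels the internal part of the double sum and what remains is the reversibility identity
$$\sum_{\eta \in \ms E^x} \mu_N(\eta)\, R^{\ms E}_N(\eta, \ms E^y) \;=\; \sum_{\eta \in \ms E^x} \mu_N(\eta)\, h_N(\eta)\, R^{\ms E}_N(\eta, \breve{\ms E}^x).$$
Dividing by $\mu_N(\ms E^x)$ on both sides exhibits $r_N(\ms E^x, \ms E^y)/r_N(\ms E^x, \breve{\ms E}^x)$ as a weighted average of $\{h_N(\eta) : \eta \in \ms E^x\}$ with nonnegative (and not all zero) weights $\mu_N(\eta) R^{\ms E}_N(\eta, \breve{\ms E}^x)$.

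The hypothesis that $P_N(\eta^N, x, y) \to P(x, y)$ along every sequence $(\eta^N)$ in $\ms E^x$ is equivalent to the uniform convergence $\sup_{\eta \in \ms E^x_N}|h_N(\eta) - P(x,y)| \to 0$ (select an $\varepsilon$-extremizing sequence to contradict non-uniformity; the $h_N$ are bounded by $1$), so the weighted average above converges to $P(x,y)$ regardless of the weights. Combining with the first factor gives $r(x,y) = \Lambda(x) P(x,y)$, as required. The only nontrivial step is the harmonic-identity manipulation, and that is essentially a Kemeny--Snell-type bookkeeping computation; the valley hypotheses enter only indirectly, through the uniform convergence of hitting probabilities that they guarantee.
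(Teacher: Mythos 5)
Your argument is correct and takes a genuinely different route from the paper's. The paper proves the identity probabilistically: it invokes the weak convergence of the speeded-up blind process (Theorem 2.7 of \cite{bl2}) to identify the limiting Markov chain, matches the first jump time (exponential of rate $\lambda(x)=\sum_{y\neq x}r(x,y)$) against the valley exit time (exponential of rate $\Lambda(x)$, via the valley hypothesis) to get $\lambda(x)=\Lambda(x)$, and matches the first jump position against $P_N$ to get $P(x,y)=r(x,y)/\lambda(x)$. You instead factor $\theta_N\,r_N(\ms E^x,\ms E^y)=[\theta_N\Lambda_N(x)]\cdot r_N(\ms E^x,\ms E^y)/r_N(\ms E^x,\breve{\ms E}^x)$, identify the first factor by the capacity formula $r_{\ms E_N}(\ms E^x,\breve{\ms E}^x)=\Cap_N(\ms E^x,\breve{\ms E}^x)/\mu_N(\ms E^x)$ of \cite[Lemma 6.7]{bl2}, and handle the second by a Kemeny--Snell identity: harmonicity of $h_N(\cdot)=P_N(\cdot,x,y)$ for the trace chain plus detailed balance express the ratio as a $\{\mu_N(\eta)R^{\ms E}_N(\eta,\breve{\ms E}^x)\}$-weighted average of $h_N$ over $\ms E^x$, which converges to $P(x,y)$ by the (sequence-to-uniform upgrade of the) hypothesis. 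Your route is purely algebraic, avoids Theorem 2.7 and the valley machinery entirely, and in fact establishes the conclusion from the three limit hypotheses alone. One small inaccuracy in your concluding remark: the uniform convergence of the hitting probabilities is a separate, explicitly stated hypothesis of the lemma, not something the valley hypotheses "guarantee"; the valley and capacity-ratio hypotheses are in the statement because the paper's proof invokes \cite[Theorem 2.7]{bl2}, but they play no role in your argument.
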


\begin{proof}
Note that we assumed that the limit $P(x,y)$ does not depend on the
sequence $(\eta^N : N\ge 1)$.

It follows from \cite[Theorem 2.7]{bl2} that for any $1\le x\le
\kappa$ and any sequence $(\eta^N : N\ge 1)$ in $\ms E^x$, under the
measure $\mb P_{\eta^N}$ the speeded-up process $X^N_t =
\Psi(\eta^N_{t\theta_N})$ converges to a Markov process on $\{1,
\dots, \kappa\}$ with jump rates $r(y,z)$ starting from $x$. In
particular, if we denote by $\tau^N_1$ the time of the first jump of
$X^N_t$, $\tau^N_1$ converges to an exponential time of rate $\lambda
(x) = \sum_{y\not =x} r(x,y)$ and $X^N_{\tau^N_1}$ converges to a
random variable with distribution $p(y)= r(x,y)/\lambda(y)$.

On the other hand, since the triple $(\ms E^x, \ms E^x \cup \bs
\Delta, \bs \xi_x)$ is a valley of depth $\Lambda_N(x)^{-1}$,
$\tau^N_1\Lambda_N(x)$ converge to a mean one exponential time so that
$\Lambda(x) = \lambda(x)$. Moreover, $\mb P_{\eta^N} [X^N_{\tau^N_1} =
y] = \mb P_{\eta^N} [ T_{\ms E^{y}} = T_{\breve{\ms E}^{x}} ]$
converges to $P(x,y)$ so that $P(x,y)=p(y)$, which proves the lemma.
\end{proof}

\end{document}